\documentclass[11pt,letterpaper]{amsart}
\usepackage{graphicx} 
\usepackage{amsthm}
\usepackage{amsmath}
\usepackage{amssymb}
\usepackage{comment}
\usepackage{tikz}
\usepackage{enumerate}
\usepackage{enumitem}
\usepackage{amsfonts}
\usepackage{leftindex}
\usepackage{mathtools}
\usepackage{hyperref}
\usepackage{cleveref}
\usepackage{todonotes}
\usepackage{booktabs}
\usepackage[utf8]{inputenc}
\usepackage[left=1.25in, right=1.25in, bottom=1.25in, top=1.25in]{geometry}

\newtheorem{theorem}{Theorem}[section]
\newtheorem{def-prop}[theorem]{Definition-Proposition}
\newtheorem{prop}[theorem]{Proposition}

\newtheorem{lemma}[theorem]{Lemma}

\newtheorem{cor}[theorem]{Corollary}

\theoremstyle{definition}
\newtheorem{ex}[theorem]{Example}

\newtheorem{defin}[theorem]{Definition}

\theoremstyle{remark}
\newtheorem{remark}{Remark}

\Crefname{remark}{Remark}{Remarks}
\Crefname{prop}{Proposition}{Propositions}

\DeclareTextFontCommand{\emph}{\color{blue}\em}

\newcommand{\R}{\mathbb{R}}
\newcommand{\Z}{\mathbb{Z}}

\begin{document}
\title{Coxeter Condorcet domains}
\author{Yibo Gao}
\address{Beijing International Center for Mathematical Research, Peking University, Beijing, China}
\email{gaoyibo@bicmr.pku.edu.cn}
\author{Yulin Peng}
\address{Qiuzhen College, Tsinghua University, Beijing, China}
\email{pyl24@mails.tsinghua.edu.cn}

\date{\today}

\begin{abstract}
Condorcet domains are subsets of permutations that ensure pairwise majority voting yields acyclic outcomes, and they form an active area of research at the intersection of social choice theory and combinatorics. In this paper, we extend the theory of Condorcet domains to the broader setting of arbitrary finite Coxeter groups. The core contribution of our approach is the introduction of Condorcet root posets, defined on the chosen root systems. Notably, we establish a natural bijection between closed Condorcet domains and Condorcet root posets, which facilitates the study of Condorcet domains. Using this correspondence, we extend the median graph representation of closed Condorcet domains to arbitrary finite Coxeter groups, demonstrating that these domains can be characterized by the skeletons of their associated Condorcet root posets. These results are novel even in type $A$. Furthermore, these posets give a unified language that efficiently captures a wide range of desirable properties of Condorcet domains, such as being maximal, connected, peak-pit, and of tiling type. Using this framework, we strengthen and generalize several classical results: we establish that a maximal Condorcet domain is connected if and only if it is peak-pit; we prove that the tiling-type property is equivalent to the combination of being maximal and connected, and having maximal width; and we show that strictly positive voting profiles on connected Condorcet domains yield outcomes with only simple ties.
\end{abstract}
\maketitle

\section{Introduction}\label{sec: intro}
A central problem in social choice theory is how to aggregate individual preferences into a coherent collective decision \cite{Arrow1951}. This challenge is already apparent under majority rule: as demonstrated by the Condorcet paradox \cite{Condorcet1785}, pairwise majority comparisons can yield cyclic outcomes. For instance, consider a case with three candidates and three voters whose respective preference rankings are $A \succ B \succ C$, $B \succ C \succ A$, and $C \succ A \succ B$. Pairwise majority voting then produces a cycle: $A$ defeats $B$, $B$ defeats $C$, yet $C$ defeats $A$. To avoid this scenario, the notion of a Condorcet domain was introduced \cite{Black1948, Ward1961, Sen1966}. By identifying the strict linear orders on $n$ candidates with permutations in the symmetric group $S_n$, a domain $\mathcal{D} \subseteq S_n$ is said to be \emph{Condorcet} if a profile of voters whose preferences are restricted to $\mathcal{D}$ can never generate a majority cycle. Since their introduction, Condorcet domains have been investigated both as fundamental tools for social choice and as rich combinatorial objects in their own right. A major line of subsequent research has focused on Condorcet domains that are maximal with respect to set inclusion. This focus has yielded various structural characterizations, revealing deep connections with weak Bruhat orders \cite{GalambosReiner2008}, rhombus tilings \cite{danilov2012condorcet}, and median graphs \cite{Puppe-Slinko-median-graph}.

In this paper, we generalize the concept of a Condorcet domain from the symmetric group to arbitrary finite Coxeter groups in a natural way. By introducing Condorcet root posets, we characterize closed Condorcet domains and their associated median graphs. These results are novel even in type $A$. Moreover, we use these posets to study several well-known classes of Condorcet domains, including maximal, connected, peak-pit, and tiling-type domains. And then we generalize some of their fundamental properties.

Let $\Phi$ be a finite root system, $\Phi^+$ be a choice of positive roots, $\Phi^-=\Phi\setminus\Phi^+$ be the negative roots, and $W=W(\Phi)$ be its corresponding Coxeter group. For each $w\in W$, its \emph{(left) inversion set} is $I(w):=\{\alpha\in\Phi\mid w^{-1}\alpha\in\Phi^-\}$. Note that we differ significantly from the standard literature by allowing $I(w)$ to contain negative roots, so that $|I(w)|=|\Phi|/2$ as exactly one of $\alpha,-\alpha$ lies in $I(w)$. We choose this convention to discuss all roots simultaneously, rather than separating $\Phi^+$ from $\Phi^-$. We say that $\{\alpha,\beta,\gamma\}\subseteq \Phi$ is a \emph{root-triple} if there exists $a,b,c\in\R_{>0}$ such that $a\alpha+b\beta+c\gamma=0$. For a set $A\subseteq \Phi$, write $-A:=\{-\alpha\mid \alpha \in A\}$. Then $\{\alpha,\beta,\gamma\}$ is a root-triple implies that $-\{\alpha,\beta,\gamma\}$ also is a root-triple.

The following is our key definition.
\begin{defin}\label{def: condorcet}
A nonempty subset $D\subseteq W$ is a \emph{(Coxeter) Condorcet domain} if there do not exist distinct $u,v,w\in D$ and a root-triple $\{\alpha,\beta,\gamma\}$ such that $\alpha,\beta\in I(u)$, $\alpha,\gamma\in I(v)$, $\beta,\gamma\in I(w)$.
\end{defin}
Given a Condorcet domain $D$, a \emph{voting profile} is a map $f:D\rightarrow \Z_{\geq0}$ that is not constantly zero. For each $\alpha\in\Phi$, write $f(\alpha)=\sum_{w\in D,\ \alpha\in I(w)}f(w)$. Write $|f|:=\sum_{w\in D}f(w)$ to be the total number of votes, where $|f|=f(\alpha)+f(-\alpha)$ for all $\alpha\in\Phi$. The \emph{preference} of $f$ is $I(f):=\{\alpha\in \Phi \mid f(\alpha)\ge |f|/2\}$. The \emph{result} of such $f$ under majority rule is \[R(f):=\{w\in W\mid I(w) \subseteq I(f)\}.\] We say that a voting profile $f$ has \emph{no tie} if for each $\alpha\in\Phi$, $f(\alpha)\neq|f|/2$. Our first theorem below justifies our generalizations and serves as the bases for further discussions.
\begin{theorem}\label{thm: fundamental}
Let $f:D\rightarrow\Z_{\geq0}$ be a voting profile on a Condorcet domain $D$. Then the result $R(f)$ under majority rule is nonempty. Moreover, if $f$ has no tie, then there exists $w\in W$ such that $I(w)=I(f)$, and $R(f)=\{w\}$ is a singleton. 
\end{theorem}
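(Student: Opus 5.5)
We will use the standard characterization of inversion sets of a finite Coxeter group, in the convention of this paper. A subset $S\subseteq\Phi$ equals $I(w)$ for some $w\in W$ if and only if two conditions hold. The first is that $S$ contains exactly one of $\alpha,-\alpha$ for each $\alpha$. The second is that $S$ is closed: whenever $\alpha,\beta\in S$ and $\gamma=a\alpha+b\beta\in\Phi$ with $a,b>0$, we have $\gamma\in S$. Equivalently, $S=\{\alpha\mid \langle \alpha, x\rangle<0\}$ for some regular $x$, i.e. $S$ is the set of roots negative on a chamber. This is the usual biclosed/separable characterization of inversion sets of finite Coxeter groups, reformulated for all of $\Phi$. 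In the root-triple language, a complete set $S$ is an inversion set if and only if it contains no root-triple. Indeed, if $a\alpha+b\beta+c\gamma=0$ with $\alpha,\beta\in S$, then $-\gamma$ is a positive combination of $\alpha$ and $\beta$, so closure forces $-\gamma\in S$ and hence $\gamma\notin S$.

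\textbf{No-tie case.} Here $I(f)$ contains exactly one of $\pm\alpha$ for every $\alpha$, since $f(\alpha)+f(-\alpha)=|f|$ and $f(\alpha)\ne |f|/2$. So it suffices to show that $I(f)$ contains no root-triple. Suppose $\alpha,\beta,\gamma\in I(f)$ form a root-triple. Each of them is contained in the inversion sets of a strict majority of the voters (counted with weights $f$). We claim some voter contains two of them. Since no $I(w)$ contains all three (inversion sets contain no root-triple), each voter contains at most two. Write $n_2$ for the total weight of voters containing exactly two of $\alpha,\beta,\gamma$. Then $f(\alpha)+f(\beta)+f(\gamma)\le |f|+n_2$, and the left side exceeds $3|f|/2$, so $n_2>|f|/2$.

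More precisely, let $p,q,r$ be the weights of voters containing exactly the pairs $\{\alpha,\beta\},\{\alpha,\gamma\},\{\beta,\gamma\}$. If two of $p,q,r$ vanish, say $q=r=0$, then $\gamma$ is covered only by voters containing $\gamma$ alone. Those voters contain neither $\alpha$ nor $\beta$, so $f(\gamma)+f(\alpha)\le |f|$, contradicting that both exceed $|f|/2$. Hence at least two of $p,q,r$ are positive. Suppose $p,q>0$ but $r=0$. Then $\beta$ and $\gamma$ never occur together, so $f(\beta)+f(\gamma)\le|f|$, again a contradiction. So $p,q,r$ are all positive, which gives distinct $u,v,w\in D$ violating \Cref{def: condorcet}. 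They are distinct because the pairs they contain are different. Therefore $I(f)=I(w)$ for some $w$, and $R(f)=\{w\}$ because $I(w')\subseteq I(w)$ with equal cardinalities forces $I(w')=I(w)$, hence $w'=w$.

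\textbf{General case.} We perturb to break ties. Pick $u\in D$ with $f(u)>0$ and consider $f'=2f+\delta_u$ (or $Nf+\delta_u$). Then $|f'|=2|f|+1$ is odd, so $f'$ has no tie. If $f(\alpha)>|f|/2$, then $f'(\alpha)\ge 2f(\alpha)>|f'|/2$, because $2f(\alpha)\ge |f|+1$ and $2f(\alpha)+\epsilon\ge |f|+1/2+\ldots$; the integer parity check works out. So strict majorities are preserved and $I(f')\subseteq I(f)$. By the no-tie case, $I(f')=I(w)$ for some $w$, and then $w\in R(f)$. Here $f'$ is a voting profile on the same domain $D$, so the no-tie argument applies.

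The main obstacle is establishing the characterization of inversion sets, namely that complete, root-triple-free subsets of $\Phi$ are exactly the sets $I(w)$. For finite Coxeter groups this is classical: the biclosed sets of positive roots are exactly the inversion sets. One also has to check carefully the translation between biclosedness in $\Phi^+$ and the absence of root-triples in this paper's convention using all of $\Phi$. If the paper has not recorded this, we will cite it, for example from Björner–Brenti or Dyer.
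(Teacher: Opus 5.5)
Your proof is correct and follows essentially the same route as the paper. The steps match: biconvexity of $I(f)$ via Proposition~\ref{prop: biconvex}; the pigeonhole observation that $f(x)+f(y)>|f|$ forces a voter containing both $x$ and $y$, which yields three distinct voters violating Definition~\ref{def: condorcet}; and a perturbation to an odd total to reduce the tie case to the no-tie case. The differences are cosmetic: the paper perturbs by $f+\delta_v$ (ties force $|f|$ even) rather than $2f+\delta_u$, and your $p,q,r$ case analysis collapses to the single observation that each of $p,q,r$ must be positive, since otherwise the corresponding pair $x,y$ would satisfy $f(x)+f(y)\le |f|$.
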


Our main tool to analyze the Coxeter Condorcet domains is the \emph{Condorcet root (pre)poset}, which is a family of transitive orders on $\Phi$ satisfying certain conditions on rank $2$ root subsystems (see Definition~\ref{def: condorcet-poset} in \Cref{subsec: basic setting}). These posets have natural relations with Condorcet domains. Let $\mathcal{D}$ be the set of all Condorcet domains and let $\mathcal{P}$ be the set of all Condorcet root (pre)posets.
\begin{theorem}\label{thm: bijection}
There are explicit maps $\varphi:\mathcal{D}\rightarrow\mathcal{P}$ and $\psi:\mathcal{P}\rightarrow\mathcal{D}$ providing a pair of mutually inverse bijections between $\psi(\mathcal{P})\subseteq \mathcal{D}$ and $\varphi(\mathcal{D})=\mathcal{P}$. 
\end{theorem}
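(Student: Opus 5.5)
The plan is to treat $\varphi$ and $\psi$ as the two halves of a Galois-type correspondence between subsets of $W$ and transitive relations on $\Phi$. I will use the natural choices, which I expect match Definition~\ref{def: condorcet-poset}. For a domain $D$, let $\varphi(D)$ be the preorder on $\Phi$ given by $\alpha\preceq_D\beta$ if and only if $D_\alpha\subseteq D_\beta$, where $D_\alpha:=\{w\in D\mid \alpha\in I(w)\}$. For a Condorcet root preorder $P$, let $\psi(P):=\{w\in W\mid I(w)\text{ is an upper set of }P\}$.

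Both maps reverse inclusion: enlarging $D$ shrinks the relation $\preceq_D$, and enlarging the relation shrinks $\psi(P)$. We also have $D\subseteq\psi(\varphi(D))$ tautologically. Indeed, if $w\in D$, $\alpha\in I(w)$ and $D_\alpha\subseteq D_\beta$, then $w\in D_\beta$, so $I(w)$ is an upper set for $\preceq_D$. Hence $\psi\circ\varphi$ is a closure operator on $\mathcal{D}$. Once the two well-definedness statements and the identity $\varphi\circ\psi=\mathrm{id}_{\mathcal P}$ are in place, the theorem follows formally. That identity gives $\varphi(\mathcal D)=\mathcal P$ and shows $\psi$ is injective. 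Moreover, for $D=\psi(P)$ we get $\psi\varphi(D)=\psi(P)=D$, so $\varphi$ and $\psi$ are mutually inverse between $\psi(\mathcal P)$ (the closed domains) and $\mathcal P$.

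The steps, in order, are as follows.

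\textbf{Step 1: $\varphi(D)\in\mathcal P$.} Transitivity is automatic. The relation $\preceq_D$ is also compatible with negation: $\alpha\preceq\beta$ if and only if $-\beta\preceq-\alpha$, since $D_{-\alpha}=D\setminus D_\alpha$. The remaining rank $2$ conditions must come from Definition~\ref{def: condorcet}. Restrict to a rank $2$ subsystem spanned by a root-triple $\{\alpha,\beta,\gamma\}$. The inversion sets of elements of $W$ meet this subsystem in ``intervals'' of the cyclic order of the rank $2$ roots. The Condorcet condition then says that the three sets $D_\alpha\cap D_\beta$, $D_\alpha\cap D_\gamma$, $D_\beta\cap D_\gamma$ are not all nonempty (allowing for the fact that a single $w$ cannot contain all of $\alpha,\beta,\gamma$ in $I(w)$). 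I expect this to translate into a forced comparability, for example $-\gamma\preceq\alpha$ or a similar relation, which should be exactly the rank $2$ axiom.

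\textbf{Step 2: $\psi(P)$ is a nonempty Condorcet domain.} Suppose $u,v,w\in\psi(P)$ violate Definition~\ref{def: condorcet} via a root-triple. The rank $2$ axiom supplies a relation among $\pm\alpha,\pm\beta,\pm\gamma$. Combined with the fact that $I(u),I(v),I(w)$ are upper sets, this should put all three roots of a root-triple into one inversion set. That is impossible, because inversion sets are biconvex.

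\textbf{Step 3: $\varphi(\psi(P))=P$.} One inclusion is immediate: if $\alpha\le_P\beta$, then every upper set containing $\alpha$ contains $\beta$. The reverse inclusion is the main obstacle. If $\alpha\not\le_P\beta$, I must produce a $w\in W$ such that $I(w)$ is an upper set of $P$ containing $\alpha$ but not $\beta$. My plan is to take the upper set $U$ generated by $\alpha$ together with the down-set of $-\beta$ negated, namely $U=\uparrow\alpha\,\cup\,-(\downarrow\!(-\beta))^{c}$ adjusted appropriately. I would then extend $U$ greedily by adding, for each still-undecided pair $\pm\delta$, a $P$-minimal choice. The key claim is that any upper set $S$ with $S\sqcup -S=\Phi$ is biconvex, hence equals $I(w)$ for some $w$. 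Biconvexity only needs to be checked on rank $2$ subsystems, which is exactly what the rank $2$ axioms of Definition~\ref{def: condorcet-poset} should guarantee. I would first verify the characterization that a set $S\subseteq\Phi$ containing exactly one of each $\pm\delta$ is an inversion set if and only if it is closed on every root-triple. Taking this as the standard fact, the remaining work is the case check in rank $2$ types $A_1\times A_1$, $A_2$, $B_2$, $G_2$.
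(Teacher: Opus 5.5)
Your architecture is the paper's: well-definedness of $\varphi$ (Lemma~\ref{lem:phi well-defined}) and of $\psi$ (Lemma~\ref{lem:psi well-defined}), plus $\varphi\circ\psi=\mathrm{id}_{\mathcal P}$ (Theorem~\ref{thm: Condorcet root poset eq condition}, via Propositions~\ref{prop: J_p(P) in J_s(P)} and~\ref{prop: recover from symmetric order ideal}).

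Two things in your setup are fine:
\begin{itemize}
\item Your formal deduction of the theorem from these three facts is correct. It even avoids Proposition~\ref{prop: phi-psi is closure img psi is closed}, which is only needed to identify $\psi(\mathcal P)$ with the closed domains in the voting sense.
\item Your $\preceq_D$ and upper sets are the order-dual of the paper's $\le$ and order ideals. This is harmless, since Definition~\ref{def: condorcet-poset} is invariant under reversing the preorder.
\end{itemize}

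The gaps are at the steps where the axioms of Definition~\ref{def: condorcet-poset} actually enter; these are left as expectations. The main one is in Step 3. Its real content is that $U=\uparrow\!\alpha\cup\uparrow\!(-\beta)$ (not the mixed up-/down-set you wrote) is disjoint from $-U=\downarrow\!(-\alpha)\cup\downarrow\!\beta$, and you do not address this. Using axiom (1), a common element gives one of $\alpha\preceq-\alpha$, $\alpha\preceq\beta$, or $-\beta\preceq\beta$. The middle one is excluded by hypothesis. The other two are excluded only by axiom (2): a root strictly below its negative is a global minimum, which would force $\alpha\preceq\beta$.

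You never invoke (2), and it cannot be dropped. Take $A_1\times A_1=\{\pm\alpha,\pm\beta\}$ with the preorder whose only nontrivial relation is $\alpha<-\alpha$. It satisfies (1), satisfies (3) vacuously, and has $\gamma\neq-\gamma$ for all $\gamma$. Yet every element of $\psi(P)$ has the same one of $\pm\alpha$ in its inversion set, so $\alpha$ becomes a global minimum in $\varphi(\psi(P))$, and $\varphi(\psi(P))\neq P$.

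Two smaller points in Step 3:
\begin{itemize}
\item The greedy extension must add the whole equivalence class of a maximal undecided root, in your upper-set convention; adding a minimal one breaks the upper-set property. That class is disjoint from its negative, again by (2).
\item This same extension, started from $\emptyset$, is what gives $\psi(P)\neq\emptyset$ in Step 2.
\end{itemize}
Once a symmetric upper set is built, biconvexity follows in one line from axiom (3) and Definition~\ref{def: biconvex}. So no type-by-type rank $2$ check is needed.

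Step 1 has a similar gap. The Condorcet property applied to $T=\{\alpha,\beta,\gamma\}$ only excludes one $2$-element pattern $T\cap I(v)$, say $\{\alpha,\gamma\}$. That yields a single relation, $D_\alpha\subseteq D_{-\gamma}$. Axiom (3) demands more: up to relabeling, $D_\alpha\subseteq D_{-\gamma}\subseteq D_\beta$ or $D_\alpha=D_{-\gamma}$.

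The missing half comes from applying the Condorcet property to the root-triple $-T$, which excludes a $1$-element pattern $T\cap I(v)$:
\begin{itemize}
\item excluding $\{\beta\}$ gives $D_{-\gamma}\subseteq D_\alpha$, hence the equality;
\item excluding $\{\alpha\}$ or $\{\gamma\}$ gives one of the chains.
\end{itemize}
This is the case split in Lemma~\ref{lem:phi well-defined}, and it is exactly where the two types of Condorcet condition arise. Alternatively, combine one relation for $T$ with one for $-T$ as at the end of the proof of Theorem~\ref{thm: Condorcet root poset eq condition}.

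Finally, in Step 2 the contradiction is not that three roots of a triple land in one inversion set. Suppose the axiom gives $\alpha\preceq-\gamma$ in your convention. Then the element $v$ with $\alpha,\gamma\in I(v)$ would also have $-\gamma\in I(v)$, which is impossible.
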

The explicit maps $\varphi$ and $\psi$ will be provided in \Cref{subsec: Relations with Condorcet domains}. See Figure~\ref{fig:maps-cartoon} for a visualization. This language allows us to efficiently characterize many desirable properties on Condorcet domains. One example is that of a closed Condorcet domain. Define 
$$\nu(D):=\bigcup_{\substack
{f:D\rightarrow \Z_{\ge0}\\ f \text{ has no tie}}}R(f)$$
 to be all possible results of no tie voting profile on a Condorcet domain $D$. Note that $D\subseteq \nu(D)$. A Condorcet domain is \emph{closed} if $\nu(D)=D$. Let $\overline{D}$ denote the \emph{closure} of $D$, i.e., the minimal closed Condorcet domain containing $D$. Our characterization of closed Condorcet domains is novel even in type $A$.
\begin{cor}\label{cor:bijection}
The closed Condorcet domains are precisely the image $\psi(\mathcal{P})$, and they are in bijection with $\mathcal{P}$ under $\varphi$ with inverse $\psi$. Moreover, for $D_1, D_2\in \mathcal{D}$, $\varphi(D_1)=\varphi(D_2)$ if and only if $\overline{D}_1=\overline{D}_2$.
\end{cor}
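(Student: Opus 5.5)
We do not yet have the explicit maps, so the plan has to rest on properties of $\varphi$ and $\psi$ that Theorem~\ref{thm: bijection} and its construction in \Cref{subsec: Relations with Condorcet domains} should provide. We would establish, or extract from that construction, three facts.
\begin{enumerate}[label=(\roman*)]
\item For every $D\in\mathcal{D}$, the domain $\psi(\varphi(D))$ is a Condorcet domain containing $D$. Since $\varphi(D)$ is determined by the inversion sets of the elements of $D$, membership of $D$ should be immediate.
\item For every $P\in\mathcal{P}$ we have $\varphi(\psi(P))=P$. This is part of Theorem~\ref{thm: bijection}.
\item $\psi(\varphi(D))\subseteq \nu(D)$. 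That is, every element of $\psi(\varphi(D))$ arises as $R(f)=\{w\}$ for some profile $f$ on $D$ with no tie, using Theorem~\ref{thm: fundamental}.
\end{enumerate}
We would also need the converse inclusion $\nu(D)\subseteq\psi(\varphi(D))$. The idea is that if $f$ has no tie, then $I(f)=I(w)$ for a unique $w$ by Theorem~\ref{thm: fundamental}. Majority preferences on each rank $2$ subsystem are then consistent with the transitive relations recorded in $\varphi(D)$, so $w$ satisfies the defining conditions of $\psi(\varphi(D))$.

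Granting these, the argument runs as follows. Facts (i), (iii) and the converse inclusion together give $\nu(D)=\psi(\varphi(D))$.
\begin{itemize}
\item Every closed $D$ satisfies $D=\nu(D)=\psi(\varphi(D))\in\psi(\mathcal{P})$.
\item Conversely, let $D=\psi(P)$. By (ii), $\nu(D)=\psi(\varphi(\psi(P)))=\psi(P)=D$, so $D$ is closed.
\end{itemize}
Hence the closed domains are exactly $\psi(\mathcal{P})$, and the bijection statement is Theorem~\ref{thm: bijection} restricted to them.

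For closures, note that $\nu$ is monotone: a profile on $D_1\subseteq D_2$ extends by zero to a profile on $D_2$. So any closed $D'\supseteq D$ satisfies $D'=\nu(D')\supseteq\nu(D)$. Since $\nu(D)=\psi(\varphi(D))$ is itself closed, this gives $\overline{D}=\psi(\varphi(D))$. In particular the closure exists and is a Condorcet domain. The final claim now follows:
\begin{itemize}
\item If $\varphi(D_1)=\varphi(D_2)$, then applying $\psi$ gives $\overline{D}_1=\overline{D}_2$.
\item If $\overline{D}_1=\overline{D}_2$, then applying $\varphi$ and using (ii) gives $\varphi(D_1)=\varphi(\overline{D}_1)=\varphi(\overline{D}_2)=\varphi(D_2)$.
\end{itemize}

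The main obstacle is (iii): showing that every $w\in\psi(\varphi(D))$ is realized by a profile on $D$ with no tie. We would proceed in three steps.
\begin{enumerate}[label=(\alph*)]
\item Construct $f$ as a weighted sum concentrated near $w$, for example by giving large weight to elements of $D$ that agree with $w$ on many roots.
\item Control the weights root by root, using the rank $2$ conditions defining Condorcet root posets, so that the majority on each $\alpha$ matches whether $\alpha\in I(w)$.
\item Remove ties by a small perturbation, possible because each $\alpha$ has $f(\alpha)\neq|f|/2$ generically.
\end{enumerate}
If direct construction fails, we would argue by induction on a linear extension or skeleton of $\varphi(D)$. At each step we would add voters to flip one root at a time, in the manner of median-graph convexity arguments.
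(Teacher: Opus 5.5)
Your step (iii) is false in general, and with it the identity $\nu(D)=\psi(\varphi(D))$ and the claim that $\nu(D)$ is closed. So no choice of weights in (a)--(c) can succeed.

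Here is a counterexample. Take $\Phi=A_1^7$, so $W=(\Z/2\Z)^7\cong\{0,1\}^7$ has no root-triples and every nonempty subset is a Condorcet domain. The same example lives in type $A_{13}$ as the $2^7$ orders of $a_1,b_1,\dots,a_7,b_7$ respecting the block order $\{a_1,b_1\}<\dots<\{a_7,b_7\}$. Let $D$ consist of the seven points $p$ of the Fano plane, encoded by $x_i(p)=1$ iff $p\in L_i$, where $L_1,\dots,L_7$ are the lines; thus $D(\alpha_i)=L_i$. Distinct lines are distinct $3$-sets that meet, so no one of the fourteen sets $L_i, L_i^c$ contains another. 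Hence the roots $\pm\alpha_i$ are pairwise incomparable in $\varphi(D)$, and $\psi(\varphi(D))=W$. Now let $w=(1,\dots,1)$. If $w\in\nu(D)$, then by Theorem~\ref{thm: fundamental} some profile with no tie would satisfy $f(\alpha_i)=\sum_{p\in L_i}f(p)>|f|/2$ for all seven lines. But each point lies on three lines, so $\sum_i f(\alpha_i)=3|f|<\tfrac72|f|$, a contradiction. Thus $w\in\psi(\varphi(D))\setminus\nu(D)$. Since the corollary itself gives $\overline{D}=\psi(\varphi(D))$, the set $\nu(D)$ is not closed either.

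The point is that membership in $\psi(\varphi(D))$ only encodes \emph{pairwise} consistency: the sets $D(\alpha)$, $\alpha\in I(w)$, pairwise intersect. Pairwise intersection does not yield a single measure on $D$ making all of them strict majorities.

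The missing idea is to iterate majority voting inside $\overline{D}$, using its closedness at each stage. The paper proves $\psi(\varphi(D))\subseteq\overline{D}$ by a Helly-type induction. The claim is that for every $I\in J_p(\varphi(D))$ with generating antichain $A$, some $u\in\overline{D}$ has $A\subseteq I(u)$.
\begin{itemize}
\item For $|A|\le 2$ this is the definition of $\varphi(D)$: $\alpha\not\geq-\beta$ yields $u\in D$ with $\alpha,\beta\in I(u)$.
\item For $A=\{\alpha_1,\dots,\alpha_{k+1}\}$ with $k\ge2$, take $u_i\in\overline{D}$ with $I(u_i)\supseteq A\setminus\{\alpha_i\}$ and put weight $1$ on each $u_i$. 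Every $\alpha_i$ receives at least $k>(k+1)/2$ votes. Break the remaining ties as in the proof of Theorem~\ref{thm: fundamental}, and use $\nu(\overline{D})\subseteq\overline{D}$.
\end{itemize}
The voters $u_i$ are themselves outcomes of earlier votes, which is exactly how the all-ones vertex in the Fano example is reached.

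The opposite inclusion $\overline{D}\subseteq\psi(\varphi(D))$ follows from $D\subseteq\psi(\varphi(D))$ and the closedness of $\psi(P)$. That closedness comes from $f(\alpha)\ge f(\beta)$ whenever $\alpha\le_P\beta$, which is a cleaner justification than your rank-$2$ remark. With $\overline{D}=\psi(\varphi(D))$ established, the rest of your deduction goes through with $\overline{D}$ in place of $\nu(D)$.
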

\begin{figure}
\centering
\begin{tikzpicture}[scale=0.5]
\node at (0,0) {$\bullet$};
\node at (0,-1) {$\bullet$};
\node at (0,-2) {$\bullet$};
\node at (0,-3) {$\bullet$};
\draw(-1,0)--(-1,-3);
\draw(1,0)--(1,-3);
\node[left] at (0,0) {\small{$\psi(P)$}};
\node[left] at (0,-2) {$\psi(\varphi(D))=\overline D$};
\node[left] at (0,-3) {$D$};
\draw (1,0) arc (0:180:1);
\draw (-1,-3) arc (180:360:1);
\node[above] at (0,1) {$\mathcal{D}$};

\node at (4,0) {$\bullet$};
\node at (4,-1) {$\bullet$};
\node at (4,-2) {$\bullet$};
\draw(3,0)--(3,-2);
\draw(5,0)--(5,-2);
\draw(-1,0)--(-1,-3);
\draw (5,0) arc (0:180:1);
\draw (3,-2) arc (180:360:1);
\node[right] at (4,-2) {\small{$\varphi(D)$}};
\node[right] at (4,0) {$P=\varphi(\psi(P))$};
\draw(0,0)--(4,0);
\draw(0,-1)--(4,-1);
\draw(0,-2)--(4,-2);
\draw(0,-3)--(4,-2);
\node[above] at (4,1) {$\mathcal{P}$};
\end{tikzpicture}
\caption{Visualization of the maps $\varphi:\mathcal{D}\rightarrow\mathcal{P}$ and $\psi:\mathcal{P}\rightarrow\mathcal{D}$}
\label{fig:maps-cartoon}
\end{figure}

To further understand the structure of generalized Condorcet domains, we explore their connection to median graphs. Originating from the study of distributive lattices and characterized by the property that any three vertices have a unique median on their shortest paths, median graphs play a central role in metric graph theory (such as in \cite{mulder1980interval, klavzar1999median, bandelt2008metric}). Previous studies in \cite{Puppe-Slinko-median-graph} have established a profound link in type $A$, showing that closed Condorcet domains in type $A$ can induce median graphs. We extend this elegant correspondence to arbitrary finite Coxeter groups. We define the associated graph $G_D$ in Section~\ref{sec: median graph} for a Condorcet domain $D$, which generalizes the definition in \cite[Section 3.2]{Puppe-Slinko-median-graph}.
\begin{theorem}
    \label{thm: Condorcet domain to median graph}
    If $D$ is a closed Condorcet domain, then $G_D$ is a median graph. 
\end{theorem}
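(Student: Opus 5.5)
We have not seen the definition of $G_D$, so we assume it generalizes \cite[Section 3.2]{Puppe-Slinko-median-graph}. The vertex set is $D$. Two vertices $u,v$ are adjacent when no third $w\in D$ lies between them, meaning $I(u)\cap I(v)\subseteq I(w)\subseteq I(u)\cup I(v)$. Equivalently, we use the metric $d(u,v)=|I(u)\setminus I(v)|$ and join $u,v$ when there is no $w$ with $d(u,w)+d(w,v)=d(u,v)$.

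The plan is to realize $G_D$ as an isometric median subgraph of a hypercube. Consider the cube $\{0,1\}^{\Phi^+}$, where each $w$ is encoded by which of $\pm\alpha$ lies in $I(w)$. Under this encoding $d$ is the Hamming distance. We use the standard characterization of median graphs: a graph is median if and only if it is isomorphic to a subset $S$ of a hypercube that is closed under coordinatewise majority and whose induced "betweenness" graph is connected and isometric. Equivalently, a median subset of the cube with edges taken as "no point strictly between" is a median graph.

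\textbf{Step 1: closure under majority.} Take $u,v,w\in D$ and the profile $f=\mathbf 1_u+\mathbf 1_v+\mathbf 1_w$. Since $|f|=3$ is odd, $f$ has no tie. By Theorem~\ref{thm: fundamental} there is $m\in W$ with $I(m)=I(f)$, and $I(f)$ is exactly the coordinatewise majority of $I(u),I(v),I(w)$. Since $D$ is closed, $m\in\nu(D)=D$. Hence the encoded set $S$ of $D$ is a median-closed subset of the cube. The median is unique because it is determined as a set of roots. It also lies on geodesics in $d$: $I(u)\cap I(v)\subseteq I(m)\subseteq I(u)\cup I(v)$, and the same holds for the other two pairs.

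\textbf{Step 2: $G_D$ is connected and $d$ equals the graph distance.} We argue by induction on $d(u,v)$. If $u,v$ are not adjacent, pick $w$ strictly between them. Then both $d(u,w)$ and $d(w,v)$ are smaller than $d(u,v)$, and concatenating geodesics gives a path of length $d(u,v)$. A path can never be shorter than $d$, since each edge changes Hamming distance by at most its length, so $d$ is the graph metric. Edges need not have $d=1$; in fact they may join elements differing on several roots. Consequently the graph metric is actually the number of edges only after a reweighting. To handle this, I would first show that in a median-closed set every "no point between" pair differs in a single class of parallel coordinates. Suppose an edge $uv$ changed two coordinates $\alpha,\beta$ that are not always flipped together across $S$. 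Taking medians of $u$, $v$ and a point separating $\alpha$ from $\beta$ then produces a point strictly between $u$ and $v$. So after identifying coordinates that are equal on all of $S$, every edge has Hamming length $1$, and the graph distance coincides with the reduced Hamming distance.

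\textbf{Step 3: conclusion.} With $S$ a median-closed subset of a (reduced) hypercube, and with edges exactly the Hamming-distance-$1$ pairs, the graph is connected isometrically by Step 2. It is then a median graph by the known theorem (Bandelt--Hedl\'ikov\'a / Mulder) that retracts of hypercubes closed under majority with isometric embedding are median. Alternatively, one can verify the definition directly: any $x$ on geodesics among $u,v,w$ must agree with the majority in every coordinate.

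The main obstacle is Step 2's claim that adjacency in $G_D$ corresponds to a single flipped coordinate class. I would prove it using median closure applied to triples $(u,v,z)$, with $z$ chosen to split the coordinates.
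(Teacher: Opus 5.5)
Your proposal is correct, and your reading of adjacency in $G_D$ agrees with the paper's. Since each $I(w)$ contains exactly one of $\pm\alpha$, the paper's neighbor condition $I(w)\supseteq I(v_1)\cap I(v_2)$ automatically gives $I(w)\subseteq I(v_1)\cup I(v_2)$.

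Your route is genuinely different from the paper's. The paper proceeds as follows:
\begin{enumerate}
\item It fixes $v\in D$ and transports $D$ bijectively onto $J_s(I(v))$ via $\rho(u)=I(u)\cap I(v)$. This uses $D=\psi(\varphi(D))$, i.e.\ the full bijection between closed domains and Condorcet root posets.
\item It shows $J_s(I(v))$ is an order filter of the distributive lattice $J(I(v))$ and identifies $G_D$ with its Hasse diagram.
\item It invokes the Klav\v{z}ar--Mulder characterization of median graphs as Hasse diagrams of distributive join-semilattices, checking the ``pairwise meets imply a triple meet'' axiom with a three-voter profile.
\end{enumerate}
You use only closure of $D$ under three-voter majorities, so that $D$ is a majority-closed subset of the cube. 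You then prove directly the classical fact that such a set, with ``empty interval'' adjacency, is a median graph. An edge flips exactly one class of parallel coordinates, so the graph embeds isometrically in the reduced cube, and the coordinatewise majority is the unique median.

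Your argument is more elementary and self-contained: no root posets, no symmetric order ideals, no Klav\v{z}ar--Mulder. The paper's route buys the explicit identification of $G_D$ with the Hasse diagram of $J_s(I(v))$, which it reuses for Theorem~\ref{thm: eq median graph iff eq skeleton}. The two pictures match: your parallel classes of non-constant coordinates are exactly the sets $\{\beta\in\Phi^+ : \beta=\alpha \text{ or } \beta=-\alpha \text{ in } \varphi(D)\}$ for $\alpha$ in the skeleton of $\varphi(D)$.

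When you write it up, fix three presentational points.
\begin{itemize}
\item Do the reduction before the induction in Step~2. With the unreduced $d$, the claim that $d$ is the graph metric is false, as you noticed yourself.
\item Identify coordinates that are equal \textit{or complementary} on all of $S$. In the $\Phi^+$-encoding, two positive roots can satisfy $\alpha=-\beta$ in $\varphi(D)$.
\item Rely on your direct verification in Step~3 rather than the cited ``retracts'' statement, which is garbled. What you need is that an isometric subgraph of a cube with majority-closed vertex set is median, and your direct check proves exactly that.
\end{itemize}
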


Conversely, it was proved in \cite[Theorem 5]{Puppe-Slinko-median-graph} that every median graph is associated with a closed Condorcet domain in type $A$. But in general, such Condorcet domain is not unique. So it's natural to consider the conditions under which two distinct Condorcet domains induce the same median graph. We characterize such a condition using skeleton isomorphism of Condorcet root posets.
\begin{theorem}
    \label{thm: eq median graph iff eq skeleton}
    Let $D$ and $D'$ be two closed Condorcet domains. Then $G_D$ and $G_{D'}$ are isomorphic if and only if $\varphi(D)$ and $\varphi(D')$ are skeleton isomorphic.
\end{theorem}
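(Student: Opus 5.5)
We plan to prove both directions by realizing $G_D$ as a graph built canonically from the poset $\varphi(D)$, up to isomorphism of median graphs. We expect that $G_D$ has vertex set $D$, with an edge between $u,v\in D$ when no third element of $D$ lies between them. Here $w$ lies between $u$ and $v$ when $I(u)\cap I(v)\subseteq I(w)\subseteq I(u)\cup I(v)$, i.e.\ $w$ lies on a geodesic in the hypercube-like metric $d(u,v)=|I(u)\triangle I(v)|/2$.

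\textbf{Step 1: closed domains as order ideals.} Since $D$ is closed, $D=\psi(\varphi(D))$ by Corollary~\ref{cor:bijection}. So $D$ is exactly the set of $w$ whose inversion set $I(w)$ is an ``antipodal order ideal'' of $P=\varphi(D)$: a set containing exactly one of $\pm\alpha$ for each root and closed upward (or downward) in $P$. Using Theorem~\ref{thm: Condorcet domain to median graph}, $G_D$ is median, and we expect two facts from its proof:
\begin{itemize}
\item edges of $G_D$ correspond to flipping a single minimal equivalence class of $P$, i.e.\ a pair $\{C,-C\}$ of blocks of the preposet;
\item the $\Theta$-classes (Djoković--Winkler classes) of $G_D$ correspond bijectively to these antipodal block pairs.
\end{itemize}
We will check the second fact directly: two edges are $\Theta$-related iff they flip the same block pair, because the distance in $G_D$ equals the number of block pairs separating the two ideals. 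This follows from the median and convexity property, since any separating block can be flipped one at a time, starting from a minimal one.

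\textbf{Step 2: reconstructing $G_D$ from the skeleton.} A median graph is determined up to isomorphism by its set of $\Theta$-classes (halfspaces) together with the pairwise relations among halfspaces: nested, disjoint, or crossing. Equivalently, it is the graph of consistent orientations (ultrafilters) of the associated pocset, by Sageev/Roller duality. In the poset language, the halfspace pairs are the block pairs $\{C,-C\}$. The relation ``the halfspace $\{w: C\subseteq I(w)\}$ is contained in $\{w: C'\subseteq I(w)\}$'' is exactly $C'\le C$ in $P$ (or the reverse, depending on orientation). I expect the \emph{skeleton} of $\varphi(D)$ to be precisely this datum: the quotient of $P$ by equivalence, as an abstract poset with the involution $C\mapsto -C$, forgetting which roots lie in each block. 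Granting this, the two directions go as follows.
\begin{itemize}
\item[($\Leftarrow$)] A skeleton isomorphism induces a bijection between the antipodal ideals of $\varphi(D)$ and those of $\varphi(D')$. This bijection preserves symmetric differences of blocks, hence preserves adjacency, so it gives $G_D\cong G_{D'}$.
\item[($\Rightarrow$)] A graph isomorphism maps $\Theta$-classes to $\Theta$-classes and preserves halfspace inclusion. We then read off the skeleton from the pocset of $G_D$. For this we must show that the pocset of $G_D$ recovers the skeleton exactly, including incomparabilities and the involution.
\end{itemize}

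\textbf{Main obstacle.} The hard step is verifying that distinct blocks of $\varphi(D)$ give distinct, nonempty, non-complementary halfspaces of $G_D$, and that the order on halfspaces coincides with the skeleton order rather than merely containing it. This needs the fact that, for a Condorcet root preposet, every antipodal ideal is realized by some $w\in W$, so that $\psi$ hits all of them. Without that, some order relations could be forced by the geometry of $W$ rather than by $P$. I would try to deduce this realizability from the construction of $\psi$ together with Theorem~\ref{thm: fundamental}: a no-tie profile realizes any consistent biclosed choice. I would also fall back on the rank-2 conditions in Definition~\ref{def: condorcet-poset} to exclude extra relations.
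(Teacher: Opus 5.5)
Your argument is correct in outline, but it takes a genuinely different route from the paper. The paper fixes a base point $v$ and identifies $G_D$ with the Hasse diagram of $J_s(I(v))$ (Proposition~\ref{prop: associated graph is Hasse}). It turns a graph isomorphism into an order isomorphism of these join-semilattices and recovers $I(v)\setminus P_m$ from meet-irreducibles via Birkhoff and Lemma~\ref{lem: meet-irreducible}. It then needs Proposition~\ref{prop: recover from symmetric order ideal} to recover the relations between $\alpha$ and $-\beta$. Its $(\Leftarrow)$ direction is the same transport-of-symmetric-ideals argument you give.

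Your base-point-free halfspace argument treats $\pm$ uniformly, and neither direction actually needs Sageev--Roller duality. The step you flag as the main obstacle is in fact immediate. Once you have $d_{G_D}(u,w)=$ the number of block pairs of $\tilde P$ separating $I(u)$ and $I(w)$, the following hold:
\begin{itemize}
\item the halfspace $\{z : d(z,x)<d(z,y)\}$ of an edge $xy$ that flips a block $C\ni\alpha$ with $C\subseteq I(x)$ is exactly $D(\alpha)$;
\item every $\alpha\in\tilde P$ occurs, because $\emptyset\neq D(\alpha)\neq D$ for such $\alpha$;
\item complementation is $D(-\alpha)=D\setminus D(\alpha)$;
\item $D(\alpha)\subseteq D(\beta)\iff \beta\le\alpha$ is literally the definition of $\varphi(D)$.
\end{itemize}
So distinctness, nonemptiness, and ``the order equals the skeleton order'' need no rank-$2$ conditions and no appeal to Theorem~\ref{thm: fundamental}.

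Realizability enters only in the distance formula: each intermediate set in your block-by-block flip must be $I(w)$ for some $w\in D$. That comes from closedness, $D=\psi(\varphi(D))$, together with the check that flipping a $P$-minimal block of $I(w)\setminus I(u)$ preserves being a symmetric order ideal. This check is the content of Proposition~\ref{prop: J_s(I) struture}. The same formula shows directly that $u,w$ are neighbors if and only if exactly one block pair separates them.

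Two small corrections. First, the datum must be the quotient of $\tilde P$, not of $P$. The global blocks $P_m,P_M$ would give the trivial sets $D$ and $\emptyset$, which are not halfspaces, and keeping them makes the statement false. For example, in type $A_2$ the closed domains $\{e,w_0\}$ and $\{e,s_1\}$ both have $G_D$ a single edge, but only the second has $P_G\neq\emptyset$. Second, to turn an isomorphism of quotient pocsets into a root-level map $\chi$ with $\chi(-\alpha)=-\chi(\alpha)$, pick a root $\gamma$ in one block of each target pair and send the opposite block to $-\gamma$.
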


A central problem in the theory of Condorcet domains is determining their maximum possible sizes. To this end, various structural properties, such as being ample, connected, peak-pit, or of tiling type, have been used to study their relationships with maximal and maximum-size Condorcet domains. In Section~\ref{sec: families}, we generalize these definitions to arbitrary finite Coxeter groups and establish equivalent characterizations in the language of Condorcet root posets. To connect these properties with maximality, we provide a combinatorial criterion for maximal Condorcet domains.

\begin{theorem}
    \label{thm: minimal cover is root-triple}
    Let $D$ be a connected, ample and closed Condorcet domain and $P=\varphi(D)$. Then $D$ is a maximal Condorcet domain if and only if for any covering relation $\alpha\lessdot \beta$ in $P$, there exists a root-triple containing $\alpha$ and $-\beta$.
\end{theorem}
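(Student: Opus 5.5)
We have not seen the definitions of $\varphi$, $\psi$, ``connected'', or ``ample''. The plan therefore assumes the natural reading. $P=\varphi(D)$ is the transitive order on $\Phi$ with $\alpha\le\beta$ whenever every $w\in D$ with $\alpha\in I(w)$ also has $\beta\in I(w)$, so that $I(w)$ for $w\in D$ are exactly the upsets (order filters) of $P$ which contain exactly one of $\pm\gamma$ for each root $\gamma$. In that reading $\psi(P)$ is the set of $w$ whose inversion set is such an upset. Closedness (Corollary~\ref{cor:bijection}) gives $D=\psi(P)$. The proof then reduces maximality of $D$ to a local statement about $P$, and the two directions are handled separately.

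\textbf{($\Leftarrow$)} Suppose every cover $\alpha\lessdot\beta$ admits a root-triple containing $\alpha$ and $-\beta$. Let $D'\supsetneq D$ be Condorcet, and pick $u\in D'\setminus D$. Since $D=\psi(P)$, $I(u)$ is not an upset of $P$, so there are $\alpha<\beta$ with $\alpha\in I(u)$ and $-\beta\in I(u)$. Refining the chain, we may take $\alpha\lessdot\beta$ a cover. Fix a root-triple $\{\alpha,-\beta,\gamma\}$.
\begin{enumerate}
\item The goal is to find distinct $v,w\in D$ with $\alpha,\gamma\in I(v)$ and $-\beta,\gamma\in I(w)$, up to relabelling. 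This contradicts Definition~\ref{def: condorcet} for $D'$.
\item Connectedness and ampleness should be exactly what produce $v,w$. Ampleness should let us find, for the cover $\alpha\lessdot\beta$, elements of $D$ realizing the upsets just above and just below the cover. Connectedness, meaning that $D$ is connected in the weak-order/simple-reflection graph, should let us adjust these so that they agree on $\gamma$.
\end{enumerate}
The rank-$2$ conditions in the definition of a Condorcet root poset should force $\pm\gamma$ to be placed compatibly, and this is where a case split on $\gamma\in I(u)$ versus $-\gamma\in I(u)$ enters.

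\textbf{($\Rightarrow$)} Suppose instead some cover $\alpha\lessdot\beta$ admits no root-triple containing $\alpha$ and $-\beta$. Let $P'$ be $P$ with this cover relation deleted (and $-\beta\lessdot-\alpha$ deleted as well, keeping the antipodal symmetry). Since it is a cover, the remaining relations stay transitive. We then check two things.
\begin{enumerate}
\item $P'$ is still a Condorcet root poset. The only rank-$2$ conditions that could fail involve the subsystem spanned by $\alpha$ and $\beta$. The absence of a root-triple through $\alpha,-\beta$ should be exactly what keeps those conditions satisfied.
\item $\psi(P')\supsetneq\psi(P)=D$. Since $\psi(P')\supseteq\psi(P)$ automatically, we need some $w$ with $\alpha\in I(w)$, $-\beta\in I(w)$, and $I(w)$ an upset of $P'$. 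Build it from an element of $D$ adjacent to the cover, using ampleness and connectedness to find $v\in D$ in which $\alpha,\beta$ is the next pair to flip. Then apply the reflection in $\beta$, or a simple reflection if the cover corresponds to a wall, and use Theorem~\ref{thm: fundamental}-style arguments to confirm the result is an element of $W$.
\end{enumerate}
Since $\psi(P')$ is a Condorcet domain by Theorem~\ref{thm: bijection}, $D$ is not maximal.

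The main obstacle is the realization step in ($\Rightarrow$), together with the symmetric use of connectedness in ($\Leftarrow$). Both require showing that a single cover of $P$ can be ``toggled'' by an actual group element whose inversion set differs from a member of $D$ exactly by the cover. I would first try to prove a lemma: in a connected ample closed domain, every cover $\alpha\lessdot\beta$ is realized by two elements $v,vs\in D$ (or their analogue with $-\beta,-\alpha$) differing in a single root pair. Both directions would then follow from local rank-$2$ analysis.
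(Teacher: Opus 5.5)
There is a genuine gap, and it sits in the step you yourself flag as the main obstacle. In both directions you must show that certain sets of roots are inversion sets of actual elements of $W$. You leave this to a lemma attacked through reflections and connectivity in $D$. The missing idea is that this is automatic on the poset side:

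\begin{itemize}
\item Every symmetric order ideal of a Condorcet root poset is biconvex, hence an inversion set (Lemma~\ref{lem:psi well-defined} with Proposition~\ref{prop: biconvex}).
\item Every partial symmetric ideal extends to a symmetric one (Proposition~\ref{prop: J_p(P) in J_s(P)}).
\item Together these give $\varphi(\psi(P'))=P'$ for every $P'\in\mathcal{P}$ (Theorem~\ref{thm: Condorcet root poset eq condition}).
\end{itemize}

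\textbf{The direction ($\Rightarrow$).} Your step 1 is the paper's argument: delete $\alpha\lessdot\beta$ and $-\beta\lessdot-\alpha$, and only root-triples containing $\pm\{\alpha,-\beta\}$ could lose the Condorcet condition. (Your upset convention is dual to the paper's, which is harmless here.) Step 2 is then immediate. Contravariance gives $\psi(P')\supseteq\psi(P)=D$, and $\psi(P')\neq\psi(P)$ because $\varphi\circ\psi$ is the identity on $\mathcal{P}$ and $P'\neq P$. This is the paper's route through Proposition~\ref{prop: maximal and minimal}, and nothing needs to be constructed in $W$. Your reflection idea would need this biconvexity input anyway, since $I(s_\beta v)=s_\beta I(v)$ generally differs from $I(v)$ in many roots. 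Your proposed lemma also realizes covers by elements inside $D$, whereas you need an element outside $D$.

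Connected and ample enter only through Propositions~\ref{prop: ample in poset} and~\ref{prop: connected in poset}: $P_G=\emptyset$, and no two distinct roots are equal in $P$. Step 1 silently needs both facts. If some $\beta'=\beta$ in $P$, then deleting $\alpha\le\beta$ while keeping $\alpha\le\beta'$ breaks transitivity. Condition (2) of Definition~\ref{def: condorcet-poset} for $P'$ uses $P_G=\emptyset$.

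\textbf{The direction ($\Leftarrow$).} Your target is correct and gives a legitimate alternative to the paper: find distinct $v,w\in D$ with $\alpha,\gamma\in I(v)$ and $-\beta,\gamma\in I(w)$, so that $u,v,w$ violate Definition~\ref{def: condorcet} in $D'$. But the mechanism is not adjusting elements along paths in $D$. No case split is needed either, since biconvexity of $I(u)\ni\alpha,-\beta$ already forces $-\gamma\in I(u)$.

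\begin{itemize}
\item With no equal roots, the condition on $\{\alpha,-\beta,\gamma\}$ is of the first type.
\item Together with $\alpha<\beta$ and $P_G=\emptyset$, the only surviving labelings are $\alpha\le\beta\le\gamma$ and $-\gamma\le\alpha\le\beta$.
\item In both cases $\alpha\nleq-\gamma$ and $\gamma\nleq\beta$. So in your convention the filters generated by $\{\alpha,\gamma\}$ and by $\{-\beta,\gamma\}$ are partial symmetric.
\item They extend to symmetric filters, which equal $I(v)$ and $I(w)$ for some $v,w\in\psi(P)=D$. Finally $v\neq w$, since $\beta\in I(v)$ while $-\beta\in I(w)$.
\end{itemize}

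The paper instead proves that $P$ is minimal. A strictly coarser Condorcet $P'$ must drop some cover $\alpha\lessdot\beta$. The first-type condition on $\{\alpha,-\beta,\gamma\}$ in $P'$ must then read $\alpha\le-\gamma\le-\beta$ or $-\beta\le-\gamma\le\alpha$, and either creates a global element in $P$. Your version exhibits an explicit Condorcet violation; the paper's is shorter once the bijection is available.
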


Using those equivalent characterizations and the above criterion, we generalize several classical theorems concerning these properties to the broader Coxeter group setting. For instance, the following theorem generalizes a recent result for type $A$ from \cite{liEquivalenceConnectedPeakPit2026} and our proof can be viewed as a simplification of their approach.

\begin{theorem}
    \label{thm: connected eq peak-pit}
    Let $D$ be a maximal Condorcet domain. Then $D$ is connected if and only if $D$ is peak-pit.
\end{theorem}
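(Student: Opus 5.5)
We prove the two implications separately. We work throughout with the Condorcet root poset $P=\varphi(D)$ via \Cref{thm: bijection} and \Cref{cor:bijection}. Since a maximal Condorcet domain is automatically closed ($D\subseteq\nu(D)$ and $\nu(D)$ is Condorcet, so maximality forces $\nu(D)=D$), we have $D=\psi(P)$. The first step is to translate both properties into the language of $P$, using the characterizations of Section~\ref{sec: families}.

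\emph{Connectedness.} $D$ is connected in the weak-order/Bruhat graph sense exactly when the comparabilities of $P$ along each rank-$2$ subsystem are ``interval-like''. Concretely, every antichain direction can be flipped one root-pair at a time, and $P$ has no pair $\alpha,-\alpha$ both forced into a configuration that separates $\psi(P)$.

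\emph{Peak-pit.} On every rank-$2$ root subsystem (type $A_1\times A_1$, $A_2$, $B_2$, $G_2$) the restriction of $D$ avoids one of the ``middle'' patterns. In poset language, each rank-$2$ subsystem carries a comparability of the form $\alpha<\beta$ with $\{\alpha,-\beta,\cdot\}$ spanning a root-triple in that subsystem, i.e.\ an extreme root, as opposed to a middle one, is constrained.

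\emph{Peak-pit $\Rightarrow$ connected.} Suppose $D$ is maximal and peak-pit but not connected. I would take two elements $u,v\in D$ that are not joined by a path in $D$, chosen with $|I(u)\setminus I(v)|$ minimal. Within the interval between them, I would find a root $\alpha\in I(u)\setminus I(v)$ whose flip stays inside the constraints of $P$ on every rank-$2$ subsystem; peak-pit conditions are exactly what make such a local flip legal, by a rank-$2$ case check. The flipped element $u'$ then lies in $\psi(P)=D$, and it is closer to $v$, contradicting minimality. Here maximality is used only to guarantee $D=\psi(P)$, so that any element satisfying the poset constraints already belongs to $D$.

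\emph{Connected $\Rightarrow$ peak-pit.} Let $D$ be maximal and connected. Connectedness of a maximal domain should give ampleness (or we use the paper's lemma identifying connected maximal domains as ample), so \Cref{thm: minimal cover is root-triple} applies. Hence every covering relation $\alpha\lessdot\beta$ in $P$ has a root-triple containing $\alpha$ and $-\beta$. Now fix any rank-$2$ subsystem $\Phi'$. Maximality forces $P$ restricted to $\Phi'$ to be nontrivial, since otherwise one could add a constraint and enlarge $D$. Take a covering relation among roots of $\Phi'$ (after checking that covers in $P$ between roots of $\Phi'$ can be taken inside $\Phi'$, which I expect follows from transitivity and the rank-$2$ axioms). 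The root-triple containing $\alpha,-\beta$ then lies in $\Phi'$, because a positive combination of $\alpha$ and $-\beta$ determines the third root in their span. This says the constraint is of peak/pit type rather than a never-middle type, so $D$ is peak-pit on $\Phi'$.

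The main obstacle is this last localization: showing that a cover $\alpha\lessdot\beta$ witnessing the constraint on $\Phi'$ can be chosen with both roots in $\Phi'$, and that a ``never-middle'' constraint would produce a cover with no root-triple through $\alpha,-\beta$. I would handle it by a case analysis over the four rank-$2$ types, comparing which comparabilities each restriction pattern induces on $P|_{\Phi'}$.
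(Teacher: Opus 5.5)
Your proposal has a genuine gap in the direction peak-pit $\Rightarrow$ connected, and the local-flip strategy cannot be repaired. By Proposition~\ref{prop: connected in poset}, a closed $D$ is disconnected precisely when two distinct roots $\alpha_1\neq_\Phi\alpha_2$ are equal in the skeleton $\tilde P$ of $P=\varphi(D)$. Every symmetric order ideal of $P$ then contains both or neither, so no element of $\psi(P)=D$ is obtained from another by flipping $\alpha_1$ alone. In fact, for a pair $u,v$ chosen as you propose (not joined, with $|I(u)\setminus I(v)|$ minimal), no single flip from $u$ toward $v$ can land in $D$, by that very minimality. So the ``legal flip'' you hope to certify by a rank-$2$ check never exists for such a pair.

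You also use maximality only to get $D=\psi(P)$, which closedness already gives. Your argument would therefore show that every closed peak-pit domain is connected, which is false. Take $D=\{e,w_0\}$ in type $A_2$: it is closed, and $\varphi(D)$ has $12=13=23$ and $-12=-13=-23$ with the two classes incomparable. Both root-triples satisfy the first type of Condorcet condition (e.g.\ $12\ge 13\ge 23$ for $\{12,23,-13\}$), yet $D$ is not connected.

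What is missing is a genuine use of maximality, which is how the paper argues:
\begin{itemize}
\item Take $I,I'\in J_s(P)$ with $I'\setminus I=A$, the full equivalence class of $\alpha_1$, and realize them as $v,v'\in D$.
\item Translate so that $v'=w_0$, and adjoin the saturated single-crossing domain $D'$ from $v$ to $w_0$ (Proposition~\ref{prop: saturated single-crossing domain exists}). It separates the roots of $A$, so $D'\not\subseteq D$.
\item Show $D\cup D'$ is still Condorcet by checking the Condorcet condition in $\varphi(D)\cap\varphi(D')$ (Lemma~\ref{lem: union to intersection}). Triples meeting $A\sqcup -A$ in zero or one root inherit the condition from $P$, and triples lying in $A\sqcup -A$ inherit it from $\varphi(D')$.
\item For triples meeting $A\sqcup -A$ in exactly two roots, the peak-pit hypothesis is exactly what forces a first-type condition in $P$ compatible with the chain order of $\varphi(D')$.
\end{itemize}
This contradicts maximality.

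The converse direction is also not established as written. The claim that connected maximal domains are ample is neither proved by you nor available in the paper, and the localization of covers to $\Phi'$ is left open. Even granted both, a single cover $\alpha\lessdot\beta$ inside $\Phi'$ with a root-triple through $\alpha,-\beta$ does not give the first type of Condorcet condition on every root-triple of $\Phi'$, which is what peak-pit means.

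Your opening reformulations of ``connected'' and ``peak-pit'' are not the usable ones; the right tools are Proposition~\ref{prop: connected in poset} and the definition itself. With them this direction is short and needs only closedness (Proposition~\ref{prop: connected implies peak-pit}). Suppose a root-triple $\{\alpha,\beta,\gamma\}$ fails the first type. Then it satisfies the second type: $\alpha=-\gamma$ in $P$ with $\alpha\neq_\Phi-\gamma$. If this class lies in $P_m$, then $\beta\ge-\gamma\ge\alpha$ is a first-type condition, and symmetrically for $P_M$. So the class lies in $\tilde P$, giving two distinct equal roots in the skeleton and contradicting connectedness.
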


And the next theorem provides a Coxeter-theoretic generalization of \cite[Theorem 4]{danilov2012condorcet}, completely characterizing Condorcet domains of tiling type.

\begin{theorem}
    \label{thm: tiling type}
    Let $D$ be a closed Condorcet domain. Then $D$ is of tiling type if and only if $D$ is connected, of maximal width, and maximal, with $e, w_0\in D$.
\end{theorem}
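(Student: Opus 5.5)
We prove the two directions separately, translating everything through the bijection of Corollary~\ref{cor:bijection}. Write $P=\varphi(D)$; since $D$ is closed, $D=\psi(P)$. We will use the characterizations from Section~\ref{sec: families} of connectedness, maximal width and tiling type in terms of $P$, together with the maximality criterion of Theorem~\ref{thm: minimal cover is root-triple}. A Condorcet domain of tiling type should be one of the form $\{w\in W : I(w)\ \text{is compatible with}\ P\}$, where $P$ restricted to $\Phi^+$ comes from a rank-two "tiling" choice on every rank-$2$ subsystem. In type $A$ this is exactly the choice of one of the two reduced-word orientations of each hexagon, corresponding to a rhombus tiling. Concretely, I expect tiling type to mean that $D$ is the vertex set of a maximal chain-filling in the weak order from $e$ to $w_0$, i.e.\ a union of reduced-word galleries closed under the allowed braid moves only.

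For the forward direction, assume $D$ is of tiling type. First, $e,w_0\in D$ holds by definition. Connectedness follows because any two elements of $D$ lie on a common gallery from $e$ to $w_0$ inside $D$; equivalently, in $P$ every pair of comparable-in-weak-order elements is joined through covers. Maximal width is also immediate from the definition: the width should be the number of roots $\alpha$ with both $\alpha$ and $-\alpha$ appearing among inversion sets of $D$. Since $e,w_0\in D$, every root is "split", and this gives the maximum possible value, namely all of $\Phi^+$. For maximality, Theorem~\ref{thm: minimal cover is root-triple} applies once we know that $D$ is ample; I expect ampleness of a tiling-type domain to follow from its definition as a cube-like complex. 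By that theorem, it then suffices to check that each covering relation $\alpha\lessdot\beta$ of $P$ arises inside some rank-$2$ subsystem. In that subsystem the tiling choice fixes a total order on its positive roots, and consecutive roots in such an order always admit a root-triple containing $\alpha$ and $-\beta$, by a direct rank-$2$ computation in types $A_1\times A_1$, $A_2$, $B_2$, $G_2$ and $I_2(m)$.

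For the converse, assume $D$ is closed, connected, of maximal width, maximal, and contains $e$ and $w_0$. Containing $e$ and $w_0$ forces $P$ to orient each pair $\{\alpha,-\alpha\}$ consistently with both extremes, so $P$ restricts to an order on $\Phi^+$. Maximal width then says that no root is forced to a fixed sign, so $P$ constrains only relative orders. Connectedness plus the peak-pit equivalence of Theorem~\ref{thm: connected eq peak-pit} shows that on each rank-$2$ subsystem $P$ induces either a total order or a "never-middle" type condition. Maximality via Theorem~\ref{thm: minimal cover is root-triple} rules out the degenerate cases and forces $P$ restricted to each rank-$2$ subsystem to be one of the two linear orders coming from its two reduced words. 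Finally, we glue these rank-$2$ choices: $\psi(P)$ is then exactly the tiling-type domain determined by the collection of local choices, and connectedness guarantees that $\psi(P)$ is realized by galleries.

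I expect the main obstacle to be this last gluing step. We must show that local rank-$2$ linear orders, subject to the Condorcet root poset axioms, assemble into a genuine tiling, i.e.\ that every element of $\psi(P)$ lies on an $e$--$w_0$ gallery inside $\psi(P)$. I would attempt it by induction on length: from $w\in\psi(P)$ with $w\neq w_0$, find a simple reflection $s$ with $ws>w$ and $ws\in\psi(P)$. To locate $s$, take a minimal root of $P$ among those not yet in $I(w)$, and use the rank-$2$ linearity to show that it is a descent-type root.
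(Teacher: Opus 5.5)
Both directions have a genuine gap, and it is the same missing ingredient: you never control the covering relations of $P=\varphi(D)$, in particular whether a cover $\alpha\lessdot\beta$ among positive roots can join two roots spanning a subsystem of type $A_1\times A_1$.

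In the forward direction you list $A_1\times A_1$ among the rank-$2$ cases where consecutive roots admit a root-triple containing $\alpha$ and $-\beta$. But $A_1\times A_1$ contains no root-triple at all. Such covers really do occur for non-tiling domains: for the single gallery along $(s_1,s_3,s_2,s_1,s_3,s_2)$ in type $A_3$, $P^+$ is a chain beginning $e_1-e_2\lessdot e_3-e_4$. So they must be excluded using the tiling hypothesis, and that requires knowing $\varphi(D)$. The paper does this in three steps:
\begin{itemize}
\item it shows $\varphi(D)=P(\mathbf{i})\sqcup -P(\mathbf{i})$ (Proposition~\ref{prop: heap poset and tiling type});
\item it notes that a cover in the heap joins non-commuting letters $s_{i_s},s_{i_t}$;
\item it computes $r_sr_tr_s^{-1}r_t^{-1}=v'_sv''(s_{i_t}s_{i_s})^2{v''}^{-1}{v'_s}^{-1}\neq e$, so $\alpha_s\not\perp\alpha_t$.
\end{itemize}
Only then does the cycle condition yield the root-triple $\{\alpha_s,\gamma_l,-\alpha_t\}$, or a contradiction if $\alpha_t=\gamma_l$. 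Your phrase ``the tiling choice fixes a total order'' presupposes this structure rather than proving it.

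Two smaller points in this direction. First, two elements of $D$ need not lie on a common $e$--$w_0$ gallery ($s_1$ and $s_3$ in Example~\ref{ex: tiling type} have the same length), so connect each one to $e$ instead. Second, maximal width in the paper means $v,vw_0\in D$ for some $v$; like ampleness, it is immediate from $e,w_0\in D$.

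In the converse, the step you call the main obstacle is neither hard nor sufficient. That every element of $\psi(P)$ lies on an $e$--$w_0$ gallery in $\psi(P)$ follows at once from $J_s(P)\cong J(P^+)$, as in Proposition~\ref{prop: connected in poset}. Moreover, the single gallery above shows the gluing step fails:
\begin{itemize}
\item it is a closed, connected Condorcet domain containing $e,w_0$ (Proposition~\ref{prop: saturated single-crossing domain exists});
\item it induces the same chain on the positive roots of every $A_2$ subsystem as the tiling domain of Example~\ref{ex: tiling type};
\item yet it is a proper subset of that domain.
\end{itemize}
So local rank-$2$ data do not determine $\psi(P)$, and your argument cannot succeed unless maximality does real work.

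You use maximality in the wrong place. The rank-$2$ chains already follow without it, from connectedness (no equal roots), ampleness, the absence of relations between $\Phi^+$ and $\Phi^-$, and Theorem~\ref{thm: cycle condition}. What maximality must supply, via Theorem~\ref{thm: minimal cover is root-triple}, is that no cover of $P^+$ joins roots spanning $A_1\times A_1$, i.e.\ that $P^+$ is generated by its rank-$2$ chains.

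The missing construction is then the paper's:
\begin{itemize}
\item label $\alpha_t$ by the simple reflection $s_{i_t}$ with $v_t=v'_ts_{i_t}$, coming from the principal ideal of $\alpha_t$ in $P^+$;
\item show every linear extension of the relabeled poset is a reduced word for $w_0$ (incomparable roots span $A_1\times A_1$, hence are orthogonal, so their letters commute);
\item show the relabeled $P^+$ equals the heap $H(\mathbf{i})$, where ``cover implies non-commuting letters'' is exactly the maximality input.
\end{itemize}
This is what makes $D$ the prefix set of an entire commutation class rather than merely a union of galleries.
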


When defining the majority outcome $\nu(D)$, we typically require the voting profile $f$ to have no tie in order to guarantee a unique result. However, there are instances where $f$ contains ties, yet the set of all possible outcomes is still contained in $\nu(D)$, meaning $R(f) \subseteq \nu(D)$. In such cases, we say that the voting profile $f$ has only \emph{simple ties}. This relaxation allows Condorcet domains to accommodate voting scenarios where ties are unavoidable.

The following theorem provides a broad sufficient condition for a voting profile to exhibit only simple ties, and generalizes a recent result from \cite[Corollary 1.4]{reiner2025majority}, which originally established this property strictly for domains of tiling type in type $A$.

\begin{theorem}
    \label{thm: simple ties}
    Let $f$ be a voting profile on a connected, closed Condorcet domain $D$. If $\operatorname{supp}(f)=D$, then $f$ has only simple ties.
\end{theorem}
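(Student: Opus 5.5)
The plan is to reduce the statement to a perturbation argument. Fix $w\in R(f)$, so $I(w)\subseteq I(f)$. We must show $w\in\nu(D)=D$. Let $T:=\{\alpha\in\Phi\mid f(\alpha)=|f|/2\}$ be the set of tied roots, which is closed under negation. Outside $T$, $I(w)$ is forced to agree with $I(f)$. On $T$, $w$ picks one root from each pair $\pm\alpha$; write $T_w:=I(w)\cap T$.

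The first step is the reduction: it suffices to find some $u\in D$ with $T_w\subseteq I(u)$. Given such $u$, put $g:=2Nf+\mathbf 1_u$ for $N$ large.
\begin{itemize}
\item For $\alpha\notin T$, the sign of $g(\alpha)-|g|/2$ equals that of $f(\alpha)-|f|/2$.
\item For $\alpha\in T$ we get $g(\alpha)-|g|/2=\pm\tfrac12$, with the sign $+$ exactly when $\alpha\in I(u)$.
\end{itemize}
Hence $g$ has no tie and $I(g)=(I(f)\setminus T)\cup(T\cap I(u))=I(w)$. By Theorem~\ref{thm: fundamental}, $R(g)=\{w\}$, so $w\in\nu(D)=D$ because $D$ is closed. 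Note that $g$ is a legitimate profile on $D$, and we never used $\operatorname{supp}(f)=D$ here; that hypothesis enters only in the next step.

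The second step, a pairwise statement, is the heart of the proof: for any $\alpha,\beta\in T_w$ (possibly equal) there is $u\in D$ with $\alpha,\beta\in I(u)$. Suppose not.
\begin{itemize}
\item Every $u\in D$ contains at most one of $\alpha,\beta$, so $f(\alpha)+f(\beta)\le|f|$.
\item Since both are tied, equality holds. Because $\operatorname{supp}(f)=D$, every $u\in D$ then contains exactly one of $\alpha,\beta$, i.e.\ $\alpha\in I(u)\iff -\beta\in I(u)$ for all $u\in D$.
\item Both $\alpha$ and $-\alpha$ occur among the $I(u)$, $u\in D$, since $f(\alpha)=f(-\alpha)=|f|/2>0$.
\end{itemize}
Now I would invoke connectedness: along an edge of the connectedness graph of $D$ exactly one pair $\pm\gamma$ flips. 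On a path from some $u$ with $\alpha\in I(u)$ to some $v$ with $-\alpha\in I(v)$, the pair $\pm\alpha$ must flip at some edge. At that edge $\pm\beta$ flips as well, forcing $\beta=-\alpha$. This contradicts $\alpha,\beta\in I(w)$. The case $\alpha=\beta$ is just $f(\alpha)>0$.

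The third step upgrades pairwise to simultaneous realizability by a Helly-type argument for median-closed sets. Since $D$ is closed, Theorem~\ref{thm: Condorcet domain to median graph} and its proof show that $D$ is closed under the majority (median) operation: for $u_1,u_2,u_3\in D$, the element with inversion set equal to the majority of $I(u_1),I(u_2),I(u_3)$ lies in $D$. This also follows directly from closedness, using the no-tie profile $\mathbf 1_{u_1}+\mathbf 1_{u_2}+\mathbf 1_{u_3}$. I would then induct on $k$: any $k$ roots of $T_w$ lie in a common $I(u)$ with $u\in D$. Given $\alpha_1,\dots,\alpha_{k+1}$, take $u_i\in D$ containing all the roots except $\alpha_i$, for $i=1,2,3$. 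Their median contains every $\alpha_j$, since each $\alpha_j$ lies in at least two of the three. Induction yields $u\in D$ with $T_w\subseteq I(u)$, and the first step finishes the proof.

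I expect the main obstacle to be the second step: making the connectedness hypothesis, as it is formalized later in the paper (presumably via $\varphi(D)$ or adjacency in $D$), yield exactly the ``one root pair flips per edge'' property. If connectedness is phrased only in poset language, I would first translate it into this path statement through Theorem~\ref{thm: bijection}.
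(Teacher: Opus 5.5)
Your proof is correct, and it takes a genuinely different route from the paper's.

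\textbf{The paper's route.} The paper works entirely inside the Condorcet root poset $P=\varphi(D)$. It first notes that the tied set (your $T$, the paper's $A$) avoids $P_G$. Connectedness, through Proposition~\ref{prop: connected in poset}, rules out two distinct roots of $A$ being equal in $P$. Since $\operatorname{supp}(f)=D$ makes $f$ strictly decrease along strict relations of $P$, the set $A$ is an antichain. Because $I(f)$ and $I(f)\setminus A$ are both order ideals, $A$ consists of maximal elements of $I(f)$. Hence every tie-break $I(v)$ is a symmetric order ideal, and $v\in\psi(P)=D$ by Corollary~\ref{cor:bijection}.

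\textbf{Your route.} You never pass to $P$.
\begin{itemize}
\item Your perturbation $g=2Nf+\mathbf{1}_u$ reduces the problem to finding a single $u\in D$ that agrees with $w$ on the tied roots.
\item Your pairwise step is precisely the statement $\alpha\not\ge_P-\beta$ for tied $\alpha,\beta\in I(w)$. The paper obtains this from the antichain property together with the absence of equal roots in $\tilde{P}$.
\item Your Helly step via three-voter medians re-derives the piece of $\psi(\varphi(D))=\overline{D}$ that the paper proves by the $(k+1)$-voter induction in Proposition~\ref{prop: phi-psi is closure img psi is closed}.
\end{itemize}

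\textbf{What each buys.} The paper's argument is shorter once the bijection and the poset characterization of connectedness are available. It also gives a clean structural reason for the result: the tied roots form an antichain of maximal elements of the order ideal $I(f)$. Your argument is self-contained, needing only Theorem~\ref{thm: fundamental} and the raw definitions. It also shows exactly where each hypothesis enters: $\operatorname{supp}(f)=D$ and connectedness only in the pairwise step, and closedness in the median step and at the end.

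\textbf{Your worry about connectedness.} The concern in your last paragraph does not arise. The paper defines connectedness exactly by paths with $|I(w_{i+1})\setminus I(w_i)|=1$, that is, one root pair flips per step, which is precisely what your second step uses.
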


This article is organized as follows. In \Cref{sec: poset}, we introduce the Condorcet root poset and prove Theorem~\ref{thm: fundamental} in \Cref{subsec: basic setting}, and then define the maps $\varphi$ and $\psi$ in \Cref{subsec: Relations with Condorcet domains}. In \Cref{subsec: Antipodal root posets}, we prove some properties of antipodal root posets, which is a generalization of Condorcet root poset, and with these basis, we prove Theorem~\ref{thm: bijection} and Corollary~\ref{cor:bijection} in \Cref{subsec: Closed Condorcet domains}. In \Cref{sec: median graph}, we introduced the concepts of median graphs, associated graphs and skeleton isomorphisms, and prove Theorems~\ref{thm: Condorcet domain to median graph} and ~\ref{thm: eq median graph iff eq skeleton}. In \Cref{sec: families}, we introduce the concept of ample and connected and prove Theorem~\ref{thm: minimal cover is root-triple} in \Cref{subsec: Maximal ample and connected}. In \Cref{subsec: Peak-pit and saturated single-crossing}, we introduce peak-pit and saturated single-crossing domains and use it to prove Theorem~\ref{thm: connected eq peak-pit}. In \Cref{subsec: Symmetry maximal width and tiling type}, we introduce concepts of symmetry, maximal width and tiling type, and prove Theorem~\ref{thm: tiling type}. In \Cref{subsec: Restriction to root subsystems}, we simply discuss the restriction of a Condorcet domain into a root subsystems. In \Cref{sec: voting}, we prove Theorem~\ref{thm: simple ties}.

\section{Condorcet root posets}\label{sec: poset}
\subsection{Basic settings}\label{subsec: basic setting}

The following is our key definition, which we use extensively.
\begin{defin}\label{def: condorcet-poset}
A preposet $P$ on $\Phi$ is an \emph{antipodal root poset} if:
\begin{itemize}
\item[(1)] For all $\alpha,\beta\in\Phi,$ $\alpha\leq\beta$ implies $-\beta\leq-\alpha$.
\item[(2)] For all $\gamma\in\Phi,$ $\gamma\neq-\gamma$. If $\gamma< -\gamma$, then $\gamma$ is a minimum element in $P$.
\end{itemize}
An antipodal root poset is a \emph{Condorcet root poset} if it satisfies the additional condition
\begin{itemize}
\item[(3)] For every root-triple, there exists a label $\{\alpha,\beta,\gamma\}$ of its elements such that \[\alpha\ge-\gamma\ge\beta\quad\text{or}\quad\alpha=-\gamma.\]
\end{itemize}
We use $\mathcal{P}$ to denote the set of Condorcet root posets.
\end{defin}
Condition (3) of Definition~\ref{def: condorcet-poset} is the main technical requirement. We call these two cases the \emph{first} and \emph{second type of Condorcet conditions} on the root-triple $\{\alpha, \beta, \gamma\}$, respectively.

Note that in a preposet, there may be multiple minimum elements, which equal to each other. Hence, we use the terminology ``a'' minimum element in Condition (2) to mean that $\gamma$ is one of such minimum element. Write $P_m$ and $P_M$ for the set of minimum and maximum elements in $P$, respectively. Define the \emph{global elements} to be $P_G:=P_m\cup P_M$. Its complement $\tilde{P}:=P\setminus P_G$ is called the \emph{skeleton} of $P$. To avoid ambiguity, we use the notation $\alpha =_\Phi \beta$ to mean that  $\alpha$ and $\beta$ are the same roots, and $\alpha =\beta $ to mean that they are equal in $P$, when $P$ is naturally understood.

Throughout this work, unless specified otherwise, we drop the antisymmetry axiom for \emph{posets}, allowing different elements to be the same in a binary relation. In other words, we will be working with \emph{preposets} most of the times. We will stick with the terminology ``posets" in order to appeal to the broader audience.

There is an equivalent formulation of the Condorcet condition for Condorcet root posets, which proves to be particularly useful in non-simply-laced cases. We say that a sequence $(\gamma_1,\ldots,\gamma_{2m})$ is a \emph{dihedral root cycle} if $\gamma_{i+2}=-s_{\gamma_{i+1}}(\gamma_i)$ for all $i$ (where indices are taken modulo $2m$), and the subspace $U=\mathrm{span}\{\gamma_1,\ldots,\gamma_{2m}\}$ is two-dimensional with $\Phi\cap U=\{\gamma_1,\ldots,\gamma_{2m}\}$. The set of all dihedral root cycles for a fixed rank-$2$ root subsystem exhibits dihedral symmetry. As such, the indices of a dihedral root cycle of size $2m$ are always considered modulo $2m$. Note that from definition of dihedral root cycle, we have $\gamma_{i+m}=-\gamma_i$ for all $i$. See \Cref{fig:condorcet-root-cycle} for an example. 

\begin{figure}[h!]
\centering
\begin{tikzpicture}[scale=1.0]
\def\r{1};
\draw[->](0,0)--(36:\r);
\draw[->](0,0)--(72:\r);
\draw[->](0,0)--(108:\r);
\draw[->](0,0)--(144:\r);
\draw[->](0,0)--(180:\r);
\draw[->](0,0)--(216:\r);
\draw[->](0,0)--(252:\r);
\draw[->](0,0)--(288:\r);
\draw[->](0,0)--(324:\r);
\draw[->](0,0)--(360:\r);
\node at (36:\r+0.3) {$\gamma_1$};
\node at (72:\r+0.3) {$\gamma_2$};
\node at (108:\r+0.3) {$\gamma_3$};
\node at (144:\r+0.3) {$\gamma_4$};
\node at (180:\r+0.3) {$\gamma_5$};
\node at (216:\r+0.3) {$\gamma_6$};
\node at (252:\r+0.3) {$\gamma_7$};
\node at (288:\r+0.3) {$\gamma_8$};
\node at (324:\r+0.3) {$\gamma_9$};
\node at (0:\r+0.3) {$\gamma_{10}$};
\node[rotate=54-90] at (54:\r) {$\geq$};
\node[rotate=18-90] at (18:\r) {$\geq$};
\node[rotate=-18-90] at (-18:\r) {$\geq$};
\node[rotate=-54-90] at (-54:\r) {$\geq$};
\node[rotate=126+90] at (126:\r) {$\geq$};
\node[rotate=162+90] at (162:\r) {$\geq$};
\node[rotate=198+90] at (198:\r) {$\geq$};
\node[rotate=234+90] at (234:\r) {$\geq$};
\end{tikzpicture}
\quad
\begin{tikzpicture}[scale=1.0]
\def\r{1};
\draw[->](0,0)--(36:\r);
\draw[->](0,0)--(72:\r);
\draw[->](0,0)--(108:\r);
\draw[->](0,0)--(144:\r);
\draw[->](0,0)--(180:\r);
\draw[->](0,0)--(216:\r);
\draw[->](0,0)--(252:\r);
\draw[->](0,0)--(288:\r);
\draw[->](0,0)--(324:\r);
\draw[->](0,0)--(360:\r);
\node at (36:\r+0.3) {$\gamma_1$};
\node at (72:\r+0.3) {$\gamma_2$};
\node at (108:\r+0.3) {$\gamma_3$};
\node at (144:\r+0.3) {$\gamma_4$};
\node at (180:\r+0.3) {$\gamma_5$};
\node at (216:\r+0.3) {$\gamma_6$};
\node at (252:\r+0.3) {$\gamma_7$};
\node at (288:\r+0.3) {$\gamma_8$};
\node at (324:\r+0.3) {$\gamma_9$};
\node at (0:\r+0.3) {$\gamma_{10}$};
\node[rotate=54-90] at (54:\r) {$=$};
\node[rotate=90-90] at (90:\r) {$=$};
\node[rotate=162-90] at (162:\r) {$=$};
\node[rotate=234-90] at (234:\r) {$=$};
\node[rotate=270-90] at (270:\r) {$=$};
\node[rotate=342-90] at (342:\r) {$=$};
\end{tikzpicture}
\caption{Examples of the first and second type of Condorcet conditions for $m=5$ (c.f. Theorem~\ref{thm: cycle condition})}
\label{fig:condorcet-root-cycle}
\end{figure}

\begin{theorem}\label{thm: cycle condition}
    Let $P$ be an antipodal root poset on $\Phi$. The following conditions are equivalent:
    \begin{itemize}
        \item[(1)] The Condorcet condition holds for $P$, that is, $P$ is a Condorcet root poset.
        \item[(2)] For every dihedral root cycle $(\gamma_1,\ldots,\gamma_{2m})$, either there exists an index $i$ such that $\gamma_i\geq \gamma_{i+1}\geq\cdots\geq\gamma_{i+m-1}$ in $P$, or there exist indices $i$ and $j$ with $i<j<i+m$ such that $\gamma_i =\gamma_{i+1}=\cdots=\gamma_{j-1}$ and $\gamma_j=\gamma_{j+1}=\cdots=\gamma_{i+m-1}$ in $P$. 
    \end{itemize}
\end{theorem}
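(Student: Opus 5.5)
The plan is to translate root-triples inside a rank $2$ subsystem into positions on the dihedral root cycle, and then compare the two conditions on that cycle. First, observe that any root-triple $\{\alpha,\beta,\gamma\}$ spans a $2$-dimensional $U$, so it lies in $\Phi\cap U=\{\gamma_1,\ldots,\gamma_{2m}\}$ for a dihedral root cycle. Writing the triple as $\{\gamma_a,\gamma_b,\gamma_c\}$ with $a<b<c<a+2m$, a positive relation $a\alpha+b\beta+c\gamma=0$ exists exactly when the three cyclic gaps $b-a$, $c-b$, $a+2m-c$ are all $<m$. In other words, no closed half-plane contains the triple: a gap equal to $m$ would force two opposite roots, and then the third coefficient is $0$. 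Consequently, for each member of the triple, its negative lies strictly inside the arc between the other two members.

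Next, record what antipodality (condition (1) of Definition~\ref{def: condorcet-poset}) does to a chain. From $\gamma_k\ge\gamma_{k+1}$ we get $-\gamma_{k+1}\ge-\gamma_k$, i.e.\ $\gamma_{k+m+1}\ge\gamma_{k+m}$. Hence a chain $\gamma_i\ge\cdots\ge\gamma_{i+m-1}$ on the half-circle $H$ forces the complementary half-circle $-H$ to be a chain read in the opposite direction. Similarly, the two equality blocks in the second alternative of (2) produce four equality blocks $B_1,B_2,-B_1,-B_2$ covering the circle.

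For (2)$\Rightarrow$(1), take a root-triple in the cycle; one of $H,-H$ contains two of its members, say $x$ before $y$ in the chain order of that half. By the first paragraph, the negative $-z$ of the third member lies on the arc strictly between $x$ and $y$, and that arc is contained in the same half. So $x\ge -z\ge y$ in $P$, which is the first type of the Condorcet condition. In the equality-block case I would argue by pigeonhole that some member $x$ and some $-z$ with $z\neq x$ lie in a common block. The idea is that the six points $\pm$(triple) are spread over only four blocks, and the two members flanking $-z$ cannot be separated from it by two block boundaries. This gives $x=-z$, the second type. Edge cases (small $m$, or blocks of size one, as for $m=2$, where there are no root-triples and (2) holds trivially) will be checked directly.

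For (1)$\Rightarrow$(2), which I expect to be the main obstacle, I would apply condition (3) to carefully chosen triples of the cycle and assemble the resulting comparisons via transitivity. Concretely, I would use triples $\{\gamma_k,\gamma_{k+1},\gamma_{k+m+j}\}$ and more general ``spread'' triples to orient each adjacent pair $(\gamma_k,\gamma_{k+1})$ as $\ge$, $\le$ or $=$. The goal is to show that the orientation pattern around the $2m$-cycle switches direction at exactly two antipodal positions, which yields the chain $\gamma_i\ge\cdots\ge\gamma_{i+m-1}$. The alternative outcome, in which the type-two equalities $\alpha=-\gamma$ occur, should force the two-block equality pattern. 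I would organize this as an induction on the arc length, using condition (2) of Definition~\ref{def: condorcet-poset} to handle the case where some root lies below its negative and is therefore a global minimum, since that case degenerates comparisons. The hardest part will be ruling out mixed patterns in which the conditions from different triples are satisfied with different labelings.
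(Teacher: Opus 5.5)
\textbf{Direction (2)$\Rightarrow$(1).} Your argument is correct and is essentially the paper's. Two members $x,y$ of the triple lie in one half-cycle, and $-z$ sits strictly between them. The chain then gives $x\ge -z\ge y$. In the block case, that half consists of only two blocks, so two consecutive items among $x,-z,y$ share a block.

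\textbf{Direction (1)$\Rightarrow$(2).} This is the real content of the theorem, and your proposal states it only as a goal. The one concrete device you name cannot work. The set $\{\gamma_k,\gamma_{k+1},\gamma_{k+m+j}\}$ is never a root-triple: its cyclic gaps are $1$, $m+j-1$ and $m-j$, and the conditions $m+j-1<m$ and $m-j<m$ are incompatible. More generally, by your own gap criterion, two adjacent roots of a dihedral cycle never lie in a common root-triple, because there is no position strictly between them for the negative of the third member. So condition (3) never orients an adjacent pair directly. A comparison between $\gamma_k$ and $\gamma_{k+1}$ can only come from a triple such as $\{\gamma_k,\gamma_{k+j},-\gamma_{k+1}\}$ with $2\le j\le m-1$. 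That triple can equally well be satisfied by $\gamma_{k+1}\ge\gamma_{k+j}\ge\gamma_{k+m}$, which says nothing about the pair. This is exactly the ``mixed pattern'' problem you flag and leave open. Likewise, the claim that type-two equalities ``should force'' the two-block pattern is asserted, not argued.

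\textbf{The missing idea.} The paper works with partial root cycles: choose $k$ of the $m$ roots in a half-cycle together with their negatives, and induct on $k$. For $k=3$, the partial cycle $(\gamma'_1,\dots,\gamma'_6)$ is exactly $\pm$ the root-triple $\{\gamma'_1,\gamma'_3,\gamma'_5\}$. Its two Condorcet types are literally the two cycle types. The inductive step has three parts:
\begin{itemize}
\item[(a)] Dispose of adjacent equal roots: delete one of the two equal neighbours and its negative, apply the hypothesis, and reinsert. This is also the only route by which the second type propagates.
\item[(b)] With no adjacent equalities, show that every size-$2l$ subcycle satisfies the first type. For $l\ge 4$ this follows by counting equal neighbours; $l=3$ needs a short separate argument.
\item[(c)] Take the chain on the subcycle omitting $\pm\gamma'_{l+1}$. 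Apply the Condorcet condition to the single six-element partial cycle on $\pm\{\gamma'_l,\gamma'_{l+1},\gamma'_{l+2}\}$. The strict relation already known among chain elements rules out every labeling except those that extend the chain through $\gamma'_{l+1}$. Axiom (2) (a root below its negative is a global minimum) eliminates the degenerate alternatives. A similar inner induction handles a chain that straddles the omitted position.
\end{itemize}
This is how the ``different labelings'' issue is resolved: at each step the chain already built forces which labeling of the one new triple can hold. Some mechanism of this kind is what your proposal lacks.
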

A visualization is provided in \Cref{fig:condorcet-root-cycle}. We refer to the two cases in condition (2) as the \emph{first} and \emph{second type of cycle conditions} on $(\gamma_1,\ldots,\gamma_{2m})$, respectively.

Before proceeding to the proof of Theorem~\ref{thm: cycle condition}, we require some additional definitions. Let $(\gamma_1, \ldots, \gamma_{2m})$ be a dihedral root cycle. Choose an integer $1\le k\le m$ and a sequence of $k$ indices $i_1< i_2< \cdots < i_k< i_1+m$. We define the sequence 
\[
(\gamma'_1, \ldots, \gamma'_{2k}) := (\gamma_{i_1}, \ldots , \gamma_{i_k}, \gamma_{i_1+m}, \ldots, \gamma_{i_k+m})
\]
to be a \emph{partial root cycle} of size $2k$. As with full root cycles, the set of all partial root cycles of size $2k$ exhibits dihedral symmetry, so their indices are considered modulo $2k$. Note that $\gamma'_{i+k}=-\gamma'_i$ for all $i$. 

We define analogous conditions for any partial root cycle $(\gamma'_1, \ldots, \gamma'_{2k})$: either there exists an index $i$ such that $\gamma'_i\geq \gamma'_{i+1}\geq\cdots\geq\gamma'_{i+k-1}$, or there exist indices $i$ and $j$ with $i<j<i+k$ such that $\gamma'_i =\gamma'_{i+1}=\cdots=\gamma'_{j-1}$ and $\gamma'_j=\gamma'_{j+1}=\cdots=\gamma'_{i+k-1}$. We similarly refer to these as the \emph{first} and \emph{second type of cycle conditions on $(\gamma'_1, \ldots, \gamma'_{2k})$}. It is straightforward to verify that if a dihedral root cycle satisfies the cycle conditions, then every partial root cycle derived from it satisfies them as well.
\begin{proof}[Proof of Theorem~\ref{thm: cycle condition}]
    We first prove the implication $(1)\Rightarrow (2)$. Assuming $P$ is a Condorcet root poset, we prove by induction on $k$ that the cycle conditions hold on every partial root cycle $(\gamma'_1, \ldots, \gamma'_{2k})$ for all $1\le k\le m$.
        
    For $k=1$ and $k=2$, the conditions are trivial and there is nothing to prove. For $k=3$, note that $\{\gamma'_1, \gamma'_3, \gamma'_5\}$ forms a root-triple. The two types of Condorcet conditions on the root-triple $\{\gamma'_1, \gamma'_3, \gamma'_5\}$ precisely correspond to the two types of cycle conditions on the partial root cycle $(\gamma'_1, \ldots, \gamma'_{6})$. 
    
    For $k\ge 4$, suppose there exists an index $i$ such that $\gamma'_i=\gamma'_{i+1}$. Without loss of generality, we may assume $i=k-1$. Then the cycle condition on the smaller partial root cycle 
    $(\gamma'_1,\ldots,\gamma'_{k-1}, \gamma'_{k+1},\ldots,\gamma'_{2k-1})$
    immediately implies the corresponding cycle condition on $(\gamma'_1,\gamma'_2,\ldots,\gamma'_{2k})$ in both types. Therefore, by induction, we may assume throughout the remainder of the proof that there are no adjacent equal roots.

    Suppose the induction hypothesis holds for $k=l$, where $3\le l<m$. We will prove it holds for $k=l+1$. Let $C=(\gamma'_1, \gamma'_2,\ldots, \gamma'_{2l+2})$ be any partial root cycle of size $2l+2$. We claim that the first type of cycle condition holds for every partial root cycle of size $2l$ contained in $C$. If $l\ge 4$, this is immediately true since there are no adjacent equal roots. If $l=3$, by dihedral symmetry, we may assume without loss of generality that the second type of cycle condition holds on $(\gamma'_1, \gamma'_2, \gamma'_3, \gamma'_5, \gamma'_6,\gamma'_7)$, which implies $\gamma'_3=\gamma'_5$. 
    
    Now consider the partial root cycle $(\gamma'_3, \gamma'_4, \gamma'_5, \gamma'_7, \gamma'_8, \gamma'_1)$. Suppose the first type of cycle condition holds on it. Since $\gamma'_3=\gamma'_5$, the strict chains $\gamma'_3<\gamma'_4<\gamma'_5$ and $\gamma'_3>\gamma'_4>\gamma'_5$ cannot hold. If $\gamma'_1<\gamma'_3<\gamma'_4$ holds, we obtain $\gamma'_1<\gamma'_3=\gamma'_5=-\gamma'_1$, this forces $-\gamma'_1=\gamma'_3\in P_M$, contradicting $\gamma'_3<\gamma'_4$. By applying similar reasoning to the other configurations of the first type of cycle condition, we deduce that none can hold. Thus, only the second type of cycle condition can hold, which yields $\gamma'_1=\gamma'_3$. We then have $\gamma'_1=\gamma'_3=\gamma'_5=-\gamma'_1$, which contradicts the fact that $P$ is an antipodal root poset. Therefore, the claim holds.

    Consider the partial root cycle $(\gamma'_1, \ldots, \gamma'_{l}, \gamma'_{l+2}, \ldots, \gamma'_{2l+1})$ of size $2l$ in $C$. By the first type of cycle condition and dihedral symmetry, we may assume without loss of generality that there are two cases to consider:
    
    \textbf{Case (i) }$\gamma'_1>\gamma'_2>\cdots>\gamma'_l$. Consider the partial root cycle $C'=(\gamma'_{l},\gamma'_{l+1},\gamma'_{l+2}, \gamma'_{2l+1},\gamma'_{2l+2},\gamma'_1)$. The second type of cycle condition cannot hold on $C'$ because there are no adjacent equal roots and $\gamma'_1>\gamma'_{l}$. For the first type of cycle condition to hold, the relation $\gamma'_1>\gamma'_{l}$ implies that either $\gamma'_{l}>\gamma'_{l+1}$ or $\gamma'_{2l+2}>\gamma'_1$ must be true. Either of these options extends the chain, thereby satisfying the first type of cycle condition on the entire partial root cycle of size $2l+2$.
    
    \textbf{Case (ii) }$\gamma'_i>\gamma'_{i+1}>\cdots>\gamma'_l>\gamma'_{l+2}>\gamma'_{l+3}>\cdots>\gamma'_{l+i}$ for some $2 \le i\le l$. We proceed by induction on $i$. Consider the partial root cycle $C'=(\gamma'_{l},\gamma'_{l+1},\gamma'_{l+2}, \gamma'_{2l+1},\gamma'_{2l+2},\gamma'_1)$. Note that if $\gamma'_l>\gamma'_{l+1}$, the chain extends to $\gamma'_i>\gamma'_{i+1}>\cdots>\gamma'_{l+1}>\gamma'_{l+3}>\cdots>\gamma'_{l+i}$. This reduces to Case (i) when $i=2$, or allows us to apply the inductive hypothesis (on $i$) when $i>2$, completing the proof for this scenario. 
    
    Now assume $\gamma'_l\ngtr \gamma'_{l+1}$. If the second type of cycle condition holds on $C'$, we have $\gamma'_1=\gamma'_l$, which leads to $\gamma'_1=\gamma'_l>\gamma'_{l+2}=-\gamma'_1$. Thus $\gamma'_1=\gamma'_l\in P_M$, which forces $\gamma'_l > \gamma'_{l+1}$, contradicting the assumption that $\gamma'_l\ngtr \gamma'_{l+1}$. Therefore, only the first type of cycle condition can hold on $C'$. If we assume $\gamma'_{l+2}>\gamma'_{2l+1}$, then $\gamma'_l>\gamma'_{l+2}>\gamma'_{2l+1}=-\gamma'_l$, meaning $\gamma'_l\in P_M$, which again yields the same contradiction. Finally, assuming both $\gamma'_l\ngtr \gamma'_{l+1}$ and $\gamma'_{l+2}\ngtr \gamma'_{2l+1}$, the relation $\gamma'_l>\gamma'_{l+2}$ forces the first type of cycle condition on $C'$ to be precisely $\gamma'_{l+1}<\gamma'_{l+2}<\gamma'_{2l+1}$. This yields $\gamma'_l>\gamma'_{l+2}>\gamma'_{l+1}$, which contradicts $\gamma'_l\ngtr \gamma'_{l+1}$, completing the proof.
    
    We now prove the implication $(2)\Rightarrow (1)$. Let $\{\alpha, \beta, \gamma\}$ be any root-triple. Since these roots are coplanar, the subspace $U=\mathrm{span}\{\alpha, \beta, \gamma\}$ is a two-dimensional plane. The intersection $\Phi\cap U$ defines a dihedral root cycle $(\gamma_1,\ldots,\gamma_{2m})$, and there exist indices $i$, $j$, and $k$ such that $\alpha=\gamma_i$, $\beta=\gamma_j$, and $\gamma=\gamma_k$, with $j<i+m$, $k<j+m$, and $i<k+m$. 
    
    By the cycle condition on $(\gamma_1,\ldots,\gamma_{2m})$, either there exists an index $i'$ such that $\gamma_{i'}\geq \gamma_{i'+1}\geq\cdots\geq\gamma_{i'+m-1}$ or there exist indices $i'$ and $j'$ with $i'<j'<i'+m$ such that $\gamma_{i'} =\gamma_{i'+1}=\cdots=\gamma_{j'-1}$. After appropriately relabeling the roots of the root-triple, the sequence $(\alpha, -\gamma, \beta)$ must appear in relative order within either $(\gamma_{i'}, \gamma_{i'+1}, \ldots ,\gamma_{i'+m-1})$ or $(\gamma_{i'+m}, \gamma_{i'+m+1}, \ldots ,\gamma_{i'+2m-1})$. The cycle condition restricted to them immediately yields the Condorcet condition on $\{\alpha, \beta, \gamma\}$.

\end{proof}

Before connecting Condorcet root posets with Condorcet domains, we first establish the bases for generalizing the Condorcet domain to Coxeter groups.

\begin{defin} \label{def: biconvex}
A subset $I\subseteq \Phi$ is \emph{biconvex} if $|\{\alpha,-\alpha\}\cap I|=1$ for all $\alpha\in\Phi$, and $|\{\alpha,\beta,\gamma\}\cap I|\in\{1,2\}$ for all root-triples $\{\alpha,\beta,\gamma\}$.
\end{defin}
Note that our definitions here differ slightly from the literature, as we do not require $I\subseteq \Phi^+$. The following classical result is very useful, see for example \cite[Lemma 4.1]{Dyer-weak}.
\begin{prop}\label{prop: biconvex}
A subset $I\subseteq \Phi$ is the inversion set $I(w)$ for some $w\in W$ if and only if $I$ is biconvex.
\end{prop}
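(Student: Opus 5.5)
The plan is to prove the two directions separately, working throughout with the identity $I(w)=w\Phi^-$. This identity holds because $\alpha\in I(w)$ if and only if $w^{-1}\alpha\in\Phi^-$.

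\textbf{Forward direction.} Exactly one of $\alpha,-\alpha$ lies in $I(w)$, because $w^{-1}(-\alpha)=-w^{-1}\alpha$. Now let $a\alpha+b\beta+c\gamma=0$ with $a,b,c>0$, and apply $w^{-1}$ to get $a\,w^{-1}\alpha+b\,w^{-1}\beta+c\,w^{-1}\gamma=0$. Fix a linear functional that is strictly positive on $\Phi^+$. If all three of $w^{-1}\alpha,w^{-1}\beta,w^{-1}\gamma$ were negative, this functional would be strictly negative on the left-hand side, which is impossible. The same argument rules out all three being positive. Hence $|\{\alpha,\beta,\gamma\}\cap I(w)|\in\{1,2\}$, so $I(w)$ is biconvex.

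\textbf{Converse.} Let $I$ be biconvex. Among all $w\in W$, choose one maximizing $|I(w)\cap I|$; the goal is to show $I(w)=I$. For each simple root $\alpha_s$ we have $I(ws)=(I(w)\setminus\{-w\alpha_s\})\cup\{w\alpha_s\}$, since $ws\Phi^-$ differs from $w\Phi^-$ only on $\pm w\alpha_s$. Suppose some $s$ had $w\alpha_s\in I$. Then $-w\alpha_s\notin I$, so passing to $ws$ strictly increases $|I(w)\cap I|$, contradicting maximality. Therefore $-w\alpha_s\in I$ for every simple $\alpha_s$. It remains to show that this forces $w\Phi^-\subseteq I$; since both sets contain exactly one of each pair $\pm\alpha$, inclusion then gives equality $I(w)=I$.

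\textbf{Main step: propagating from simple roots.} Set $J:=w^{-1}I$. This set is again biconvex, because $w$ maps root-triples to root-triples. The task becomes: if $-\alpha_s\in J$ for all simple roots, then $\Phi^-\subseteq J$. I would argue by induction on the depth of $\beta\in\Phi^+$, i.e.\ the minimal length of $u$ with $u^{-1}\beta$ simple.
\begin{itemize}
\item In the depth-zero case $\beta$ is simple and $-\beta\in J$ by hypothesis.
\item Otherwise pick a simple $\alpha_s$ with $\langle\beta,\alpha_s\rangle>0$. Then $s\beta$ is positive of smaller depth, and $\beta=s\beta+c\,\alpha_s$ with $c>0$.
\item By induction $-s\beta\in J$, and $-\alpha_s\in J$ by hypothesis.
\item Now $\{-s\beta,\,-c'\alpha_s,\,\beta\}$, with $\alpha_s$ rescaled appropriately, is a root-triple, because $-s\beta-c\alpha_s+\beta=0$. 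Since it already has two members in $J$, biconvexity forces $\beta\notin J$, i.e.\ $-\beta\in J$.
\end{itemize}

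I expect the main obstacle to be justifying these depth facts for an arbitrary finite root system, including the non-crystallographic and non-reduced normalizations, where coefficients are real rather than integer. The facts needed are: existence of such an $s$ for a non-simple positive root, the decrease in depth, and $c>0$. The first thing I would try is the standard exchange argument: $\beta$ is a nonnegative combination of simple roots, so $0<\langle\beta,\beta\rangle$ forces some $\langle\beta,\alpha_s\rangle>0$. Then $s\beta$ is positive because $s$ permutes $\Phi^+\setminus\{\alpha_s\}$, and its height with respect to the simple roots strictly drops. For this reason I would run the induction on height instead of depth. If needed, I would fall back on the cited \cite[Lemma 4.1]{Dyer-weak}.
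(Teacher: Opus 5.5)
Your proof is correct, and it takes a different route from the paper simply because the paper gives no argument. It cites \cite[Lemma 4.1]{Dyer-weak}, which is the standard statement about finite subsets of $\Phi^+$; applying it here tacitly requires translating to the paper's signed convention via $I\mapsto I\cap\Phi^+$ and $I=(I\cap\Phi^+)\sqcup-(\Phi^+\setminus I)$.

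Your argument stays entirely in the signed convention:
\begin{itemize}
\item The forward direction uses a functional that is positive on $\Phi^+$.
\item The converse picks $w$ maximizing $|I(w)\cap I|$. Using only the pair condition, this forces $-\alpha_s\in w^{-1}I$ for every simple root, which reduces the problem to a clean lemma: a biconvex set containing all $-\alpha_s$ equals $\Phi^-$.
\item You prove that lemma by induction on height, using only that $s$ permutes $\Phi^+\setminus\{\alpha_s\}$ and that positive roots are nonnegative combinations of simple roots.
\end{itemize}
This handles crystallographic and non-crystallographic types uniformly. Real-valued heights cause no trouble because $\Phi^+$ is finite. The citation buys generality: for infinite $W$ your maximization step is unavailable, and one would induct on $|I\cap\Phi^+|$ instead. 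Your version buys self-containment.

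Two small presentational fixes:
\begin{itemize}
\item The root-triple in your main step is just $\{\beta,-s\beta,-\alpha_s\}$ with coefficients $(1,1,c)$. Nothing is rescaled, since members of a root-triple must themselves be roots. It is worth noting that the three roots are distinct, because $s\beta\neq\alpha_s$.
\item Your description $s\Phi^-=(\Phi^-\setminus\{-\alpha_s\})\cup\{\alpha_s\}$ uses that $\Phi$ is reduced. The paper assumes this implicitly throughout.
\end{itemize}
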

Since we are working in the generality of finite Coxeter groups, we do not need to specify that $I\cap\Phi^+$ is finite. We can now prove Theorem~\ref{thm: fundamental}.

\begin{proof}[Proof of Theorem~\ref{thm: fundamental}.] 
We first consider the case when $f$ has no tie. In other words, for every $\alpha \in \Phi,\ |\{\alpha, -\alpha\}\cap I(f)|=1$. We only need to prove that $I(f)$ is biconvex by Proposition~\ref{prop: biconvex}. For any root-triple $\{\alpha,\beta,\gamma\}$, if $\alpha, \beta, \gamma \in I(f)$, then $f(\alpha), f(\beta), f(\gamma) > |f|/2.$ As $f(\alpha)+f(\beta)>|f|, $ there exist $u\in D$ with $\alpha, \beta \in I(u)$. Similarly, there are also $v, w\in D$ with $\beta, \gamma \in I(v) $ and $\alpha, \gamma \in I(w)$, contradicting the Condorcet condition on $D$. If $\alpha, \beta, \gamma \notin I(f)$, then we have $-\alpha, -\beta, -\gamma\in I(f)$ for root-triple $\{-\alpha, -\beta, -\gamma\}$. Similarly, there is a contradiction, too. So $|\{\alpha,\beta,\gamma\}\cap I(f)|\in\{1,2\}$, hence $I(f)$ is biconvex.

When there is a tie in $f$, $|f|$ must be even. Let $|f|=2m$. Fix any $v\in D$. We take a voting profile $f'$ with $f'(u)=f(u)$ for $u\neq v$ and $f'(v)= f(v)+1$. As $|f'|=2m+1$ is odd, $f'$ has no tie. As $I(f')=\{\alpha\in \Phi\mid f'(\alpha)\ge m+1\} \subseteq \{\alpha\in \Phi \mid f(\alpha)\ge m\}=I(f)$, $R(f')\subseteq R(f)$. By the previous paragraph, $R(f')\neq\emptyset$ so $R(f)\neq\emptyset$.
\end{proof}

\begin{remark} \label{rmk: Condorcet domain}
The converse of Theorem~\ref{thm: fundamental} is true. If $D$ is not a Condorcet domain, then there exists $u,v,w\in D$ and a root-triple $\{\alpha,\beta,\gamma\}$ such that $\alpha,\beta\in I(u)$, $\alpha,\gamma\in I(v)$, $\beta,\gamma\in I(w)$. Take a voting profile $f$ on $D$ with value $1$ on $u, v, w$ and $0$ elsewhere. Then $\{\alpha, \beta, \gamma\}\subseteq I(f)$ and $f$ has no tie. By Proposition~\ref{prop: biconvex}, $R(f)$ must be empty. This is just the generalization of Condorcet paradox in Coxeter cases.
\end{remark}

In this article, when consider examples in type $A_{n-1}$, we take the convention $\Phi^+=\{e_i-e_j\mid1\le i< j\le n\}$, and use the shorthand $ij$ for the root $e_i-e_j$ and $-ij$ for the root $e_j-e_i$ for $1\le i<j\le n$. Now we have some basic examples of Condorcet domains and voting profiles on it.

\begin{ex}\label{ex: Condorcet and voting}
\begin{enumerate}
\item In type $A_2$ with Weyl group $D=S_3$. Let $D=\{123, 231, 312\}$. We have $I(123)=\{-12, -23, -13\}, I(231)=\{12,-23, 13\}, I(312)=\{-12, 13, 23\}$. Take the voting profile $f$ on $D$ with constant value $1$. Then $I(f)=\{-12, 13,-23\}$ is a root-triple, hence not a biconvex set. So $R(f)=\emptyset$. This is the classic Condorcet paradox, which is the origin of all the Condorcet theory.
\item In any finite Coxeter group $W$, $D=\{e, w_0\} $ is always a Condorcet domain as $D$ has only $2$ elements. Take a voting profile $f$ on $D$ with constant value $1$. As $I(e)=\Phi^-, I(w_0)=\Phi^+$, we have $I(f)=\Phi$ and $R(f)=W$. This example illustrates the no tie condition is indispensable.
\item In type $A_2$ with Weyl group $W=S_3$. Let $D=\{213, 231,321\}$, we have $I(213)=\{12, -23, -13\}, I(231)=\{12,-23, 13\}, I(321)=\{12, 13, 23\}$. The only two root-triples in $A_2$ are $\{12, 23, -13\}$ and $\{-12, -23, 13\}$. We can check $D$ is a Condorcet domain by checking Definition~\ref{def: condorcet} on these root-triples. Take a no tie voting profile $f$ on $D$ with $f(321)=a, f(231)=b, f(213)=c, abc\neq 0$. Then $f(23)=a, f(-23)=b+c, f(13)=a+b, f(-13)=c, f(12)=a+b+c=|f|, f(-12)=0$. If $a>b+c$, then $I(f)=\{23, 13, 12\}=I(321), R(f)=\{321\}$. Similarly, if $c>a+b$, then $R(f)=\{213\}$. If $a<b+c$ and $c<a+b$, then $R(f)=\{231\}$. We can see in each case, $R(f)$ is a singleton, as Theorem~\ref{thm: fundamental} says. Moreover, we have $R(f)\subseteq D$ hence $\nu(D)\subseteq D$.
\end{enumerate}
\end{ex}
The notion of closed Condorcet domain is another essential concept.

\begin{defin}
    \label{def: closed}
    A Condorcet domain $D$ is \emph{maximal} if for any Condorcet domain $D'$, the inclusion $D'\supseteq D$ implies $D'=D$. It is \emph{closed} if $\nu(D)\subseteq D$. For a Condorcet domain $D$, the \emph{closure} of $D$ is defined as the minimal closed Condorcet domain containing $D$. 
\end{defin}

\begin{remark}\label{rmk: closed}
    For any $v\in D$, consider a voting profile $f$ on $D$ where $f(v)=1$ and $f(w)=0$ for all $w\neq v$. Then $f$ has no tie and $R(f)=\{v\}\subseteq \nu(D)$. Therefore, $D\subseteq \nu(D)$, which implies that $D$ is closed if and only if $\nu(D)=D$.
\end{remark}

For instance, in Example~\ref{ex: Condorcet and voting} (3), we demonstrated that the domain $D=\{321, 231, 213\}$ in type $A_2$ is a closed Condorcet domain.

\begin{lemma}
    \label{lem: closure}
    If $D$ and $D'$ are closed Condorcet domains, then $D\cap D'$ is also a closed Condorcet domain. Consequently, $\overline{D}$ is well-defined and $\overline{D}$ is the intersection of all closed Condorcet domain containing $D$.
\end{lemma}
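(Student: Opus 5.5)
The plan is to prove the first assertion directly from the definitions, and then obtain the consequence by showing that at least one closed Condorcet domain contains $D$.

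\textbf{Step 1: intersections.} Let $D,D'$ be closed Condorcet domains. Implicitly $D\cap D'\neq\emptyset$; this holds in the intended application, where both contain a fixed $D_0$. Any nonempty subset of a Condorcet domain is a Condorcet domain, because the forbidden configuration of Definition~\ref{def: condorcet} only involves elements of the domain. So $D\cap D'$ is Condorcet.

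\textbf{Step 2: the intersection is closed.} Take a no-tie profile $f$ on $D\cap D'$ and extend it by zero to a profile $\tilde f$ on $D$. Then $\tilde f(\alpha)=f(\alpha)$ for every $\alpha$ and $|\tilde f|=|f|$. Hence $\tilde f$ has no tie, $I(\tilde f)=I(f)$, and $R(\tilde f)=R(f)$. Therefore $R(f)\subseteq\nu(D)=D$. The same argument with $D'$ gives $R(f)\subseteq D'$, so $\nu(D\cap D')\subseteq D\cap D'$.

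\textbf{Step 3: existence of a closed domain containing $D$.} Since $W$ is finite, $D$ is contained in some maximal Condorcet domain $M$. I will show that $M$ is closed; this is the main step. Let $f$ be a no-tie profile on $M$, and let $w$ be given by Theorem~\ref{thm: fundamental}, so that $R(f)=\{w\}$ and $I(w)=I(f)$. I claim $M\cup\{w\}$ is Condorcet, so maximality forces $w\in M$.

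Suppose instead that there is a violating triple in $M\cup\{w\}$. It must involve $w$, since $M$ is Condorcet. Relabel so that $\alpha,\beta\in I(u)$ and $\alpha,\gamma\in I(v)$ with $u,v\in M$, and $\beta,\gamma\in I(w)=I(f)$.
\begin{itemize}
\item Since there is no tie, $f(\beta),f(\gamma)>|f|/2$. Hence some $x\in M$ with $f(x)>0$ has $\beta,\gamma\in I(x)$.
\item If $x=u$ or $x=v$, then $I(x)$ contains the whole root-triple $\{\alpha,\beta,\gamma\}$. This contradicts biconvexity (Proposition~\ref{prop: biconvex}).
\item Otherwise $u,v,x$ are three distinct elements of $M$ forming a violating configuration. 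This contradicts that $M$ is Condorcet.
\end{itemize}
Thus $R(f)\subseteq M$ for every no-tie $f$, i.e.\ $M$ is closed.

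\textbf{Step 4: conclusion.} The family of closed Condorcet domains containing $D$ is finite and nonempty, and each member contains $D$. By Steps 1–2, iterated pairwise, its intersection is a closed Condorcet domain containing $D$. This intersection is contained in every closed Condorcet domain containing $D$, so it is the unique minimal one. Hence $\overline{D}$ is well defined and equals this intersection.
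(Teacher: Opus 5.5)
Your proof is correct and follows the paper. Step~2 is exactly the paper's observation that a no-tie profile on $D\cap D'$ is also a no-tie profile on $D$ and on $D'$. Step~3 is the argument the paper gives for Proposition~\ref{prop: closed}: $f(\beta)+f(\gamma)>|f|$ produces a third voter. The paper states that proposition only after this lemma, even though its proof of the lemma tacitly needs some closed domain containing $D$ to exist.

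Two of your extra precautions fix small points the paper glosses over. First, you require $D\cap D'\neq\emptyset$; the lemma as literally stated fails without this, e.g.\ for $D=\{e\}$ and $D'=\{w_0\}$. Second, you rule out $x\in\{u,v\}$ via biconvexity.
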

\begin{proof}
    By Definition~\ref{def: condorcet}, any subset of a Condorcet domain is a Condorcet domain, so $D\cap D'$ is a Condorcet domain. Let $v\in \nu(D\cap D')$. Then there exists a voting profile $f$ with no tie on $D\cap D'$ such that $R(f)=\{v\}$. $f$ can also be viewed as a valid voting profile with no tie on $D$ and on $D'$. Therefore, $v\in \nu(D)$ and $v\in \nu(D')$. Because $D$ and $D'$ are closed, $\nu(D)\subseteq D$ and $\nu(D')\subseteq D'$, yielding $v\in D\cap D'$. This shows that $\nu(D\cap D')\subseteq D\cap D'$, so $D\cap D'$ is closed. 
    
    Because the intersection of any finite family of closed Condorcet domains is closed and $W$ is finite, the intersection of all closed Condorcet domain containing $D$ is still a closed Condorcet domain. It follows immediately from Definition~\ref{def: closed} that $\overline{D}$ is just the intersection of all closed Condorcet domain containing $D$.
\end{proof}

\begin{prop}
    \label{prop: closed}
    If $D$ is a Condorcet domain and $v\in \nu(D)$, then $D\cup \{v\}$ is also a Condorcet domain. In particular, every maximal Condorcet domain is closed, and every Condorcet domain is contained in a closed Condorcet domain.
\end{prop}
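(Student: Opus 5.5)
Suppose $v\in\nu(D)$. Then there is a voting profile $f:D\to\Z_{\ge0}$ with no tie and $R(f)=\{v\}$, and by Theorem~\ref{thm: fundamental} we have $I(v)=I(f)$. The plan is to argue by contradiction. Assume $D\cup\{v\}$ is not a Condorcet domain. Since $D$ is Condorcet, any violating triple must include $v$. So there are distinct $u,w\in D$, both different from $v$, and a root-triple $\{\alpha,\beta,\gamma\}$ such that, after relabeling,
$$\alpha,\beta\in I(v),\quad \alpha,\gamma\in I(u),\quad \beta,\gamma\in I(w).$$
Because $v\notin D$ is possible but $u,w\in D$, the goal is to produce a third element $x\in D$ that completes a forbidden configuration with $u$ and $w$.

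\textbf{Step 1.} Since $\alpha,\beta\in I(v)=I(f)$ and $f$ has no tie, we have $f(\alpha)>|f|/2$ and $f(\beta)>|f|/2$. Hence $f(\alpha)+f(\beta)>|f|$, so some $x\in D$ with $f(x)>0$ satisfies $\alpha,\beta\in I(x)$. This is the same pigeonhole argument used in the proof of Theorem~\ref{thm: fundamental}.

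\textbf{Step 2.} Now $x,u,w\in D$ carry the pairs $\{\alpha,\beta\}$, $\{\alpha,\gamma\}$, $\{\beta,\gamma\}$ respectively. If $x,u,w$ are distinct, this contradicts the Condorcet property of $D$.

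\textbf{Step 3.} It remains to rule out coincidences among $x,u,w$, and this is the main point to check. If $x=u$, then $\alpha,\beta,\gamma\in I(u)$, so $I(u)$ contains a whole root-triple. This contradicts biconvexity of $I(u)$ (Proposition~\ref{prop: biconvex}). The case $x=w$ is identical. The case $u=w$ is excluded by the assumed distinctness. Hence $D\cup\{v\}$ is a Condorcet domain.

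For the consequences: if $D$ is maximal and $v\in\nu(D)$, then $D\cup\{v\}$ is a Condorcet domain containing $D$, so maximality forces $v\in D$. Thus $\nu(D)\subseteq D$, i.e.\ $D$ is closed. Finally, any Condorcet domain $D$ is contained in a maximal one, since $W$ is finite and we may enlarge $D$ until this is no longer possible. That maximal domain is closed by the previous sentence. I expect no real obstacle beyond the coincidence check in Step 3.
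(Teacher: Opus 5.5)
Your proof is correct and matches the paper's argument: the pigeonhole step $f(\alpha)+f(\beta)>|f|$ yields an element of $D$ carrying $\{\alpha,\beta\}$, which together with $u,w$ contradicts the Condorcet property of $D$, and the two consequences follow from maximality and the finiteness of $W$. Your Step 3, which rules out coincidences $x=u$ or $x=w$ via biconvexity, is a detail the paper leaves implicit.
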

\begin{proof}
    Suppose, for the sake of contradiction, that $D\cup \{v\}$ is not a Condorcet domain. Then there exist a root-triple $\{\alpha, \beta, \gamma\}$ and elements $u, w\in D$ such that $\alpha, \beta\in I(v)$, $\beta,\gamma\in I(u)$, and $\alpha, \gamma\in I(w)$. Since $v\in \nu(D)$, by Theorem~\ref{thm: fundamental}, there exists a voting profile $f$ with no tie on $D$ such that $I(f)=I(v), R(f)=\{v\}$. As $f$ has no tie, $f(\alpha)> |f|/2$ and $f(\beta)> |f|/2$. Consequently, we have $f(\alpha)+f(\beta)>|f|$, which implies that there exist $v'\in D$ with $\alpha, \beta\in I(v')$. However, the presence of $v', u, w\in D$ directly violates the Condorcet condition for $D$, a contradiction. Thus, $D\cup \{v\}$ must be a Condorcet domain.

    If $D$ is maximal, because $D \cup \{v\}$ is a Condorcet domain for any $v \in \nu(D)$, it follows that $D \cup \{v\} = D$. This forces $\nu(D) \subseteq D$, meaning $D$ is closed. Finally, since $W$ is finite, every Condorcet domain is contained in a maximal Condorcet domain, and therefore contained in a closed Condorcet domain.
\end{proof}
\subsection{Relations with Condorcet domains} \label{subsec: Relations with Condorcet domains}
We define maps between the set of nonempty subsets of $W$ and antipodal root posets, with particularly good behavior on Condorcet domains $\mathcal{D}$ and Condorcet root posets $\mathcal{P}$.
\begin{defin}\label{def: map phi}
For a nonempty subset $D\subseteq W$ and any root $\gamma\in \Phi$, let $D(\gamma):= \{v\in D\mid \gamma\in I(v)\}$. Define a (pre)poset $\varphi(D)$ on $\Phi$ with the relation $\alpha\leq \beta$ if $D(\alpha)\supseteq D(\beta).$  
\end{defin}
\begin{lemma}\label{lem:phi well-defined}
The map $\varphi$ sends a nonempty subset $D\subseteq W$ to an antipodal root poset, and especially sends a Condorcet domain to a Condorcet root poset. In other words, $\varphi:\mathcal{D}\rightarrow\mathcal{P}$ is well-defined.
\end{lemma}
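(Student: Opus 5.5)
Since $\varphi(D)$ is defined by reverse inclusion of the sets $D(\gamma)$, it is automatically reflexive and transitive, so it is a preposet. The plan is to check the three conditions of Definition~\ref{def: condorcet-poset} in order, using only two facts. First, exactly one of $\gamma,-\gamma$ lies in $I(v)$ for every $v$, so $D(-\gamma)=D\setminus D(\gamma)$. Second, for a Condorcet domain we use Definition~\ref{def: condorcet} directly.

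For condition (1), if $D(\alpha)\supseteq D(\beta)$ then taking complements in $D$ gives $D(-\alpha)\subseteq D(-\beta)$, which is exactly $-\beta\le-\alpha$.

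For condition (2), $\gamma$ and $-\gamma$ cannot be equal in $P$. Indeed, $D(\gamma)=D(-\gamma)$ together with $D(\gamma)\cap D(-\gamma)=\emptyset$ would force $D(\gamma)=\emptyset=D(-\gamma)$, so their union would be empty; but the union is $D$, which is nonempty. If $\gamma<-\gamma$, then $D(\gamma)\supseteq D\setminus D(\gamma)$, so $D(\gamma)=D$. Since $D(\alpha)\subseteq D$ for every $\alpha$, we get $D(\gamma)\supseteq D(\alpha)$, that is, $\gamma\le\alpha$, and $\gamma$ is a minimum.

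For condition (3), let $D$ be Condorcet and let $T=\{\alpha,\beta,\gamma\}$ be a root-triple. For $v\in D$, $I(v)$ is biconvex by Proposition~\ref{prop: biconvex}, so $|T\cap I(v)|\in\{1,2\}$. Call $v$ of type $\{x,y\}$ if $T\cap I(v)=\{x,y\}$. If $|T\cap I(v)|=1$, say $T\cap I(v)=\{x\}$, then $-T\cap I(v)$ consists of the two negatives of the other roots; this is a type for the root-triple $-T$, but it constrains the order the same way. The Condorcet condition forbids having three distinct elements of $D$ realizing all three two-element types of $T$ (and likewise for $-T$). The main step is to translate ``at most two of the three pair-types occur among the $v$'s with $|T\cap I(v)|=2$, and at most two of the three singleton-types occur'' into the order condition. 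Here $v\in D(x)\setminus D(-y)$ means $x\in I(v)$ and $y\in I(v)$.

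The key observation is that $\alpha\ge-\gamma$ means $D(\alpha)\subseteq D(-\gamma)$, i.e.\ no $v$ contains both $\alpha$ and $\gamma$, i.e.\ the pair-type $\{\alpha,\gamma\}$ is absent. Similarly, $-\gamma\ge\beta$ means that no $v$ avoids both $\gamma$ and $\beta$, i.e.\ the singleton-type $\{\alpha\}$ is absent. So the first type of Condorcet condition under some labeling says precisely that a pair-type $\{\alpha,\gamma\}$ and the complementary singleton-type $\{\alpha\}$ are both missing; the singleton is the element shared with the missing pair.

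I would then do a case analysis on which pair-types and which singleton-types are present. The Condorcet condition, applied to $T$ and to $-T$, guarantees that at least one pair-type and at least one singleton-type are missing. If some missing pair $\{x,z\}$ and some missing singleton $\{x\}$ share the element $x$, label $\alpha=x$, $\gamma=z$ and the first type holds. Otherwise the missing pairs and missing singletons are disjoint in this sense. Say pair $\{\alpha,\gamma\}$ is missing; then singletons $\{\alpha\}$ and $\{\gamma\}$ are both present, so the missing singleton must be $\{\beta\}$. In turn pairs $\{\beta,\alpha\}$ and $\{\beta,\gamma\}$ are present. So the occurring types are exactly the pairs $\{\alpha,\beta\}$ and $\{\beta,\gamma\}$ and the singletons $\{\alpha\}$ and $\{\gamma\}$. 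Then every $v$ contains exactly one of $\alpha,\gamma$: never both (pair missing) and never neither (the only type avoiding both would be singleton $\{\beta\}$, which is missing). Hence $D(\alpha)=D(-\gamma)$, which is the second type, $\alpha=-\gamma$.

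I expect this bookkeeping of types, and making sure the uses of distinctness in Definition~\ref{def: condorcet} are correct, to be the main obstacle. Distinctness is automatic, since different types force different elements.
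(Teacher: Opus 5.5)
Your proposal is correct and follows the paper's proof: both classify $v\in D$ by $T\cap I(v)$, use the Condorcet condition on $T$ and on $-T$ to find a missing pair-type and a missing singleton-type, and conclude the first type when these share a root and $\alpha=-\gamma$ when the missing singleton is $\{\beta\}$. The differences are cosmetic. You verify conditions (1) and (2) explicitly where the paper calls them clear, and your final branch derives more than needed, since the absence of the pair $\{\alpha,\gamma\}$ and the singleton $\{\beta\}$ alone already gives $D(\alpha)=D(-\gamma)$.
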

\begin{proof}
For a nonempty $D\subseteq W$, the antipodal conditions in Definition~\ref{def: condorcet-poset} are clear as $D(\alpha)\sqcup D(-\alpha)=D$ for every $\alpha\in\Phi$ and $D$ is nonempty. To show that $\varphi(D)$ is a Condorcet root poset for $D\in\mathcal{D}$, we need to prove condition (3).

Fix any root-triple $\{\alpha,\beta,\gamma\}$. By Definition~\ref{def: biconvex} and Proposition~\ref{prop: biconvex}, for any $v\in D,\ |\{\alpha, \beta, \gamma \}\cap I(v)|\in\{1,2\}$. Consider the family of sets $T=\{\{\alpha,\beta,\gamma\}\cap I(v)\mid v\in D\}$, and let $T_i=\{A\in T\mid |A|=i\}$ for $i\in\{1,2\}$. By Definition~\ref{def: condorcet}, $|T_2|\leq 2$, so we assume without loss of generality that $\{\alpha,\gamma\}\notin T_2$. This means that for $v\in D$, if $\alpha\in I(v)$, then $\gamma\notin I(v)$, $-\gamma\in I(v)$, so $\alpha\geq-\gamma$ in $\varphi(D)$. By considering the root-triple $\{-\alpha,-\beta,-\gamma\}$ with Definition~\ref{def: condorcet}, we also know that $|T_1|\leq 2$. If $\{\alpha\}\notin T_1$, then for $v\in D$, $-\gamma\in I(v)$ implies that $-\beta\notin I(v)$, $\beta\in I(v)$ so $-\gamma\geq\beta$. Overall, $\alpha\geq-\gamma\geq\beta$ as desired. The situation is similar if $\{\gamma\}\notin T$, where we deduce that $\gamma\geq-\alpha\geq\beta$. And if $\{\beta\}\notin T_1$, we have that $-\gamma\geq\alpha$, which means that $\alpha=-\gamma$ as desired.
\end{proof}

\begin{ex}\label{ex:map-varphi-small-example}
Consider the Condorcet root domain $D=\{213,231,321\}$ in type $A_2$, which also appear in Example~\ref{ex: Condorcet and voting} (3). We then compute
\[\begin{tabular}{lll}
$D(12)=\{213,231,321\}$, & $D(13)=\{231,321\}$, & $D(23)=\{321\}$, \\
$D(-12)=\emptyset$, & $D(-13)=\{213\}$, & $D(-23)=\{213,231\}$.
\end{tabular}\]
Therefore, we obtain the Condorcet root poset $\varphi(D)$ shown in Figure~\ref{fig:map-varphi-small-example}.
\begin{figure}[htbp]
    \centering
    \resizebox{0.3\textwidth}{!}{
        \begin{tikzpicture}[
            x=0.7cm, 
            y=0.7cm, 
            every node/.style={
                rectangle, 
                rounded corners=4pt,
                draw, 
                fill=white, 
                inner sep=2pt, 
                minimum width=1.1cm, 
                minimum height=0.6cm
            }
        ]
\def\a{0.7};
\def\b{0.4};
\def\h{2.0};
\def\r{0.2};
\newcommand\Rec[3]{
\node (#3) at (#1,#2) {$#3$};
}
\Rec{0}{0}{12};
\Rec{-4*\a}{\h}{-23};
\Rec{4*\a}{\h}{13};
\Rec{-4*\a}{2*\h}{-13};
\Rec{4*\a}{2*\h}{23};
\Rec{0}{3*\h}{-12};
\draw(12)--(13);
\draw(12)--(-23);
\draw(13)--(23);
\draw(-23)--(-13);
\draw(-13)--(-12);
\draw(23)--(-12);
\end{tikzpicture}
}
\vspace{0.5em}
\caption{The small Condorcet root poset $\varphi(D)$ in Example~\ref{ex:map-varphi-small-example}}
\label{fig:map-varphi-small-example}
\end{figure}
\end{ex}

\begin{remark}
From the proof of Lemma~\ref{lem:phi well-defined}, we can see the set $T$ (hence the Condorcet condition) can be used to characterize a Condorcet domain. In type $A$, the analog is the \emph{never condition} (\cite{monjardet2009acyclic,puppe2024maximal}). We view a Condorcet domain $D\subseteq A_{n-1}$ as a set of permutations on $[n]$, and call the elements in $[n]$ candidates. For any $3$ candidates $i,j,k$, $x\in \{i, j, k\},\text{ and } y\in [3]$, the never condition $xN_{\{i,j,k\}}y$ denotes that in any elements of $D$, $x$ never take the $y^{\text{th}}$ position in $\{i, j, k\}$. Any Condorcet domain must satisfy one never condition on every triple of candidates $\{i, j, k\}$ \cite[Proposition 2.2]{puppe2024maximal}. By checking case by case, the never condition on $\{i, j, k\}$ turns out to be equivalent to the Condorcet condition on root-triple $\{e_i-e_j, e_j-e_k, e_k-e_i\}$, in which the never conditions of from $xN2$ correspond to the second type of Condorcet conditions. For example, in type $A_2$ with Weyl group $S_3$, we take the root-triple $\{12, 23, -13\}$. The Condorcet domain $D=\{123, 132, 231, 321\}$ satisfies the never condition $1N_{\{1,2,3\}}2$, while $\varphi(D)$ satisfies the Condorcet condition $12=-(-13)$.
\end{remark}

\begin{lemma}\label{lem: inversions are symmetric order ideals}
For a nonempty $D\subseteq W$ and $v\in D$, $I(v)$ is an order ideal in $\varphi(D)$ with $I(v)\sqcup -I(v)=\Phi$.
\end{lemma}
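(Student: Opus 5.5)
The plan is to unwind Definition~\ref{def: map phi} directly. Both claims are immediate consequences of the definitions, so no earlier theorem beyond the convention on $I(v)$ is needed.

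\textbf{The partition $\Phi=I(v)\sqcup -I(v)$.} This holds for every $w\in W$. Since $w^{-1}$ is linear and $\Phi=\Phi^+\sqcup\Phi^-$ with $\Phi^-=-\Phi^+$, for each $\alpha\in\Phi$ exactly one of $w^{-1}\alpha$ and $w^{-1}(-\alpha)=-w^{-1}\alpha$ is negative. Hence exactly one of $\alpha,-\alpha$ lies in $I(w)$. This gives $I(w)\cap -I(w)=\emptyset$ and $I(w)\cup -I(w)=\Phi$. Alternatively, this is the first condition of biconvexity in Proposition~\ref{prop: biconvex}.

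\textbf{$I(v)$ is an order ideal of $\varphi(D)$.} We must show that if $\beta\in I(v)$ and $\alpha\le\beta$ in $\varphi(D)$, then $\alpha\in I(v)$. First, $\beta\in I(v)$ means exactly that $v\in D(\beta)$, because $v\in D$. Second, by definition of $\varphi(D)$, the relation $\alpha\le\beta$ means $D(\alpha)\supseteq D(\beta)$. Therefore $v\in D(\alpha)$, i.e.\ $\alpha\in I(v)$.

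This argument also covers elements that are equal in the preposet: if $\alpha=\beta$ in $P$, then $D(\alpha)=D(\beta)$, so $I(v)$ is a union of equivalence classes. The only subtle point is that $v$ must belong to $D$, not merely to $W$, so that membership in $I(v)$ translates into membership in $D(\cdot)$. Nonemptiness of $D$ is not needed beyond ensuring that $\varphi(D)$ is defined as in Lemma~\ref{lem:phi well-defined}. There is no real obstacle here.
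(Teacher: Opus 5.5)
Your proposal is correct and follows the paper's proof: you unwind $\alpha\le\beta$ in $\varphi(D)$ as $D(\alpha)\supseteq D(\beta)$ and use $v\in D$ to pass from $\beta\in I(v)$ to $\alpha\in I(v)$. The partition $I(v)\sqcup -I(v)=\Phi$ is the standard property of inversion sets, which the paper simply cites and which you verify directly.
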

\begin{proof}
Take $\alpha\in I(v) \text{ and }  \beta\le \alpha$ in $\varphi(D)$. Then $D(\alpha)\subseteq D(\beta)$, which implies $\beta \in I(v)$. So $I(v)$ is an order ideal in $\varphi(D)$ and $I(v)\sqcup -I(v)=\Phi$ by the property of inversion sets.
\end{proof}
We study Condorcet root posets via their order ideals.
\begin{defin}\label{def: symmetric order ideal}
For an antipodal root poset $P$, an order ideal $I\subseteq P$ is called a \emph{symmetric order ideal} if $I\sqcup-I=P$, and is called a \emph{partial symmetric order ideal} if $I\cap -I=\emptyset$. Denote by $J(P)$, $J_s(P)$ and $J_p(P)$ the set of order ideals, symmetric order ideals, and partial symmetric order ideals of $P$. 
\end{defin}
We have $J_s(P)\subseteq J_p(P)\subseteq J(P)$. Now we define the other important map.

\begin{defin}\label{def: map psi}
For an antipodal root poset $P$, define $\psi(P)=\{w\in W\mid I(w)\in J_s(P)\}$.
\end{defin}
\begin{lemma}\label{lem:psi well-defined}
The map $\psi$ sends a Condorcet root poset $P$ to a Condorcet domain. In other words, $\psi: \mathcal{P} \rightarrow \mathcal{D}$ is well-defined. Moreover, $|J_s(P)|=|\psi(P)|$ for $P\in\mathcal{P}$.
\end{lemma}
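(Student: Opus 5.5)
Two things need to be shown: that $\psi(P)$ is a nonempty Condorcet domain, and that $|J_s(P)|=|\psi(P)|$. The plan is to prove first that every symmetric order ideal of a Condorcet root poset is biconvex. By Proposition~\ref{prop: biconvex}, such an ideal is then $I(w)$ for a unique $w$, since the inversion set determines $w$. This gives the bijection $J_s(P)\to\psi(P)$, and hence the cardinality statement. Afterwards we check that $J_s(P)\neq\emptyset$ and verify Definition~\ref{def: condorcet}.

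\textbf{Biconvexity.} Let $I\in J_s(P)$. By definition, $|\{\alpha,-\alpha\}\cap I|=1$. Take a root-triple $\{\alpha,\beta,\gamma\}$ and suppose all three lie in $I$. By condition (3) of Definition~\ref{def: condorcet-poset}, after relabeling either $\alpha\ge-\gamma\ge\beta$ or $\alpha=-\gamma$.
\begin{itemize}
\item In the second case, $-\gamma\le\alpha\in I$ gives $-\gamma\in I$, contradicting $\gamma\in I$.
\item In the first case, $\alpha\in I$ forces $-\gamma\in I$, which is again a contradiction.
\end{itemize}
If none of the three lies in $I$, then all of $-\alpha,-\beta,-\gamma$ lie in $I$, and $\{-\alpha,-\beta,-\gamma\}$ is also a root-triple, so the same argument applies. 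Hence $|\{\alpha,\beta,\gamma\}\cap I|\in\{1,2\}$.

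\textbf{Condorcet property.} Suppose $u,v,w\in\psi(P)$ and $\alpha,\beta\in I(u)$, $\alpha,\gamma\in I(v)$, $\beta,\gamma\in I(w)$. By condition (3), after relabeling (which permutes the roles of $u,v,w$) we have $\alpha\ge-\gamma\ge\beta$ or $\alpha=-\gamma$.
\begin{itemize}
\item If $\alpha=-\gamma$, then $\alpha\in I(v)$ gives $-\gamma\in I(v)$, contradicting $\gamma\in I(v)$.
\item If $\alpha\ge-\gamma$, the same contradiction arises in $I(v)$, which contains both $\alpha$ and $\gamma$.
\end{itemize}
For any labeling, some element of $\{u,v,w\}$ has an inversion set containing the pair $\{\alpha,\gamma\}$, so the argument is symmetric.

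\textbf{Nonemptiness.} This is the step I expect to be the main obstacle. The goal is to construct a symmetric order ideal, and the plan is a greedy construction. Let $I_0$ consist of the minimum elements $\gamma$ with $\gamma<-\gamma$; these come in a consistent way because $P_m$ and $-P_m=P_M$. Then repeatedly add to a partial symmetric order ideal $I$ a minimal class $x$ of $P\setminus(I\cup -I)$, together with everything equal to it. We must check two things. First, downward closure: anything below $x$ is in $I\cup -I$ by minimality, and it cannot be in $-I$, because $y\le x$ with $y\in -I$ gives $-x\le -y\in I$, so $-x\in I$. Second, that adding $x$ never also adds $-x$: this requires $x\ne -x$ in $P$ and $x\not\le -x$. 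If $x<-x$, condition (2) makes $x$ a minimum, so it would already have been handled in $I_0$. The equality $x=-x$ is the real issue. Condition (2) only says $\gamma\neq-\gamma$ as elements of $P$ (not merely as roots), so I read it as forbidding $x=-x$ in the preorder, which settles this case. Iterating, the process ends with $I\sqcup -I=\Phi$, so $I\in J_s(P)$ and $\psi(P)\neq\emptyset$.
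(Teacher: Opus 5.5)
Your proposal is correct and follows the paper's argument: you show each symmetric order ideal is biconvex via condition (3) (applied also to the negated triple), use Proposition~\ref{prop: biconvex} to get the bijection $J_s(P)\leftrightarrow\psi(P)$, and derive the Condorcet property from the same contradiction on the pair $\{\alpha,\gamma\}$. Your nonemptiness step, which the paper's proof of this lemma leaves implicit, is the same greedy construction the paper gives later in Proposition~\ref{prop: J_p(P) in J_s(P)}, which starts from $\emptyset$; your preliminary $I_0=P_m$ and your check that $x\not\le -x$ are harmless but unnecessary, since only $x\neq -x$ in $P$ is needed.
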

\begin{proof}
We first prove that for every $I\in J_s(P)$, $I$ is biconvex. For any root-triple $\{\alpha, \beta, \gamma\}$, if $\alpha, \beta, \gamma \in I$, then $\alpha\ngeq-\beta,\ \beta\ngeq-\gamma, \gamma\ngeq-\alpha $ in $P$. Contradict with conditions (1) and (3) in Definition~\ref{def: condorcet-poset}. Similarly, $\alpha, \beta, \gamma \notin I$ yields the same contradiction. By Proposition~\ref{prop: biconvex} and the fact that different elements in $W$ have different inversion sets, we have $|J_s(P)|=|\psi(P)|.$

To prove $\psi(P)$ is a Condorcet domain, take any root-triple $\{\alpha, \beta, \gamma\}$. If there are $I_1, I_2, I_3\in J_s(P)$ with $\alpha, \beta\in I_1,\ \beta, \gamma\in I_2,\ \alpha, \gamma\in I_3$, then also $\alpha\ngeq-\beta,\ \beta\ngeq-\gamma, \gamma\ngeq-\alpha $ in $P$. This can't hold via the same reason as above.
\end{proof}
\begin{ex}\label{ex:map-psi-example}
Consider the Condorcet root poset in type $A_3$ shown in Figure~\ref{fig:map-psi-example}. We see that $4132\in \psi(P)$ since $I(w)=\{14,23,24,34,-12,-13\}$ is indeed an symmetric order ideal in $P$. Similar calculation shows $\psi(P)=\{3142,3241,3412,3421,4132,4231,4312,4321\},$ and we can check $\psi(P)$ is indeed a Condorcet domain.
\begin{figure}[htbp]
    \centering
    \resizebox{0.4\textwidth}{!}{
        \begin{tikzpicture}[
            x=0.7cm, 
            y=0.7cm, 
            every node/.style={
                rectangle, 
                rounded corners=4pt, 
                draw, 
                fill=white, 
                inner sep=2pt, 
                minimum width=1.1cm, 
                minimum height=0.6cm
            }
        ]
\def\a{0.7};
\def\b{0.4};
\def\h{2.0};
\def\r{0.2};
\newcommand\Rec[3]{
\node (#3) at (#1,#2) {$#3$};
}
\Rec{0}{0}{13};
\Rec{4*\a}{0}{14};
\Rec{8*\a}{0}{23};
\Rec{12*\a}{0}{24};
\Rec{0}{\h}{12};
\Rec{4*\a}{\h}{-34};
\Rec{8*\a}{\h}{34};
\Rec{12*\a}{\h}{-12};
\Rec{0}{2*\h}{-24};
\Rec{4*\a}{2*\h}{-23};
\Rec{8*\a}{2*\h}{-14};
\Rec{12*\a}{2*\h}{-13};
\draw(13)--(12);
\draw(13)--(-34);
\draw(12)--(-23);
\draw(12)--(-24);
\draw(-34)--(-14);
\draw(-34)--(-24);
\draw(23)--(-12);
\draw(23)--(-34);
\draw(-12)--(-14);
\draw(-12)--(-13);
\draw(24)--(-12);
\draw(24)--(34);
\draw(34)--(-23);
\draw(34)--(-13);
\draw(14)--(34);
\draw(14)--(12);
\end{tikzpicture}
}
\vspace{0.5em}
\caption{The Condorcet root poset in type $A_3$ in Example~\ref{ex:map-psi-example}}
\label{fig:map-psi-example}
\end{figure}
\end{ex}
In fact, the converse of Lemma~\ref{lem:phi well-defined} is also true in the following sense, which provides a fundamental characterization of Condorcet domains by Condorcet root posets.
\begin{prop}\label{prop: Condorcet domain eq map to Condorcet root poset}
Let $D$ be a nonempty subset of $W$. Then $D$ is a Condorcet domain if and only if $\varphi(D)$ is a Condorcet root poset.
\end{prop}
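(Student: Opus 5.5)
The forward direction is exactly Lemma~\ref{lem:phi well-defined}: if $D$ is a Condorcet domain, then $\varphi(D)\in\mathcal{P}$. So the plan is to prove only the converse. Assume $D$ is nonempty and $\varphi(D)$ is a Condorcet root poset; we show that $D$ satisfies Definition~\ref{def: condorcet}.

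The cleanest route is to go through $\psi$. By Lemma~\ref{lem: inversions are symmetric order ideals}, every $v\in D$ has $I(v)$ a symmetric order ideal of $\varphi(D)$, so $D\subseteq \psi(\varphi(D))$. Since $\varphi(D)\in\mathcal{P}$, Lemma~\ref{lem:psi well-defined} says that $\psi(\varphi(D))$ is a Condorcet domain. Any nonempty subset of a Condorcet domain is again a Condorcet domain, because Definition~\ref{def: condorcet} only forbids certain configurations of elements. Hence $D$ is a Condorcet domain.

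It is also worth checking the argument directly, since that is the real content of Lemma~\ref{lem:psi well-defined}. Suppose $u,v,w\in D$ and $\{\alpha,\beta,\gamma\}$ is a root-triple with $\alpha,\beta\in I(u)$, $\alpha,\gamma\in I(v)$ and $\beta,\gamma\in I(w)$. Apply condition (3) to this triple, with some labeling $\{x,y,z\}$ of its elements.

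\textbf{First type}, $x\ge -z\ge y$. Pick the element of $D$ whose inversion set contains both $x$ and $z$. Its inversion set is an order ideal containing $x$, so it contains $-z$. It also contains $z$, contradicting $I\sqcup -I=\Phi$.

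\textbf{Second type}, $x=-z$. The same argument with the element containing $x$ and $z$ gives the same contradiction.

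The only point needing care is that the three pairs $\{\alpha,\beta\}$, $\{\alpha,\gamma\}$, $\{\beta,\gamma\}$ exhaust all pairs from the triple, so whatever labeling condition (3) produces, some element of $D$ inverts both $x$ and $z$. There is no real obstacle; the proposition is a repackaging of Lemmas~\ref{lem:phi well-defined}, \ref{lem: inversions are symmetric order ideals} and \ref{lem:psi well-defined}.
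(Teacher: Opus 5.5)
Your proof is correct and is essentially the paper's own argument. The forward direction is Lemma~\ref{lem:phi well-defined}, and the converse uses $D\subseteq\psi(\varphi(D))$ (Lemma~\ref{lem: inversions are symmetric order ideals}), the fact that $\psi(\varphi(D))$ is a Condorcet domain (Lemma~\ref{lem:psi well-defined}), and heredity of the Condorcet property to nonempty subsets; your supplementary direct check is also valid, and in fact only needs the relation $x\ge -z$ from condition (3).
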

\begin{proof}
If $D$ is a Condorcet domain, then $\varphi(D)$ is a Condorcet root poset by Lemma~\ref{lem:phi well-defined}. Now assume $\varphi(D)$ is a Condorcet root poset. By Lemma~\ref{lem: inversions are symmetric order ideals}, Definition~\ref{def: map psi} and Lemma~\ref{lem:psi well-defined}, $D\subseteq \psi(\varphi(D)) \in \mathcal{D}$. From Definition~\ref{def: condorcet}, a subset of Condorcet domain is still a Condorcet domain, so we are done. 
\end{proof}
\subsection{Antipodal root posets}\label{subsec: Antipodal root posets}
We establish useful properties of antipodal root posets, and eventually prove an analogy of Proposition~\ref{prop: Condorcet domain eq map to Condorcet root poset} on the poset side.

\begin{prop}
\label{prop: J_p(P) in J_s(P)}
Let $P$ be an antipodal root poset. For any order ideal $I$ of $P$, $I\in J_p(P)$ if and only if there exists a $I'\in J_s(P)$ with $I\subseteq I'$. Moreover, $J_s(P)$ is nonempty.
\end{prop}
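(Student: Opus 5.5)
The direction ``$\Leftarrow$'' is immediate. If $I\subseteq I'$ with $I'\in J_s(P)$, then $I\cap -I\subseteq I'\cap -I'=\emptyset$, so $I\in J_p(P)$.

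For ``$\Rightarrow$'' we extend $I$ greedily. Among all partial symmetric order ideals containing $I$, choose one, call it $I'$, of maximal size; this is possible since $\Phi$ is finite. We claim $I'\sqcup -I'=P$. Suppose not, and pick $\gamma\notin I'\cup -I'$. We want to add a suitable element together with everything below it. Consider the set $S=\{\delta\in P\mid \delta\notin I'\cup -I'\}$. It is nonempty and closed under negation. Choose $\gamma\in S$ minimal in $S$ with respect to the preposet order, taking a whole equivalence class $[\gamma]$ if there are ties. Set $I''=I'\cup\{\delta\mid \delta\le\gamma\}$, which is an order ideal. Every $\delta\le\gamma$ lies either in $I'$, or in $S$ (and then $\delta=\gamma$ in $P$ by minimality), or in $-I'$.

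We must rule out two things: some $\delta\le\gamma$ lying in $-I'$, and $I''\cap -I''\neq\emptyset$.

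For the first, if $\delta\le\gamma$ with $\delta\in -I'$, then $-\gamma\le -\delta\in I'$, so $-\gamma\in I'$ because $I'$ is an order ideal. This contradicts $\gamma\in S$.

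For the second, the new elements form the class $[\gamma]$ (elements $\delta$ with $\delta=\gamma$ in $P$). A conflict arises only if some $\delta$ in this class has $-\delta\in I''$. The case $-\delta\in I'$ is excluded exactly as above, so the remaining case is $-\delta=\gamma$ in $P$ for some $\delta=\gamma$, i.e.\ $\gamma$ and $-\gamma$ (or some pair $\delta,-\delta$) are equivalent in the preposet. This is the main obstacle, and it is where condition (2) of an antipodal root poset must be used. Condition (2) excludes $\gamma=-\gamma$ as roots, but as a preposet relation we need more. If $\delta=-\delta$ in $P$, then in particular $\delta\le -\delta$ and $-\delta\le\delta$. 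I would argue that condition (2), read as ``$\gamma\le-\gamma$ forces $\gamma$ minimum'' (or by choosing $\gamma$ versus $-\gamma$ appropriately), together with $D$-type intuition, makes such a class either absent or harmless. If $\gamma\le -\gamma$ holds nonstrictly, then $\gamma$ and $-\gamma$ are both minimum and both maximum, so $P$ is one class. Then every order ideal is empty or all of $P$, and $I$, being partial symmetric, is empty. This needs separate handling, and I expect the intended reading of condition (2) excludes it, i.e.\ $\gamma=-\gamma$ is forbidden in $P$. I will take condition (2) as forbidding $\gamma=-\gamma$ in $P$. Then adding $[\gamma]$ keeps the ideal partial symmetric, which contradicts the maximality of $I'$.

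Finally, $J_s(P)\neq\emptyset$ follows by applying the above to $I=\emptyset$, which is trivially in $J_p(P)$.
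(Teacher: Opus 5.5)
Your proof is correct and is essentially the paper's argument. The paper also adjoins the equivalence class of a minimal element of $P\setminus(I\sqcup -I)$; it phrases this as an iteration rather than as a contradiction with a maximal-size extension. Your extra checks (that nothing below $\gamma$ lies in $-I'$, and that the cross terms with $-I'$ vanish) spell out what the paper's identity $(I\sqcup A)\cap-(I\sqcup A)=(I\cap -I)\sqcup(A\cap -A)$ leaves implicit.

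The reading of condition (2) you settle on is exactly the intended one. The paper writes $=_\Phi$ for equality of roots and $=$ for equality in $P$, so ``$\gamma\neq-\gamma$'' means $\gamma$ and $-\gamma$ are not equal in $P$, which is precisely what the paper invokes. Your detour about $P$ collapsing to a single class is both unjustified and unnecessary, since the second clause of condition (2) concerns only strict $<$; you should drop it.
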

\begin{proof}
The ``if'' part follows directly from definition. For the ``only if'' part, we take any $I\in J_p(P)$ and assume $I\notin J_s(P)$, which means $P\setminus(I\sqcup-I)\neq \emptyset$. Take a minimal element $\alpha$ in $P\setminus(I\sqcup-I)$, let $A=\{\beta \in P \mid \beta =\alpha\}$, then $I\sqcup A$ is an order ideal with $(I\sqcup A)\cap-(I\sqcup A)=(I\cap -I)\sqcup (A\cap -A)=\emptyset$ as $\alpha\neq -\alpha$ in $P$. So $I\sqcup A\in J_p(P)$. As $P$ is finite, we can repeat this process until $I$ become a symmetric order ideal. As $\emptyset\in J_p(P)$ for any $P$, by the above proof, $J_s(P)$ is nonempty.
\end{proof}
We can recover $P$ from $J(P)$ as $a\le b$ in $P$ if and only if $\{I\in J(P)\mid a\in I\} \supseteq \{I\in J(P)\mid b\in I\}$. Also define $J_\alpha(P):=\{I\in J_s(P)\mid \alpha \in I\}$.
\begin{prop}\label{prop: recover from symmetric order ideal}
For any pair of elements $\alpha$ and $\beta$ in an antipodal root poset $P$, $\alpha\le \beta $ in $P$ if and only if $J_\alpha(P) \supseteq J_\beta(P)$.
\end{prop}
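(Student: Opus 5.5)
The forward direction is immediate from the definition of order ideal. Suppose $\alpha\le\beta$ in $P$ and $I\in J_\beta(P)$. Then $\beta\in I$, and since $I$ is an order ideal, $\alpha\in I$. Hence $I\in J_\alpha(P)$, so $J_\alpha(P)\supseteq J_\beta(P)$.

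For the converse I argue by contrapositive. Assume $\alpha\not\le\beta$; I will construct a symmetric order ideal containing $\beta$ but not $\alpha$. The natural first candidate is the principal ideal $I_0=\{\gamma\in P\mid \gamma\le\beta\}$. It contains $\beta$ and does not contain $\alpha$. If we enlarge it by adding $-\alpha$, to forbid $\alpha$ forever, we get
\[I_1=\{\gamma\mid \gamma\le\beta\}\cup\{\gamma\mid\gamma\le-\alpha\}.\]
This is an order ideal. Once $I_1$ is known to be partial symmetric, Proposition~\ref{prop: J_p(P) in J_s(P)} extends it to some $I'\in J_s(P)$. Then $\beta\in I'$ and $-\alpha\in I'$, so $\alpha\notin I'$ by symmetry. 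Thus $I'\in J_\beta(P)\setminus J_\alpha(P)$, as required.

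The main step is checking $I_1\cap -I_1=\emptyset$, which I split into cases.
\begin{itemize}
\item Suppose $\gamma\le\beta$ and $-\gamma\le\beta$. Condition (1) gives $-\beta\le\gamma\le\beta$, hence $-\beta<\beta$ or $-\beta=\beta$. Condition (2) forbids equality, so $-\beta<\beta$, and then $-\beta$ is a minimum element. Since $-\beta\le\gamma$ and $-\beta\le-\gamma$, I need to rule this out directly. From $-\beta<\beta$, the root $-\beta$ is a minimum, so $\beta$ is a maximum by (1). Then $\alpha\le\beta$ automatically, contradicting $\alpha\not\le\beta$.
\item Suppose $\gamma\le-\alpha$ and $-\gamma\le-\alpha$. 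Symmetrically this gives $\alpha<-\alpha$. Then $\alpha$ is a minimum, so $\alpha\le\beta$, again a contradiction.
\item Suppose $\gamma\le\beta$ and $-\gamma\le-\alpha$. Then $\alpha\le\gamma\le\beta$ by (1), contradicting $\alpha\not\le\beta$.
\end{itemize}
So $I_1$ is partial symmetric.

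The only delicate point is the preposet subtlety in the first two cases. There one must use the maximum/minimum dichotomy from condition (2), which is exactly what yields $\alpha\le\beta$ and hence the contradiction. I expect this case check to be the whole difficulty; the rest is direct from Proposition~\ref{prop: J_p(P) in J_s(P)}.
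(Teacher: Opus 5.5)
Your proof is correct and matches the paper's approach. The paper also takes the order ideal generated by $\beta$ and $-\alpha$ and rules out $I\cap -I\neq\emptyset$ via the four possibilities $-\alpha\ge\alpha$, $-\alpha\ge-\beta$, $\beta\ge\alpha$, $\beta\ge-\beta$; your three cases cover these, since the fourth case is the same as your third under $\gamma\leftrightarrow-\gamma$. It then extends this ideal to a symmetric one via Proposition~\ref{prop: J_p(P) in J_s(P)}.
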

\begin{proof}
By definition of $J_\gamma(P)$, $\alpha \le \beta $ implies $J_\alpha(P)\supseteq J_\beta(P)$. We only need to prove that when $\alpha \nleq \beta$, $J_\alpha(P)\nsupseteq J_\beta(P)$. Let $I$ be the order ideal generated by $-\alpha$ and $\beta$. Then $-I$ is the order filter generated by $\alpha$ and $-\beta$. First assume $I\cap -I\neq \emptyset$. Then one of $-\alpha \ge \alpha$, $-\alpha \ge -\beta$, $\beta \ge \alpha$, and $\beta \ge -\beta$ holds, which contradicts with antipodal conditions in Definition~\ref{def: condorcet-poset} and $\alpha \nleq \beta$. So $I\cap -I= \emptyset$ and $I\in J_p(P)$. By Proposition~\ref{prop: J_p(P) in J_s(P)}, there exists $I' \in J_s(P)$ with $-\alpha, \beta\in I\subseteq I'$. So  $I'\in J_\beta(P)\setminus J_\alpha(P)$ hence $J_\alpha(P)\nsupseteq J_\beta(P)$ and we are done.
\end{proof}
\begin{ex}
    \label{ex: recover from symmetric order ideal}
    Take the Condorcet root poset $P$ in Figure~\ref{fig:map-psi-example} and a symmetric order ideal $I=\{-12,34,-13,24,14,23\}$ of $P$. We can compute
    \[J_{34}(P)=\{ I, \{12,34,13,24,14,23\}, \{-12,34,13,24,14,23\}, \{12,34,13,24,14,-23\} \},\] and $J_{-13}(P)=\{I\}$. So $J_{-13}(P)\subsetneq J_{34}(P)$, corresponding to $-13>34$ in $P$.
\end{ex}
The following theorem characterizes the Condorcet property on posets.
\begin{theorem}
    \label{thm: Condorcet root poset eq condition}
    Let $P$ be an antipodal root poset. Then $P$ is a Condorcet root poset if and only if $\psi(P)$ is a Condorcet domain and $\varphi(\psi(P))=P$. Moreover, $P$ is a Condorcet root poset if and only if $|J_s(P)|=|\psi(P)|$.
\end{theorem}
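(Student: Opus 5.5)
We prove the two equivalences in turn, working throughout with the sets $J_\alpha(P)$ and Proposition~\ref{prop: recover from symmetric order ideal}. The key observation is that $\psi(P)$ is just $J_s(P)$ restricted to those symmetric order ideals that are biconvex, since an element $w\in W$ is determined by $I(w)$. Write $D=\psi(P)$. For any root $\gamma$ we then have
\[
D(\gamma)=\{w\in W\mid I(w)\in J_s(P),\ \gamma\in I(w)\},
\]
and this set is in bijection with the biconvex members of $J_\gamma(P)$.

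For the forward direction of the first statement, assume $P\in\mathcal{P}$. Lemma~\ref{lem:psi well-defined} already gives that $D=\psi(P)$ is a Condorcet domain. Its proof also shows that every $I\in J_s(P)$ is biconvex, so $I\mapsto w$ with $I(w)=I$ is a bijection $J_s(P)\to D$. Under this bijection $J_\gamma(P)$ corresponds to $D(\gamma)$, so for all $\alpha,\beta$ we get
\[
\alpha\le\beta \text{ in } \varphi(D) \iff D(\alpha)\supseteq D(\beta) \iff J_\alpha(P)\supseteq J_\beta(P) \iff \alpha\le\beta \text{ in } P,
\]
where the last step is Proposition~\ref{prop: recover from symmetric order ideal}. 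Hence $\varphi(\psi(P))=P$.

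For the converse, if $\psi(P)$ is a Condorcet domain then Lemma~\ref{lem:phi well-defined} makes $\varphi(\psi(P))$ a Condorcet root poset. If moreover $\varphi(\psi(P))=P$, then $P$ itself is a Condorcet root poset. One subtlety is that $\varphi$ needs a nonempty input. I expect the argument to go through as follows: if $\psi(P)$ were empty it would not be a Condorcet domain by definition, so the hypothesis already rules this out.

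For the second statement, the forward implication $P\in\mathcal{P}\Rightarrow |J_s(P)|=|\psi(P)|$ is the last claim of Lemma~\ref{lem:psi well-defined}. For the reverse implication, assume $|J_s(P)|=|\psi(P)|$. Since $w\mapsto I(w)$ embeds $\psi(P)$ into $J_s(P)$ and both sets are finite, every symmetric order ideal is biconvex. We must now verify condition (3) of Definition~\ref{def: condorcet-poset} for an arbitrary root-triple $\{\alpha,\beta,\gamma\}$. Suppose it fails. Then none of the relations
\[
\alpha\ge-\gamma\ge\beta \qquad\text{(over all labellings)}, \qquad \alpha=-\gamma \qquad\text{(over all labellings)}
\]
holds in $P$. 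The plan is to build a symmetric order ideal containing all of $\alpha,\beta,\gamma$, or all of their negatives, which would contradict biconvexity.

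The natural attempt is to take $I$ to be the order ideal generated by $\alpha,\beta,\gamma$. By Proposition~\ref{prop: J_p(P) in J_s(P)}, $I$ extends to a symmetric order ideal as soon as $I\cap -I=\emptyset$. Now $I\cap-I\neq\emptyset$ exactly when some $x\in\{\alpha,\beta,\gamma\}$ and $y\in\{\alpha,\beta,\gamma\}$ satisfy $-y\le x$, and the cases $x=y$ are excluded by condition (2) unless the root is a minimum. So if this attempt fails, we have relations of the form $x\ge -y$ with $x\ne y$.

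Symmetrically, if the ideal generated by $-\alpha,-\beta,-\gamma$ also fails to be partial symmetric, we get relations $-x\ge y$, which after negation are again of the form $-y\ge x$. The combinatorial heart of the proof is to show that these two families of relations, together with transitivity and the antipodal rules, always force either a chain $\alpha\ge-\gamma\ge\beta$ or an equality $\alpha=-\gamma$ for some labelling. This is a finite case check on which ordered pairs occur, and it mirrors the $T_1,T_2$ analysis in the proof of Lemma~\ref{lem:phi well-defined}.

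I expect this case analysis to be the main obstacle, in particular handling the situations where a minimum or maximum element makes $-y\le x$ hold trivially. I would first try to reduce by noting that if some root of the triple lies in $P_m\cup P_M$, then condition (3) is easy to obtain directly: for instance, a minimum $\beta$ gives $-\gamma\ge\beta$ for free. This reduces the problem to the case where all three roots are non-global, and there the relations $x\ge -y$ come only from the poset structure.
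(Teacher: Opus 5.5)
Your route is the paper's. Counting makes every member of $J_s(P)$ biconvex. Hence the order ideals generated by $T=\{\alpha,\beta,\gamma\}$ and by $-T$ cannot be partial symmetric, since otherwise Proposition~\ref{prop: J_p(P) in J_s(P)} would extend one of them to a symmetric order ideal containing $T$ or $-T$. This gives a relation $x\ge -y$ with $x,y\in T$ and a relation $-x'\ge y'$ with $x',y'\in T$. The first statement and the forward half of the second are handled exactly as in the paper.

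The gap is that you stop before the case analysis, which is where the remaining work lies. The reduction you propose for it is also wrong as stated. If $\beta\in P_m$, you get $-\gamma\ge\beta$ and $-\alpha\ge\beta$ for free. But condition (3) still needs $\alpha\ge-\gamma$ or an equality, and nothing in the axioms of an antipodal root poset supplies that (e.g.\ when $\pm\alpha,\pm\gamma$ are comparable only to global elements). So triples containing a global root need the biconvexity relations just as much.

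No reduction is needed; one relation from each family suffices.

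\textbf{Normalizing the first relation.} If $x\neq y$, relabel so that $\alpha\ge-\gamma$. If $x=y$, then $-x<x$, so $x\in P_M$ (not $P_m$), hence $x\ge -z$ for every $z$. After relabeling you again have $\alpha\ge-\gamma$.

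\textbf{Case $x'=y'$.} Here $x'\in P_m$.
For $x'=\alpha$: the minimum $\alpha$ lies above $-\gamma$, forcing $\alpha=-\gamma$.
For $x'=\beta$: you get $\alpha\ge-\gamma\ge\beta$.
For $x'=\gamma$: the maximum $-\gamma$ lies below $\alpha$, forcing $\alpha=-\gamma$.

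\textbf{Case $x'\neq y'$.} Since $-x'\ge y'$ is equivalent to $-y'\ge x'$, there are three pairs.
The pair $\{\alpha,\gamma\}$ gives $-\gamma\ge\alpha$, hence $\alpha=-\gamma$.
The pair $\{\alpha,\beta\}$ gives $-\alpha\ge\beta$. Together with $\gamma\ge-\alpha$ (the negation of $\alpha\ge-\gamma$), this gives $\gamma\ge-\alpha\ge\beta$.
The pair $\{\beta,\gamma\}$ gives $-\gamma\ge\beta$, hence $\alpha\ge-\gamma\ge\beta$.

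This is precisely the case analysis with which the paper finishes the proof.
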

\begin{proof}
    Suppose that $P$ is a Condorcet root poset. By Lemma~\ref{lem:psi well-defined}, $\psi(P)$ is a Condorcet domain and $|J_s(P)|=|\psi(P)|$. Let $P' = \varphi(\psi(P))$. By Definition~\ref{def: map phi}, for any $\alpha, \beta \in \Phi$, we have $\alpha \le_{P'} \beta$ if and only if $\psi(P)(\alpha) \supseteq \psi(P)(\beta)$. 
    By Definition~\ref{def: map psi}, $\psi(P) = \{w \in W \mid I(w) \in J_s(P)\}$. Consequently, for any $\gamma \in \Phi$, we have 
    \[ \psi(P)(\gamma) = \{v \in W \mid \gamma \in I(v) \text{ and } I(v) \in J_s(P)\} = \{v \in W \mid I(v) \in J_\gamma(P)\}. \]
    Since $|J_s(P)| = |\psi(P)|$, it follows that $|J_\gamma(P)| = |\psi(P)(\gamma)|$. Therefore, $\alpha \le_{P'} \beta$ if and only if $J_\alpha(P) \supseteq J_\beta(P)$. By Proposition~\ref{prop: recover from symmetric order ideal}, the inclusion $J_\alpha(P) \supseteq J_\beta(P)$ is equivalent to $\alpha \le_P \beta$. Thus, $\alpha \le_{P'} \beta$ if and only if $\alpha \le_P \beta$, which implies $P' = P$.
    
    Conversely, if $\psi(P)$ is a Condorcet domain and $\varphi(\psi(P)) = P$, then $P$ is a Condorcet root poset by Lemma~\ref{lem:phi well-defined}.

    Finally, assume that $|J_s(P)| = |\psi(P)|$. To show that $P$ is a Condorcet root poset, we need to prove that $P$ satisfies the Condorcet condition for any root-triple. By Definition~\ref{def: map psi} and Proposition~\ref{prop: biconvex}, any $I \in J_s(P)$ is biconvex. Let $T=\{\alpha, \beta, \gamma\}$ be a root-triple, and suppose for the sake of contradiction that $\zeta_1 \ngeq -\zeta_2$ for all $\zeta_1, \zeta_2 \in T$. Let $I'$ be the order ideal generated by $T$, then $-I' $ is the order filter generated by $-T$. If $I' \cap -I' \neq \emptyset$, there exist $\theta_1, \theta_2 \in T$ such that $\theta_1  \ge -\theta_2$, which is a contradiction. Thus, $I' \in J_p(P)$. By Proposition~\ref{prop: J_p(P) in J_s(P)}, there exists $I \in J_s(P)$ such that $T \subseteq I' \subseteq I$, which contradicts the assumption that $I$ is biconvex. Therefore, there must exist $\zeta_1, \zeta_2 \in T$ such that $\zeta_1 \ge -\zeta_2$.

    If $\zeta_1 \neq \zeta_2$, we may assume without loss of generality that $\alpha \ge -\gamma$. If $\zeta_1 = \zeta_2$, we may assume $\alpha \ge -\alpha$, which means $\alpha \in P_M$ and hence $\alpha \ge -\gamma$. In either case, we may assume $\alpha \ge -\gamma$. Since $-T$ is also a root-triple, there exist $\zeta_1', \zeta_2' \in T$ such that $-\zeta_1' \ge \zeta_2'$. 
    
    If $\zeta_1' = \zeta_2'$, then $\zeta_1' \in P_m$. We consider three subcases:
    \begin{enumerate}
        \item If $\zeta_1' = \alpha$, then $\alpha \in P_m$. The inequality $-\gamma \le \alpha$ implies $-\gamma \in P_m$, so $-\gamma = \alpha$.
        \item If $\zeta'_1 = \beta$, then $\beta \in P_m$. This yields $-\gamma \ge \beta$, and combined with $\alpha \ge -\gamma$, we obtain $\alpha \ge -\gamma \ge \beta$.
        \item If $\zeta_1' = \gamma$, then $-\gamma \in P_M$. The inequality $\alpha \ge -\gamma$ implies $\alpha \in P_M$, so $\alpha = -\gamma$.
    \end{enumerate}
    All of these subcases satisfy the Condorcet condition. 
    
    Now assume $\zeta_1' \neq \zeta_2'$. Since $-\zeta_1' \ge \zeta_2'$ is equivalent to $-\zeta_2' \ge \zeta_1'$, there are only three distinct pairs to consider:
    \begin{enumerate}
        \item  $\{\zeta_1', \zeta_2'\}=\{\alpha, \gamma\}$: Then $-\gamma \ge \alpha$. Since we already have $\alpha \ge -\gamma$, it follows that $\alpha = -\gamma$.
        \item $\{\zeta_1', \zeta_2'\}=\{\alpha, \beta\}$: Then $-\beta \ge \alpha$. This implies $-\beta \ge \alpha \ge -\gamma$, which is equivalent to $\gamma \ge -\alpha \ge \beta$.
        \item  $\{\zeta_1', \zeta_2'\}=\{\beta, \gamma\}$: Then $-\gamma \ge \beta$. This implies $\alpha \ge -\gamma \ge \beta$.
    \end{enumerate}
    All of these cases also satisfy the Condorcet condition. This completes the proof.
\end{proof}

\subsection{Relation with closed Condorcet domains}\label{subsec: Closed Condorcet domains}
In this subsection, we prove that Condorcet root posets are in bijection with closed Condorcet domains under the maps $\psi$ and $\varphi$. This establishes Theorem~\ref{thm: bijection} and Corollary~\ref{cor:bijection}. 

\begin{lemma}
    \label{lem: psi(P) closed}
    Let $P$ be a Condorcet root poset. Then $\psi(P)$ is a closed Condorcet domain.
\end{lemma}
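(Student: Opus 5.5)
We need to show: $\psi(P)$ is a Condorcet domain (Lemma~\ref{lem:psi well-defined}) and $\nu(\psi(P))\subseteq\psi(P)$. So let $D=\psi(P)$, let $f$ be a no-tie voting profile on $D$, and let $v\in R(f)$. By Theorem~\ref{thm: fundamental}, $I(v)=I(f)$. It therefore suffices to show that $I(f)$ is a symmetric order ideal of $P$, because then $v\in\psi(P)$ by Definition~\ref{def: map psi}.

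First, symmetry. Since $f$ has no tie, for each $\alpha$ exactly one of $f(\alpha),f(-\alpha)$ exceeds $|f|/2$. Hence $I(f)\sqcup -I(f)=\Phi$.

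Next, $I(f)$ is an order ideal. Take $\alpha\in I(f)$ and $\beta\le\alpha$ in $P$. Every $w\in D$ has $I(w)\in J_s(P)$, which is an order ideal of $P$. So $\alpha\in I(w)$ implies $\beta\in I(w)$, i.e.\ $D(\alpha)\subseteq D(\beta)$. Then
\[ f(\beta)=\sum_{w\in D(\beta)}f(w)\ \ge\ \sum_{w\in D(\alpha)}f(w)=f(\alpha)>|f|/2, \]
so $\beta\in I(f)$. In other words, $I(f)$ is a union of sets of the form $D(\cdot)$ cut out by a threshold, and these sets are monotone along $P$.

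Combining the two steps gives $I(f)\in J_s(P)$. Thus $R(f)=\{v\}\subseteq\psi(P)$, so $\nu(\psi(P))\subseteq\psi(P)$, and $\psi(P)$ is closed by Definition~\ref{def: closed}.

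I expect no real obstacle. The only subtle point is that $P$ is a preposet: the relation $\beta\le\alpha$ in $P$ need not equal the relation in $\varphi(D)$. The argument avoids this issue by using only that each $I(w)$, $w\in D$, is an order ideal of $P$ itself. This holds by the definition of $\psi$, without invoking Theorem~\ref{thm: Condorcet root poset eq condition}. Alternatively, one could use $\varphi(\psi(P))=P$ from Theorem~\ref{thm: Condorcet root poset eq condition} together with Lemma~\ref{lem: inversions are symmetric order ideals}.
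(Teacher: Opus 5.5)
Your proof is correct and matches the paper's argument essentially step for step. You use Theorem~\ref{thm: fundamental} to identify $I(w)=I(f)$ for a no-tie profile $f$, note that antipodality is automatic, and get the order-ideal property from $f(\beta)\ge f(\alpha)>|f|/2$ whenever $\beta\le_P\alpha$, which holds because every $I(u)$ with $u\in\psi(P)$ is an order ideal of $P$ by the definition of $\psi$. (Your aside that $I(f)$ is ``a union of sets of the form $D(\cdot)$'' is inaccurate as phrased, since $I(f)$ is a set of roots, but the argument does not use it.)
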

\begin{proof}
    Let $D = \psi(P)$. Fix an arbitrary $w \in \nu(D)$, we must show that $w \in D$, which is equivalent to $I(w) \in J_s(P)$ from Definition~\ref{def: map psi}. Since $I(w) \sqcup -I(w) = \Phi$, it suffices to show that $I(w)$ is an order ideal in $P$. 
    
    Suppose $\alpha \le_P \beta$ and $\beta \in I(w)$. By Theorem~\ref{thm: fundamental}, there exists a voting profile $f$ which has no tie on $D$ such that $I(f) = I(w)$ and $R(f) = \{w\}$. Since $\beta \in I(w) = I(f)$, we have $f(\beta) > |f|/2$. For any $v \in D$, Definition~\ref{def: map psi} dictates that $I(v) \in J_s(P)$. Since $I(v)$ is an order ideal, $\beta \in I(v)$ implies $\alpha \in I(v)$. Consequently, we have
    \[
        f(\alpha) = \sum_{\substack{v \in D \\ \alpha \in I(v)}} f(v) \ge \sum_{\substack{v \in D \\ \beta \in I(v)}} f(v) = f(\beta) > \frac{|f|}{2}.
    \]
    This implies that $\alpha \in I(f) = I(w)$, which proves $I(w)$ is an order ideal. Thus, $w \in D$, completing the proof.
\end{proof}

\begin{prop}
    \label{prop: phi-psi is closure img psi is closed}
    For any Condorcet domain $D$, $\psi(\varphi(D)) = \overline{D}$. Furthermore, $\psi(\mathcal{P})$ is precisely the set of all closed Condorcet domains. 
\end{prop}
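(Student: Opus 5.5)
To prove $\psi(\varphi(D))=\overline{D}$, we establish the two inclusions separately, writing $P=\varphi(D)$, which lies in $\mathcal{P}$ by Lemma~\ref{lem:phi well-defined}.

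For $\overline{D}\subseteq\psi(P)$: by Lemma~\ref{lem: inversions are symmetric order ideals}, every $v\in D$ has $I(v)\in J_s(P)$, so $D\subseteq\psi(P)$. By Lemma~\ref{lem: psi(P) closed}, $\psi(P)$ is a closed Condorcet domain. Lemma~\ref{lem: closure} says $\overline{D}$ is the intersection of all closed Condorcet domains containing $D$, so $\overline{D}\subseteq\psi(P)$.

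For $\psi(P)\subseteq\overline{D}$: take $w\in\psi(P)$, so $I(w)$ is a symmetric order ideal of $P$. It suffices to show $w\in\nu(D)$, since then $w\in\nu(D)\subseteq\nu(\overline{D})=\overline{D}$. For the middle inclusion, any no-tie profile on $D$ extends by zero to a no-tie profile on $\overline{D}$ with the same result, because the values $f(\alpha)$ are unchanged. So we need a no-tie profile $f$ on $D$ with $I(f)=I(w)$; Theorem~\ref{thm: fundamental} then gives $R(f)=\{w\}$.

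The main obstacle is constructing this $f$. The tool is Proposition~\ref{prop: recover from symmetric order ideal} applied on the domain side. If $\alpha\in I(w)$, then $-\alpha\notin I(w)$, and since $I(w)$ is an order ideal, $\alpha\not\geq-\alpha$, i.e., $D(\alpha)\not\subseteq D(-\alpha)$. As $D(\alpha)\sqcup D(-\alpha)=D$, this means $D(\alpha)\neq\emptyset$. More useful is the following: for $\alpha\in I(w)$ and $\beta\notin I(w)$, we have $\beta\not\le\alpha$, so there is $v\in D$ with $\alpha\in I(v)$ and $\beta\notin I(v)$.

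First try the uniform-type profile $f=\sum_{v\in D}c_v\,\mathbf{1}_v$ and look for weights with $\sum_{v\in D(\alpha)}c_v>\sum_{v\in D(-\alpha)}c_v$ for all $\alpha\in I(w)$. This is a feasibility problem for a strict linear system; we would clear denominators afterwards to get integer weights. By Gordan's/Farkas' alternative, infeasibility would give nonnegative multipliers $\lambda_\alpha$ on $I(w)$, not all zero, with
\[\sum_{\alpha\in I(w)}\lambda_\alpha\bigl(\mathbf 1[\alpha\in I(v)]-\mathbf 1[\alpha\notin I(v)]\bigr)\le 0\quad\text{for every }v\in D.\]
We would then derive a contradiction using biconvexity and the Condorcet condition.

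The cleaner alternative we would pursue first is induction on $|I(w)\triangle I(v_0)|$ for a chosen $v_0\in D$. Pick a minimal element $\alpha$ of $P$ that separates $I(w)$ from the current target. Using the pair-separation fact above, add heavy weight on a domain element realizing the needed inversions and show that the majority flips exactly the desired roots. If neither route closes cleanly, we would argue via a median-type construction: a no-tie majority over three elements of $D$ realizes the "median" of their inversion sets. Iterating medians (which stay biconvex by the argument in Theorem~\ref{thm: fundamental}) generates every symmetric order ideal of $P$, since such ideals are cut out by the pairwise separations provided by Proposition~\ref{prop: recover from symmetric order ideal}.

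The second claim then follows. Each $\psi(P)$ is closed by Lemma~\ref{lem: psi(P) closed}. Conversely, a closed $D$ satisfies $D=\overline{D}=\psi(\varphi(D))$, so it lies in $\psi(\mathcal{P})$.
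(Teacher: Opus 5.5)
\textbf{The reduction to $\nu(D)$ is false.} Your inclusion $\overline{D}\subseteq\psi(\varphi(D))$ is correct and matches the paper. Your final deduction that $\psi(\mathcal{P})$ is exactly the set of closed Condorcet domains is also correct. The gap is in the reverse inclusion. You reduce it to showing $\psi(\varphi(D))\subseteq\nu(D)$, that is, that every symmetric order ideal of $\varphi(D)$ is the majority of a \emph{single} no-tie profile on $D$. This is false in general, because one round of voting need not reach the closure.

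Here is a counterexample. In type $A_7$ let $\alpha_1=e_1-e_2$, $\alpha_2=e_3-e_4$, $\alpha_3=e_5-e_6$, $\alpha_4=e_7-e_8$, the simple roots of the commuting reflections $s_1,s_3,s_5,s_7$. For $T\subseteq\{1,2,3,4\}$ let $w_T$ be the product of the corresponding reflections, so $I(w_T)\cap\Phi^+=\{\alpha_i\mid i\in T\}$. Take $D=\{w_T\mid |T|=2\}$.
\begin{itemize}
\item A violating root-triple would have to consist of roots that vary over $D$. Only $\pm\alpha_i$ vary, and these are pairwise orthogonal, so no three form a root-triple. Hence $D$ is a Condorcet domain.
\item In $\varphi(D)$ the eight roots $\pm\alpha_i$ are pairwise incomparable and every other root is global. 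So $w_{\{1,2,3,4\}}\in\psi(\varphi(D))$.
\item Every $w_T\in D$ contains exactly two of the $\alpha_i$. So $\sum_i f(\alpha_i)=2|f|$ for every profile $f$ on $D$, and $f(\alpha_i)>|f|/2$ cannot hold for all four $i$. Thus $w_{\{1,2,3,4\}}\notin\nu(D)$.
\item It does lie in $\nu(\nu(D))$. One vote each for $w_{\{1,2\}},w_{\{1,3\}},w_{\{2,3\}}$ gives $w_{\{1,2,3\}}$. One vote each for $w_{\{1,2,3\}},w_{\{1,2,4\}},w_{\{3,4\}}$ then gives $w_{\{1,2,3,4\}}$.
\end{itemize}
Equivalently, your Farkas system is infeasible. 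The certificate $\lambda_{\alpha_i}=1$ for $i=1,\dots,4$ (zero elsewhere) makes the left-hand side equal to $0$ for every $v\in D$. No contradiction with biconvexity or the Condorcet condition is available, so none of your three routes can close as stated.

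\textbf{What the paper does instead.} The missing idea is that the voters must be drawn from $\overline{D}$, not from $D$, with closedness of $\overline{D}$ absorbing the iteration. The paper proves a stronger claim by induction on $|A|$: for every $I\in J_p(P)$ generated by an antichain $A$, some $u\in\overline{D}$ has $A\subseteq I(u)$.
\begin{itemize}
\item For $|A|\le 2$ this is exactly your pair-separation observation: $\alpha\not\geq-\beta$ gives $v\in D$ with $\alpha,\beta\in I(v)$.
\item For $|A|=k+1\geq 3$, take $u_i\in\overline{D}$ with $A\setminus\{\alpha_i\}\subseteq I(u_i)$ and vote once for each $u_i$. Every $\alpha_i$ gets at least $k>(k+1)/2$ votes. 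After breaking ties as in the proof of Theorem~\ref{thm: fundamental}, some $u\in\nu(\overline{D})=\overline{D}$ has $A\subseteq I(u)$.
\item For $I\in J_s(P)$ this forces $I(u)=I$, since $I(u)\in J_s(P)$.
\end{itemize}
Your iterated-median suggestion is this argument in disguise. However, it lands in $\overline{D}$, not in $\nu(D)$. Your unproved assertion that pairwise separations generate every symmetric order ideal is precisely what this induction establishes.
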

\begin{proof}
Let $P = \varphi(D)$. First, we prove that $\psi(P) = \overline{D}$. By Lemma~\ref{lem: inversions are symmetric order ideals}, Definition~\ref{def: map psi}, and Lemma~\ref{lem: psi(P) closed}, $\psi(P)$ is a closed Condorcet domain containing $D$. Hence, $\psi(P) \supseteq \overline{D}$. 
    
To establish the reverse inclusion $\psi(P) \subseteq \overline{D}$, by Definition~\ref{def: map psi}, we must show that for any $I \in J_s(P)$, there exists $u \in \overline{D}$ such that $I(u) = I$. For any $I \in J_p(P)$, choose an antichain $A$ which generates $I$. Since $J_s(P) \subseteq J_p(P)$, it suffices to prove that for any $I \in J_p(P)$, there exists $u \in \overline{D}$ satisfying $I(u) \supseteq I$. As $u\in \overline{D}\subseteq \psi(P)$, $I(u)\in J_s(P)\subseteq J(P)$. So it's equivalently to prove $I(u)\supseteq A$. We proceed by induction on $|A|$.

When $|A| = 1$, let $A = \{\alpha\}$. The condition $I \in J_p(P)$ implies $\alpha \ngeq_P -\alpha$. By Definition~\ref{def: map phi}, there exists $u \in D$ such that $\alpha \in I(u)$ and $-\alpha \notin I(u)$, which yields the desired result. And when $|A| = 2$, let $A = \{\alpha, \beta\}$. The condition $I \in J_p(P)$ implies $\alpha \ngeq_P -\beta$. Similarly, there exists $u \in D$ such that $\alpha \in I(u)$ and $-\beta \notin I(u)$. The latter implies $\beta \in I(u)$, so $A \subseteq I(u)$, completing this case.
    
For the inductive step, assume $k \ge 2$ and that the claim holds for $|A| = k$. We will show it holds for $|A| = k + 1$. Let $A = \{\alpha_1, \alpha_2, \ldots, \alpha_{k+1}\}$. For each $1 \le i \le k+1$, define $A_i = A \setminus \{\alpha_i\}$ and $I_i$ be the order ideal generated by $A_i$. By the induction hypothesis, there exists $u_i \in \overline{D}$ such that $I_i \subseteq I(u_i)$ for all $i$. 
    
    Consider a voting profile $f$ on $\overline{D}$ such that $f(u_i) = 1$ for all $i$, and $f(v) = 0$ elsewhere. The total number of voters is $|f| = k+1$. For each $\alpha_i$, it appears in at least $k$ of the ideals $I(u_j)$ (for all $j \neq i$), so $f(\alpha_i) \ge k > |f|/2$. Thus, $-\alpha_i\notin I(f)$ for all $i$. By the proof of Theorem~\ref{thm: fundamental}, there exists $u \in R(f)\cap \nu(\overline{D})\subseteq \overline{D}$ such that $-\alpha_i \notin I(u)$ hence $\alpha_i \in I(u)$ for all $i$. Therefore, $A \subseteq I(u)$. This completes the induction.
    
    Finally, the assertion that $\psi(\mathcal{P})$ is exactly the set of all closed Condorcet domains follows directly from Lemma~\ref{lem: psi(P) closed} and the equality $\psi(\varphi(D)) = \overline{D}$ established above.
\end{proof}
\begin{ex}
    \label{ex: phi-psi is closure}
    Consider the Condorcet domain $D=\{213,231,321\}$, which is proved to be closed in Example~\ref{ex: Condorcet and voting} (3) and $\varphi(D)$ was computed in Example~\ref{ex:map-varphi-small-example}. We can see from Figure~\ref{fig:map-varphi-small-example} that $J_s(\varphi(D))=\{\{12,13,23\}, \{23,13,-23\}, \{12,-13,-23\}\}$, hence $\psi(\varphi(D))=\overline{D}=D$.
\end{ex}
\begin{proof}[Proof of Theorem~\ref{thm: bijection} and Corollary~\ref{cor:bijection}]
    By Lemma~\ref{lem:phi well-defined}, Theorem~\ref{thm: Condorcet root poset eq condition}, and Proposition~\ref{prop: phi-psi is closure img psi is closed}, we have $\varphi(\mathcal{D}) = \mathcal{P}$, and $\psi(\mathcal{P})$ is precisely the set of closed Condorcet domains. Moreover, if $P \in \mathcal{P}$ and $D \in \psi(\mathcal{P})$, then $\psi(\varphi(D)) = D$ and $\varphi(\psi(P)) = P$. Therefore, $\psi$ and $\varphi$ restrict to mutually inverse bijections between the set of Condorcet root posets and the set of closed Condorcet domains. Furthermore, since $\psi(\varphi(D)) = \overline{D}$ for any $D\in \mathcal{D}$ and $\psi$ is injective on $\mathcal{P}$, it immediately follows that $\varphi(D_1) = \varphi(D_2)$ if and only if $\overline{D}_1 = \overline{D}_2$.
\end{proof}
\section{Median graphs and Condorcet root posets}
\label{sec: median graph}

Let $G$ be a simple graph. For any two vertices $u, v \in G$, the \emph{distance} $d(u,v)$ is defined as the number of edges in a shortest path connecting $u$ and $v$. A graph $G$ is called a \emph{median graph} \cite[Section 3.1]{Puppe-Slinko-median-graph} if for any three vertices $a, b, c \in G$, there exists a unique vertex $v \in G$ that lies on a shortest path between each pair of $a, b$, and $c$. Equivalently, there is a unique $v \in G$ such that
\[
    d(a,b) = d(a,v) + d(v,b), \quad d(b,c) = d(b,v) + d(v,c), \quad \text{and} \quad d(c,a) = d(c,v) + d(v,a).
\]

Recall that a lattice $L$ is \emph{distributive} if for all $a, b, c \in L$, $a \wedge (b \vee c) = (a \wedge b) \vee (a \wedge c)$. A join-semilattice $L'$ is called a \emph{distributive join-semilattice} if for any $a \in L'$, the principal filter generated by $a$ is a distributive lattice. For instance, for any poset $P$, its lattice of order ideals $J(P)$ is distributive, and any order filter of $J(P)$ is a distributive join-semilattice. 

The following theorem characterizes median graphs in terms of distributive join-semilattices. 

\begin{theorem}[{\cite[Section 5.4]{klavzar1999median}}]
    \label{thm: median graph}
    A graph $G$ is a median graph if and only if it is isomorphic to the Hasse diagram of a join-semilattice $L$ satisfying the following two properties:
    \begin{itemize}
        \item[(1)] $L$ is distributive;
        \item[(2)] For all $x, y, z \in L$, if the pairwise meets $x \wedge y$, $y \wedge z$, and $z \wedge x$ exist, then the meet $x \wedge y \wedge z$ exists.
    \end{itemize}
\end{theorem}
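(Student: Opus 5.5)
Since this is the classical characterization of median graphs from \cite[Section 5.4]{klavzar1999median}, I would prove it by passing through the ternary median operation and establishing each direction separately.

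\textbf{From median graph to semilattice.} Let $G$ be a median graph and fix a base vertex $b$. Define $x\le_b y$ if and only if $y$ lies on a shortest path from $b$ to $x$, that is, $d(b,x)=d(b,y)+d(y,x)$. Then $b$ is the top element, and $\le_b$ is a partial order, since transitivity follows from additivity of distances along geodesics. For any $x,y$, I claim that the median $m(b,x,y)$ is the join $x\vee y$.
\begin{itemize}
\item It is an upper bound of $x$ and $y$, because it lies on geodesics from $b$ to $x$ and from $b$ to $y$.
\item It is the least upper bound. If $z$ is another common upper bound, then $z$ lies on geodesics from $b$ to $x$ and to $y$. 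Applying the median property to the triple $(z,x,y)$, and using uniqueness of medians, shows that $m(b,x,y)=m(z,x,y)$, and this vertex lies below $z$.
\end{itemize}
Hence $(G,\le_b)$ is a join-semilattice.

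Its Hasse diagram is $G$ itself. Because median graphs are bipartite, every edge $xy$ has $d(b,x)=d(b,y)\pm1$, so edges are exactly covering relations, and conversely every covering relation is an edge. The principal filter of $a$ is the interval $[a,b]$. I would show it is a distributive lattice by checking two things:
\begin{itemize}
\item the meet of $y,z\in[a,b]$ is $m(a,y,z)$;
\item the median identities give distributivity; concretely, the median algebra on an interval is a distributive lattice, for which I would use the standard median associativity law.
\end{itemize}
For condition (2), suppose $x\wedge y$, $y\wedge z$ and $z\wedge x$ all exist. I would show that $m(x\wedge y,\,y\wedge z,\,z\wedge x)$, computed in a suitable interval, is a common lower bound of $x,y,z$ and is the greatest one.

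\textbf{From semilattice to median graph.} Let $L$ satisfy (1) and (2), with Hasse diagram $G$. Since $L$ is finite and all principal filters are distributive lattices, they are graded. I would then prove the distance formula
\[d(x,y)=\rho(x)+\rho(y)-2\rho(x\vee y),\]
where $\rho$ is a rank function, normalized with respect to the top element if one exists, or computed locally inside a common principal filter otherwise. The proof is by induction on $d$: any path from $x$ to $y$ can be pushed up to a monotone path through $x\vee y$ without increasing its length, using the distributive (hence modular) structure of filters.

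Given $x,y,z$, I define
\[m=(x\vee y)\wedge(y\vee z)\wedge(z\vee x).\]
The three pairwise meets of $x\vee y$, $y\vee z$, $z\vee x$ exist, since each pair has a common lower bound (for instance, $y$ lies below both $x\vee y$ and $y\vee z$). Condition (2) then guarantees that the triple meet exists.
\begin{itemize}
\item \textbf{Betweenness.} I would check that $m$ lies on geodesics between each pair, by distributive computations inside the filter of $x\wedge y$ together with the distance formula.
\item \textbf{Uniqueness.} Any vertex $v$ between $x$ and $y$ satisfies $v\le x\vee y$, and the analogous statements hold for the other two pairs. Hence $v\le m$. A symmetric rank count, using the distance formula, then forces $v=m$.
\end{itemize}

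\textbf{Main obstacle.} I expect the hardest step to be the distance formula in the second direction, specifically justifying that geodesics in the Hasse diagram may be taken to pass through the join when $L$ is only a join-semilattice rather than a lattice. The second-hardest step is the uniqueness argument for medians. My first attempt at both would be an induction on $\rho$, exploiting modularity of principal filters to exchange a down-up step for an up-down step.
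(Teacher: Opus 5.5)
The paper gives no proof of this statement. It is quoted from Klav\v{z}ar--Mulder (the result goes back to Avann and Bandelt--Hedl\'{\i}kov\'a), so your outline has to stand on its own. Its architecture is the standard one. The first part of your forward direction is correct: the order $\le_b$, the identification $x\vee y=m(b,x,y)$ via uniqueness of $m(z,x,y)$, and edges $=$ covers.

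\textbf{Uniqueness in the converse fails as described.} Your argument uses nothing beyond gradedness, modularity of principal filters and the distance formula, and these cannot suffice. Take $L=M_3$. It is a lattice, so (2) holds, and its filters are modular. The formula $d(u,w)=2r(u\vee w)-r(u)-r(w)$ holds on its Hasse diagram $K_{2,3}$, and your $m$ for the three atoms is $\hat 1$. Yet $\hat 0\le m$ is a second median, so no rank count can force $v=m$; this is exactly where distributivity must enter. A working version is as follows.
\begin{itemize}
\item If $v$ lies between $x$ and $y$, the distance formula plus modularity of the filter of $v$ give $\bigl[r(x\vee y\vee v)-r(x\vee y)\bigr]+\bigl[r\bigl((x\vee v)\wedge(y\vee v)\bigr)-r(v)\bigr]=0$. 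Both brackets are nonnegative, so $v\le x\vee y$ and $(x\vee v)\wedge(y\vee v)=v$.
\item Do this for all three pairs and set $p=x\vee v$, $q=y\vee v$, $s=z\vee v$ in the principal filter of $v$, a distributive lattice. Then $p\vee q=x\vee y$, and similarly for the other pairs, and the global meet $m$ agrees with the meet computed there because $v$ is a common lower bound.
\item Hence $m=(p\vee q)\wedge(q\vee s)\wedge(s\vee p)=(p\wedge q)\vee(q\wedge s)\vee(s\wedge p)=v$.
\end{itemize}

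\textbf{Existence also needs repair.} The meet $x\wedge y$ need not exist (take the path with vertices $x,t,y$ and $t$ on top), so ``the filter of $x\wedge y$'' is unavailable. Work in the filter of $m$ instead. Any $w\ge m$ lying below both $x\vee m$ and $y\vee m$ lies below $x\vee y$, $y\vee z$ and $z\vee x$, so $(x\vee m)\wedge(y\vee m)=m$ there. Modularity then gives $r(x\vee m)+r(y\vee m)=r(m)+r(x\vee y)$, which is precisely $d(x,m)+d(m,y)=d(x,y)$. Thus existence needs only modularity and (2).

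The step you flag as hardest is in fact easy. A finite join-semilattice always has a top, so $r$ is globally defined. For a cover $u\lessdot u'$, modularity of the filter of $u$ gives $r(u'\vee y)-r(u\vee y)\in\{0,1\}$. So $u\mapsto 2r(u\vee y)-r(u)-r(y)$ changes by exactly $\pm1$ along every edge and vanishes at $y$, which is the lower bound you need.

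\textbf{In the forward direction the substantive content is deferred rather than proved.} On $[a,b]$ you have $y\wedge z=m(a,y,z)$ and $y\vee z=m(b,y,z)$. Distributivity of this interval amounts to identities such as $m(a,y,m(b,z,w))=m(b,m(a,y,z),m(a,y,w))$, that is, to the median-algebra axioms for the graph median. That is a genuine theorem about median graphs, usually proved via convexity of $\Theta$-halfspaces or Mulder's expansion theorem, and it must be proved or cited.

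For (2), ``computed in a suitable interval'' is circular: an interval containing $x\wedge y$, $y\wedge z$, $z\wedge x$ already presupposes a common lower bound. Instead, take $t=m(x\wedge y,y\wedge z,z\wedge x)$ in the graph and use that each down-set $\{u: u\le_b x\}$ is convex. This follows from the same median law: if $w$ lies on a geodesic between $u$ and $u'$ with $u,u'\le_b x$, then $m(w,b,x)=m(w,m(u,b,x),m(u',b,x))=m(w,x,x)=x$. Then $t$ is a common lower bound of $x,y,z$. The set of common lower bounds is finite, nonempty and closed under joins, so it has a maximum, which is $x\wedge y\wedge z$.
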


Let $P$ be an antipodal root poset and let $I \in J_s(P)$. We define $J_s(I) := \{I' \cap I \mid I' \in J_s(P)\}$, which we call the set of \emph{symmetric order ideals in $I$}. Let $J(I)$ denote the lattice of order ideals of $I$. Clearly, $J_s(I) \subseteq J(I)$.

\begin{lemma}
    \label{lem: J_s(I)andJ_s(P)}
    Let $P$ be an antipodal root poset and $I \in J_s(P)$. The map $\phi: J_s(P) \to J_s(I)$ given by $I' \mapsto I' \cap I$ is a bijection, with its inverse given by $I_1 \mapsto I_1 \sqcup -(I \setminus I_1)$. Consequently, $|J_s(P)| = |J_s(I)|$.
\end{lemma}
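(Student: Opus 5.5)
The plan is to show that the two maps compose to the identity in both directions; there is no need to verify separately that the proposed inverse lands in $J_s(P)$. The map $\phi$ is surjective by the definition of $J_s(I)$, since every element of $J_s(I)$ has the form $I'\cap I$ for some $I'\in J_s(P)$. So it suffices to prove that $\phi$ is injective. We do this by showing that each $I'\in J_s(P)$ can be recovered from $I_1:=I'\cap I$ through the formula
\[
I' = I_1\sqcup -(I\setminus I_1).
\]
Once this is established, the map $I_1\mapsto I_1\sqcup -(I\setminus I_1)$ is a left inverse of $\phi$. Because $\phi$ is surjective, it is also a two-sided inverse on $J_s(I)$, and $|J_s(P)|=|J_s(I)|$ follows.

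To verify the formula, fix $I'\in J_s(P)$ and split $\Phi$ as $I\sqcup -I$, which is possible because $I\in J_s(P)$. On the part $I$, the set $I'$ agrees with $I'\cap I=I_1$ by definition. On the part $-I$, take $-\alpha$ with $\alpha\in I$. Since $I'$ is symmetric, exactly one of $\alpha$ and $-\alpha$ lies in $I'$. Hence $-\alpha\in I'$ holds if and only if $\alpha\notin I'$, which holds if and only if $\alpha\in I\setminus I_1$. This gives $I'\cap(-I)=-(I\setminus I_1)$, and the formula follows.

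The argument is elementary and uses only the property $I'\sqcup -I'=\Phi$ for both $I$ and $I'$. The one point needing care is the preposet convention. The symbols $\sqcup$ and $-$ here are set operations on roots. The condition $I\sqcup -I=P$ from Definition~\ref{def: symmetric order ideal} therefore means that exactly one of each pair $\alpha,-\alpha$ lies in $I$. Whether $\alpha=-\alpha$ holds in $P$ is irrelevant, since condition (2) of Definition~\ref{def: condorcet-poset} rules this out anyway. I do not expect any real obstacle.
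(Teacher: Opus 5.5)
Your proposal is correct and follows essentially the same route as the paper. Both proofs get surjectivity from the definition of $J_s(I)$, and both get injectivity by recovering $I'$ from $I'\cap I$ via $I'=(I'\cap I)\sqcup -(I\setminus I')$ using $I\sqcup -I=I'\sqcup -I'=\Phi$; you check this identity elementwise on $-I$, while the paper uses the set computation $I'\setminus I=I'\cap -I=-(I\setminus I')$.
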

\begin{proof}
    Surjectivity follows immediately from the definition. To show injectivity, let $I_1 \in J_s(I)$ and suppose $I_2 \in J_s(P)$ satisfies $I_2 \cap I = I_1$. Since $I \sqcup -I = I_2 \sqcup -I_2 = P$, we have $I_2 \setminus I = I_2 \cap -I = -(-I_2 \cap I) = -(I \setminus I_2)$. Substituting $I_2 \cap I = I_1$ yields $I_2 \setminus I = -(I \setminus I_1)$. Therefore, $I_2 = (I_2 \cap I) \sqcup (I_2 \setminus I) = I_1 \sqcup -(I \setminus I_1)$. This uniquely determines the inverse map, completing the proof.
\end{proof}

Since $J_s(P)$ is non-empty by Proposition~\ref{prop: J_p(P) in J_s(P)}, such an ideal $I$ always exists. The bijection established above allows us to work primarily with $J_s(I)$ rather than $J_s(P)$. 

For a closed Condorcet domain $D$ and $I\in J_s(\varphi(D))$, we define $\rho: D\rightarrow J_s(I)$ by $\rho(u)=I(u)\cap I$ for any $u \in D$. By Corollary~\ref{cor:bijection}, Definition~\ref{def: map psi} and Lemma~\ref{lem: J_s(I)andJ_s(P)}, $\rho$ is a well-defined bijection. Using $\rho$, we can label $J_s(I)$ by $D$.
\begin{ex}
    \label{ex: J_s(I) and map rho}
    Take the Condorcet root poset $P$ in Figure~\ref{fig:map-psi-example} and an order ideal $I=\{12,13,14,23,24,34\}\in J_s(P)$. We compute the following data:
    \[
    \begin{tabular}{ccc}\toprule
        $v$ & $I(v)$ & $\rho(v)$ \\\midrule
        $4321$ & $I$ & $I$ \\
        $3421$ & $\{12,13,14,23,24,-34\}$ & $I\setminus \{34\}$ \\
        3412 & $\{-12,13,14,23,24,-34\}$ & $I\setminus \{12,34\}$
    \\ \bottomrule\end{tabular}\]
\end{ex}
\begin{prop}
    \label{prop: J_s(I) struture}
    If $I_1 \in J_s(I)$ and $I_1 \subseteq I_2 \in J(I)$, then $I_2 \in J_s(I)$. In other words, $J_s(I)$ is an order filter of $J(I)$.
\end{prop}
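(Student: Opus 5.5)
The plan is to show directly that $I_2$ lifts to a symmetric order ideal of $P$ through the inverse map of Lemma~\ref{lem: J_s(I)andJ_s(P)}. Given $I_2\in J(I)$ with $I_1\subseteq I_2$, set
\[
I_2' := I_2\sqcup -(I\setminus I_2).
\]
Then $I_2'\cap I = I_2$, because $-(I\setminus I_2)\subseteq -I$ is disjoint from $I$. It therefore suffices to prove $I_2'\in J_s(P)$.

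\textbf{Symmetry.} We have $-I_2' = -I_2\sqcup (I\setminus I_2)$. Hence $I_2'\sqcup -I_2' = I\sqcup -I = P$, and the union is disjoint since $I\cap -I=\emptyset$. So the only real task is to show that $I_2'$ is an order ideal of $P$.

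\textbf{Ideal property.} We use two facts throughout. First, $I$ is an order ideal of $P$, so every order ideal of the subposet $I$ is also an order ideal of $P$; in particular this holds for $I_2$. Second, write $I_1 = I'\cap I$ with $I'\in J_s(P)$. Take $\alpha\le_P\beta$ with $\beta\in I_2'$ and split into cases.

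\begin{enumerate}
\item[(a)] $\beta\in I_2$. Then $\alpha\in I_2\subseteq I_2'$, since $I_2$ is an order ideal of $P$.
\item[(b)] $\beta\in -(I\setminus I_2)$ and $\alpha\in -I$. By condition (1) of Definition~\ref{def: condorcet-poset} we get $-\beta\le -\alpha$. If $-\alpha$ were in $I_2$, then $-\beta\in I_2$ as well, which is a contradiction. Hence $-\alpha\in I\setminus I_2$, i.e.\ $\alpha\in I_2'$.
\item[(c)] $\beta\in -(I\setminus I_2)$ and $\alpha\in I$. This is the only case that uses $I_1$. We have $-\beta\notin I_2\supseteq I_1 = I'\cap I$. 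Since $-\beta\in I$, this forces $-\beta\notin I'$, so $\beta\in I'$ because $I'$ is symmetric. As $I'$ is an order ideal, $\alpha\in I'$. Therefore $\alpha\in I'\cap I = I_1\subseteq I_2$.
\end{enumerate}

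Consequently $I_2'\in J_s(P)$, and $I_2 = I_2'\cap I\in J_s(I)$, as required. The one nontrivial step is case (c), where the hypothesis $I_1\subseteq I_2$ enters through the ambient symmetric ideal $I'$. The other cases only use that $I$ and $I_2$ are order ideals together with antipodality.
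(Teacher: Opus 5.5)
Your proof is correct and follows essentially the same route as the paper's. Both lift $I_2$ to $I_2\sqcup-(I\setminus I_2)$ and verify the order-ideal property using the symmetric lift $I'=I_1\sqcup-(I\setminus I_1)$ of $I_1$ together with the fact that $I\setminus I_2$ is a filter of $I$. The only difference is that you split cases according to whether the smaller element lies in $I$ or in $-I$, whereas the paper first pushes everything into $I_1'$ and then splits; this is a cosmetic reorganization of the same argument.
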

\begin{proof}
    For $i \in \{1,2\}$, let $I_i' = I_i \sqcup -(I \setminus I_i) \subseteq P$. By Lemma~\ref{lem: J_s(I)andJ_s(P)}, $I_1' \in J_s(P)$. We claim that $I_2' \in J_s(P)$ as well. By construction, $I'_2 \sqcup -I_2' = P$, so it suffices to show that $I_2'$ is an order ideal of $P$. 
    
    Let $\alpha \in I_2'$ and $\beta \le \alpha$. We must show $\beta \in I_2'$. If $\alpha \in I_2$, then $\beta \in I_2$ because $I_2$ is an order ideal, and thus $\beta \in I_2'$. Suppose instead that $\alpha \in -(I \setminus I_2)$. Since $I_1 \subseteq I_2$, we have $-(I \setminus I_2) \subseteq -(I \setminus I_1) \subseteq I_1'$. Thus, $\alpha \in I_1'$. Because $I_1'$ is an order ideal, this implies $\beta \in I_1'$, which means either $\beta \in I_1$ or $\beta \in -(I \setminus I_1)$. 
    If $\beta \in I_1$, then $\beta \in I_2 \subseteq I_2'$, and the claim holds. Finally, if $\beta \in -(I \setminus I_1)$, then both $-\alpha$ and $-\beta$ belong to $I$. Note that $\beta \le \alpha$ implies $-\alpha \le -\beta$. Since $I_2$ is an order ideal of $I$, $I \setminus I_2$ is an order filter of $I$. As $-\alpha \in I \setminus I_2$, it follows that $-\beta \in I \setminus I_2$, which is equivalent to $\beta \in -(I \setminus I_2) \subseteq I_2'$.
\end{proof}

With these structures, we can now relate closed Condorcet domains to median graphs.
\begin{prop}
    \label{prop: Hasse is median graph}
    Let $D$ be a closed Condorcet domain and $P = \varphi(D)$. For any $I \in J_s(P)$, the Hasse diagram of $J_s(I)$ is a median graph.
\end{prop}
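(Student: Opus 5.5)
The plan is to invoke Theorem~\ref{thm: median graph} with $L = J_s(I)$, viewed as a poset under inclusion. By Proposition~\ref{prop: J_s(I) struture}, $J_s(I)$ is an order filter of the distributive lattice $J(I)$, and it is nonempty since $J_s(P)\neq\emptyset$. A nonempty order filter of a finite lattice is closed under joins (unions), so $J_s(I)$ is a join-semilattice whose joins agree with those of $J(I)$. Its top element is $I$ itself; this comes from $I\in J_s(P)$ and $I\cap I=I$.

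The Hasse diagram of $J_s(I)$ is the induced subgraph of the Hasse diagram of $J(I)$ on this filter. The reason is that for $I_1\subseteq I_2$ in $J_s(I)$, every $J(I)$-element between them lies in the filter. Hence covering relations in $J_s(I)$ are exactly the coverings in $J(I)$.

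Next I check condition (1). For $a\in J_s(I)$, the principal filter $\{x\in J_s(I)\mid x\supseteq a\}$ equals the interval $[a,I]$ in $J(I)$, because $J_s(I)$ is upward closed. An interval of a distributive lattice is a sublattice, since meets and joins of elements above $a$ stay above $a$. So it is distributive.

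The remaining step, condition (2), is the only one with real content. Let $x,y,z\in J_s(I)$ be such that the pairwise meets exist in $J_s(I)$. Since $J_s(I)$ is an up-set, if $x\cap y\in J_s(I)$ then it is the meet there. Conversely, if some lower bound $w\in J_s(I)$ of $x,y$ exists, then $w\subseteq x\cap y$, so $x\cap y\in J_s(I)$ by Proposition~\ref{prop: J_s(I) struture}. So existence of a meet in $J_s(I)$ is equivalent to the intersection lying in $J_s(I)$. We must therefore show that if $x\cap y$, $y\cap z$ and $z\cap x$ all lie in $J_s(I)$, then $x\cap y\cap z\in J_s(I)$. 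This is exactly where closedness of $D$ must enter; for a general antipodal poset it could fail.

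To prove it, translate through the bijection $\rho$. Pick $u,v,w\in D$ with $\rho(u)=x\cap y$, $\rho(v)=y\cap z$ and $\rho(w)=z\cap x$, and consider the voting profile $f$ with value $1$ on $u,v,w$ and $0$ elsewhere. This profile has no tie. By Theorem~\ref{thm: fundamental} there is $t\in W$ with $I(t)=I(f)$. Since $D$ is closed, $t\in D$, so $t=\psi(\varphi(D))\ni t$ and $I(t)\in J_s(P)$.

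Now compute $I(t)\cap I$, which consists of the roots $\alpha\in I$ lying in at least two of $I(u),I(v),I(w)$. A root of $I$ lies in $I(u)$ iff it lies in $x\cap y$, and similarly for the other two. A root in two of the sets $x\cap y$, $y\cap z$, $z\cap x$ lies in all of $x,y,z$. Conversely, a root of $x\cap y\cap z$ lies in all three of these sets. Hence $I(t)\cap I = x\cap y\cap z$, so $x\cap y\cap z=\rho(t)\in J_s(I)$.

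This majority (median) construction is the key step. With both conditions verified, Theorem~\ref{thm: median graph} shows that the Hasse diagram of $J_s(I)$ is a median graph.
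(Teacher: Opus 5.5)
Your proposal is correct and follows the paper's proof essentially step for step. Condition (1) comes from $J_s(I)$ being an order filter of $J(I)$, and condition (2) from the three-voter majority profile on $\rho^{-1}$ of the pairwise intersections, with closedness of $D$ returning the outcome to $D$; you additionally make explicit that a meet exists in $J_s(I)$ exactly when the intersection lies in $J_s(I)$, and (as in the paper) $f$ should be read as a sum of indicators so that $|f|=3$ even when $u,v,w$ coincide, a degenerate case that is trivial anyway.
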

\begin{proof}
    To prove that the Hasse diagram of $J_s(I)$ is indeed a median graph, we verify the two conditions outlined in Theorem~\ref{thm: median graph}. 
    
    For condition (1), by Proposition~\ref{prop: J_s(I) struture}, $J_s(I)$ is an order filter of $J(I)$. Since any order filter of a distributive lattice is a distributive join-semilattice, the condition holds.
    
    For condition (2), let $I_1, I_2, I_3 \in J_s(I)$ such that the pairwise intersections $I_1 \cap I_2$, $I_2 \cap I_3$, and $I_3 \cap I_1$ all belong to $J_s(I)$. We must show that $I_1 \cap I_2 \cap I_3 \in J_s(I)$. 
    Let $u_1, u_2, u_3 \in D$ be the unique elements satisfying $\rho(u_1) = I_1 \cap I_2$, $\rho(u_2) = I_2 \cap I_3$, and $\rho(u_3) = I_3 \cap I_1$. Consider a voting profile $f$ on $D$ such that $f(u_i) = 1$ for $i \in \{1,2,3\}$, and $f(v) = 0$ otherwise. Since the total number of voters is $|f| = 3$, there are no ties. Because $D$ is a closed Condorcet domain, Theorem~\ref{thm: fundamental} guarantees that $I(u)=I(f)$ and the result set is a singleton $R(f) = \{u\} \subseteq D$. 
    
    For any element $\alpha \in I$, $\alpha \in I(f)=\{\alpha \in \Phi\mid f(\alpha)\ge |f|/2=3/2\}$ if and only if $\alpha$ belongs to at least two of the sets $\{\rho(u_1), \rho(u_2), \rho(u_3)\}$. Notice that $\rho(u_1) \cap \rho(u_2) = (I_1 \cap I_2) \cap (I_2 \cap I_3) = I_1 \cap I_2 \cap I_3$, and similar equation holds for $\rho(u_2)\cap \rho(u_3), \rho(u_3)\cap \rho(u_1)$. Therefore, $\alpha\in I(f)$ if and only if $\alpha \in I_1 \cap I_2 \cap I_3$. This implies $\rho(u) = I_1 \cap I_2 \cap I_3$. Since $\rho$ is a bijection onto $J_s(I)$, it follows that $I_1 \cap I_2 \cap I_3 \in J_s(I)$, completing the proof.
\end{proof}
\begin{ex}\label{ex:Hasse is median graph}
Consider the Condorcet root poset $P$ in Figure~\ref{fig:map-psi-example} and take $I=\{12,13,14,23,24,34\}$ as an order ideal in $J_s(P)$. We then compute the poset $J_s(I)$ as shown in Figure~\ref{fig:JsI-example}.
\begin{figure}[h!]
\centering
\begin{tikzpicture}[
node distance=0.6 and 1.2,
every node/.style={draw=none}
]
\node (I) {$I$};
\node (A) [below left=of I] {$I\setminus\{34\}$};
\node (B) [below=of I] {$I\setminus\{12\}$};
\node (C) [below right=of I] {$I\setminus\{23\}$};
\node (D) [below=of A] {$I\setminus\{24,34\}$};
\node (E) [below=of B] {$I\setminus\{12,34\}$};
\node (F) [below=of C] {$I\setminus\{12,13\}$};
\node (G) [below=of E] {$I\setminus\{12,14,34\}$};

\draw (I) -- (A);
\draw (I) -- (B);
\draw (I) -- (C);
\draw (A) -- (D);
\draw (A) -- (E);
\draw (B) -- (E);
\draw (B) -- (F);
\draw (E) -- (G);
\end{tikzpicture}
\caption{The poset $J_s(I)$ in Example~\ref{ex:Hasse is median graph}, with the underlying Condorcet root poset $P$ in Figure~\ref{fig:map-psi-example}}
\label{fig:JsI-example}
\end{figure}

    We can see $J_s(I)$ is a distributive join-semilattice, such as the order filter generated by $I\setminus \{12,14,34\}$ is a distributive lattice. And the Hasse diagram of $J_s(I)$ is a median graph, for example we take $a=I\setminus \{24,34\}, b=I\setminus\{12,13\}, c=I\setminus\{13,14,34\}$, then $v=I\setminus\{12,34\}$ is the unique vertex lies on a shortest path between each pair of $a, b$, and $c$. 
\end{ex}
To ensure that the median graph structure on $D$ is uniquely defined up to isomorphism, we need a characterization of the edges that is independent of the choice of $I\in J_s(\varphi(D))$. The following concept was initially proposed for the type $A$ Condorcet domains in \cite[Section 3.2]{Puppe-Slinko-median-graph}, and achieves our goal.

Let $D$ be a Condorcet domain. Two distinct elements $v_1, v_2 \in D$ are called \emph{neighbors} if there is no $v \in D \setminus \{v_1, v_2\}$ such that $I(v) \supseteq I(v_1) \cap I(v_2)$. Intuitively, this means that the difference between $v_1$ and $v_2$ is minimal. More precisely, $I(v_1) \cap I(v_2)$ acts as a coatom in both $J_s(I(v_1))$ and $J_s(I(v_2))$.

\begin{defin}[{\cite[Section 3.2]{Puppe-Slinko-median-graph}}]
    \label{def:the median graph of D}
    Let $D$ be a closed Condorcet domain. The \emph{associated graph of $D$}, denoted by $G_D$, is the graph with vertex set $D$ and edge set $E(G_D) = \{\{v_1, v_2\} \mid v_1, v_2 \text{ are neighbors}\}$. 
\end{defin}

\begin{prop}
    \label{prop: associated graph is Hasse}
    Let $D$ be a closed Condorcet domain. For any $v \in D$, the map $\rho$ induces a graph isomorphism between $G_D$ and the Hasse diagram of $J_s(I(v))$.
\end{prop}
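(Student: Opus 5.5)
Fix $v\in D$, set $I=I(v)\in J_s(P)$ with $P=\varphi(D)$, and let $\rho(u)=I(u)\cap I$. We already know $\rho:D\to J_s(I)$ is a bijection, so it suffices to show that $v_1,v_2\in D$ are neighbors exactly when $\rho(v_1)$ and $\rho(v_2)$ form a covering pair in $J_s(I)$. The approach is to translate the neighbor condition into a statement about joins in the join-semilattice $J_s(I)$. Throughout, the partial order on $J_s(I)$ is inclusion and the join is union, using Proposition~\ref{prop: J_s(I) struture}.

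The key step is a translation lemma. For $u,v_1,v_2\in D$ we claim
\[ I(u)\supseteq I(v_1)\cap I(v_2) \iff \rho(u)\ \text{lies in the interval between}\ \rho(v_1)\cap\rho(v_2)\ \text{and}\ \rho(v_1)\cup\rho(v_2). \]
Write $I_i=\rho(v_i)$. By Lemma~\ref{lem: J_s(I)andJ_s(P)}, $I(v_i)=I_i\sqcup -(I\setminus I_i)$. Hence
\[ I(v_1)\cap I(v_2)=(I_1\cap I_2)\sqcup -\bigl(I\setminus(I_1\cup I_2)\bigr). \]
So $I(u)$ contains this set if and only if $\rho(u)\supseteq I_1\cap I_2$ and $I\setminus\rho(u)\supseteq I\setminus(I_1\cup I_2)$. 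The second condition says $\rho(u)\subseteq I_1\cup I_2$. This is just bookkeeping with the disjoint decomposition $\Phi=I\sqcup -I$.

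Consequently, $v_1,v_2$ are neighbors if and only if no element of $J_s(I)$ other than $I_1,I_2$ lies in the interval $[I_1\cap I_2,\,I_1\cup I_2]$ of $J(I)$.

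For the forward direction, suppose the interval contains only $I_1$ and $I_2$. Note that $I_1\cup I_2\in J_s(I)$, since $J_s(I)$ is a filter of $J(I)$. So $I_1\cup I_2\in\{I_1,I_2\}$, say $I_1\subsetneq I_2$. Any element of $J_s(I)$ strictly between $I_1$ and $I_2$ would lie in the interval and be a third element. Hence $I_1\lessdot I_2$ is a cover in $J_s(I)$.

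For the converse, suppose $I_1\lessdot I_2$ in $J_s(I)$. Then the interval is exactly $[I_1,I_2]$, and its elements in $J_s(I)$ are only $I_1$ and $I_2$. So $v_1,v_2$ are neighbors.

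The only subtle point I expect is the forward direction. There one needs $I_1\cup I_2\in J_s(I)$ to rule out incomparable neighbors. This holds because $J_s(I)$ is an order filter of $J(I)$ (Proposition~\ref{prop: J_s(I) struture}) and $I_1\cup I_2\supseteq I_1$ is an order ideal of $I$. Closedness of $D$ is used only to guarantee that $\rho$ is onto $J_s(I)$, via Corollary~\ref{cor:bijection}. Thus $\rho$ is a graph isomorphism from $G_D$ onto the Hasse diagram of $J_s(I(v))$.
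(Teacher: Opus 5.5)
Your proof is correct. It shares the paper's bookkeeping: both arguments use $I(v_i)=I_i\sqcup -(I\setminus I_i)$ from Lemma~\ref{lem: J_s(I)andJ_s(P)} to turn $I(u)\supseteq I(v_1)\cap I(v_2)$ into a sandwich condition on $\rho(u)$. You differ from the paper only at the one nontrivial step, ruling out incomparable neighbors.

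The paper handles that step with voting. It takes the three-voter profile on $v,v_1,v_2$ and uses Theorem~\ref{thm: fundamental} together with closedness ($\nu(D)\subseteq D$) to get $v_3\in D$ with $I(v_3)\supseteq I(v_1)\cap I(v_2)$. It concludes $v_3\in\{v_1,v_2\}$ and then reads off comparability from $I(v_2)\cap I(v)\subseteq I(v_3)$.

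You instead note that $I_1\cup I_2\in J_s(I)$, because $J_s(I)$ is an order filter of $J(I)$ (Proposition~\ref{prop: J_s(I) struture}). By surjectivity of $\rho$ (Corollary~\ref{cor:bijection}), this set is then realized by some $u\in D$ in the sandwich.

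These are the same witness in disguise. The majority outcome $v_3$ satisfies $\rho(v_3)=I_1\cup I_2$, since every root of $I$ already carries $v$'s vote.

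Your version packages everything into one criterion: no element of $J_s(I)$ other than $I_1,I_2$ lies in $[I_1\cap I_2,\,I_1\cup I_2]$. Both directions then become immediate, and it shows that closedness enters only through the surjectivity of $\rho$. The paper's version stays in the language of majority voting and exhibits the witness concretely as the median of $v,v_1,v_2$.
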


\begin{proof}
    Fix an element $v \in D$, let $I = I(v)$, and let $T$ denote the Hasse diagram of $J_s(I)$. By Proposition~\ref{prop: Hasse is median graph}, $T$ is a median graph. To show that $G_D \cong T$, we must prove that two distinct elements $v_1, v_2 \in D$ are neighbors if and only if their images $I_1 := \rho(v_1)$ and $I_2 := \rho(v_2)$ satisfy a covering relation $I_1 \lessdot I_2$ or $I_1 \gtrdot I_2$ in $J_s(I)$.

    $(\Rightarrow)$ Suppose $v_1$ and $v_2$ are neighbors. Consider a voting profile $f$ on $D$ such that $f(v) = f(v_1) = f(v_2) = 1$, and $f \equiv 0$ elsewhere. Since the total number of voters is $|f| = 3$, there are no ties. By Theorem~\ref{thm: fundamental} and the assumption that $D$ is closed, there exists a unique $v_3 \in D$ such that $I(v_3) = I(f)$. For any $\alpha \in I(v_1) \cap I(v_2)$, $f(\alpha) \ge 2 > |f|/2$. Thus, $I(v_1) \cap I(v_2) \subseteq I(v_3)$. Because $v_1$ and $v_2$ are neighbors, this forces $v_3 = v_1$ or $v_3 = v_2$. 
    
    Without loss of generality, assume $v_3 = v_1$. Similarly, $I(v_2) \cap I(v)\subseteq I(v_3)=I(v_1)$, which implies $I_1 \ge I_2$ in $J_s(I)$. To show that  $I_1 \gtrdot I_2$, let $v_4 \in D$ be an element such that $I_1 \ge I_4 := \rho(v_4) \ge I_2$. 
    
    By Lemma~\ref{lem: J_s(I)andJ_s(P)}, we have $I(v_i) = I_i \sqcup -(I \setminus I_i)$ for $i \in \{1, 2, 4\}$. Because $I_4 \supseteq I_2 = I_1 \cap I_2$ and $-(I \setminus I_4) \supseteq -(I \setminus I_1) = -(I \setminus I_1) \cap -(I \setminus I_2)$, we deduce that $I(v_4) \supseteq I(v_1) \cap I(v_2)$. Since $v_1$ and $v_2$ are neighbors, we must have $v_4 = v_1$ or $v_4 = v_2$. Therefore, $I_4 = I_1$ or $I_4 = I_2$, meaning that $I_1 \gtrdot I_2$.

    $(\Leftarrow)$ Conversely, suppose $I_1 \lessdot I_2$ or $I_1 \gtrdot I_2$ in $J_s(I)$. Without loss of generality, assume $I_1 \lessdot I_2$. Let $v_5 \in D$ be any element satisfying $I(v_5) \supseteq I(v_1) \cap I(v_2)$. Similar to the logic in the previous paragraph, let $I_5 := \rho(v_5)$, then $I(v_5) \supseteq I(v_1) \cap I(v_2)$ yields $I_5 \supseteq I_1$ and $-(I \setminus I_5) \supseteq -(I \setminus I_2)$. This implies $I_1 \le I_5 \le I_2$ in $J_s(I)$. Because $I_2$ covers $I_1$, it must be that $I_5 = I_1$ or $I_5 = I_2$. This means $v_5 = v_1$ or $v_5 = v_2$, which proves that $v_1$ and $v_2$ are indeed neighbors.
\end{proof}

Now we are ready to prove that every closed Condorcet domain is associated to a unique median graph.

\begin{proof}[Proof of Theorem~\ref{thm: Condorcet domain to median graph}]
   The result follows directly from the combination of Proposition~\ref{prop: associated graph is Hasse}, Proposition~\ref{prop: Hasse is median graph}.
\end{proof}
\begin{ex}
    \label{ex:associated graph is Hasse}
    Take the Condorcet root poset $P$ in Figure~\ref{fig:map-psi-example}, as partially showed in Example~\ref{ex:map-psi-example}, $D=\psi(P)=\{3142,3241,3412,3421,4132,4231,4312,4321\}$. Then $u=4321$ and $v=3421$ are neighbors as $I(u)\cap I(v)=\{12,13,14,23,24\}$ has size $5$, the only possibility of $w\in D$ with $I(w)\supseteq I(u)\cap I(v)$ is $I(w)=I(u)$ or $I(w)=I(v)$, hence $w=u$ or $w=v$. We can compute the associated graph $G_D$ of $D$ shown in Figure~\ref{fig:GD-example}.
\begin{figure}[h!]
\centering
\begin{tikzpicture}[
node distance=0.6 and 1.2,
every node/.style={draw=none}
]
\node (I) {$4321$};
\node (A) [below left=of I] {$3421$};
\node (B) [below=of I] {$4312$};
\node (C) [below right=of I] {$4231$};
\node (D) [below=of A] {$3241$};
\node (E) [below=of B] {$3412$};
\node (F) [below=of C] {$4132$};
\node (G) [below=of E] {$3142$};

\draw (I) -- (A);
\draw (I) -- (B);
\draw (I) -- (C);
\draw (A) -- (D);
\draw (A) -- (E);
\draw (B) -- (E);
 \draw (B) -- (F);
\draw (E) -- (G);
\end{tikzpicture}
\caption{The associated graph $G_D$ in Example~\ref{ex:associated graph is Hasse}, which is isomorphic with the graph in Figure~\ref{fig:JsI-example}}
\label{fig:GD-example}
\end{figure}

Combined with Example~\ref{ex:Hasse is median graph}, we can see the Hasse diagram of $J_s(I)$ is isomorphic with $G_D$, and then $G_D$ is a median graph.
\end{ex}

In light of Theorem~\ref{thm: Condorcet domain to median graph}, the associated graph $G_D$ is canonically referred to as the \emph{median graph of $D$}. The converse relationship was established by Puppe and Slinko in \cite[Theorem 5]{Puppe-Slinko-median-graph} for type $A$ domains as follows.

\begin{theorem}[{\cite[Theorem 5]{Puppe-Slinko-median-graph}}]
    \label{thm: median graph to Condorcet domain}
    For any median graph $G$, there exists a Condorcet domain $D$ (of type $A$) whose median graph is isomorphic to $G$.
\end{theorem}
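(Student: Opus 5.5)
The plan is to reverse the construction of this section: build a Condorcet root poset $P$ in type $A$ by hand so that $J_s(I)$, for a suitable $I\in J_s(P)$, is the distributive join-semilattice attached to $G$. Then take $D=\psi(P)$. By Proposition~\ref{prop: phi-psi is closure img psi is closed} and Corollary~\ref{cor:bijection}, $D$ is closed with $\varphi(D)=P$. Proposition~\ref{prop: associated graph is Hasse} then identifies $G_D$ with the Hasse diagram of $J_s(I)$.

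\textbf{Combinatorial input.} By Theorem~\ref{thm: median graph}, $G$ is the Hasse diagram of a join-semilattice $L$ satisfying (1) and (2). Such a semilattice is a median semilattice. I will use the Barthélemy--Constantin description, which I take as known. Let $Q$ be the poset of join-irreducibles of $L$ (equivalently the $\Theta$-classes of $G$ relative to a base vertex). Let $\#$ be the symmetric conflict relation on $Q$, where $p\# q$ means $p\vee q$ does not exist in $L$. This relation is upward hereditary: $p\#q$ and $q\le q'$ imply $p\#q'$. Then $L$ is isomorphic to the family of order ideals of $Q$ containing no conflicting pair, ordered by inclusion. Since a Hasse diagram is unchanged under order reversal, I may freely dualize when matching with $J_s(I)$, which is an order filter of $J(I)$.

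\textbf{Type $A$ realization.} Let $|Q|=k$, index $Q=\{1,\dots,k\}$, and work in $A_{2k-1}$ with candidates grouped into blocks $\{2t-1,2t\}$. Set $r_t:=e_{2t-1}-e_{2t}$.
\begin{itemize}
\item Every root between different blocks is made global. Concretely, for $t<s$, each root $e_a-e_c$ with $a$ in block $t$ and $c$ in block $s$ goes to $P_M$, and its negative goes to $P_m$.
\item The skeleton is $\{\pm r_t\}$. Encode the order of $Q$ as relations between skeleton roots of the form $r_s\le r_t$, together with their antipodes.
\item Encode each conflict as a relation of the form $-r_s\le r_t$.
\item Take the transitive closure.
\end{itemize}
The sign conventions are to be fixed so that, for $I=P_M\cup\{r_t\}$, an ideal of $I$ lies in $J_s(I)$ exactly when the set of $t$ with $r_t$ removed forms a consistent ideal of $Q$.

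\textbf{Verifying the axioms.}
\begin{itemize}
\item Antipodality, condition (1) of Definition~\ref{def: condorcet-poset}, is built in.
\item The main risk for condition (2) is that the closure forces some $r_t\le -r_t$. I expect the upward heredity of $\#$ and the absence of self-conflicts to rule this out, via a short path argument in the closure.
\item For condition (3), every root-triple in type $A$ comes from three candidates, and these contain at most one block pair. If no block pair occurs, all three roots are global and the condition is immediate. If exactly one occurs, say $a<b$ in block $t$ and $c$ in block $s$, the two cross-block roots in the triple are one in $P_M$ and one in $P_m$, because both point between the same two blocks. The triple is then $\{\text{max},\, r_t \text{ or } -r_t,\, \text{min}\}$, so the first type of Condorcet condition, $\alpha\ge-\gamma\ge\beta$, holds automatically. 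For instance, with $t<s$ the triple $\{bc, ab, ca\}$ gives $bc\ge -ab\ge ca$.
\end{itemize}
Hence any antipodal preorder on the skeleton yields a Condorcet root poset.

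\textbf{Main obstacle.} The hard part is the bookkeeping that shows $J_s(I)$ is exactly the consistent-ideal lattice of $Q$. The transitive closure of the encoded relations could create relations beyond the intended ones, and then $J_s(I)$ would be too small. My plan here is to check the condition for $I_1\sqcup-(I\setminus I_1)$ to be an order ideal (Lemma~\ref{lem: J_s(I)andJ_s(P)}) one generating relation at a time, and to show that closure adds nothing new, using transitivity of $\le_Q$ and heredity of $\#$.
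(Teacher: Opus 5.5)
The paper does not prove this statement. It quotes it from Puppe--Slinko and uses it only as motivation for Theorem~\ref{thm: eq median graph iff eq skeleton}. Your argument is therefore an independent proof, and it is sound. It runs entirely through the paper's own machinery: Lemma~\ref{lem: psi(P) closed}, Theorem~\ref{thm: Condorcet root poset eq condition}, and Proposition~\ref{prop: associated graph is Hasse}, applied at a $v\in D$ with $I(v)=I$.

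Your block construction in fact shows something stronger. Every antipodal preorder on $\{\pm r_1,\dots,\pm r_k\}$ with $r_t,-r_t$ incomparable is the skeleton of a type-$A$ Condorcet root poset whose other roots are all global. So realizing median graphs reduces to the Barth\'elemy--Constantin representation by a poset with a hereditary conflict relation. The resulting domain sits inside the $2^k$-element domain of independent swaps of the pairs $\{2t-1,2t\}$. Citing Puppe--Slinko buys brevity; your route buys self-containment in the language of root posets.

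Some points need fixing, none serious.
\begin{itemize}
\item The conflict relation must be taken in $L^{\mathrm{op}}$, which is a median meet-semilattice. Joins always exist in the join-semilattice $L$ of Theorem~\ref{thm: median graph}.
\item Your signs are inconsistent. With generators $r_s\le r_t$ for $s\le_Q t$ and $-r_s\le r_t$ for $s\# t$, a symmetric set $J\supseteq P_m$ avoiding $P_M$ is an ideal iff $X(J)=\{t: r_t\in J\}$ is a conflict-free down-set. So the base point must be $I=P_m\cup\{-r_t: t\in Q\}$, the empty configuration. Then $J_s(I)\cong\mathcal C(Q)^{\mathrm{op}}\cong L$ via the set of $t$ with $-r_t$ removed.
\item Your choice $P_M\cup\{r_t\}$ is not an order ideal at all, since it contains maximum elements. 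The set $P_m\cup\{r_t\}$ fails as soon as $\#\neq\emptyset$. Alternatively, keep $I=P_m\cup\{r_t\}$ and reverse the generators to $r_t\le r_s$ and $r_t\le -r_s$.
\end{itemize}

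What you call the main obstacle is not one. A subset is an order ideal of the transitive closure of a relation iff it is closed under the generating pairs. So extra relations produced by the closure never remove ideals, and $J_s(P)$ is exactly the set of conflict-free down-sets by construction.

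The same observation settles condition (2) without a path argument. Consider the symmetric sets with $X=\emptyset$ and with $X={\downarrow}t$. The latter is conflict-free, because heredity plus irreflexivity of $\#$ rule out two conflicting elements below $t$. Both sets are ideals, and they witness $r_t\not\le -r_t$ and $-r_t\not\le r_t$ respectively.
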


As observed in \cite{Puppe-Slinko-median-graph}, the Condorcet domain $D$ corresponding to a given median graph in Theorem~\ref{thm: median graph to Condorcet domain} is not unique. This naturally raises the question of how to characterize the equivalence classes of Condorcet domains that yield the same median graph. We use the notion of the \emph{skeleton isomorphism classes} of Condorcet root posets to answer this question, as formulated in Theorem~\ref{thm: eq median graph iff eq skeleton}.

\begin{defin}
    \label{def: skeleton isomorphic}
    Let $P$ and $P'$ be preposets. We say a map $\chi: P \rightarrow P'$ is a \emph{order equivalence} if it is an order isomorphism after modding out the equal relations in the preposets, or equivalently, if it is an order embedding, that is, $a \le_P b$ if and only if $\chi(a) \le_{P'} \chi(b)$ for all $a,b \in P$, and it is essentially surjective, meaning that for every $y \in P'$, there exists an $x \in P$ such that $\chi(x) =_{P'} y$.
    Two Condorcet root posets $P_1$ and $P_2$ are said to be \emph{skeleton isomorphic} if there exists an order equivalence $\chi: \tilde{P}_1 \to \tilde{P}_2$ such that $\chi(-\alpha) = -\chi(\alpha)$ for all $\alpha \in \tilde{P}_1$. 
\end{defin}

Recall that in a lattice $L$, an element $a \in L$ is \emph{meet-irreducible} if $a$ is not the maximum element in $L$, and $a = b \wedge c$ implies $a = b$ or $a = c$. A classic theorem by Birkhoff allows us to recover a poset from a distributive lattice using its meet-irreducible elements.

\begin{theorem}[{\cite[Section 3.4]{stanleyEnumerativeCombinatoricsVolume2012}}]
    \label{thm: Birkhoff's representation theorem}
    For any distributive lattice $L$, let $P$ be the subposet of meet-irreducible elements in $L$. Then $L$ is order-isomorphic to $J(P)$ via the map $a \mapsto \{b \in P \mid b \le a\}$.
\end{theorem}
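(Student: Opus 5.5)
Since $L$ will only be applied to finite distributive lattices, such as order filters of $J(I)$, I will assume $L$ is finite throughout. The plan is the standard Birkhoff argument, dualized to meet-irreducibles.

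First, a caution about the map itself. Read literally, $a\mapsto\{b\in P\mid b\le a\}$ with $P$ the meet-irreducibles is not injective: for the chain $0<1<2$ one has $P=\{0,1\}$, and both $1$ and $2$ are sent to $\{0,1\}$. I will therefore prove the version that is correct for meet-irreducibles, namely
\[\theta(a)=\{b\in P\mid b\not\ge a\}.\]
This is an order ideal of $P$: if $b\not\ge a$ and $c\le b$, then $c\not\ge a$. The map $\theta$ is order preserving, since $a\le a'$ and $b\ge a'$ force $b\ge a$. Equivalently, one can use the join-irreducibles with $a\mapsto\{b\le a\}$, and the two versions are interchanged by passing to the dual lattice $L^{op}$, which is again distributive. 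I would add a remark to this effect, and check that the later application to $J_s(I)$ only uses the correct form.

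\textbf{Step 1 (representation).} In a finite lattice, every $a$ is the meet of the meet-irreducibles above it, with the empty meet equal to $\hat 1$. I will prove this by downward induction: if $a$ is not meet-irreducible and $a\ne\hat1$, then $a=x\wedge y$ with $x,y>a$, and the claim for $x$ and $y$ gives it for $a$. Consequently $a=\bigwedge\{b\in P\mid b\ge a\}$. So $a$ is determined by $P\setminus\theta(a)$, hence by $\theta(a)$, and $a\le a'$ if and only if $\theta(a)\subseteq\theta(a')$. This makes $\theta$ an order embedding.

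\textbf{Step 2 (key lemma, the main obstacle).} In a distributive lattice, a meet-irreducible $b$ is meet-prime: $b\ge x\wedge y$ implies $b\ge x$ or $b\ge y$. To see this, write
\[b=b\vee(x\wedge y)=(b\vee x)\wedge(b\vee y).\]
By meet-irreducibility, $b=b\vee x$ or $b=b\vee y$. This is the only place distributivity enters, and the essential point is to use it in this dual form.

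\textbf{Step 3 (surjectivity).} Given an order ideal $K\subseteq P$, let $F=P\setminus K$, which is an order filter, and set $a=\bigwedge F$. Certainly every $b\in F$ satisfies $b\ge a$. Conversely, suppose $b\in P$ and $b\ge a=\bigwedge F$. Iterating Step 2 gives some $c\in F$ with $b\ge c$, so $b\in F$ because $F$ is a filter. Hence $\{b\in P\mid b\ge a\}=F$, so $\theta(a)=K$.

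Combining Steps 1 and 3, $\theta$ is a bijective order embedding, and therefore an order isomorphism $L\cong J(P)$.
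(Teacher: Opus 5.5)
Your correction is right: with $P$ the meet-irreducibles, the literal map $a\mapsto\{b\in P\mid b\le a\}$ already fails on the three-element chain (the cited result in Stanley is stated for join-irreducibles and assumes $L$ finite). Your $\theta(a)=\{b\in P\mid b\not\ge a\}$ is the correct isomorphism, and it is also exactly what the paper's later application needs, since it identifies $I_1\in J_s(I)$ with $I_1\setminus P_m$. The paper gives no proof beyond the citation, and your Steps 1--3 are precisely the order dual of the standard join-irreducible argument behind it: every element is the meet of the meet-irreducibles above it, meet-irreducibles are meet-prime by distributivity, and $K\mapsto\bigwedge(P\setminus K)$ inverts $\theta$, with the case $K=P$ trivial because no meet-irreducible equals $\hat 1$. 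So your proof is correct and follows essentially the cited approach.
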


The set $J_s(I)$ behaves very well with respect to meet-irreducible elements.

\begin{lemma}
    \label{lem: meet-irreducible}
    Let $P$ be a Condorcet root poset, and let $I \in J_s(P)$. Suppose $I' \in J(I)$ satisfies the condition that $I' \neq \emptyset$ whenever $P_m \neq \emptyset$. Then $I'$ is a meet-irreducible element in $J(I)$ if and only if $I'$ is a meet-irreducible element in $J_s(I)$. 
\end{lemma}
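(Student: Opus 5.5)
The plan is to reduce the statement to one structural fact: every meet-irreducible element of $J(I)$ already lies in $J_s(I)$. Once that is known, the two notions of meet-irreducibility agree, because $J_s(I)$ is an order filter of $J(I)$ by Proposition~\ref{prop: J_s(I) struture}.

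\textbf{Step 1: for elements of $J_s(I)$ the two notions agree.} Suppose $I'\in J_s(I)$. Both $J(I)$ and $J_s(I)$ have the same maximum $I$, since $I=I\cap I\in J_s(I)$. Meet-irreducibility of $I'$ only involves pairs $b,c$ with $b,c\supseteq I'$. For such $b,c\in J(I)$, we have $b\cap c\supseteq I'\in J_s(I)$, so $b\cap c\in J_s(I)$ because $J_s(I)$ is an order filter. In particular $b,c\in J_s(I)$, and their meet in $J_s(I)$ equals their meet $b\cap c$ in $J(I)$. Hence $I'=b\wedge c$ in one poset if and only if it holds in the other. This already gives the direction ``meet-irreducible in $J_s(I)$ $\Rightarrow$ meet-irreducible in $J(I)$'', since such an $I'$ lies in $J_s(I)$.

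\textbf{Step 2 (main step): meet-irreducibles of $J(I)$ lie in $J_s(I)$.} Let $I'$ be meet-irreducible in $J(I)$. By the standard description of meet-irreducibles in a lattice of order ideals (working with the preposet $I$), we have $I'=I\setminus V_x$, where $V_x=\{y\in I\mid y\ge x\}$ for some $x\in I$. By Lemma~\ref{lem: J_s(I)andJ_s(P)}, it suffices to show that
\[
K:=I'\sqcup -V_x
\]
is an order ideal of $P$; it is automatically complementary to $-K$.

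Take $\beta\le\alpha$ with $\alpha\in K$. If $\alpha\in I'$, then $\beta\in I$ since $I$ is an ideal, and $\beta\notin V_x$ since $V_x$ is a filter in $I$. Hence $\beta\in I'$.

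Now suppose $\alpha\in -V_x$, so that $-\beta\ge-\alpha\ge x$. If $-\beta\in I$, then $-\beta\in V_x$, and so $\beta\in -V_x\subseteq K$. Otherwise $\beta\in I$, and we need $\beta\notin V_x$. Suppose instead that $\beta\ge x$. Together with $-\beta\ge x$, i.e.\ $\beta\le -x$, this gives $x\le\beta\le -x$. Since $x\neq -x$ in $P$, we get $x<-x$, so $x\in P_m$ by the antipodal axiom (2). Then every element of $I$ is $\ge x$, so $V_x=I$ and $I'=\emptyset$ while $P_m\neq\emptyset$. This contradicts the hypothesis, so $\beta\in I'$.

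Thus $K\in J_s(P)$, and therefore $I'=K\cap I\in J_s(I)$. Combined with Step 1, this gives the remaining direction and proves the lemma.

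The only delicate point is the last case of Step 2. This is exactly where the hypothesis ``$I'\neq\emptyset$ whenever $P_m\neq\emptyset$'' is needed: if $P_m\neq\emptyset$, then $I'=\emptyset$ is meet-irreducible in $J(I)$ (taking $x$ a minimum element of $P$ lying in $I$) but need not lie in $J_s(I)$. I also expect some care with the preposet setting: $V_x$ must be the full filter generated by the equivalence class of $x$, so that the description of meet-irreducibles as $I\setminus V_x$ remains valid.
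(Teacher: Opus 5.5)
Your proposal is correct and follows essentially the same route as the paper. Both arguments use the order-filter property of $J_s(I)$ to reduce to showing that meet-irreducibles of $J(I)$ lie in $J_s(I)$, write $I\setminus I'$ as the filter generated by a single class $x$, and check that $I'\sqcup -(I\setminus I')$ is an order ideal of $P$, with the hypothesis entering exactly where $x\le\beta\le -x$ would force $x\in P_m$. The only cosmetic difference is that the paper excludes a generator in $P_m$ up front, whereas you invoke the hypothesis at the point of contradiction.
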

\begin{proof}
    Whether $I'$ is meet-irreducible depends only on the structure of $[I', I]$. By Proposition~\ref{prop: J_s(I) struture}, if $I'\in J_s(I)$, then $[I', I] \subseteq J_s(I) \subseteq J(I)$. Therefore $I'$ is meet-irreducible in $J(I)$ if and only if it is meet-irreducible in $J_s(I)$. We only need to prove the converse: if $I'$ is meet-irreducible in $J(I)$, then $I' \in J_s(I)$. 
    
    Let $A \subseteq I$ be the antichain that generates the order filter $I \setminus I'$ in $I$. If $A = A_1 \sqcup A_2$ for some nonempty disjoint sets $A_1$ and $A_2$, let $F_1$ and $F_2$ be the order filters generated by $A_1$ and $A_2$ in $I$, respectively. Then $I' = (I \setminus F_1) \cap (I \setminus F_2)$. Since $I \setminus F_i \neq I'$ for $i \in \{1,2\}$, this contradicts the assumption that $I'$ is meet-irreducible. 
    
    Furthermore, when $P_m \neq \emptyset$, we are given that $I' \neq \emptyset$. If $A = P_m$, then $I \setminus I'$ would be exactly the filter generated by $P_m$, which is $\emptyset$, yielding a contradiction. Thus, $A \neq P_m$. Because $I'$ is not the maximum element $I$, $A$ cannot be empty. Therefore, $A$ must be a singleton which does not contained in $P_m$, say $A = \{\alpha\}$ for some $\alpha \notin P_m$. 
    
    To prove that $I' \in J_s(I)$, motivated by Lemma~\ref{lem: J_s(I)andJ_s(P)}, we define $I'' := I' \sqcup -(I \setminus I')$. We must prove $I'' \in J_s(P)$. By construction, $I'' \sqcup -I'' = P$, so it suffices to prove that $I''$ is an order ideal of $P$. 
    
    Let $\beta \in I''$ and $\gamma \le \beta$. We must show that $\gamma \in I''$. 
    If $\beta \in I'$, then since $I'$ is an order ideal of $I$ and $I$ is an order ideal of $P$, $\gamma \in I'$, which implies $\gamma \in I''$. 
    Suppose instead that $\beta \in -(I \setminus I')$. This means $-\beta \in I \setminus I'$. Because $I \setminus I'$ is the order filter generated by $\alpha$ in $I$, we have $-\beta \ge \alpha$. The inequality $\gamma \le \beta$ implies $-\gamma \ge -\beta$, and thus $-\gamma \ge \alpha$. 
    If $-\gamma \in I$. Since $-\gamma \ge \alpha$, we have $-\gamma \in I \setminus I'$. This immediately implies $\gamma \in -(I \setminus I') \subseteq I''$.
        
    If $-\gamma \notin I$, which means $\gamma \in I$. Assume for the sake of contradiction that $\gamma \notin I''$. Because $I'' \cap I = I'$, we have $\gamma \in I \setminus I'$, which means $\gamma \ge \alpha$. Then we have $-\alpha \ge \beta \ge \gamma \ge \alpha$.
    This implies $\alpha \le -\alpha$, contradicts $\alpha \notin P_m$. Thus $\gamma \in I''$. We are done.
\end{proof}

\begin{proof}[Proof of Theorem~\ref{thm: eq median graph iff eq skeleton}]
    Let $G = G_D$, $G' = G_{D'}$, $P = \varphi(D)$, and $P' = \varphi(D')$.
    
    $(\Leftarrow)$ Suppose $P$ and $P'$ are skeleton isomorphic, and let $\chi: \tilde{P} \to \tilde{P}'$ be the skeleton isomorphism. For any order ideal $I \in J_p(P)$  with $P_m\subseteq I$, define $\chi'(I)$ to be the order ideal generated by $\chi(I \setminus P_m)\sqcup P_m'$ in $P'$. By Definition~\ref{def: skeleton isomorphic}, $\chi'$ induce a bijection from $J_s(P)$ to $J_s(P')$. Take any $v \in D$. By Lemma~\ref{lem: inversions are symmetric order ideals}, $I(v) \in J_s(P)$, and thus $\chi'(I(v)) \in J_s(P')$. Corollary~\ref{cor:bijection} and Definition~\ref{def: map psi} then guarantees the existence of a unique $v' \in D'$ such that $I(v') = \chi(I(v))$. We define $\chi_0(v) := v'$, which makes $\chi_0$ a bijection from $D$ to $D'$. 
    
    For any $v_1, v_2, v_3 \in D$, we have $I(v_3) \supseteq I(v_1) \cap I(v_2)$ if and only if 
    \[
        I(v'_3) = \chi'(I(v_3)) \supseteq \chi'(I(v_1) \cap I(v_2)) = I(v_1') \cap I(v_2').
    \]
    Thus, $v_1$ and $v_2$ are neighbors in $D$ if and only if $\chi_0(v_1)$ and $\chi_0(v_2)$ are neighbors in $D'$. This proves that $\chi_0$ is a graph isomorphism between $G$ and $G'$. 

    $(\Rightarrow)$ Suppose $G$ and $G'$ are isomorphic, and let $\chi_0: G \to G'$ be the isomorphism. Fix any $v \in G$ and let $v' = \chi_0(v) \in G'$. Let $I = I(v)$ and $I' = I(v')$. By Proposition~\ref{prop: associated graph is Hasse}, the map $\rho \circ \chi_0 \circ \rho^{-1}$ induces an isomorphism between the Hasse diagram of $J_s(I)$ and the Hasse diagram of $J_s(I')$ which preserves the maximum element, that is, maps $I$ to $I'$. Since Hasse diagram isomorphisms which preserve the maximum element correspond to order isomorphisms, $\rho \circ \chi_0 \circ \rho^{-1}$ induces an order isomorphism $\chi_1: J_s(I) \to J_s(I')$.  Then by the bijection in Lemma~\ref{lem: J_s(I)andJ_s(P)}, $\chi_1$ induces a bijection $\chi_2: J_s(P) \to J_s(P')$. 
    
    Let $I_0$ and $I'_0$ be the subposets of meet-irreducible elements in $J_s(I)$ and $J_s(I')$, respectively. By Theorem~\ref{thm: Birkhoff's representation theorem} and the proof of Lemma~\ref{lem: meet-irreducible}, $I_0$ is order equivalent to $I \setminus P_m$, and $I'_0$ is order equivalent to $I' \setminus P'_m$. Thus, the restriction $\chi_1|_{I_0}$ yields an order equivalence $\chi: I \setminus P_m \to I' \setminus P'_m$. 
    
    By Theorem~\ref{thm: Birkhoff's representation theorem}, for any $I_1 \in J_s(I)$, $\chi_1(I_1)$ can be explicitly formulated as the order ideal generated by $\chi(I_1 \setminus P_m)\sqcup P'_m$ in $P'$. So for all $\alpha \in I\setminus P_m$, $\alpha \in I_1$ if and only if $\chi(\alpha)\in \chi_1(I_1)$, which implies $\chi_2(J_\alpha(P)) = J_{\chi(\alpha)}(P')$.
    
    Note we have the following equations:
    \begin{align*}
        &J_\alpha(P)\sqcup J_{-\alpha}(P)  = J_s(P), &  &\tilde{P} = (I \setminus P_m) \sqcup -(I \setminus P_m),\\
        &J_{\chi(\alpha)}(P')\sqcup J_{-\chi(\alpha)}(P') = J_s(P'), & &\tilde{P}' = (I' \setminus P'_m) \sqcup -(I' \setminus P'_m).
    \end{align*}
   We define $\chi(-\alpha):=-\chi(\alpha)$ for $\alpha \in I\setminus P_m$. Therefore, $\chi_2$ is a bijection from $J_s(P)$ to $J_s(P')$ with $\chi_2(J_\alpha(P)) = J_{\chi(\alpha)}(P')$ for all $\alpha \in \tilde{P}$, which makes $\chi$ a skeleton isomorphism from $\tilde{P}$ to $\tilde{P}'$ by Proposition~\ref{prop: recover from symmetric order ideal}.
\end{proof}
\section{Families of Condorcet domains}\label{sec: families}
In this section, we provide several equivalent descriptions of certain useful families of Condorcet domains in terms of their corresponding Condorcet root posets. Using these equivalent descriptions, we are able to generalize and strengthen various properties of these families. The classical type $A$ definitions and properties can be found in \cite{puppe2024maximal, karpovConstructingLargePeakpit2023}.

Here are some notations that will be used throughout this section. For any two posets $P_1, P_2$ on the same ground set $A$, we say that $P_2$ \emph{refines} $P_1$, denoted by $P_1\subseteq P_2$, if $a \le_{P_1} b $ implies $a \le_{P_2} b$ for $a, b \in A$. Similarly, define the \emph{intersection} of $P_1$ and $P_2$, denoted by $P_1\cap P_2$, to be the poset $P_3$ with $a\le_{P_3} b$ if and only if $a \le_{P_1} b $ and $a \le_{P_2} b$ for $a, b \in A$.

\subsection{Maximal, ample, and connected}\label{subsec: Maximal ample and connected}

We say that a Condorcet root poset $P$ is \emph{minimal} if there is no Condorcet root poset $P'$ on the same root system $\Phi$ such that $P'\subsetneq P$. The following lemma connects maximality and minimality between the classes $\mathcal{D}$ and $\mathcal{P}$.

\begin{lemma}
    \label{lem: contravariant}
    The maps $\varphi$ and $\psi$ are contravariant between $\mathcal{D}$ and $\mathcal{P}$, that is, if $D\subseteq D' \in \mathcal{D}$ (resp.\ $P\subseteq P' \in \mathcal{P}$), then $\varphi(D)\supseteq \varphi(D')$ (resp.\ $\psi(P)\supseteq \psi(P')$).
\end{lemma}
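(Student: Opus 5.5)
Both contravariance statements follow directly from Definitions~\ref{def: map phi} and~\ref{def: map psi}; no Condorcet condition is needed beyond what makes $\varphi(D)$ and $\psi(P)$ well defined.

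For $\varphi$, suppose $D\subseteq D'$ and $\alpha\le_{\varphi(D')}\beta$, that is, $D'(\alpha)\supseteq D'(\beta)$. For every $\gamma$ we have $D(\gamma)=D'(\gamma)\cap D$, since $D(\gamma)=\{v\in D\mid \gamma\in I(v)\}$. Intersecting both sides of $D'(\alpha)\supseteq D'(\beta)$ with $D$ therefore gives $D(\alpha)\supseteq D(\beta)$, so $\alpha\le_{\varphi(D)}\beta$. Hence every relation of $\varphi(D')$ holds in $\varphi(D)$, i.e.\ $\varphi(D')\subseteq\varphi(D)$ in the refinement order introduced at the start of this section.

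For $\psi$, suppose $P\subseteq P'$, meaning every relation of $P$ holds in $P'$, and take $w\in\psi(P')$. Then $I(w)$ is a symmetric order ideal of $P'$. Since $I(w)$ is an inversion set, $I(w)\sqcup -I(w)=\Phi$ holds automatically, so it only remains to check that $I(w)$ is an order ideal of $P$. Let $\beta\in I(w)$ and $\alpha\le_P\beta$. Then $\alpha\le_{P'}\beta$, and because $I(w)$ is an order ideal of $P'$ we get $\alpha\in I(w)$. Thus $I(w)\in J_s(P)$, so $w\in\psi(P)$, and $\psi(P')\subseteq\psi(P)$.

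Neither step presents a real obstacle. The only point needing care is the direction of the notation: $P\subseteq P'$ means $P'$ has more relations, so $P'$ has fewer order ideals, which is exactly what makes $\psi$ order-reversing.
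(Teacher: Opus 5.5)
Your proposal is correct and follows the paper's proof essentially step for step. For $\varphi$ you intersect $D'(\alpha)\supseteq D'(\beta)$ with $D$, and for $\psi$ you note that an order ideal of the finer preposet $P'$ is still an order ideal of $P$, with the symmetry condition $I(w)\sqcup -I(w)=\Phi$ automatic for inversion sets.
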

\begin{proof}
    Suppose $D\subseteq D'$ and $\alpha \le \beta$ in $\varphi(D')$. Then $D'(\alpha)=\{v\in D' \mid \alpha \in I(v)\}\supseteq D'(\beta)=\{v\in D' \mid \beta \in I(v)\}$. Since $D\subseteq D'$, we have $D(\alpha)=D\cap D'(\alpha)\supseteq D\cap D'(\beta)=D(\beta)$. Thus, $\alpha\le \beta$ in $\varphi(D)$, which implies $\varphi(D)\supseteq \varphi(D')$. 

    Conversely, suppose $P\subseteq P'$ and $v \in \psi(P')$. Then $I(v)\in J_s(P')$. For any $\alpha\in I(v)$ and $\beta\in \Phi$ such that $\beta\le_P \alpha$, the inclusion $P\subseteq P'$ implies $\beta\le_{P'}\alpha$, and therefore $\beta\in I(v)$. Thus, $I(v)\in J_s(P)$, which means $v\in \psi(P)$, yielding $\psi(P)\supseteq \psi(P')$.
\end{proof}

\begin{prop}
    \label{prop: maximal and minimal}
    A closed Condorcet domain $D$ is maximal if and only if $\varphi(D)$ is minimal.
\end{prop}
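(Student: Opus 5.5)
The plan is to transfer maximality across the bijection of Corollary~\ref{cor:bijection}, using the contravariance in Lemma~\ref{lem: contravariant}. Throughout, let $D$ be closed and set $P=\varphi(D)$, so that $\psi(P)=D$ by Proposition~\ref{prop: phi-psi is closure img psi is closed}.

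($\Rightarrow$) Suppose $D$ is maximal and $P'\subseteq P$ is a Condorcet root poset. By Lemma~\ref{lem: contravariant}, $\psi(P')\supseteq\psi(P)=D$. By Lemma~\ref{lem:psi well-defined}, $\psi(P')$ is a Condorcet domain, so maximality of $D$ gives $\psi(P')=D$. Theorem~\ref{thm: Condorcet root poset eq condition} then yields
\[
P'=\varphi(\psi(P'))=\varphi(D)=P.
\]
Hence $P$ is minimal.

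($\Leftarrow$) Suppose $P$ is minimal and $D'\supseteq D$ is a Condorcet domain. By Proposition~\ref{prop: closed}, $D'$ is contained in a maximal, hence closed, Condorcet domain $D''$, so we may replace $D'$ by $D''$ and assume $D'$ is closed. Lemma~\ref{lem:phi well-defined} shows that $\varphi(D')$ is a Condorcet root poset, and Lemma~\ref{lem: contravariant} gives $\varphi(D')\subseteq\varphi(D)=P$. Minimality of $P$ forces $\varphi(D')=P$. Since $D$ and $D'$ are both closed, Corollary~\ref{cor:bijection} (injectivity of $\varphi$ on closed domains) gives $D'=D$. In particular the original $D'$, which lies between $D$ and $D''$, also equals $D$, so $D$ is maximal.

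The proof has no real obstacle. The only subtle point is in ($\Leftarrow$): we must pass to a closed superdomain before invoking injectivity of $\varphi$. This works because maximal Condorcet domains are closed (Proposition~\ref{prop: closed}).
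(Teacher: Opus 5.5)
Your proof is correct and is the paper's argument: the paper proves this in one line by citing Corollary~\ref{cor:bijection} and the contravariance Lemma~\ref{lem: contravariant}, and you have spelled that out in both directions. The detour through a maximal (hence closed) superdomain in the ($\Leftarrow$) direction is valid but not needed. The second part of Corollary~\ref{cor:bijection} already gives $\overline{D'}=\overline{D}=D$ from $\varphi(D')=\varphi(D)$, and hence $D'\subseteq D$.
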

\begin{proof}
    This follows immediately from Corollary~\ref{cor:bijection} and Lemma~\ref{lem: contravariant}.
\end{proof}

\begin{ex}
    \label{ex: maximal in type I}    
    By Proposition~\ref{prop: maximal and minimal}, Theorem~\ref{thm: cycle condition} immediately yields a classification of maximal type $I$ Condorcet domains, as their root systems are simply dihedral root cycles. The two types of cycle conditions produce two types of maximal Condorcet domains in $I_2(n)$, having sizes $n+1$ and $4$, respectively. For instance, let $\{s_1, s_2\}$ be the set of generators of $I_2(n)$, then $\{e, s_1, s_1s_2, s_1s_2s_1, \ldots, w_0\}$ and $\{e, w_0, s_1, s_1w_0\}$ are maximal Condorcet domains that corresponding to the two types of cycle conditions, respectively.
\end{ex}

We say that a Condorcet domain $D$ is \emph{ample} if there is no $\alpha\in \Phi$ such that $\alpha\in I(v)$ for all $v\in D$.

\begin{prop}
    \label{prop: ample in poset}
    Let $D$ be a closed Condorcet domain and let $P=\varphi(D)$. Then $D$ is ample if and only if $P_G=\emptyset$.
\end{prop}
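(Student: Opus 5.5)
We unwind the definitions on both sides and show that the existence of a root lying in every inversion set is the same as the existence of a global element of $P$. Recall that $P=\varphi(D)$ is defined by $\alpha\le\beta$ iff $D(\alpha)\supseteq D(\beta)$. So $P_m$ is the set of roots $\gamma$ with $D(\gamma)$ maximal under inclusion among all $D(\beta)$, and $P_M$ is the set with $D(\gamma)$ minimal. The first key step is a reformulation of ampleness: $D$ fails to be ample exactly when some $\alpha\in\Phi$ has $D(\alpha)=D$. Since $D(\alpha)\sqcup D(-\alpha)=D$, this is equivalent to $D(-\alpha)=\emptyset$.

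For the direction ``not ample $\Rightarrow P_G\neq\emptyset$'', take $\alpha$ with $D(\alpha)=D$. Then $D(\alpha)\supseteq D(\beta)$ for every $\beta$, so $\alpha\le\beta$ for all $\beta$ and $\alpha\in P_m$. Dually, $D(-\alpha)=\emptyset$ gives $-\alpha\in P_M$.

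For the converse, suppose $P_G\neq\emptyset$. By condition (1) of Definition~\ref{def: condorcet-poset}, $\gamma\in P_M$ iff $-\gamma\in P_m$, so we may assume some $\gamma\in P_m$. Then $-\gamma\in P_M$, and in particular $\gamma\le-\gamma$. By condition (2) we have $\gamma\neq-\gamma$ in the preposet, so $\gamma<-\gamma$ strictly. Strictness means $D(\gamma)\supsetneq D(-\gamma)$. Since $D(\gamma)$ and $D(-\gamma)$ are disjoint, the containment $D(-\gamma)\subseteq D(\gamma)$ forces $D(-\gamma)=\emptyset$, hence $D(\gamma)=D$. This means $\gamma\in I(v)$ for all $v\in D$, so $D$ is not ample.

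The only subtle point is the converse: one must use that $\gamma$ and $-\gamma$ are comparable with $\gamma$ below. This is immediate from the minimum/maximum property, because a minimum element lies below everything, including $-\gamma$. One might worry whether $P_m$ could be nonempty in some degenerate way; since a minimum is required to lie below every element, this is not an issue. The closedness hypothesis is not actually needed for this argument.
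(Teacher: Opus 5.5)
Your proof is correct, but it takes a more direct route than the paper. The paper uses closedness through Corollary~\ref{cor:bijection} to identify $\{I(v)\mid v\in D\}$ with $J_s(P)$. Ampleness then becomes ``no root lies in every symmetric order ideal of $P$.'' The paper identifies such roots with $P_m$ and concludes from $P_G=P_m\sqcup(-P_m)$. You instead work with the sets $D(\alpha)$ and show directly that $\alpha\in P_m$ if and only if $D(\alpha)=D$. One direction is immediate from the definition of $\varphi$; the other follows from $\alpha\le-\alpha$ together with $D(\alpha)\cap D(-\alpha)=\emptyset$. This bypasses the $\varphi$--$\psi$ correspondence entirely. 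As you observe, it proves the statement for every nonempty $D\subseteq W$; closedness enters the paper's argument only because it passes through $\psi(P)=D$. The paper's version reads the criterion off the poset side, matching its general strategy, but at the cost of an unneeded hypothesis.

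Two cosmetic remarks. First, $P_m$ consists of the roots $\gamma$ for which $D(\gamma)$ contains \emph{every} $D(\beta)$, not those with $D(\gamma)$ merely maximal under inclusion. Your argument uses the correct notion, but your opening description misstates it. Second, the appeal to condition (2) for strictness is superfluous, since $D(-\gamma)\subseteq D(\gamma)$ and disjointness already force $D(-\gamma)=\emptyset$.
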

\begin{proof}
    By Corollary~\ref{cor:bijection}, Definitions~\ref{def: map phi} and ~\ref{def: map psi}, $D$ is ample if and only if there is no $\alpha\in \Phi$ such that $\alpha\in I$ for all $I\in J_s(P)$, which is equivalent to saying that there is no $\alpha \in P_m$. Since $P_G=P_m\sqcup (-P_m)$, the result follows.
\end{proof}

Perhaps surprisingly, the following example demonstrates that a maximal domain is not necessarily ample.
\begin{ex}
    \label{ex: maximal doesn't imply ample}
    Consider the type $A_4$ domain \[ D=\{12345, 12354, 12534, 13245, 13254, 13524, 14235, 14325, 41235, 41325, 42135, 43125\}. \] Let $P=\varphi(D)$, which is is depicted in Figure~\ref{fig: maximal not ample}. One can verify that $D$ is a maximal Condorcet domain with $-15 \in P_m$. 
\begin{figure}[htbp]
    \centering
    \resizebox{0.3\textwidth}{!}{
        \begin{tikzpicture}[
            x=1.5cm, 
            y=1cm, 
            every node/.style={
                rectangle, 
                rounded corners=4pt, 
                draw, 
                fill=white, 
                inner sep=2pt, 
                minimum width=1.1cm, 
                minimum height=0.6cm
            }
        ]

        \node (n_15_b) at (1.5, 0) {-15};

        \node (n_35_b) at (2, 1) {-35};
        \node (n_25_b) at (0, 1) {-25};
        \node (n_13_b) at (1, 1) {-13};
        \node (n_12_b) at (3, 1) {-12};

        \node (n_45_b) at (1, 2) {-45};
        \node (n_14_b) at (2, 2) {-14};

        \node (n_2434)   at (1, 3) {24, 34};
        \node (n_m24m34) at (2, 3) {-24, -34};
        \node (n_23_b) at (0, 3) {-23};
        \node (n_23)   at (3, 3) {23};

        \node (n_14) at (1, 4) {14};
        \node (n_45) at (2, 4) {45};

        \node (n_12) at (0, 5) {12};
        \node (n_13) at (2, 5) {13};
        \node (n_35) at (1, 5) {35};
        \node (n_25) at (3, 5) {25};

        \node (n_15) at (1.5, 6) {15};

        \draw (n_15_b) -- (n_35_b);
        \draw (n_15_b) -- (n_25_b);
        \draw (n_15_b) -- (n_13_b);
        \draw (n_15_b) -- (n_12_b);

        \draw (n_35_b) -- (n_45_b);
        \draw (n_35_b) -- (n_23);
        \draw (n_25_b) -- (n_45_b);
        \draw (n_25_b) -- (n_23_b);
        \draw (n_13_b) -- (n_23_b);
        \draw (n_13_b) -- (n_14_b);
        \draw (n_12_b) -- (n_23);
        \draw (n_12_b) -- (n_14_b);

        \draw (n_45_b) -- (n_2434);
        \draw (n_23_b) -- (n_12);
        \draw (n_23_b) -- (n_35);
        \draw (n_23)   -- (n_13);
        \draw (n_23)   -- (n_25);
        \draw (n_14_b) -- (n_m24m34);

        \draw (n_2434)   -- (n_14);
        \draw (n_m24m34) -- (n_45);

        \draw (n_14) -- (n_12);
        \draw (n_14) -- (n_13);
        \draw (n_45) -- (n_35);
        \draw (n_45) -- (n_25);

        \draw (n_12) -- (n_15);
        \draw (n_13) -- (n_15);
        \draw (n_35) -- (n_15);
        \draw (n_25) -- (n_15);

        \end{tikzpicture}
    }
    \vspace{0.5em}
    \caption{The poset in Example~\ref{ex: maximal doesn't imply ample}, which corresponding to a maximal but not ample Condorcet domain}
    \label{fig: maximal not ample}
\end{figure}
\end{ex}

We say that a Condorcet domain $D$ is \emph{connected} if for any $u, v\in D$, there exist $m\ge0$ and a sequence $(w_0=u, w_1, \ldots, w_m=v)$ in $D$ such that $|I(w_{i+1})\setminus I(w_i)|=1$ for all $0\le i\le m-1$.

\begin{prop}
    \label{prop: connected in poset}
    Let $D$ be a closed Condorcet domain and let $P=\varphi(D)$. Then $D$ is connected if and only if there are no equal roots in $\tilde{P}$, that is, for any $\alpha=\beta \in \tilde{P}$, we have $\alpha=_\Phi \beta$.
\end{prop}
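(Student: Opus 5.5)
We translate connectivity into a statement about the distributive join-semilattice $J_s(I)$ for a fixed $I=I(v)$, $v\in D$. By Corollary~\ref{cor:bijection} and Lemma~\ref{lem: J_s(I)andJ_s(P)}, the map $\rho$ identifies $D$ with $J_s(I)$. Since $I(u)=\rho(u)\sqcup-(I\setminus\rho(u))$, the symmetric difference of $I(u_1)$ and $I(u_2)$ is determined by $\rho(u_1)\triangle\rho(u_2)$. Indeed $|I(u_2)\setminus I(u_1)|=|\rho(u_1)\triangle\rho(u_2)|$, because each root $\beta\in I$ that is in exactly one of $\rho(u_1),\rho(u_2)$ contributes exactly one element, either $\beta$ or $-\beta$, to $I(u_2)\setminus I(u_1)$. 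So $D$ is connected if and only if any two elements of $J_s(I)$ can be joined by a chain of moves, each adding or removing a single root.

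\textbf{($\Leftarrow$)} Suppose $\tilde P$ has no nontrivial equalities. Given $I_1,I_2\in J_s(I)$, we connect each to $I_1\vee I_2=I_1\cup I_2$, which lies in $J_s(I)$ since $J_s(I)$ is an order filter of $J(I)$ by Proposition~\ref{prop: J_s(I) struture}. Every ideal between $I_1$ and $I_1\cup I_2$ in $J(I)$ lies in $J_s(I)$, again by Proposition~\ref{prop: J_s(I) struture}. Hence it suffices to show that covers in $J(I)$ above an element of $J_s(I)$ add exactly one root. In $J(I)$ for a preposet, a cover adds one full equivalence class $\{\beta: \beta=\alpha\}$.

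\begin{itemize}
\item If $\alpha\in\tilde P$, this class is a singleton by hypothesis.
\item Otherwise $\alpha\in P_G$. A lower set of $I$ containing an element of $J_s(I)$ already contains $P_m\cap I$; this follows from Proposition~\ref{prop: J_p(P) in J_s(P)}-type reasoning, since every symmetric ideal contains $P_m$ (if $\gamma\in P_m$ then $\gamma<-\gamma$, so $-\gamma$ cannot lie in an ideal without $\gamma$). Moreover $P_M\cap I=\emptyset$ unless $P_M$ lies in every symmetric ideal, which is impossible because $P_M=-P_m$. So added classes never lie in $P_G$.
\end{itemize}

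Thus each step changes $I(u)$ by one root, and $D$ is connected.

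\textbf{($\Rightarrow$)} Suppose $\alpha=\beta$ in $\tilde P$ with $\alpha\neq_\Phi\beta$. By Proposition~\ref{prop: recover from symmetric order ideal}, $J_\alpha(P)=J_\beta(P)$, so every $u\in D$ has either both or neither of $\alpha,\beta$ in $I(u)$. Since $\alpha\notin P_G$, Lemma~\ref{lem:phi well-defined}/Definition~\ref{def: map phi} gives some $u_1\in D$ with $\alpha\in I(u_1)$ and some $u_2\in D$ with $\alpha\notin I(u_1)$; the first exists because $\alpha$ is not maximal in the sense that $D(\alpha)\neq\emptyset$, and the second because $\alpha\notin P_m$ means $D(\alpha)\ne D$. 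Any path from $u_1$ to $u_2$ must at some step remove $\alpha$ from the inversion set. Since $\alpha$ and $\beta$ always enter and leave together, that step changes at least two roots, namely $\alpha\to-\alpha$ and $\beta\to-\beta$. Note that $\beta\neq_\Phi-\alpha$, since $\alpha=-\alpha$ is impossible in an antipodal root poset. So that step has $|I(w_{i+1})\setminus I(w_i)|\ge2$, a contradiction.

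The main obstacle is the careful handling of global elements in ($\Leftarrow$): one must check that $P_m\subseteq$ every symmetric ideal and that $P_M$ never meets the interval between symmetric ideals, so that equalities among global elements, which are allowed, never cause multi-root jumps. Everything else is a direct translation via $\rho$ and Proposition~\ref{prop: J_s(I) struture}.
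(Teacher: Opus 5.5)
Your proof is correct and follows essentially the paper's route. For $(\Rightarrow)$ you find a step of a connecting path at which $\alpha$ and $\beta$ flip simultaneously, and for $(\Leftarrow)$ you climb a saturated chain in $J_s(I)$, which is an order filter of $J(I)$ by Proposition~\ref{prop: J_s(I) struture}, one root at a time; the paper takes $I=I(v)$ and climbs from $\rho(u)$ to the top $I$, which is your join $I_1\cup I_2$ in that case. Your explicit check that the added classes avoid $P_G$ (every symmetric ideal contains $P_m$, so $P_M\cap I=\emptyset$) fills in a detail the paper leaves implicit, and the only blemish is the typo ``$\alpha\notin I(u_1)$'', which should read $\alpha\notin I(u_2)$.
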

\begin{proof}
    Suppose $D$ is connected and $\alpha=\beta \in \tilde{P}$. Then $\alpha, -\alpha \notin P_G$, and by Definition~\ref{def: map phi}, there exist $I, I'\in J_s(P)$ such that $\alpha\in I$ and $\alpha \notin I'$. By Corollary~\ref{cor:bijection}, we can choose $v, v'\in D$ such that $I(v)=I$ and $I(v')=I'$. As $D(\alpha)=D(\beta)$, by the definition of connectedness on $v, v'$, there exist $u, w\in D$ such that $|I(u)\setminus I(w)|=1$, $\alpha, \beta \notin I(u)$, and $\alpha, \beta \in I(w)$. Then $\alpha=_\Phi \beta$.

    Conversely, suppose that for any $\alpha=\beta \in \tilde{P}$, we have $\alpha=_\Phi \beta$. Take any $u, v \in D$, and let $I=I(v)\in J_s(P)$ and $I'=I(u)\cap I\in J_s(I)$. By Proposition~\ref{prop: J_s(I) struture}, any $I''\in J(I)$ with $I''\supseteq I'$ satisfies $I''\in J_s(I)$. Let $m=|I\setminus I'|$. Then, similarly to the proof of Proposition~\ref{prop: J_p(P) in J_s(P)}, we can construct a sequence $(I_0=I', I_1,\ldots, I_m=I)$ in $J_s(I)$ such that $I_{i+1}=I_i\sqcup \{\alpha_i\}$. Choosing $w_i\in D$ such that $\rho(w_i)=I_i$ for each $i$, the result follows.
\end{proof}

\begin{proof}[Proof of Theorem~\ref{thm: minimal cover is root-triple}]
    $(\Rightarrow)$ Suppose $D$ is maximal. By Propositions~\ref{prop: ample in poset} and ~\ref{prop: connected in poset}, $P=\tilde{P}$ and there are no equal roots in $P$. Let $P'$ be the poset obtained from $P$ by removing the relations $\alpha \le \beta$ and $-\beta \le -\alpha$. Because these are covering relations in $P$ (i.e., $\alpha \lessdot \beta$ and $-\beta \lessdot -\alpha$) and there are no other roots equal $\alpha$ or $\beta$, $P'$ is a well-defined poset. Because $P'\subsetneq P$, Proposition~\ref{prop: maximal and minimal} implies that $P'$ is not a Condorcet root poset. Since $P_G=\emptyset$, $P'$ satisfies conditions (1) and (2) of Definition~\ref{def: condorcet-poset}. Thus, there exists a root-triple $\{\alpha', \beta', \gamma'\}$ that satisfies the Condorcet condition in $P$ but not in $P'$. By the construction of $P'$, we must have either $\alpha, -\beta \in \{\alpha', \beta', \gamma'\}$ or $\alpha, -\beta \in -\{\alpha', \beta', \gamma'\}$. In either case, the claim holds.

    $(\Leftarrow)$ Suppose for any $\alpha, \beta\in P$, the covering relation $\alpha\lessdot \beta$ implies the existence of a root-triple containing $\alpha$ and $-\beta$. By Proposition~\ref{prop: maximal and minimal}, we only need to prove $P$ is minimal. Suppose, for the sake of contradiction, that there exists a Condorcet root poset $P'\subsetneq P$. By Proposition~\ref{prop: ample in poset}, $P=\tilde{P}$, which in turn implies $P'=\tilde{P'}$. By Proposition~\ref{prop: connected in poset}, there are no equal roots in $P$, and therefore none in $P'$. Since $P'\subsetneq P$, there exists a covering relation $\alpha\lessdot \beta$ in $P$ such that $\alpha$ and $\beta$ are incomparable in $P'$. Let $\{\alpha, -\beta, \gamma\}$ be the root-triple guaranteed by the hypothesis. Since there are no equal roots in $P'$, $\{\alpha, -\beta, \gamma\}$ satisfies the first type of Condorcet condition in $P'$. Because $\alpha$ and $\beta$ are incomparable in $P'$, the Condorcet condition implies that either $\alpha\le -\gamma\le -\beta$ or $\alpha \ge -\gamma \ge -\beta$ in $P'$. If $\alpha\le -\gamma\le -\beta$, then $-\alpha \ge \beta \ge \alpha$ in $P$. Conversely, if $\alpha \ge -\gamma \ge -\beta$, then $\beta \ge \alpha\ge -\beta$ in $P$. Both cases contradict the fact that $P_G=\emptyset$. Thus, no such Condorcet root poset $P'\subsetneq P$ can exist, completing the proof.
\end{proof}

Theorem~\ref{thm: minimal cover is root-triple} makes it easy to determine whether a connected and ample Condorcet domain $D$ is maximal: we can simply examine the Hasse diagram of $P$ and verify whether every edge (or more precisely, half of the edges) satisfies the condition in this theorem.

\begin{ex}
    \label{ex: minimal cover is root-triple}
    Let $P$ be the Condorcet root poset in Figure~\ref{fig:map-psi-example}, and let $D=\psi(P)$. One can verify that $D$ is a maximal Condorcet domain, and by Propositions~\ref{prop: ample in poset} and \ref{prop: connected in poset}, we can see directly from $P$ that $D$ is both ample and connected. Therefore, for any covering relation $\alpha \lessdot \beta$ in $P$, the roots $\alpha$ and $-\beta$ belong to a common root-triple. For instance, taking $\alpha=12$ and $\beta=-23$, the set $\{12, 23, -13\}$ forms such a root-triple.
\end{ex}
\subsection{Peak-pit and saturated single-crossing}\label{subsec: Peak-pit and saturated single-crossing}
We say that a Condorcet domain $D$ is \emph{peak-pit} if every root-triple satisfies the first type of Condorcet condition in $\varphi(D)$. 

\begin{prop}
    \label{prop: connected implies peak-pit}
    If a closed Condorcet domain $D$ is connected, then $D$ is peak-pit.
\end{prop}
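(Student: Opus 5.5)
The plan is to argue by contradiction, using the poset characterization of connectedness. Let $P=\varphi(D)$, which is a Condorcet root poset by Lemma~\ref{lem:phi well-defined}. Fix a root-triple $\{\alpha,\beta,\gamma\}$. By condition (3) of Definition~\ref{def: condorcet-poset}, some labeling satisfies either the first type of Condorcet condition, in which case we are done, or the second type $\alpha=-\gamma$ in $P$. So the only case to handle is $\alpha=-\gamma$ in $P$, and there we must show that some labeling also satisfies the first type.

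\textbf{Step 1: the equal pair is global.} I will first check that $\alpha$ and $-\gamma$ are distinct roots. If $\alpha=_\Phi-\gamma$, then $a\alpha+b\beta+c\gamma=0$ with $a,b,c>0$ gives $(a-c)\alpha+b\beta=0$. Since $b\neq 0$, $\beta$ would be a multiple of $\alpha$, hence $\beta=\pm\alpha$, and each choice of sign contradicts positivity of the coefficients. Since $D$ is closed and connected, Proposition~\ref{prop: connected in poset} says that $\tilde P$ contains no equal roots that are distinct in $\Phi$. Hence $\alpha$ and $-\gamma$ cannot both lie in $\tilde P$. Because they are equal in $P$, they lie in the same equivalence class, and $P_m$, $P_M$ are unions of such classes. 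It follows that either both lie in $P_m$ or both lie in $P_M$.

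\textbf{Step 2: produce a chain of the first type.} If $\alpha,-\gamma\in P_M$, then $-\gamma$ is a maximum element, so $-\gamma\ge\beta$. Together with $\alpha=-\gamma$ this gives $\alpha\ge-\gamma\ge\beta$, which is the first type. If instead $\alpha,-\gamma\in P_m$, then $-\gamma$ is a minimum element, so $\beta\ge-\gamma$. Together with $-\gamma=\alpha$ this gives $\beta\ge-\gamma\ge\alpha$, which is again the first type, now for the relabeling in which $\beta$ plays the role of the first root, $\gamma$ stays as the middle one, and $\alpha$ plays the third. In either case the root-triple satisfies the first type of Condorcet condition, so $D$ is peak-pit.

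\textbf{Main obstacle.} The real content is the passage from ``equal in $P$'' to ``global'', which is exactly where connectedness enters through Proposition~\ref{prop: connected in poset}. Two small points need care there. The first is the check that $\alpha\neq_\Phi-\gamma$ for a root-triple, handled in Step 1. The second is that an equality class meeting $P_G$ lies entirely in $P_m$ or entirely in $P_M$, which holds because these sets are defined by order conditions that are invariant under equality in the preposet. Everything after that is a one-line verification.
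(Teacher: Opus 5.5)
Your proof is correct and is essentially the paper's argument: reduce to the second-type case $\alpha=-\gamma$, use Proposition~\ref{prop: connected in poset} to push the equal pair into $P_m$ or $P_M$, and then read off a first-type chain. You also make explicit the check $\alpha\neq_\Phi-\gamma$, which the paper leaves implicit, but positivity of the coefficients does not exclude $\beta=\pm\alpha$ (for instance $\gamma=-\alpha$, $\beta=\alpha$, $c=a+b$ gives $a\alpha+b\beta+c\gamma=0$), so that step should instead use the fact that the three roots of a root-triple are distinct.
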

\begin{proof}
    Suppose $D$ is not peak-pit. Then there exists a root-triple $\{\alpha, \beta, \gamma\}$ such that $\alpha=-\beta$ in $P$ and does not satisfy the first type of Condorcet condition. If $\alpha\in P_G$, we may assume without loss of generality that $\alpha\in P_m$. This implies $\alpha\le -\beta\le \gamma$, which satisfies the first type of Condorcet condition, yielding a contradiction. Thus, we must have $\alpha=-\beta \in \tilde{P}$. However, by Proposition~\ref{prop: connected in poset}, this implies that $D$ is not connected, which is a contradiction. Therefore, $D$ must be peak-pit.
\end{proof}

A nonempty subset $D\subseteq W$ is a \emph{saturated single-crossing domain} if $D=\{v_1,v_2,\ldots,v_m\}$ and there exist distinct roots $\alpha_1,\alpha_2,\ldots,\alpha_{m-1} \in \Phi$ such that $I(v_{i+1})\setminus I(v_{i})= \{\alpha_{i}\}$ for $1\le i\le m-1$. In this case, we say $D$ \emph{connects} $v_1$ and $v_m$. For instance, the Condorcet domain $D$ in Example~\ref{ex:map-varphi-small-example} is a saturated single-crossing domain connecting $213$ and $321$.

We now recall some well-known concepts from the theory of Coxeter groups. The \emph{longest element} in $W$ is denoted by $w_0$, it is the unique element in $W$ for which $I(w_0)=\Phi^+$. For any $w\in W$, we have $I(w)\sqcup I(ww_0)=\Phi$. Let $S=\{s_1, \ldots, s_n\}$ be the set of \emph{simple reflections} generating the Coxeter group $W$. We say that a sequence of generators $\mathbf{i}=(s_{i_1}, s_{i_2}, \ldots, s_{i_m})$ is a \emph{reduced word} for $w\in W$ if $w=s_{i_1}s_{i_2}\cdots s_{i_m}$, where $m$ is minimum possible. We call $m$ the \emph{length} of $w$, denoted by $\ell(w)$. For any reduced word $\mathbf{i}$, any contiguous subsegment of $\mathbf{i}$ is also a reduced word. Let $\{\gamma_k\}:= I(s_k)\cap \Phi^+$ be the corresponding \emph{simple root} of $s_k$ for $1\le k \le n$. Then $w$ has reduced word $\mathbf{i}$ implies $I(w)\cap \Phi^+=\{\gamma_{i_1}, s_{i_1}(\gamma_{i_2}), s_{i_1}s_{i_2}(\gamma_{i_3}),\ldots, s_{i_1}s_{i_2}\cdots s_{i_{m-1}}(\gamma_{i_{m}})\}$. So for any $w\in W$, we have $\ell(w)=|I(w)\cap \Phi^+|$. 

For any $u, v \in W$, we have $I(uv)=uI(v)$ and for any root-triple $T$, the set $uT$ remains a root-triple, it follows that for any $w\in W$, $D$ is a Condorcet domain if and only if $wD$ is a Condorcet domain. Furthermore, $\varphi(wD)$ is simply a relabeling of $\varphi(D)$, which is obtained by replacing any $\alpha \in \Phi$ by $w(\alpha)$. Therefore, we may always assume without loss of generality that $w_0\in D$.

\begin{prop}
    \label{prop: saturated single-crossing domain exists}
    Let $D$ be a saturated single-crossing domain and let $P=\varphi(D)$. Then $D$ is a connected, closed Condorcet domain, and $\tilde{P}$ is a disjoint union of two chains. Furthermore, for any $u, v \in W$, there exists a saturated single-crossing domain connecting $u$ and $v$. Moreover, when $v=w_0$, we can choose the roots $\alpha_i$ from $\Phi^+$.
\end{prop}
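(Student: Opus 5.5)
Throughout I read ``distinct roots $\alpha_1,\ldots,\alpha_{m-1}$'' in the single-crossing sense: no root pair $\{\gamma,-\gamma\}$ is flipped twice, i.e.\ the $\alpha_i$ are distinct and $\alpha_j\neq-\alpha_i$. This is an interpretive assumption, not a checked consequence of the literal wording. It is exactly what the reduced-word construction below produces, and the proof of the Condorcet property uses it essentially. Since $|I(v_i)|=|\Phi|/2$ for all $i$, the condition $I(v_{i+1})\setminus I(v_i)=\{\alpha_i\}$ means $I(v_{i+1})=(I(v_i)\setminus\{-\alpha_i\})\sqcup\{\alpha_i\}$. Hence for every $\gamma\in\Phi$ the set $D(\gamma)$ has one of three forms: empty, all of $D$, or (when $\gamma=\pm\alpha_i$) a suffix $\{v_{i+1},\ldots,v_m\}$ or the complementary prefix $\{v_1,\ldots,v_i\}$. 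Everything will be read off from this interval description.

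\textbf{Condorcet property.} Take a root-triple $\{\alpha,\beta,\gamma\}$; its three roots are pairwise non-antipodal. Each step of the chain flips at most one of them. By Proposition~\ref{prop: biconvex}, $|\{\alpha,\beta,\gamma\}\cap I(v_i)|\in\{1,2\}$ for all $i$. Suppose three distinct $2$-subsets occur along the chain. Passing from one $2$-subset to another requires two flips, one out and one in, through a singleton. Visiting all three $2$-subsets therefore forces some root of the triple to be flipped out and later back in, i.e.\ both $-\beta$ and $\beta$ occur among the $\alpha_i$, which is excluded. So $D$ is a Condorcet domain.

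\textbf{Poset structure, closedness and connectedness.} Next I compute $P=\varphi(D)$ from the interval description. Roots with $D(\gamma)=\emptyset$ or $D(\gamma)=D$ form $P_G$. The roots $\alpha_i$ have nested suffix sets $D(\alpha_i)=\{v_{i+1},\ldots,v_m\}$, so they form the strict chain $\alpha_1<\alpha_2<\cdots<\alpha_{m-1}$. Their negatives $-\alpha_i$ have nested prefix sets and form the reversed chain. A nonempty proper prefix and a nonempty proper suffix are never nested, so there are no further relations. Thus $\tilde P$ is a disjoint union of two chains with no equal roots.

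A symmetric order ideal must contain $P_m$ and omit $P_M$. On the skeleton it is determined by a down-set $\{\alpha_1,\ldots,\alpha_{k}\}$ of the first chain, together with the complementary negatives $-\alpha_{k+1},\ldots,-\alpha_{m-1}$. One checks this union is an order ideal, using that there are no cross relations. This gives exactly $m$ ideals, so $|J_s(P)|\le m=|D|$. Combined with $D\subseteq\psi(\varphi(D))$ (Lemma~\ref{lem: inversions are symmetric order ideals}) and Lemma~\ref{lem:psi well-defined}, we get $\psi(\varphi(D))=D$. Hence $D$ is closed by Proposition~\ref{prop: phi-psi is closure img psi is closed} (or directly by Lemma~\ref{lem: psi(P) closed}). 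Connectedness then follows from Proposition~\ref{prop: connected in poset}, since $\tilde P$ has no equal roots; alternatively, it is immediate from the chain itself.

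\textbf{Existence.} Write $u^{-1}v=s_{i_1}\cdots s_{i_m}$ as a reduced word, let $y_k=s_{i_1}\cdots s_{i_k}$, and set $v_k=uy_k$. Using $I(uy)=uI(y)$ and $I(y_{k})=I(y_{k-1})$ with the single pair $\pm y_{k-1}\gamma_{i_k}$ flipped, the flipped roots are $\alpha_k=u\,y_{k-1}\gamma_{i_k}$. The roots $y_{k-1}\gamma_{i_k}$ are the distinct positive inversions of a reduced word, so in particular they lie in pairwise distinct root pairs. Applying $u$ preserves both properties, so this is a saturated single-crossing domain connecting $u$ and $v$. When $v=w_0$, each $\alpha_k$ enters the inversion set at step $k$ and is never removed afterwards, so $\alpha_k\in I(w_0)=\Phi^+$.

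The main obstacle is the Condorcet argument, specifically making rigorous that visiting three distinct $2$-subsets of a root-triple forces a double flip. This is also the one place where the single-crossing reading of ``distinct'' is genuinely needed.
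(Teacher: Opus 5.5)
Your proposal is correct and follows essentially the paper's proof. It uses the same ``flipped out and back in'' contradiction for the Condorcet property, and the same interval description of $D(\gamma)$, which gives the two chains and exactly $m$ symmetric order ideals (hence $\psi(\varphi(D))=D$). For existence, both arguments use reduced words plus left translation; you take a reduced word of $u^{-1}v$ directly, whereas the paper first translates to $v=w_0$ and extends a reduced word of $u$ to one of $w_0$.

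Your reading of ``distinct'' as $\alpha_i\neq\pm\alpha_j$ is genuinely necessary. Under the literal reading, the chain $123,213,231,321,312$ in type $A_2$ flips $12,13,23,-12$, which are pairwise distinct roots, yet it contains the Condorcet cycle $\{123,231,312\}$. The paper's proof relies on the same reading implicitly when it asserts $\alpha_l\in I(v_k)$ for $l<k$.
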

\begin{proof}
    Suppose $D$ is not a Condorcet domain. By Definition~\ref{def: condorcet}, there exist a root-triple $\{\alpha, \beta, \gamma\}$ and distinct elements $u, v, w\in D$ such that $\alpha,\beta \in I(u)$, $\beta, \gamma\in I(v)$, and $\gamma, \alpha\in I(w)$. We assume without loss of generality that $u=v_i$, $v=v_j$, and $w=v_k$ with $i<j<k$. Since $\alpha \in I(u)\setminus I(v)$, the construction of $D$ implies that there exists an index $l$ with $i\le l<j$ such that $\alpha=-\alpha_l$. Because $l< k$, we have $\alpha_l=-\alpha\in I(v_k)=I(w)$, contradicting the fact that $\alpha \in I(w)$. Thus, $D$ is Condorcet.

    By definition, $D$ is connected. To show that $D$ is closed, Corollary~\ref{cor:bijection} implies that it suffices to prove $\psi(P)=D$. By the construction of $D$, for any $\alpha \in I(v_1)\cap I(v_m)$, we have $D(\alpha)=D$. Additionally, for $1\le i\le m-1$, $D(\alpha_i)=\{v_{i+1}, v_{i+2},\ldots, v_{m}\}$. Since $(I(v_1)\cap I(v_m)) \cup \{\alpha_1, \alpha_2,\ldots, \alpha_{m-1}\}=I(v_m)$, $I(v_m)\sqcup -I(v_m)=\Phi$, and $D(\alpha)\sqcup D(-\alpha)=D$, we can determine $D(\alpha)$ for all $\alpha\in \Phi$ now. Thus, we find $P_m= I(v_1)\cap I(v_m)$ and $P_M=-P_m=\Phi\setminus (I(v_1)\cup I(v_m))$. Furthermore, the only relations in $\tilde{P}$ form two chains: $\alpha_1<\alpha_2<\cdots <\alpha_{m-1}$ and $-\alpha_{m-1}<-\alpha_{m-2}<\cdots<-\alpha_1$. There are exactly $m$ symmetric order ideals in $P$, which correspond exactly to $v_1, v_2, \ldots, v_m$, hence, $\psi(P)=D$, as required.

    Now, let $u, v\in W$, we will prove there is a saturated single-crossing domain connecting $u$ and $v$. As for any $w_1, w_2\in W$, $I(w_1w_2)=w_1I(w_2)$, multiplying from the left preserves the saturated single-crossing property. By multiplying by $w_0v^{-1}$ from the left, we may assume $v=w_0$. Then there exists a reduced word $\mathbf{i}=(s_{i_1}, s_{i_2},\ldots , s_{i_m})$ for $w_0$ and an integer $l\le m$ such that $\mathbf{i'}=(s_{i_1}, s_{i_2},\ldots , s_{i_l})$ is a reduced word for $u$. Define $D=\{v_j=s_{i_1}s_{i_2}\cdots s_{i_{l+j-1}}\in W \mid  1\le j\le m-l+1\}$, and let $\alpha_j=v_j(\gamma_{i_{l+j}})$. Then $v_1=u, v_{m+l-1}=v$, and we have $I(v_{j+1})\setminus I(v_j)=\{\alpha_j\}\subseteq \Phi^+ $ for $1\le j \le m-l$. So this construction yields the desired domain. 
\end{proof}

\begin{lemma}
    \label{lem: union to intersection}
    For any Condorcet domains $D$ and $D'$, $\varphi(D\cup D')=\varphi(D)\cap \varphi(D')$. Furthermore, $D\cup D'$ is a Condorcet domain if and only if $\varphi(D)\cap \varphi(D')$ is a Condorcet root poset.
\end{lemma}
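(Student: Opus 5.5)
The first claim is a direct computation from Definition~\ref{def: map phi}. For any root $\gamma$, we have $(D\cup D')(\gamma)=D(\gamma)\cup D'(\gamma)$, where these are disjoint unions inside $D\cup D'$ if $D$ and $D'$ are disjoint, and ordinary unions in general. So we must show that $(D\cup D')(\alpha)\supseteq (D\cup D')(\beta)$ holds exactly when $D(\alpha)\supseteq D(\beta)$ and $D'(\alpha)\supseteq D'(\beta)$. The key observation is $D(\gamma)=(D\cup D')(\gamma)\cap D$ and $D'(\gamma)=(D\cup D')(\gamma)\cap D'$. If $(D\cup D')(\alpha)\supseteq(D\cup D')(\beta)$, intersecting both sides with $D$ and with $D'$ gives the two inclusions. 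Conversely, if both inclusions hold, taking their union gives $(D\cup D')(\alpha)=D(\alpha)\cup D'(\alpha)\supseteq D(\beta)\cup D'(\beta)=(D\cup D')(\beta)$. Hence $\alpha\le\beta$ in $\varphi(D\cup D')$ if and only if $\alpha\le\beta$ in both $\varphi(D)$ and $\varphi(D')$. This is exactly the definition of the intersection poset $\varphi(D)\cap\varphi(D')$. The argument is the same as in Lemma~\ref{lem: contravariant}, applied in both directions.

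For the second claim, note that $D\cup D'$ is a nonempty subset of $W$, since $D$ is nonempty. Proposition~\ref{prop: Condorcet domain eq map to Condorcet root poset} then says $D\cup D'$ is a Condorcet domain if and only if $\varphi(D\cup D')$ is a Condorcet root poset. By the first part this poset equals $\varphi(D)\cap\varphi(D')$, which gives the equivalence. We do not need to check separately that $\varphi(D)\cap\varphi(D')$ is antipodal: Lemma~\ref{lem:phi well-defined} already shows that $\varphi$ of any nonempty subset is an antipodal root poset.

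No step here is a real obstacle. The only point needing care is that $(D\cup D')(\gamma)$ is computed inside $D\cup D'$ (the disjointness of $D$ and $D'$ plays no role), and that "intersection of posets" is meant in the sense defined at the start of Section~\ref{sec: families}, with preposets allowed.
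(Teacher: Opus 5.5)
Your proof is correct and matches the paper's argument. The paper gets $\varphi(D)\cap\varphi(D')\subseteq\varphi(D\cup D')$ by taking unions of the sets $D(\gamma)$ and $D'(\gamma)$, and gets the reverse inclusion by restricting to $D$ and $D'$, which it cites as Lemma~\ref{lem: contravariant}. It then deduces the second claim from Proposition~\ref{prop: Condorcet domain eq map to Condorcet root poset}, as you do.
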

\begin{proof}
    Suppose $\alpha \le \beta $ in $\varphi(D)\cap\varphi( D')$. This implies that $D(\alpha)\supseteq D(\beta)$ and $D'(\alpha)\supseteq D'(\beta)$. Consequently, $(D\cup D')(\alpha)=D(\alpha)\cup D'(\alpha)\supseteq D(\beta)\cup D'(\beta)=(D\cup D')(\beta)$. Thus, $\alpha \le \beta$ in $\varphi(D\cup D')$, which shows that $\varphi(D\cup D')\supseteq\varphi(D)\cap \varphi(D')$.  
    
    Conversely, by Lemma~\ref{lem: contravariant}, the inclusions $D, D'\subseteq D\cup D'$ imply that $\varphi(D\cup D') \subseteq \varphi(D)$ and $\varphi(D\cup D') \subseteq \varphi(D')$, so $\varphi(D\cup D') \subseteq \varphi(D)\cap \varphi(D')$. Therefore, $\varphi(D\cup D')=\varphi(D)\cap \varphi(D')$. The final statement follows directly from Proposition~\ref{prop: Condorcet domain eq map to Condorcet root poset}.
\end{proof}

\begin{proof}[Proof of Theorem~\ref{thm: connected eq peak-pit}]
    If $D$ is connected, then $D$ is peak-pit by Proposition~\ref{prop: connected implies peak-pit}. 
    
    Now assume that $D$ is peak-pit but not connected. By Proposition~\ref{prop: connected in poset}, there exist roots $\alpha_1, \alpha_2 \in \tilde{P}$ such that $\alpha_1 \neq_\Phi \alpha_2$ but $\alpha_1 = \alpha_2$. Let $A=\{ \gamma \in \Phi \mid \gamma =\alpha_1 \text{ in } P \}=\{\alpha_1, \alpha_2, \ldots, \alpha_l\}$. Then $|A|=l \ge 2$. By Definition~\ref{def: map phi}, the fact that $\alpha_1\in \tilde{P}$ implies there exist $v_1, v_2 \in D$ such that $\alpha_1 \in I(v_1)\setminus I(v_2)$ hence $A\subseteq I(v_1)\setminus I(v_2)$. Similarly to the proof of Proposition~\ref{prop: connected in poset}, we can choose $I, I'\in J_s(P)$ such that $I'\setminus I=A$. By Corollary~\ref{cor:bijection}, there exist $v, v' \in D$ such that $I(v)=I$ and $I(v')=I'$. 
    
    Because $\varphi(wD)$ is simply a relabeling of $\varphi(D)$, we may assume without loss of generality that $v'=w_0$. Consequently, $I(v')=\Phi^+$ and $A\subseteq \Phi^+$. Let $D'$ be the saturated single-crossing domain connecting $v$ and $v'$ guaranteed by Proposition~\ref{prop: saturated single-crossing domain exists}. Note that $D' \nsubseteq D$, since $v$ and $v'$ are not connected in $D$ but are connected in $D'$. To establish a contradiction with the maximality of $D$, it suffices to prove that $D\cup D'$ is a Condorcet domain. 
    
    By Lemma~\ref{lem: union to intersection}, letting $P'=\varphi(D')$, we need only show that $P'':=P\cap P'$ is a Condorcet root poset. Since the intersection of antipodal root posets is necessarily antipodal, by Lemma~\ref{lem:phi well-defined}, it remains only to verify the Condorcet condition for $P''$. 
    
    Take any root-triple $T=\{\alpha, \beta, \gamma\}$. Let $k=|T\cap \tilde{P'}|$. From the proof of Proposition~\ref{prop: saturated single-crossing domain exists} and the construction of $I$ and $I'$, we know that $\tilde{P'}$ is a poset on $A\sqcup -A$. Because $v'=w_0$ and $A\subseteq \Phi^+$, we can relabel the elements in $A$ to assume that the only relations in $\tilde{P'}$ form the chains $\alpha_1< \alpha_2<\cdots< \alpha_l$ and $-\alpha_1>-\alpha_2>\cdots>-\alpha_l$. As $A\subseteq \tilde{P},$ elements in $A$ are incomparable with elements in $-A$. Therefore, for any $\zeta_1, \zeta_2 \in A\sqcup-A$, we have $\zeta_1\le _{P''}\zeta_2$ if and only if $\zeta_1\le_{P'} \zeta_2$.  
    
    Furthermore, from the proof of Proposition~\ref{prop: saturated single-crossing domain exists} and $I(v), I(v')\in J_s(P)$, $P'_m= I(v)\cap I(v')$ is an order ideal in $P$, and $P'_M=\Phi\setminus(I(v)\cup I(v'))$ is an order filter in $P$. Moreover, we have $A\sqcup -A=I(v)\Delta I(v')=\tilde{P'}$ and $P'_m\sqcup P'_M\sqcup \tilde{P}'=\Phi$. So if $\zeta_1 \in P'_m, \zeta_2\in A\sqcup -A$, then $\zeta_1<_{P'}\zeta_2 $ and $\zeta_1\ngeq_P \zeta_2$. Similar analysis implies that for any $\{\zeta_1, \zeta_2\} \nsubseteq A\sqcup -A$, we have $\zeta_1\le_{P''}\zeta_2$ if and only if $\zeta_1\le_P \zeta_2$. 
    
    We now consider possible values of $k$:
    \begin{itemize}
        \item[(1)] When $k=0$ or $1$, the Condorcet condition on $T$ in $P''$ is equivalent to that in $P$. 
        \item[(2)] When $k=3$, the Condorcet condition on $T$ in $P''$ is equivalent to that in $P'$.
    \end{itemize}
    Since both $P$ and $P'$ are Condorcet root posets, the condition holds in $P''$ for these cases.  
    
    Now assume $k=2$. Without loss of generality, let $\alpha, \gamma \in \tilde{P'}$. Then $\beta \in P'_G$. Since $|T\cap \tilde{P'}|=|-T\cap \tilde{P'}|$ and the Condorcet conditions on $T$ and $-T$ are equivalent, we can replace $T$ with $-T$ if necessary to assume $\beta \in P'_m$.  As there are no equal roots in $\tilde{P'}$, the first type of Condorcet condition implies $-\beta >_{P'}\alpha >_{P'} -\gamma >_{P'} \beta$ and $-\beta >_{P'}\gamma >_{P'}-\alpha >_{P'}\beta$. As $\beta \in P'_m, -\beta \in P_M'$, we have $-\beta\nleq_P \alpha$ and $-\gamma\nleq_P\beta$. Because $P$ is peak-pit, the only possible first type of Condorcet conditions on $T$ in $P$ are $\alpha \ge_P -\gamma\ge_P \beta$ or $\gamma \ge_P -\alpha \ge_P \beta$. Both conditions are also satisfied in $P'$ and therefore the first type of Condorcet condition holds in $P''$.
    
    Thus, the Condorcet condition holds on $T$ in $P''$ in all cases, which completes the proof.
\end{proof}

Theorem~\ref{thm: connected eq peak-pit} was conjectured for type $A$ domains in \cite[Conjecture 1]{puppe2024maximal}, and was verified for $A_i$ with $i\le 6$ in \cite[Observation 4.15]{akello-egwelCondorcetDomainsMost2025}. Recently, \cite{liEquivalenceConnectedPeakPit2026} claimed to provide a proof for the type $A$ cases. 

The strategy for our proof is partly inspired by \cite{liEquivalenceConnectedPeakPit2026} and can be viewed as both a simplification and a generalization of their approach.

\subsection{Symmetric, of maximal width, and of tiling type} \label{subsec: Symmetry maximal width and tiling type}
The central focus of this subsection is the class of Condorcet domains of tiling type, which naturally combines the properties of being maximal, connected, and of maximal width. Although known examples demonstrate that this class does not always contain the maximum-size Condorcet domains, unlike the broader family of ample and connected domains discussed previously, it nonetheless possesses elegant properties and rich structural connections worthy of study. Because domains of tiling type are not a priori Condorcet domains by definition, we will temporarily broaden our focus to arbitrary nonempty subsets of $W$.

A nonempty subset $D \subseteq W$ is said to be \emph{symmetric} if $vw_0\in D$ for all $v\in D$, and \emph{of maximal width} if there exists at least one $v\in D$ such that $vw_0\in D$.

\begin{prop} 
    \label{prop: symmetry in poset}
    Let $D$ be a nonempty subset of $W$ and let $P=\varphi(D)$. If $D$ is symmetric, then $P$ coincides with an equivalent relation on $\Phi$, that is, any two non-equal elements $\alpha\neq \beta \in P$ are incomparable in $P$. Moreover, if $D$ is a closed Condorcet domain, the converse also holds.
\end{prop}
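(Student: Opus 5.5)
For the forward direction I would work directly with Definition~\ref{def: map phi}. The key identity is $I(vw_0)=\Phi\setminus I(v)=-I(v)$, which follows from $I(v)\sqcup I(vw_0)=\Phi$ together with $I(v)\sqcup -I(v)=\Phi$. If $D$ is symmetric, the map $v\mapsto vw_0$ is an involution of $D$. It sends $D(\gamma)$ onto $D(-\gamma)=D\setminus D(\gamma)$ for every $\gamma\in\Phi$.

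Now suppose $\alpha\le\beta$ in $P$, so $D(\alpha)\supseteq D(\beta)$. Applying the involution gives $D(-\alpha)\supseteq D(-\beta)$. Taking complements in $D$, this becomes $D\setminus D(\alpha)\supseteq D\setminus D(\beta)$, that is, $D(\beta)\supseteq D(\alpha)$. Hence $D(\alpha)=D(\beta)$ and $\alpha=\beta$ in $P$. So every comparability in $P$ is an equality, and $P$ is an equivalence relation.

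For the converse, assume $D$ is a closed Condorcet domain and every relation in $P$ is an equality. By Corollary~\ref{cor:bijection}, $D=\psi(P)$, so it suffices to show that $-I(v)\in J_s(P)$ for each $v\in D$. We know $I(v)\in J_s(P)$ by Lemma~\ref{lem: inversions are symmetric order ideals}, so $-I(v)\sqcup I(v)=\Phi$ and only the order-ideal property needs checking. Take $\gamma\le\beta$ with $\beta\in -I(v)$. By hypothesis $\gamma=\beta$ in $P$, so $\beta\le\gamma$ as well. If $\gamma\notin -I(v)$, then $\gamma\in I(v)$, and since $I(v)$ is an ideal, $\beta\in I(v)$. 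This contradicts $\beta\in -I(v)$ because $I(v)\cap -I(v)=\emptyset$. Hence $-I(v)$ is a symmetric order ideal. Then $vw_0\in\psi(P)=D$, since $I(vw_0)=-I(v)$ and inversion sets determine elements of $W$.

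The only step needing care is the identity $I(vw_0)=-I(v)$ with the paper's all-roots convention for inversion sets. It follows from $w_0\Phi^-=\Phi^+$: $\alpha\in I(vw_0)$ iff $w_0v^{-1}\alpha\in\Phi^-$ iff $v^{-1}\alpha\in\Phi^+$ iff $-\alpha\in I(v)$. Everything else is formal.
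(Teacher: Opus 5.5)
Your proof is correct and follows essentially the same route as the paper. Both directions rest on $I(vw_0)=\Phi\setminus I(v)=-I(v)$. Your forward direction uses the involution $D(\gamma)\to D(-\gamma)$ where the paper picks a witness $v\in D(\alpha)\setminus D(\beta)$, which is the same argument in contrapositive form. Your converse shows $-I(v)\in J_s(P)$ and then invokes Corollary~\ref{cor:bijection}, exactly as the paper does, and you also spell out the step the paper leaves implicit: why an element below something in $-I(v)$ must itself lie in $-I(v)$.
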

\begin{proof}
    Suppose $D$ is symmetric. Take any $\alpha\neq \beta \in P$, we will show that they are incomparable in $P$. We may assume without loss of generality that $\alpha \ngeq \beta$. By Definition~\ref{def: map phi}, there exists $v \in D$ such that $\alpha \in I(v)$ and $\beta \notin I(v)$. By symmetry, $vw_0 \in D$. Since $I(vw_0) = \Phi \setminus I(v)$, we have $\alpha \notin I(vw_0)$ and $\beta \in I(vw_0)$. This implies that $\beta \ngeq \alpha$ in $P$. Therefore, $\alpha$ and $\beta$ are incomparable.

    Conversely, suppose $D$ is a closed Condorcet domain and $P$ is an equivalent relation on $\Phi$. Let $I \in J_s(P)$. For any $\alpha \in -I$ and $\beta \notin- I$, we have $\alpha \ngeq \beta$, so $-I$ is an order ideal in $P$. As $I\sqcup -I=P$, $-I \in J_s(P)$. If $I = I(v)$, we note that $-I = \Phi \setminus I(v) = I(vw_0)$. Thus, by Corollary~\ref{cor:bijection} and Definition~\ref{def: map psi}, $v \in D$ implies $vw_0 \in D$, meaning $D$ is symmetric.
\end{proof}

The proof of the following proposition is analogous to that of Proposition~\ref{prop: symmetry in poset}, so we left it as an exercise.
\begin{prop}
    \label{prop: maximal width in poset}
   Let $D$ be a nonempty subset of $W$ and let $P=\varphi(D)$. If $D$ is of maximal width with $v, vw_0\in D$, then for the subposet $P^+=P|_{I(v)}$, we have $P=P^+\sqcup -P^+$, where the disjoint union means for any $\alpha \in P^+$ and $\beta \in -P^+$, $\alpha$ and $\beta$ are incomparable in $P$. Moreover, if $D$ is a closed Condorcet domain, the converse also holds.
\end{prop}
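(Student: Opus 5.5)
We follow the proof of Proposition~\ref{prop: symmetry in poset}, replacing the global antipode $v\mapsto vw_0$ with the single pair $v, vw_0\in D$. For the forward direction, suppose $v, vw_0\in D$ and set $P^+=P|_{I(v)}$. Since $I(vw_0)=\Phi\setminus I(v)=-I(v)$, we get $-P^+=P|_{-I(v)}$ and $\Phi=I(v)\sqcup -I(v)$ as sets. It remains to show that no $\alpha\in I(v)$ is comparable to any $\beta\in -I(v)$. If $\alpha\ge\beta$ in $P$, then $D(\alpha)\subseteq D(\beta)$. But $v\in D(\alpha)$ while $\beta\notin I(v)$, so $v\notin D(\beta)$, a contradiction. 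If $\alpha\le\beta$, then $D(\beta)\subseteq D(\alpha)$. But $vw_0\in D(\beta)$ while $\alpha\notin I(vw_0)$, again a contradiction. Hence $P=P^+\sqcup -P^+$ in the stated sense.

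For the converse, let $D$ be a closed Condorcet domain and $P=\varphi(D)$. Suppose $P=P^+\sqcup -P^+$ with $P^+=P|_{I(v)}$ for some $v\in D$; this is how we read the converse, since the subposet must be attached to some element. The plan is to show that $-I(v)\in J_s(P)$. Then Corollary~\ref{cor:bijection} and Definition~\ref{def: map psi} give a $u\in D$ with $I(u)=-I(v)=I(vw_0)$, so $u=vw_0\in D$ and $D$ has maximal width.

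The only thing to check is that $-I(v)$ is an order ideal. By Lemma~\ref{lem: inversions are symmetric order ideals}, $I(v)$ is an order ideal, and by the antipodal condition (1), $-I(v)$ is therefore an order filter. Take $\alpha\in -I(v)$ and $\beta\le\alpha$. If $\beta\in I(v)$, then $\beta\in P^+$ and $\alpha\in -P^+$ would be comparable, which contradicts the disjointness hypothesis. So $\beta\in -I(v)$, and $-I(v)$ is both an ideal and a filter. Since $-I(v)\sqcup I(v)=\Phi$, it is a symmetric order ideal.

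The one point needing care is the reading of the converse: which $v$ defines $P^+$, and that this $v$ lies in $D$. If the intended converse only assumes $P=Q\sqcup -Q$ for an arbitrary $Q$ with $Q\sqcup -Q=\Phi$, the same argument still works, with one extra step. First observe that $Q$ is then itself an order ideal, because no element of $-Q$ lies below an element of $Q$. So $Q\in J_s(P)$, and closedness gives $Q=I(v)$ for some $v\in D$. The argument above then shows $-Q\in J_s(P)$, and we conclude as before.
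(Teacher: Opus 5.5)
Your proof is correct and is essentially the paper's argument. The paper leaves this proposition as an exercise "analogous to Proposition~\ref{prop: symmetry in poset}", and your forward direction ($v\in D(\alpha)\setminus D(\beta)$ and $vw_0\in D(\beta)\setminus D(\alpha)$) and your converse (show $-I(v)\in J_s(P)$, then apply Corollary~\ref{cor:bijection} to get $vw_0\in D$) are exactly the adaptation of that proof; your extra step for an arbitrary $Q$ with $Q\sqcup -Q=\Phi$ is also sound and resolves the ambiguity in how the converse is phrased.
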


\begin{ex}
    \label{ex: maximum not maximal width}
    Let $P$ be the poset on the root system of type $A_7$ shown in Figure~\ref{fig:A_7 maximum}. Then $P$ is a Condorcet root poset, and $D=\psi(P)$ is the Condorcet domain of maximum size in $A_7$, which was found by \cite{leedham2024largest}. $D$ has size $224$. By Propositions~\ref{prop: ample in poset}, \ref{prop: connected in poset}, Theorem~\ref{thm: minimal cover is root-triple} and Proposition~\ref{prop: maximal width in poset}, we can see from $P$ that $D$ is ample, connected, maximal, but is not of maximal width. 
    
It was previously believed that, in type $A$, there exists a maximum-size Condorcet domain that is both ample and peak-pit (and hence connected). However, maximal width, another seemingly effective condition, was shown to fail for the maximum-size domain in type $A_7$ \cite{leedham2024largest}, which is the only counterexample we know in type $A$.

\begin{figure}[htbp]
    \centering
    \resizebox{0.8\textwidth}{!}{
        \begin{tikzpicture}[
            x=1.3cm, 
            y=1.3cm, 
            every node/.style={
                rectangle, 
                rounded corners=4pt, 
                draw, 
                fill=white, 
                inner sep=2pt, 
                minimum width=1.1cm, 
                minimum height=0.6cm
            }
        ]

        \node (n17) at (1.5, 0) {17};
        \node (n18) at (4.5, 0) {18};
        \node (n27) at (6.5, 0) {27};
        \node (n28) at (9.5, 0) {28};

        \node (n23) at (0, 1) {23};
        \node (n16) at (1, 1) {16};
        \node (n47) at (2, 1) {47};
        \node (n15) at (3, 1) {15};
        \node (n58) at (4, 1) {58};
        \node (n37) at (5, 1) {37};
        \node (n48) at (6, 1) {48};
        \node (n67) at (7, 1) {67};
        \node (n26) at (8, 1) {26};
        \node (n38) at (9, 1) {38};
        \node (n25) at (10, 1) {25};
        \node (n14) at (11, 1) {14};

        \node (n13) at (1, 2) {13};
        \node (n46) at (2, 2) {46};
        \node (n57) at (4, 2) {57};
        \node (n45) at (5, 2) {45};
        \node (n36) at (6, 2) {36};
        \node (n68) at (7, 2) {68};
        \node (n35) at (9, 2) {35};
        \node (n24) at (10, 2) {24};

        \node (n12) at (1, 3) {12};
        \node (n_34) at (2, 3) {-34};
        \node (n_78) at (4.5, 3) {-78};
        \node (n56) at (3.5, 3) {56};
        \node (n_56) at (7.5, 3) {-56};
        \node (n78) at (6.5, 3) {78};
        \node (n34) at (9, 3) {34};
        \node (n_12) at (10, 3) {-12};

        \node (n_24) at (1, 4) {-24};
        \node (n_35) at (2, 4) {-35};
        \node (n_68) at (4, 4) {-68};
        \node (n_36) at (5, 4) {-36};
        \node (n_45) at (6, 4) {-45};
        \node (n_57) at (7, 4) {-57};
        \node (n_46) at (9, 4) {-46};
        \node (n_13) at (10, 4) {-13};

        \node (n_14) at (0, 5) {-14};
        \node (n_25) at (1, 5) {-25};
        \node (n_38) at (2, 5) {-38};
        \node (n_26) at (3, 5) {-26};
        \node (n_67) at (4, 5) {-67};
        \node (n_48) at (5, 5) {-48};
        \node (n_37) at (6, 5) {-37};
        \node (n_58) at (7, 5) {-58};
        \node (n_15) at (8, 5) {-15};
        \node (n_47) at (9, 5) {-47};
        \node (n_16) at (10, 5) {-16};
        \node (n_23) at (11, 5) {-23};

        \node (n_28) at (1.5, 6) {-28};
        \node (n_27) at (4.5, 6) {-27};
        \node (n_18) at (6.5, 6) {-18};
        \node (n_17) at (9.5, 6) {-17};

        \draw (n17) -- (n16);
        \draw (n17) -- (n47);
        \draw (n18) -- (n15);
        \draw (n18) -- (n48);
        \draw (n27) -- (n37);
        \draw (n27) -- (n26);
        \draw (n28) -- (n38);
        \draw (n28) -- (n25);

        \draw (n23) -- (n13);
        \draw (n16) -- (n13);
        \draw (n16) -- (n46);
        \draw (n47) -- (n46);
        \draw (n47) -- (n57);
        \draw (n15) -- (n13);
        \draw (n15) -- (n45);
        \draw (n58) -- (n57);
        \draw (n37) -- (n57);
        \draw (n37) -- (n36);
        \draw (n48) -- (n45);
        \draw (n48) -- (n68);
        \draw (n67) -- (n68);
        \draw (n26) -- (n36);
        \draw (n26) -- (n24);
        \draw (n38) -- (n68);
        \draw (n38) -- (n35);
        \draw (n25) -- (n35);
        \draw (n25) -- (n24);
        \draw (n14) -- (n24);

        \draw (n13) -- (n12);
        \draw (n13) -- (n_34);
        \draw (n46) -- (n_34);
        \draw (n46) -- (n56);
        \draw (n57) -- (n_78);
        \draw (n57) -- (n56);
        \draw (n45) -- (n_34);
        \draw (n45) -- (n_56);
        \draw (n36) -- (n56);
        \draw (n36) -- (n34);
        \draw (n68) -- (n_56);
        \draw (n68) -- (n78);
        \draw (n35) -- (n34);
        \draw (n35) -- (n_56);
        \draw (n24) -- (n34);
        \draw (n24) -- (n_12);

        \draw (n12) -- (n_24);
        \draw (n_34) -- (n_24);
        \draw (n_34) -- (n_35);
        \draw (n_34) -- (n_36);
        \draw (n_78) -- (n_68);
        \draw (n56) -- (n_35);
        \draw (n56) -- (n_68);
        \draw (n56) -- (n_45);
        \draw (n78) -- (n_57);
        \draw (n_56) -- (n_36);
        \draw (n_56) -- (n_57);
        \draw (n_56) -- (n_46);
        \draw (n34) -- (n_13);
        \draw (n34) -- (n_46);
        \draw (n34) -- (n_45);
        \draw (n_12) -- (n_13);

        \draw (n_24) -- (n_14);
        \draw (n_24) -- (n_25);
        \draw (n_24) -- (n_26);
        \draw (n_35) -- (n_25);
        \draw (n_35) -- (n_38);
        \draw (n_36) -- (n_26);
        \draw (n_36) -- (n_37);
        \draw (n_68) -- (n_38);
        \draw (n_68) -- (n_67);
        \draw (n_68) -- (n_48);
        \draw (n_57) -- (n_37);
        \draw (n_57) -- (n_47);
        \draw (n_57) -- (n_58);
        \draw (n_45) -- (n_48);
        \draw (n_45) -- (n_15);
        \draw (n_46) -- (n_47);
        \draw (n_46) -- (n_16);
        \draw (n_13) -- (n_15);
        \draw (n_13) -- (n_16);
        \draw (n_13) -- (n_23);

        \draw (n_25) -- (n_28);
        \draw (n_38) -- (n_28);
        \draw (n_26) -- (n_27);
        \draw (n_37) -- (n_27);
        \draw (n_48) -- (n_18);
        \draw (n_15) -- (n_18);
        \draw (n_47) -- (n_17);
        \draw (n_16) -- (n_17);

        \end{tikzpicture}
    }
    \vspace{0.5em}
    \caption{The poset $P$ in Example~\ref{ex: maximum not maximal width}, which corresponding to the maximum-size but not maximal width Condorcet domain in type $A_7$}
    \label{fig:A_7 maximum}
\end{figure}
\end{ex}

Fix a reduced word $\mathbf{i}=(s_{i_1}, s_{i_2}, \ldots, s_{i_m})$ for $v\in W$. We say another reduced word $\mathbf{i}'=(s_{i'_1}, s_{i'_2},\ldots , s_{i'_m})$ for $v$ is in the \emph{commutation class} of $\mathbf{i}$, denoted by $C(\mathbf{i})$, if $\mathbf{i}'$ can be obtained from $\mathbf{i}$ by a sequence of \emph{commuting moves} (i.e., exchanging adjacent generators $s_i$ and $s_j$ whenever $s_is_j=s_js_i$).

Recall that any contiguous subsegment of a reduced word is again a reduced word; in particular, this holds for any prefix. A nonempty subset $D \subseteq W$ is said to be \emph{of tiling type} if there exists a reduced word $\mathbf{i}$ for $w_0$ such that $D$ consists of the elements whose reduced words are prefixes of reduced word in $C(\mathbf{i})$. That is,
$$D=\{s_{i'_1}s_{i'_2}\cdots s_{i'_l} \mid (s_{i'_1}, s_{i'_2}, \ldots, s_{i'_m})\in C(\mathbf{i}),\ 0\le l \le m\}.$$ 
In this case, we call $D$ the \emph{domain of tiling type represented by $\mathbf{i}$}.

For a reduced word $\mathbf{i}=(s_{i_1}, s_{i_2},\ldots, s_{i_m})$ for $v\in W$, the \emph{Heap poset} of $\mathbf{i}$, denoted by $H(\mathbf{i})$, is a poset whose underlying set corresponds to the positions in the word $\mathbf{i}$ (so that identical generators at different positions are treated as distinct elements). The partial order is generated by the relations $s_{i_s} < s_{i_t}$ whenever $s < t$ and the generators $s_{i_s}$ and $s_{i_t}$ do not commute.

\begin{ex}
    \label{ex: tiling type}
    In type $A_3$, consider the reduced word $\mathbf{i}=(s_1,s_3,s_2,s_1,s_3,s_2)$ for $w_0$. The Heap poset $H(\mathbf{i})$ corresponds to the poset shown in Figure~\ref{fig:Heap poset}. We can compute that the domain $D$ of tiling type represented by $\mathbf{i}$ is $\{1234, 2134, 1243, 2143, 2413, 4213, 2431, 4231, 4321\}$. For instance, the word $\mathbf{i}'=(s_1,s_3,s_2,s_3,s_1,s_2) \in C(\mathbf{i})$, and the reduced word $(s_1, s_3,s_2,s_3)$ for $2431 \in D$ is a prefix of $\mathbf{i}'$.
\begin{figure}[htbp]
    \centering
    \resizebox{0.2 \textwidth}{!}{
        \begin{tikzpicture}[
            x=1cm, 
            y=1cm, 
            every node/.style={
                rectangle, 
                rounded corners=4pt,
                draw, 
                fill=white, 
                inner sep=2pt, 
                minimum width=1.1cm, 
                minimum height=0.6cm
            }
        ]

        \node (n11) at (0, 0) {$s_1$};
        \node (n13) at (2, 0) {$s_3$};

        \node (n22) at (1, 1) {$s_2$};

        \node (n31) at (0, 2) {$s_1$};
        \node (n33) at (2, 2) {$s_3$};

        \node (n42) at (1, 3) {$s_2$};

        \draw (n11) -- (n22);
        \draw (n13) -- (n22);
        \draw (n22) -- (n31);
        \draw (n22) -- (n33);
        \draw (n31) -- (n42);
        \draw (n33) -- (n42);
        \end{tikzpicture}
    }
    \vspace{0.5em}
    \caption{The Heap poset $H(\mathbf{i)}$ in Example~\ref{ex: tiling type} with $\mathbf{i}=(s_1,s_3,s_2,s_1,s_3,s_2)$}
    \label{fig:Heap poset}
\end{figure}
\end{ex}

\begin{lemma}[{\cite[Proposition 2.2]{stembridgeFullyCommutativeElements1996}}]\label{lem: linear extension of heap}
Take any $v\in W$ and a reduced word $\mathbf{i}$ for $v$. The set of linear extensions of $H(\mathbf{i})$ is exactly $C(\mathbf{i})$.
\end{lemma}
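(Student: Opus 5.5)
The statement is the classical fact (Stembridge) that the linear extensions of $H(\mathbf{i})$ are exactly the words in $C(\mathbf{i})$. A linear extension is read as the word obtained by listing the letters of $H(\mathbf{i})$, each position carrying its generator, in the order given by the extension. I would prove the two inclusions separately.

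\textbf{Every word in $C(\mathbf{i})$ is a linear extension.} Record each word $\mathbf{i}'\in C(\mathbf{i})$ together with the bijection between its positions and those of $\mathbf{i}$ that is transported along the commuting moves. A single commuting move swaps two adjacent letters $s_a,s_b$ with $s_as_b=s_bs_a$. It therefore changes the relative order only of these two positions, which are incomparable in the heap. The order is generated by relations between non-commuting letters, so every generating relation of $H(\mathbf{i})$ is preserved by the move. Hence the set of words that are linear extensions is stable under commuting moves. It contains $\mathbf{i}$ itself, so it contains $C(\mathbf{i})$.

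\textbf{Every linear extension lies in $C(\mathbf{i})$.} Take a linear extension $L$ and induct on the number of inversions of $L$ relative to the original order of positions in $\mathbf{i}$. If there are none, $L=\mathbf{i}$. Otherwise some adjacent pair in $L$ appears in the opposite order to $\mathbf{i}$, say $p$ immediately followed by $q$ with $q<p$ as positions. This pair must be incomparable in $H(\mathbf{i})$, since $L$ is an extension. I would then show the two letters commute: if they did not commute, then $q<p$ in positions would give $s_{i_q}<s_{i_p}$ in the heap by definition, contradicting that $L$ places $p$ before $q$. Swapping them is therefore a commuting move. The result is again a linear extension with one fewer inversion, so by induction it lies in $C(\mathbf{i})$, and hence so does $L$.

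\textbf{Remaining checks and main obstacle.} Each such word is a word for the same element $v$ of the same length, hence reduced, and is reached from $\mathbf{i}$ by commuting moves. The main subtlety is bookkeeping: $H(\mathbf{i})$ treats repeated generators at different positions as distinct, and a word in $C(\mathbf{i})$ might a priori be realized by several position bijections. This is harmless. Two equal letters never commute past each other (the move requires distinct commuting generators, and $s_i s_i\ne 1$ would not be reduced anyway), so the occurrences of each generator keep their relative order. This makes the labeling canonical, and the correspondence between words and linear extensions is a bijection.
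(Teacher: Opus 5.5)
Your argument is correct. The paper does not prove this lemma and only cites Stembridge's Proposition 2.2. Your two-inclusion argument is the standard self-contained proof of that cited fact. In one direction, commuting moves preserve the generating relations of $H(\mathbf{i})$. In the other, you remove the inversions of a linear extension by swapping adjacent incomparable, hence commuting, letters. The only slip is cosmetic: in your last paragraph, $s_is_i\ne 1$ should read $s_is_i=1$, which is why adjacent equal letters cannot occur in a reduced word.
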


\begin{prop} \label{prop: order ideal of heap}
If $D$ is the domain of tiling type represented by $\mathbf{i}$, then there exists a bijection $L:J(H(\mathbf{i})) \rightarrow D$, defined by mapping an order ideal $I$ of $H(\mathbf{i})$ to the group element for which any linear extension of $I$ is a reduced word.
\end{prop}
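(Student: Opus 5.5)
Since $H(\mathbf{i})$ is finite and every order ideal $I$ extends to a linear extension of $H(\mathbf{i})$ in which $I$ forms an initial segment, I would first check that $L$ is well defined. Take an order ideal $I\in J(H(\mathbf{i}))$ and a linear extension $\mathbf{j}$ of $H(\mathbf{i})$ that lists the elements of $I$ first; such an extension exists because $I$ is a down-set. By Lemma~\ref{lem: linear extension of heap}, $\mathbf{j}\in C(\mathbf{i})$, so $\mathbf{j}$ is a reduced word for $w_0$. Its prefix of length $|I|$ is then a reduced word, and this prefix is a linear extension of the restricted heap $H(\mathbf{i})|_I$. The restriction $H(\mathbf{i})|_I$ is exactly the heap of that prefix word, since the relations among positions in $I$ are generated by the same non-commuting pairs. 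Applying Lemma~\ref{lem: linear extension of heap} to this prefix, any two linear extensions of $I$ differ by commuting moves and so represent the same group element. Hence $L(I)$ is well defined, it is a prefix of a word in $C(\mathbf{i})$, and therefore $L(I)\in D$.

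Surjectivity is next. Any $u\in D$ has a reduced word equal to the length-$l$ prefix of some $\mathbf{i}'\in C(\mathbf{i})$. By Lemma~\ref{lem: linear extension of heap}, $\mathbf{i}'$ is a linear extension of $H(\mathbf{i})$, and the first $l$ positions of a linear extension always form an order ideal $I$. So $u=L(I)$.

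Injectivity is the main obstacle. I would prove it by showing that the inversion set $I(L(I))$ determines $I$. Label each position $p$ of $H(\mathbf{i})$ by the root $\beta_p\in\Phi^+$ that it contributes. Concretely, if $\mathbf{j}$ is any linear extension with $p$ at position $k$, set $\beta_p=s_{j_1}\cdots s_{j_{k-1}}(\gamma_{j_k})$. I claim $\beta_p$ depends only on $p$ and not on the choice of $\mathbf{j}$. It suffices to check invariance under a single commuting move: swapping adjacent commuting $s_a,s_b$ leaves the roots at other positions unchanged, and for the swapped pair it uses $s_a(\gamma_b)=\gamma_b$ when $s_as_b=s_bs_a$. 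Given this claim, the formula for inversion sets recalled before Proposition~\ref{prop: saturated single-crossing domain exists} yields
\[
I(L(I))\cap\Phi^+=\{\beta_p\mid p\in I\}.
\]
Since $\mathbf{i}$ is a reduced word for $w_0$, the roots $\beta_p$ over all positions are exactly the distinct elements of $\Phi^+$, so $p\mapsto\beta_p$ is injective. Therefore $L(I)=L(I')$ forces $\{\beta_p\}_{p\in I}=\{\beta_p\}_{p\in I'}$, hence $I=I'$, and $L$ is a bijection.
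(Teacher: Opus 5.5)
Your proof is correct. Well-definedness and surjectivity match the paper's argument. You are slightly more careful about reducedness: you identify $H(\mathbf{i})|_I$ with the heap of a prefix and reapply Lemma~\ref{lem: linear extension of heap}. The paper instead just connects two linear extensions of $I$ by swaps of adjacent incomparable, hence commuting, elements.

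The genuine difference is injectivity. The paper argues via supports. If $L(I_1)=L(I_2)$, it cancels the common part coming from $I_0=I_1\cap I_2$ to get $v_1=v_2$. It then notes that every element of $I_1\setminus I_2$ is incomparable with every element of $I_2\setminus I_1$. Such a pair can be made adjacent in a linear extension of $H(\mathbf{i})$, so the two elements carry distinct generators. Hence $v_1=v_2$ has two reduced words with disjoint sets of simple reflections, which forces $v_1=v_2=e$.

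You instead label each heap element by a positive root $\beta_p$. You check invariance under commuting moves via $s_a(\gamma_b)=\gamma_b$, and then read $I$ off from $I(L(I))\cap\Phi^+$. This is exactly what the paper proves only afterwards, in Lemmas~\ref{lem: alpha well defined} and \ref{lem: inversion eq elements of order ideal}. Your $\beta_p$ is the paper's $\alpha_t$. Your commuting-move check replaces the paper's conjugation computation $v's_{i_t}{v'}^{-1}=v'_ts_{i_t}{v'_t}^{-1}$.

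So your route folds those later lemmas into the proposition and gives a more informative injectivity statement, with the explicit left inverse $v\mapsto I(v)\cap\Phi^+$. The cost is using root-system facts: commuting distinct simple reflections have orthogonal simple roots, and the roots produced by a reduced word are distinct. The paper's route stays at the level of words. It needs only that the support of an element does not depend on the reduced word, and it defers the root labelling to where it is actually needed.
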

\begin{proof}
We first prove that $L$ is well-defined. Take any order ideal $I\in J(H(\mathbf{i}))$ and two linear extensions $l_1$ and $l_2$ of $I$. The sequence $l_2$ can be obtained from $l_1$ by repeatedly exchanging adjacent incomparable elements in $H(\mathbf{i})$. By the definition of the Heap poset, two elements $s_{i_s}, s_{i_t} \in H(\mathbf{i})$ being incomparable means they commute. Consequently, $l_1$ and $l_2$ is the reduced word for the same element in the Weyl group $W$. 

To show surjectivity, observe that by Lemma~\ref{lem: linear extension of heap} and the definition of $D$, the linear extensions of the order ideals of $H(\mathbf{i})$ correspond exactly to the prefixes of the linear extensions of whole $H(\mathbf{i})$, which precisely the reduced word for the elements of $D$.

For injectivity, suppose that $I_1, I_2\in J(H(\mathbf{i}))$ satisfy $L(I_1)=L(I_2)$. Let $I_0=I_1\cap I_2$, and let $l_0, l_1, l_2$ be linear extensions of $I_0$, $I_1\setminus I_0$, and $I_2\setminus I_0$, respectively. Let $v_i \in W$ be the element with reduced word $l_i$ for $i \in \{0, 1, 2\}$. Since $I_0$ is an order ideal in both $I_1$ and $I_2$, we can form valid linear extensions for $I_1$ and $I_2$ by concatenating $l_0$ with $l_1$, and $l_0$ with $l_2$, respectively. The assumption $L(I_1)=L(I_2)$ thus implies that $v_0v_1=v_0v_2$, and hence $v_1=v_2$. Furthermore, any element $s_{i_k}\in l_1$ and any element $s_{i_l}\in l_2$ must be incomparable in $H(\mathbf{i})$, if we had, for instance, $s_{i_k} < s_{i_l}$, then $s_{i_l} \in I_2$ would imply $s_{i_k} \in I_2$, contradicting $s_{i_k}\in I_1\setminus I_2$. Therefore, there exists a linear extension of $H(\mathbf{i})$ in which $s_{i_k}$ and $s_{i_l}$ appear adjacently. Since a reduced word cannot contain adjacent identical generators, the simple reflections corresponding to $s_{i_k}$ and $s_{i_l}$ must be distinct. Consequently, the element $v_1=v_2$ has two reduced words which use two disjoint subsets of the simple reflections $S$. This forces $v_1=v_2=e$. So $I_1\setminus I_2$ and $I_2\setminus I_1$ are empty and then $I_1=I_2$, completing the proof.
\end{proof}

\begin{ex}\label{ex: order ideal of heap}
Consider the domain $D$ from Example~\ref{ex: tiling type}. We see that $(s_1, s_3, s_2, s_1, s_3)$ and $(s_3, s_1, s_2, s_3, s_1)$ are both linear extensions of the order ideal $\{s_1, s_3, s_2, s_1, s_3\}$ in $H(\mathbf{i})$. Furthermore, they are reduced word for the same element $4231\in D$.
\end{ex}

Until we complete the proof of $(\Rightarrow)$ part of Theorem~\ref{thm: tiling type}, for a fixed reduced word $\mathbf{i}=(s_{i_1}, s_{i_2},\ldots, s_{i_m})$ for $w_0$, we use the following notations defined on $H(\mathbf{i})$. For any $1\le t\le m$, let $I_t$ be the principal order ideal in $H(\mathbf{i})$ generated by $s_{i_t}$, and let $I'_t := I_t\setminus\{s_{i_t}\}$. Set $v_t=L(I_t)$ and $v'_t=L(I'_t)$. Then $v'_t\lessdot v_t$ in the right weak order of $W$, and there exists a unique positive root $\alpha_t\in \Phi^+$ such that $\alpha_t \in I(v_t)\setminus I(v'_t)$. In fact, $\alpha_t$ is the positive root associated with the reflection $v'_ts_{i_t}{v'_t}^{-1}$. Denote the poset obtained by replacing each element $s_{i_t}$ in $H(\mathbf{i})$ with $\alpha_t$ by $P(\mathbf{i})$.  
\begin{lemma}
    \label{lem: alpha well defined}
    Take any reduced word $\mathbf{i}=(s_{i_1}, s_{i_2},\ldots, s_{i_m})$ for $w_0$ and $1\le t\le m$. Let $I\in J(H(\mathbf{i}))$ be any order ideal in which $s_{i_t}$ is a maximal element, and let $I'=I\setminus \{s_{i_t}\}\in J(H(\mathbf{i}))$. If $v=L(I)$ and $v'=L(I')$, then $\alpha_t$ is the unique positive root in $I(v)\setminus I(v')$. 
\end{lemma}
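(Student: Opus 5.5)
We compare an arbitrary order ideal $I$ in which $s_{i_t}$ is maximal with the principal ideal $I_t$, which realizes the smallest such situation. The key tool is the standard identity for a product $xy$ with $\ell(xy)=\ell(x)+\ell(y)$, namely $I(xy)\cap\Phi^+=(I(x)\cap\Phi^+)\sqcup x\,(I(y)\cap\Phi^+)$, which follows from the description of $I(w)\cap\Phi^+$ via reduced words recalled before Proposition~\ref{prop: saturated single-crossing domain exists}. In particular, if $v=v's$ with $\ell(v)=\ell(v')+1$ and $s$ has simple root $\gamma$, then the unique positive root in $I(v)\setminus I(v')$ is $v'(\gamma)$. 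So the goal is to show $v'(\gamma_{i_t})=v'_t(\gamma_{i_t})$, i.e.\ that $v'_t{}^{-1}v'$ fixes $\gamma_{i_t}$.

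First I would decompose $I'=I'_t\sqcup K$, where $K:=I'\setminus I'_t = I\setminus I_t$. Because $I_t$ is an order ideal contained in $I$, the set $K$ is an order filter of $I$, and hence of $I'$. I would then take a linear extension of $I$ of the following form: first a linear extension of $I'_t$, then a linear extension of $K$, and finally $s_{i_t}$. Since $s_{i_t}$ is maximal in $I$, this is a legitimate linear extension. Under the bijection $L$ of Proposition~\ref{prop: order ideal of heap}, it gives reduced factorizations $v'=v'_t\,u$ and $v=v'_t\,u\,s_{i_t}$, where $u$ is the element whose reduced word is the chosen linear extension of $K$.

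Next I would show that every letter of this reduced word for $u$ commutes with $s_{i_t}$. Take any element $x\in K$. We have $x\not< s_{i_t}$, because otherwise $x\in I_t$. We also have $x\not> s_{i_t}$, because $s_{i_t}$ is maximal in $I$. So $x$ and $s_{i_t}$ are incomparable in the heap. They are also different positions, hence by the definition of $H(\mathbf{i})$ (an incomparable pair consists of commuting generators, as used in the proof of Proposition~\ref{prop: order ideal of heap}) they commute. It then remains to check that a simple reflection $s_j$ commuting with $s_{i_t}$, with $j\neq i_t$, fixes $\gamma_{i_t}$. Commuting simple reflections correspond to orthogonal simple roots, i.e.\ $m_{j,i_t}=2$, so $s_j(\gamma_{i_t})=\gamma_{i_t}$. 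One small point needs care: could a letter of $K$ be the same generator $s_{i_t}$? Two occurrences of the same generator are always comparable, since they do not commute with each other. So no element of $K$ has the same label as $s_{i_t}$.

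Finally, $u(\gamma_{i_t})=\gamma_{i_t}$, so $v'(\gamma_{i_t})=v'_t u(\gamma_{i_t})=v'_t(\gamma_{i_t})=\alpha_t$. Since the words are reduced (Proposition~\ref{prop: order ideal of heap}), the length identity above applies, and it shows that $v'(\gamma_{i_t})$ is the unique positive root in $I(v)\setminus I(v')$. I expect the main obstacle to be the bookkeeping in the inversion-set identity: matching the left-inversion convention $I(w)=\{\alpha \mid w^{-1}\alpha\in\Phi^-\}$ with right multiplication by $s$. It is already implicit in the paper's formula $I(w)\cap\Phi^+=\{\gamma_{i_1},s_{i_1}(\gamma_{i_2}),\dots\}$, which I would cite directly.
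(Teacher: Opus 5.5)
Your proposal is correct and follows the paper's proof: both write $I'=I'_t\sqcup(I\setminus I_t)$, observe that every letter of $I\setminus I_t$ is incomparable with $s_{i_t}$ in the heap and hence commutes with it, and conclude that the new inversion is the one coming from $v'_t$ and $s_{i_t}$, namely $\alpha_t$. The only difference is that the paper works with reflections, $v's_{i_t}{v'}^{-1}=v'_tv''s_{i_t}{v''}^{-1}{v'_t}^{-1}=v'_ts_{i_t}{v'_t}^{-1}$, instead of the root $u(\gamma_{i_t})$. This lets it skip your side check that no letter of $I\setminus I_t$ equals $s_{i_t}$. That check is true, but the right justification is reducedness (between two consecutive occurrences of $s_{i_t}$ some non-commuting letter must occur), not that $s_{i_t}$ fails to commute with itself.
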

\begin{proof}
    Define $I'' = I\setminus I_t = I'\setminus I'_t$. Let $l''$ be a linear extension of $I''$ and let $v''$ be the element with reduced word $l''$. Because $s_{i_t}$ is incomparable with every element of $I''$ in $H(\mathbf{i})$, we have $v''s_{i_t} = s_{i_t}v''$ in $W$. The only positive root in $I(v)\setminus I(v')$ is the one associated with the reflection:
    $$v's_{i_t}{v'}^{-1} = v'_tv''s_{i_t}{v''}^{-1}{v'_t}^{-1} = v'_ts_{i_t}v''{v''}^{-1}{v'_t}^{-1} = v'_ts_{i_t}{v'_t}^{-1},$$
    which is exactly $\alpha_t$.
\end{proof}

\begin{lemma}
    \label{lem: inversion eq elements of order ideal}
    Take any reduced word $\mathbf{i}=(s_{i_1}, s_{i_2},\ldots, s_{i_m})$ for $w_0$. For any $I\in J(H(\mathbf{i}))$, let $v=L(I)$. Then $I(v)\cap \Phi^+=\{\alpha_t\mid s_{i_t}\in I\}$.
\end{lemma}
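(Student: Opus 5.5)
We argue by induction on $|I|$, building $I$ up one maximal element at a time and using Lemma~\ref{lem: alpha well defined} at each step. If $I=\emptyset$, then $L(I)=e$ and $I(e)\cap\Phi^+=\emptyset$, so the claim holds.

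For $|I|\ge 1$, choose a maximal element $s_{i_t}$ of $I$ in $H(\mathbf{i})$ and put $I'=I\setminus\{s_{i_t}\}$. Then $I'\in J(H(\mathbf{i}))$, and by induction
\[
I(v')\cap\Phi^+=\{\alpha_{t'}\mid s_{i_{t'}}\in I'\},\qquad v'=L(I').
\]
Take a linear extension $l'$ of $I'$. Appending $s_{i_t}$ gives a linear extension of $I$, so $v=v's_{i_t}$. By Proposition~\ref{prop: order ideal of heap}, both words are reduced, and so $\ell(v)=\ell(v')+1$.

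We now compare inversion sets. Using $I(uw)=uI(w)$ and $I(s_{i_t})=\{\gamma_{i_t},-\gamma_{i_t}\}$, we get
\[
I(v)\cap\Phi^+=(I(v')\cap\Phi^+)\sqcup\{v'(\gamma_{i_t})\}.
\]
This is the standard fact about extending a reduced word, also recorded in the paper's description of $I(w)\cap\Phi^+$ via reduced words. The new positive root is therefore the unique positive root in $I(v)\setminus I(v')$. By Lemma~\ref{lem: alpha well defined}, applied to $I$ with maximal element $s_{i_t}$, this root equals $\alpha_t$. Hence
\[
I(v)\cap\Phi^+=\{\alpha_{t'}\mid s_{i_{t'}}\in I'\}\cup\{\alpha_t\}=\{\alpha_{t'}\mid s_{i_{t'}}\in I\},
\]
which completes the induction.

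The only delicate point is the inclusion $I(v')\cap\Phi^+\subseteq I(v)\cap\Phi^+$ with exactly one positive root added. This depends on $v's_{i_t}$ being length-additive, which we get from reducedness of linear extensions. The identification of the added root with $\alpha_t$ is the content of Lemma~\ref{lem: alpha well defined}, and this is where independence from the chosen ideal enters.

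A side remark, not needed for the proof: comparing cardinalities shows the $\alpha_t$ are pairwise distinct. Taking $I=H(\mathbf{i})$ gives $v=w_0$, and $\ell(w_0)=m=|H(\mathbf{i})|$ forces distinctness. The union above is therefore disjoint.
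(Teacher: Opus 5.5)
Your proof is correct and takes the same route as the paper: induction on $|I|$, removing a maximal element and using Lemma~\ref{lem: alpha well defined} to identify the one new positive inversion as $\alpha_t$. You also make explicit the length-additivity $v=v's_{i_t}$ that the paper compresses into $|I(v)\cap\Phi^+|=\ell(v)=|I|$. One small slip: in this paper's convention $I(s_{i_t})=\{\gamma_{i_t}\}\cup(\Phi^-\setminus\{-\gamma_{i_t}\})$, not $\{\gamma_{i_t},-\gamma_{i_t}\}$; it is the symmetric difference $I(s_{i_t})\,\triangle\,I(e)$ that equals $\{\gamma_{i_t},-\gamma_{i_t}\}$. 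Your displayed identity is still right, and it also follows directly from the reduced-word description of $I(w)\cap\Phi^+$ that you cite.
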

\begin{proof}
    This follows from the fact that $|I(v)\cap \Phi^+|=\ell(v)=|I|$, combined with Lemma~\ref{lem: alpha well defined} and induction on the size of $I$.
\end{proof}

\begin{prop}\label{prop: heap poset and tiling type}
If $D$ is the domain of tiling type represented by $\mathbf{i}=(s_{i_1}, s_{i_2},\ldots, s_{i_m})$ and $P=\varphi(D)$, then $P$ is of the form $P(\mathbf{i})\sqcup -P(\mathbf{i})$.
\end{prop}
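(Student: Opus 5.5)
We compute $\varphi(D)$ directly from the definition, describing each $D(\gamma)$ through the bijection $L$ of Proposition~\ref{prop: order ideal of heap}. The first step is to show that $t\mapsto\alpha_t$ is a bijection from the positions of $\mathbf{i}$ onto $\Phi^+$. The whole heap $H(\mathbf{i})$ is an order ideal with $L(H(\mathbf{i}))=w_0$. Lemma~\ref{lem: inversion eq elements of order ideal} then gives $\{\alpha_t\}_{t=1}^m=\Phi^+$, and since $m=\ell(w_0)=|\Phi^+|$ the $\alpha_t$ are distinct. By the same lemma, for $v=L(J)\in D$ with $J\in J(H(\mathbf{i}))$ we have $\alpha_t\in I(v)$ if and only if $s_{i_t}\in J$. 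Consequently
\[D(\alpha_t)=\{L(J)\mid s_{i_t}\in J\},\qquad D(-\alpha_t)=\{L(J)\mid s_{i_t}\notin J\},\]
where we use that exactly one of $\alpha_t,-\alpha_t$ lies in $I(v)$.

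Next we compare the sets $D(\alpha_s)$ and $D(\alpha_t)$ for positive roots. Since $L$ is a bijection, $D(\alpha_s)\supseteq D(\alpha_t)$ holds if and only if every order ideal of $H(\mathbf{i})$ containing $s_{i_t}$ also contains $s_{i_s}$. Testing this on the principal ideal $I_t$ shows it is equivalent to $s_{i_s}\le s_{i_t}$ in $H(\mathbf{i})$. Hence $\alpha_s\le\alpha_t$ in $P$ exactly when $\alpha_s\le\alpha_t$ in $P(\mathbf{i})$, so $P|_{\Phi^+}=P(\mathbf{i})$. By the antipodal symmetry in Definition~\ref{def: condorcet-poset}(1), which holds for $\varphi(D)$ by Lemma~\ref{lem:phi well-defined}, we likewise get $P|_{\Phi^-}=-P(\mathbf{i})$.

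The remaining step, and the main point, is to show that no $\alpha_s$ is comparable to any $-\alpha_t$. Suppose first that $-\alpha_t\le\alpha_s$. This would mean every ideal containing $s_{i_s}$ omits $s_{i_t}$, and the full heap $H(\mathbf{i})$ is a counterexample. Suppose instead that $\alpha_s\le-\alpha_t$. This would mean every ideal omitting $s_{i_t}$ contains $s_{i_s}$, and the empty ideal, which gives $e\in D$, is a counterexample. Thus $P=P(\mathbf{i})\sqcup-P(\mathbf{i})$ in the sense of Proposition~\ref{prop: maximal width in poset}. This is consistent with that proposition, since $e,w_0\in D$.

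The only delicate point is the distinctness of the $\alpha_t$, which is needed so that relabeling the heap by the $\alpha_t$ yields a poset on $\Phi^+$ rather than a preposet with collisions. The cardinality argument above handles this. It also implies that $P$ has no equal roots, so no preposet subtleties arise.
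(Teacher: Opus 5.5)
Your proof is correct and follows essentially the paper's route: you identify $P|_{\Phi^+}$ with $P(\mathbf{i})$ via the bijection $L$ of Proposition~\ref{prop: order ideal of heap} and Lemma~\ref{lem: inversion eq elements of order ideal}, then use antipodal symmetry. The only differences are that you check the incomparability of positive and negative roots directly, using the full and empty heap ideals (that is, $w_0,e\in D$), instead of citing Proposition~\ref{prop: maximal width in poset}, whose proof the paper leaves as an exercise, and that you state explicitly the distinctness of the $\alpha_t$, which the paper uses without comment.
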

\begin{proof}
Since $D$ is of tiling type, we have $e, w_0\in D$. By Proposition~\ref{prop: maximal width in poset}, it suffices to show that the restriction of $P|_{\Phi^+}$ coincides with $P(\mathbf{i})$. 
Taking $I=H(\mathbf{i})$ in Lemma~\ref{lem: inversion eq elements of order ideal}, we have $L(I)=w_0$ and $I(w_0)=\Phi^+$. Thus, the underlying set of the poset $P(\mathbf{i})$ is precisely $\Phi^+$. By Definition~\ref{def: map phi}, for any $\alpha, \beta \in \Phi^+$, we have $\alpha \le_P \beta$ if and only if $D(\alpha )=\{v\in D \mid \alpha \in I(v)\} \supseteq D(\beta )=\{v\in D\mid \beta \in I(v)\}$. By Proposition~\ref{prop: order ideal of heap} and Lemma~\ref{lem: inversion eq elements of order ideal}, this containment is equivalent to $J_\alpha:=\{I\in J(P(\mathbf{i}))\mid \alpha \in I\} \supseteq J_\beta :=\{ I\in J(P(\mathbf{i}))\mid \beta \in I\}$, which in turn means $\alpha \le_{P(\mathbf{i})} \beta$. This completes the proof.
\end{proof}

\begin{prop}
    \label{prop: tiling type is closed condorcet}
    If $D$ is a domain of tiling type represented by $\mathbf{i}$, then $D$ is a closed Condorcet domain.
\end{prop}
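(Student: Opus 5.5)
The plan is to show that $P:=P(\mathbf{i})\sqcup -P(\mathbf{i})$ is a Condorcet root poset and that $\psi(P)=D$. Closedness then follows from Corollary~\ref{cor:bijection}, because $\psi(\mathcal{P})$ is exactly the set of closed Condorcet domains. By Proposition~\ref{prop: heap poset and tiling type}, $\varphi(D)=P$, so $P$ is at least an antipodal root poset by Lemma~\ref{lem:phi well-defined}.

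\textbf{Step 1: $\psi(P)=D$.} Since $P$ is a disjoint union with $\Phi^+$ carrying $P(\mathbf{i})$, a symmetric order ideal of $P$ has the form $K\sqcup -(\Phi^+\setminus K)$ with $K\in J(P(\mathbf{i}))$. Here the complement $\Phi^+\setminus K$ is an order filter of $P(\mathbf{i})$, so its negative is an order ideal of $-P(\mathbf{i})$. Conversely, every such set is in $J_s(P)$. Hence $|J_s(P)|=|J(P(\mathbf{i}))|=|J(H(\mathbf{i}))|=|D|$, using Proposition~\ref{prop: order ideal of heap}. By Lemma~\ref{lem: inversion eq elements of order ideal}, each $v=L(I)\in D$ has $I(v)\cap\Phi^+$ equal to the image of $I$, so $I(v)$ is exactly the symmetric ideal attached to that $K$. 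Therefore $D\subseteq\psi(P)$, and the map $v\mapsto I(v)$ gives a bijection $D\to J_s(P)$.

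It follows that $|\psi(P)|\ge|D|=|J_s(P)|$. On the other hand, $|\psi(P)|\le|J_s(P)|$ always holds, since distinct elements of $W$ have distinct inversion sets. So $|J_s(P)|=|\psi(P)|$, and $\psi(P)=D$.

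\textbf{Step 2: $P$ is Condorcet.} By the last assertion of Theorem~\ref{thm: Condorcet root poset eq condition}, the equality $|J_s(P)|=|\psi(P)|$ from Step~1 already implies that $P$ is a Condorcet root poset. This step needs some care: that theorem's proof uses only that every $I\in J_s(P)$ is biconvex, which holds here because every symmetric ideal equals some $I(v)$. Then Lemma~\ref{lem:psi well-defined} shows $D=\psi(P)$ is a Condorcet domain, and Lemma~\ref{lem: psi(P) closed} shows it is closed.

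The main obstacle is the bookkeeping in Step~1: verifying that the correspondence $K\mapsto K\sqcup-(\Phi^+\setminus K)$ matches exactly the inversion sets $I(L(I))$. This relies on Lemma~\ref{lem: inversion eq elements of order ideal} together with $I(v)\cap\Phi^-=-(\Phi^+\setminus I(v))$. Everything else is a citation of earlier results.
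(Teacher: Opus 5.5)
Your proposal is correct and follows essentially the same route as the paper. Both arguments identify $\varphi(D)=P(\mathbf{i})\sqcup -P(\mathbf{i})$ via Proposition~\ref{prop: heap poset and tiling type}, match $J_s(P)$ with $J(P(\mathbf{i}))\cong J(H(\mathbf{i}))$ to get $\psi(P)=D$ and $|J_s(P)|=|\psi(P)|$, and then invoke the last assertion of Theorem~\ref{thm: Condorcet root poset eq condition} together with Corollary~\ref{cor:bijection}. Your counting step ($D\subseteq\psi(P)$ together with $|\psi(P)|\le|J_s(P)|=|D|$) simply spells out a step that the paper states more tersely.
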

\begin{proof}
    Let $P=\varphi(D)$. By Corollary~\ref{cor:bijection}, it suffices to show that $P$ is a Condorcet root poset and that $\psi(P)=D$. From Proposition~\ref{prop: heap poset and tiling type}, we know that $P=P(\mathbf{i})\sqcup -P(\mathbf{i})$. Thus, for any order ideal $I\in J(P(\mathbf{i}))$, the set $I\sqcup -(P(\mathbf{i})\setminus I)$ is in $J_s(P)$. This establishes a bijection, which implies $|J_s(P)|=|J(P(\mathbf{i}))|$. By Lemma~\ref{lem: J_s(I)andJ_s(P)}, we have $J_s(P(\mathbf{i}))=J(P(\mathbf{i}))$. And then by Proposition~\ref{prop: order ideal of heap}, Lemma~\ref{lem: inversion eq elements of order ideal}, and the construction of $P(\mathbf{i})$, we obtain $\psi(P)=D$ and $|\psi(P)|=|J_s(P)|$. Therefore, $P$ is a Condorcet root poset by Theorem~\ref{thm: Condorcet root poset eq condition}, completing the proof.
\end{proof}

We are now justified in using the term \emph{Condorcet domain of tiling type}. Our definition generalizes the one given in \cite[Definition 1.1]{reiner2025majority}. In type $A$, Condorcet domains of tiling type admit several equivalent formulations, such as those discussed in \cite[Section 4]{danilov2012condorcet}, where the notion of tiling type was originally introduced. 

\begin{ex}
    \label{ex: tiling to Condorcet}
    Consider the domain $D$ from Example~\ref{ex: tiling type}. The corresponding poset $P=\varphi(D)$ can be computed and is shown in Figure~\ref{fig: Condorcet root poset of tiling}. We observe that $P$ is indeed a Condorcet root poset, which agrees with our construction in the proof of Proposition~\ref{prop: heap poset and tiling type}. Furthermore, invoking Theorem~\ref{thm: minimal cover is root-triple} and Propositions~\ref{prop: connected in poset} and \ref{prop: maximal width in poset}, we can verify directly from $P$ that $D$ is maximal, connected, and of maximal width. In fact, $D$ is the maximum-size Condorcet domain in type $A_3$. More generally, for $3\le i\le 6$, the maximum-size Condorcet domain in type $A_i$ is of tiling type.

\begin{figure}[htbp]
    \centering
    \resizebox{0.4 \textwidth}{!}{
        \begin{tikzpicture}[
            x=1cm, 
            y=1cm, 
            every node/.style={
                rectangle, 
                rounded corners=4pt,
                draw, 
                fill=white, 
                inner sep=2pt, 
                minimum width=1.1cm, 
                minimum height=0.6cm
            }
        ]

        \node (n11) at (0, 0) {12};
        \node (n13) at (2, 0) {34};

        \node (n22) at (1, 1) {14};

        \node (n31) at (0, 2) {24};
        \node (n33) at (2, 2) {13};

        \node (n42) at (1, 3) {23};
        
        \node (n-11) at (5, 3) {-12};
        \node (n-13) at (3, 3) {-34};

        \node (n-22) at (4, 2) {-14};

        \node (n-31) at (5, 1) {-24};
        \node (n-33) at (3, 1) {-13};

        \node (n-42) at (4, 0) {-23};
        
        \draw (n11) -- (n22);
        \draw (n13) -- (n22);
        \draw (n22) -- (n31);
        \draw (n22) -- (n33);
        \draw (n31) -- (n42);
        \draw (n33) -- (n42);

        \draw (n-11) -- (n-22);
        \draw (n-13) -- (n-22);
        \draw (n-22) -- (n-31);
        \draw (n-22) -- (n-33);
        \draw (n-31) -- (n-42);
        \draw (n-33) -- (n-42);
        \end{tikzpicture}
    }
    \vspace{0.5em}
    \caption{The poset $P$ in Example~\ref{ex: tiling to Condorcet}, which corresponding to the Condorcet domain $D$ of tiling type represented by $\mathbf{i}=(s_1,s_3,s_2,s_1,s_3,s_2)$}
    \label{fig: Condorcet root poset of tiling}
\end{figure}
\end{ex}

\begin{proof}[Proof of Theorem~\ref{thm: tiling type}, $(\Rightarrow)$ direction.]
    Let $D$ be a closed Condorcet domain of tiling type represented by $\mathbf{i}=(s_{i_1}, s_{i_2},\ldots, s_{i_m})$. By Propositions~\ref{prop: connected in poset} and \ref{prop: maximal width in poset}, along with Proposition~\ref{prop: heap poset and tiling type} and the definition of the $P(\mathbf{i})$, $D$ is connected and of maximal width. Since $e, w_0\in D$ and $I(e)\sqcup I(w_0)=\Phi$, it immediately follows that $D$ is ample. 
    
    To prove that $D$ is maximal, Theorem~\ref{thm: minimal cover is root-triple} implies that for any covering relation $\alpha\lessdot \beta $ in $P$, we need only show that there is a root-triple containing $\alpha$ and $-\beta$. By Proposition~\ref{prop: heap poset and tiling type} and condition (1) of Definition~\ref{def: condorcet-poset}, we may assume without loss of generality that $\alpha, \beta \in P(\mathbf{i})$, and thus $\alpha, \beta \in \Phi^+$. 
    
    Let $\alpha=\alpha_s$ and $\beta=\alpha_t$ for $1\le s< t\le m$. Define $I'' = I_t\setminus I_s = I'_t\setminus I'_s$. Let $l''$ be a linear extension of $I''$, and let $v''$ be the group element with reduced word $l''$. Because $s_{i_s}\lessdot s_{i_t}$ in $H(\mathbf{i})$, $s_{i_s}$ is incomparable with every element in $I''$, meaning that their corresponding simple reflections commute, thus, $v''s_{i_s}=s_{i_s}v''$ in $W$. The positive roots $\alpha_s$ and $\alpha_t$ correspond to the reflections $r_s:=v_s's_{i_s}{v'_s}^{-1}$ and $r_t:=v_t's_{i_t}{v'_t}^{-1}=v'_ss_{i_s}v''s_{i_t}{v''}^{-1}s_{i_s}{v'_s}^{-1}$. Thus, we compute the commutator:
    \begin{align*}
        r_sr_tr_s^{-1}r_t^{-1} &= (v'_ss_{i_s}{v'_s}^{-1}) \cdot (v'_ss_{i_s}v''s_{i_t}{v''}^{-1}s_{i_s}{v'_s}^{-1}) \cdot (v'_ss_{i_s}{v'_s}^{-1}) \cdot (v'_ss_{i_s}v''s_{i_t}{v''}^{-1}s_{i_s}{v'_s}^{-1})\\
        &= v'_sv''s_{i_t}{v''}^{-1}s_{i_s}v''s_{i_t}{v''}^{-1}s_{i_s}{v'_s}^{-1}\\
        &= v'_sv''s_{i_t}s_{i_s}s_{i_t}s_{i_s}{v''}^{-1}{v'_s}^{-1}.
    \end{align*}
    Consequently, if $r_sr_tr_s^{-1}r_t^{-1}=e$, it would imply that $s_{i_t}s_{i_s}s_{i_t}s_{i_s}=e$, meaning $s_{i_s}$ and $s_{i_t}$ commute. However, because $s_{i_s}\lessdot s_{i_t}$, the generators $s_{i_s}$ and $s_{i_t}$ do not commute. Therefore, $r_sr_tr_s^{-1}r_t^{-1}\neq e$, which means the roots $\alpha_s$ and $\alpha_t$ are not orthogonal. 
    
    Consider the dihedral root cycle $(\gamma_1, \gamma_2,\ldots, \gamma_{2l})$ in the plane spanned by $\alpha_s$ and $\alpha_t$. Without loss of generality, we may assume $\alpha_s=\gamma_1$ and $\alpha_t=\gamma_k$ for some $1<k\le l$. Since $\alpha_s$ and $\alpha_t$ are not orthogonal, we must have $l\ge 3$. 
    
    If $k<l$, then the set $\{\gamma_1, \gamma_l, \gamma_{k+l}\} = \{\alpha_s, \gamma_l, -\alpha_t\}$ forms a root-triple containing $\alpha_s$ and $-\alpha_t$, and the claim holds. 
    
    If $k=l$, then for $1\le i \le l$, $\gamma_i\in \Phi^+$. By Theorem~\ref{thm: cycle condition}, the construction of $P$, and the fact that $\alpha_s\lessdot \alpha_t$ in $\Phi^+$, we must have the chain $\alpha_s=\gamma_1<\gamma_2< \cdots <\gamma_l=\alpha_t$ in $P$. Because $\alpha_s \lessdot \alpha_t$ is a covering relation, this forces $l \le 2$, which contradicts the fact that $l\ge 3$ and we are done.
    \end{proof}
   In the rest of this subsection, we prove the $(\Leftarrow)$ direction. We fix a closed Condorcet domain $D$ which is of maximal width, connected, and maximal, with $e, w_0\in D$. Then Proposition~\ref{prop: maximal width in poset}, combined with the fact that $e, w_0\in D$, implies that $P=P^+\sqcup -P^+$, where $P^+=P|_{\Phi^+}$. Moreover, $I(e)\sqcup I(w_0)=\Phi$ implies $D$ is ample and $P=\tilde{P}$. And then Proposition~\ref{prop: connected in poset} implies that there are no equal roots in $P$. 

Let $\Phi^+=\{\alpha_1,\alpha_2, \ldots, \alpha_m\}$, where $m=|\Phi^+|$. Until we complete the proof of $(\Leftarrow)$ part of Theorem~\ref{thm: tiling type}, we use the following notations defined on $P^+$.  For each $1\le t \le m$, let $I_t$ be the principal order ideal generated by $\alpha_t$ in $P^+$, and let $I'_t=I_t\setminus \{\alpha_t\}$. Since $I\sqcup -(P^+\setminus I)\in J_s(P)$ for any $I\in J(P^+)$, we have a identification $J_s(P^+)=J(P^+)$ by Lemma~\ref{lem: J_s(I)andJ_s(P)}. Thus, $I_t, I'_t\in J(P^+) = J_s(I(w_0))$, and we can define $v_t=\rho^{-1}(I_t)$ and $v'_t=\rho^{-1}(I'_t)$ in $D$. This implies $I(v_t)\cap \Phi^+= (I(v'_t)\cap \Phi^+)\sqcup \{\alpha_t\}$ and $v'_t\lessdot v_t$ in the right weak order of $W$. Thus, there exists a unique simple reflection $s_{i_t}\in S$ such that $v_t=v'_ts_{i_t}$. Let $H$ be the poset obtained by replacing each $\alpha_t$ with $s_{i_t}$ in $P^+$ for $1\le t\le m$, and let $R: P^+\rightarrow H$, defined by $\alpha_t\mapsto s_{i_t}$, be the corresponding order isomorphism. We will prove that $H$ is the Heap poset of some reduced word $\mathbf{i}$, and that $D$ is the Condorcet domain of tiling type represented by $\mathbf{i}$. We first establish two lemmas.

\begin{lemma}
    \label{lem: linear extension is reduced word}
    For any $I\in J(P^+)$, let $v=\rho^{-1}(I)$. Then any linear extension of $R(I)$ is a reduced word for $v$.
\end{lemma}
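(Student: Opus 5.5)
We argue by induction on $|I|$, proving the statement for all order ideals of $P^+$ simultaneously. If $I=\emptyset$, then $\rho^{-1}(\emptyset)$ is the element $u\in D$ with $I(u)\cap\Phi^+=\emptyset$, that is $u=e$. The empty word is its reduced word.

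For the inductive step, fix $I\in J(P^+)$ with $|I|\ge 1$. Take an arbitrary linear extension $(R(\alpha_{t_1}),\ldots,R(\alpha_{t_k}))$ of $R(I)$; since $R$ is an order isomorphism, $(\alpha_{t_1},\ldots,\alpha_{t_k})$ is a linear extension of $I$. Set $\alpha:=\alpha_{t_k}$, which is maximal in $I$, and $J:=I\setminus\{\alpha\}\in J(P^+)$. Let $v=\rho^{-1}(I)$ and $u=\rho^{-1}(J)$. By induction, $(s_{i_{t_1}},\ldots,s_{i_{t_{k-1}}})$ is a reduced word for $u$. It therefore suffices to show that $v=us_{i_{t_k}}$; the word is then reduced because $\ell(v)=|I(v)\cap\Phi^+|=|I|=k$.

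Since $I(v)\cap\Phi^+=(I(u)\cap\Phi^+)\sqcup\{\alpha\}$, we have $u\lessdot v$ in right weak order. Hence $v=us$ for a unique simple reflection $s$, and $\alpha$ is the root of the reflection $usu^{-1}$. By definition, $s_{i_{t_k}}$ is the simple reflection with $v_{t_k}=v'_{t_k}s_{i_{t_k}}$, and $\alpha$ is the root of $v'_{t_k}s_{i_{t_k}}{v'_{t_k}}^{-1}$. So we must show
\[u^{-1}(\alpha)={v'_{t_k}}^{-1}(\alpha),\]
since a simple root determines its simple reflection. This is the analogue, in the reverse direction, of Lemma~\ref{lem: alpha well defined}.

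To prove it, write $I''=J\setminus I'_{t_k}=I\setminus I_{t_k}$. Then $I'_{t_k}\subseteq J$, and both are order ideals of $P^+$. Apply induction to $J$, choosing a linear extension that lists $I'_{t_k}$ first and then $I''$. This gives $u=v'_{t_k}x$, where $x$ is the product of $R(I'')$ in that order, $\ell(u)=\ell(v'_{t_k})+\ell(x)$, and every generator of $x$ corresponds to an element of $I''$. Then $u^{-1}(\alpha)=x^{-1}{v'_{t_k}}^{-1}(\alpha)=x^{-1}(\gamma)$, where $\gamma$ is the simple root of $s_{i_{t_k}}$. It suffices to show that each generator $s_{i_t}$ with $\alpha_t\in I''$ fixes $\gamma$, i.e. commutes with $s_{i_{t_k}}$.

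Here $\alpha_t\in I''$ means $\alpha_t$ and $\alpha=\alpha_{t_k}$ are incomparable in $P^+$. This is the \textbf{main obstacle}: we must show that incomparable roots in $P^+$ give commuting generators. The plan is to reduce it to orthogonality. Suppose, for an induction over the positions in $x$, that the generators before $s_{i_t}$ have been shown to fix $\gamma$. Then the relevant simple root at that stage is $y^{-1}{v'_{t_k}}^{-1}(\alpha)$ for the corresponding prefix $y$, and $\alpha_t$ is transported in the same way. So $s_{i_t}$ commutes with $s_{i_{t_k}}$ exactly when $\alpha_t\perp\alpha$. If they were not orthogonal, consider the dihedral root cycle through $\alpha_t,\alpha$. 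Apply Theorem~\ref{thm: cycle condition} using $P=P^+\sqcup-P^+$, the fact that $P$ has no equal roots, and $P_G=\emptyset$. Only the first type of cycle condition can hold, and it forces a chain inside the positive half of the cycle. In particular $\alpha_t$ and $\alpha$ become comparable in $P^+$, which is a contradiction. Combining this orthogonality with the commutation then completes the induction.
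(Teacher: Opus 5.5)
Your argument is correct, but you organize the inductive step differently from the paper. The paper splits into two cases. When $I=I_t$ has a unique maximal element, the claim is immediate from the definition of $s_{i_t}$. When $I$ has several maximal elements, the paper only compares two linear extensions ending in $(s_{i_s},s_{i_t})$ and $(s_{i_t},s_{i_s})$: incomparability of $\alpha_s,\alpha_t$ forces orthogonality via Theorem~\ref{thm: cycle condition}, so the reflections $us_{i_s}u^{-1}$ and $us_{i_t}u^{-1}$ commute, hence so do $s_{i_s}$ and $s_{i_t}$. You instead prove, for an arbitrary linear extension, the $P^+$-analogue of Lemma~\ref{lem: alpha well defined}: the cover $\rho^{-1}(I\setminus\{\alpha_{t_k}\})\lessdot\rho^{-1}(I)$ is always labelled by $s_{i_{t_k}}$. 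You get this by factoring $u=v'_{t_k}x$ through $I'_{t_k}$ and transporting the simple root $\gamma$ through $x$. This costs a nested induction along $x$, and it needs orthogonality of $\alpha_{t_k}$ with every root of $I\setminus I_{t_k}$ rather than with a single other maximal element (though it follows from the same cycle-condition argument). In exchange, it identifies the represented element with $v=\rho^{-1}(I)$ directly, and reducedness comes for free. The paper's swap argument by itself only shows that all linear extensions represent a common element. Showing that this element is $v$ in the several-maxima case needs an extra line, such as $I(us_{i_s}s_{i_t})\cap\Phi^+=(I(u)\cap\Phi^+)\sqcup\{\alpha_s,\alpha_t\}$, which uses the commutation. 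You also make explicit that $P=P^+\sqcup-P^+$ is what forces the chain from the cycle condition onto the positive half of the dihedral cycle, a step the paper leaves implicit.
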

\begin{proof}
    We proceed by induction on $|I|$. If $|I|=0$, then $v=e$, and its reduced word is empty, thus the base case holds. 
    
    Assume the induction hypothesis holds for all order ideals of size up to some $k$ with $0\le k < m$, and let $I\in J(P^+)$ with $|I|=k+1$. If $I$ has a unique maximal element, then $I=I_t$ for some $1\le t\le m$. Any linear extension of $R(I)$ is a linear extension of $R(I'_t)$ with $s_{i_t}$ appended to the end. The result then follows from the definition of $s_{i_t}$ and the induction hypothesis. 
    
    If $I$ has multiple maximal elements, let $l_1$ and $l_2$ be any two linear extensions of $R(I)$. If $l_1$ and $l_2$ end with the same element $s_{i_t}$, we are done by applying the induction hypothesis to $I\setminus \{\alpha_t\}$. Suppose instead that $l_1$ and $l_2$ end with $s_{i_t}$ and $s_{i_s}$, respectively, where $s\neq t$. Then $\alpha_t$ and $\alpha_s$ are both maximal elements in $I$. Since the induction hypothesis guarantees that all linear extensions of $R(I\setminus \{\alpha_s\})$ and $ R(I\setminus \{\alpha_t\})$ are reduced word for $\rho^{-1}(I\setminus \{\alpha_s\})$ and $\rho^{-1}(I\setminus\{\alpha_t\})$, respectively. We may assume without loss of generality that $l_1$ and $l_2$ are of the form $(l', s_{i_s}, s_{i_t})$ and $(l', s_{i_t}, s_{i_s})$, respectively, where $l'$ is a linear extension of $R(I\setminus \{\alpha_s, \alpha_t\})$. To show that $l_1$ and $l_2$ are reduced word for the same element, it suffices to prove that $s_{i_s}s_{i_t}=s_{i_t}s_{i_s}$. 
    
    Since $\alpha_t$ and $\alpha_s$ are maximal in $I$, they are incomparable in $P^+$. Let $(\gamma_1, \gamma_2,\ldots, \gamma_{2l})$ be the dihedral root cycle in the plane spanned by $\alpha_t$ and $\alpha_s$. Assume without loss of generality that $\gamma_k\in \Phi^+$ for $1\le k\le l$. If $l\ge 3$, then since there are no equal roots in $P$, Theorem~\ref{thm: cycle condition} implies that either $\gamma_1<\gamma_2<\cdots<\gamma_l$ or $\gamma_1>\gamma_2>\cdots>\gamma_l$. This forces $\alpha_t$ and $\alpha_s$ to be comparable, which is a contradiction. 
    
    Thus $l=2$, which means $\alpha_t$ and $\alpha_s$ are orthogonal. Let $u$ be the group element with reduced word $l'$. Then induction hypothesis on $I\setminus \{\alpha_s\}$, $ I\setminus \{\alpha_t\}$ and $I\setminus \{\alpha_s, \alpha_t\}$ imply that $\alpha_t$ and $\alpha_s$ correspond to the reflections $us_{i_t}u^{-1}$ and $us_{i_s}u^{-1}$, respectively. The orthogonality $\alpha_t \perp \alpha_s$ implies that their corresponding reflections commute:
    $$ (us_{i_t}u^{-1}) \cdot (us_{i_s}u^{-1}) \cdot (us_{i_t}u^{-1}) \cdot (us_{i_s}u^{-1}) = u (s_{i_t}s_{i_s}s_{i_t}s_{i_s}) u^{-1} = e.$$
    Therefore, $s_{i_t}s_{i_s}s_{i_t}s_{i_s}=e$, which implies $s_{i_s}s_{i_t}=s_{i_t}s_{i_s}$, completing the inductive step.
\end{proof}

\begin{lemma}
    \label{lem: H is heap}
    Let $\mathbf{i}$ be any linear extension of $H$. Then $\mathbf{i}$ is a reduced word for $w_0$ and $H=H(\mathbf{i})$.
\end{lemma}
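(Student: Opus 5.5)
Apply Lemma~\ref{lem: linear extension is reduced word} with $I=P^+$. Then $\rho^{-1}(P^+)=w_0$, because $I(w_0)=\Phi^+$ and $\rho$ is defined relative to $I(w_0)$. So every linear extension $\mathbf{i}$ of $H=R(P^+)$ is a reduced word for $w_0$, which gives the first claim. It remains to show that the partial order on $H$ equals the heap order $H(\mathbf{i})$. Both posets live on the same ground set: the positions of $\mathbf{i}$, identified with $\alpha_1,\dots,\alpha_m$ through $R$. It therefore suffices to compare their cover relations, namely to show (a) that every covering $\alpha_s\lessdot\alpha_t$ in $P^+$ has non-commuting $s_{i_s},s_{i_t}$, and (b) that incomparable elements of $P^+$ have commuting generators. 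Since $\mathbf{i}$ is a linear extension of $H$, comparable elements appear in the correct order. Given (a) and (b), the relations of $H(\mathbf{i})$, which are generated by the non-commuting pairs in word order, coincide with those of $H$. Indeed, (b) shows every generating relation of $H(\mathbf{i})$ is a relation of $H$, and (a) shows every cover of $H$ is a generating relation of $H(\mathbf{i})$.

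For (b), I would reuse the argument from the proof of Lemma~\ref{lem: linear extension is reduced word}. Let $\alpha_s,\alpha_t$ be incomparable. Then $I=I_s\cup I_t$ is an order ideal, and $\alpha_s,\alpha_t$ are both maximal in it. The rank-$2$ dihedral argument (Theorem~\ref{thm: cycle condition}, together with the absence of equal roots) forces $\alpha_s\perp\alpha_t$. The reflection computation with $u=\rho^{-1}(I\setminus\{\alpha_s,\alpha_t\})$ then shows that $s_{i_s}s_{i_t}=s_{i_t}s_{i_s}$. The one new point to check is that $\alpha_s,\alpha_t$ correspond to $us_{i_s}u^{-1}$ and $us_{i_t}u^{-1}$. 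This should follow from Lemma~\ref{lem: linear extension is reduced word}: the unique new positive inversion when appending a letter is the root of that conjugated reflection, and by the definition of $v_t$ this new inversion is $\alpha_t$.

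For (a), which I expect to be the main obstacle, let $\alpha_s\lessdot\alpha_t$ and suppose $s_{i_s},s_{i_t}$ commute. By maximality of $D$ and Theorem~\ref{thm: minimal cover is root-triple}, there is a root-triple containing $\alpha_s$ and $-\alpha_t$, so $\alpha_s,\alpha_t$ are not orthogonal. Take the ideal $I=I'_t\cup\{\alpha_s\}$, which equals $I'_t$ because $\alpha_s<\alpha_t$, and write $u=\rho^{-1}(I\setminus\{\alpha_s\})$ after removing $\alpha_s$. Since $\alpha_s$ is covered by $\alpha_t$ and $I'_t$ is an order ideal, $\alpha_s$ is a maximal element of $I'_t$. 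Consider $J=I_t\setminus\{\alpha_s\}$. This set is not an order ideal, so the linear extension approach needs care. Instead I would use a linear extension of $I_t$ ending in $\dots,\alpha_s,\alpha_t$. Such an extension exists: $\alpha_s$ is maximal in $I'_t$, and it is the cover immediately preceding $\alpha_t$. With $u$ the prefix element, we get $\alpha_s\leftrightarrow us_{i_s}u^{-1}$ and $\alpha_t\leftrightarrow us_{i_s}s_{i_t}s_{i_s}u^{-1}$. If $s_{i_s}$ and $s_{i_t}$ commute, the latter reflection is $us_{i_t}u^{-1}$, which commutes with $us_{i_s}u^{-1}$. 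Hence $\alpha_s\perp\alpha_t$, contradicting the previous paragraph. This shows that covers carry non-commuting generators, completing the proof that $H=H(\mathbf{i})$.
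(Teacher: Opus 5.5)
Your overall structure matches the paper's proof. You get the first claim from Lemma~\ref{lem: linear extension is reduced word} with $I=P^+$. You then reduce to (a) and (b), and prove (b) with the rank-$2$ argument from the proof of that lemma. In (a) you use the linear extension of $I_t$ ending in $\alpha_s,\alpha_t$, with $r_s=us_{i_s}u^{-1}$ and $r_t=us_{i_s}s_{i_t}s_{i_s}u^{-1}$. The gap is in (a), at the step ``there is a root-triple containing $\alpha_s$ and $-\alpha_t$, so $\alpha_s,\alpha_t$ are not orthogonal.'' That inference is false outside simply-laced types. In $B_2$, $\{e_1,-e_2,e_2-e_1\}$ is a root-triple, since the three roots sum to $0$, yet $e_1\perp e_2$. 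Your contradiction at the end of (a) is exactly ``$\alpha_s\perp\alpha_t$,'' so as written the argument does not cover types $B_n$, $F_4$, $G_2$, $I_2(2k)$.

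The root-triple by itself only tells you that the dihedral root cycle $(\gamma_1,\ldots,\gamma_{2l})$ in the plane of $\alpha_s,\alpha_t$ has $l\ge 3$. The missing step uses the cover relation itself, and this is how the paper argues. Take $\gamma_1,\ldots,\gamma_l\in\Phi^+$. Theorem~\ref{thm: cycle condition}, the absence of equal roots, and $P=P^+\sqcup -P^+$ force $\gamma_1<\gamma_2<\cdots<\gamma_l$ or the reverse chain in $P^+$. Since $\alpha_s\lessdot\alpha_t$ is a cover, the two roots must be consecutive, $\alpha_s=\gamma_j$ and $\alpha_t=\gamma_{j+1}$. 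The angle between them is then $\pi/l<\pi/2$, so they are not orthogonal. In the $B_2$ example, $e_1$ and $e_2$ are two steps apart in that chain, so $e_1\lessdot e_2$ cannot occur. With this step inserted, the rest of your (a) works as in the paper: the commuting hypothesis makes $r_t=us_{i_t}u^{-1}$ commute with $r_s$, which would force $\alpha_s\perp\alpha_t$, giving the contradiction.
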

\begin{proof}
    That $\mathbf{i}$ is a reduced word for $w_0$ follows directly from the facts that $H=R(P^+)$, $\rho^{-1}(P^+)=w_0$, and Lemma~\ref{lem: linear extension is reduced word}. By relabeling the elements of $\Phi^+$ if necessary, we may assume without loss of generality that $\mathbf{i}=(s_{i_1}, s_{i_2},\ldots, s_{i_m})$. 
    
    To prove that $H=H(\mathbf{i})$, we must show that the partial order on $H$ is generated exactly by the relations $s_{i_s}<s_{i_t}$ for all $s < t$ such that $s_{i_s}s_{i_t}\neq s_{i_t}s_{i_s}$. 
    
    First, suppose $s<t$ and $s_{i_s}s_{i_t}\neq s_{i_t}s_{i_s}$, we must prove that $s_{i_s}< s_{i_t}$ in $H$. From the proof of Lemma~\ref{lem: linear extension is reduced word}, consider the order ideal generated by $\alpha_s, \alpha_t$ in $P^+$, we have that $\alpha_s$ and $\alpha_t$ are incomparable in $P^+$ implies $s_{i_s}s_{i_t}= s_{i_t}s_{i_s}$. Thus $s_{i_s}s_{i_t}\neq s_{i_t}s_{i_s}$ implies that $\alpha_s$ and $\alpha_t$ are comparable in $P^+$, and thus $s_{i_s}$ and $s_{i_t}$ are comparable in $H$. Since $s_{i_s}$ appears before $s_{i_t}$ in the linear extension $\mathbf{i}$, we cannot have $s_{i_s} > s_{i_t}$. Hence, $s_{i_s}< s_{i_t}$. 
    
    Conversely, suppose $s_{i_s}\lessdot s_{i_t}$ is a covering relation in $H$, we must prove that $s<t$ and $s_{i_s}s_{i_t}\neq s_{i_t}s_{i_s}$. The fact that $s<t$ is immediate since $\mathbf{i}$ is a linear extension of $H$ and $s_{i_s}<s_{i_t}$. By Theorem~\ref{thm: minimal cover is root-triple}, the corresponding covering relation $\alpha_s\lessdot \alpha_t$ in $P^+$ implies that there is a root-triple containing $\alpha_s$ and $-\alpha_t$. Let $(\gamma_1, \gamma_2,\ldots, \gamma_{2l})$ be the dihedral root cycle in the plane spanned by $\alpha_s$ and $\alpha_t$, and assume $\gamma_k\in \Phi^+$ for $1\le k\le l$. Since the roots in a root-triple are coplanar, we must have $l\ge 3$. Then Theorem~\ref{thm: cycle condition}, together with the facts that there are no equal roots in $P^+$ and $\gamma_k\in P^+$ for $1\le k \le l$, imply that either $\gamma_1<\gamma_2<\cdots<\gamma_l$ or $\gamma_1>\gamma_2>\cdots>\gamma_l$. Thus, we can assume $\alpha_s=\gamma_j$ and $\alpha_t=\gamma_{j+1}$ for some $1\le j\le l-1$. Because $l\ge 3$, $\alpha_s$ and $\alpha_t$ are not orthogonal. 
    
    Since $\alpha_s \lessdot \alpha_t$, $\alpha_s$ is a maximal element in $I'_t$ and $I'':=I'_t\setminus \{\alpha_s\}$ is an order ideal in $P^+$. Let $u$ be the group element whose reduced word is a linear extension of $R(I'')$. By Lemma~\ref{lem: linear extension is reduced word} on $I''$, $I_t'$ and $I_t$, the positive roots $\alpha_s$ and $\alpha_t$ correspond to the reflections $r_s:=us_{i_s}u^{-1}$ and $r_t:=u s_{i_s}s_{i_t}s_{i_s}u^{-1}$. The fact that $\alpha_s$ and $\alpha_t$ are not orthogonal implies that their reflections do not commute:
    \begin{align*}
        r_sr_tr_s^{-1}r_t^{-1} &= (u s_{i_s}u^{-1}) \cdot (u s_{i_s}s_{i_t}s_{i_s}u^{-1}) \cdot (u s_{i_s}u^{-1}) \cdot (u s_{i_s}s_{i_t}s_{i_s}u^{-1})\\
        &= u (s_{i_t}s_{i_s}s_{i_t}s_{i_s}) u^{-1}\neq e.
    \end{align*}
    Thus $s_{i_t}s_{i_s}s_{i_t}s_{i_s}\neq e$, which means $s_{i_s}s_{i_t}\neq s_{i_t}s_{i_s}$, completing the proof.
\end{proof}
\begin{proof}[Proof of Theorem~\ref{thm: tiling type}, $(\Leftarrow)$ direction.]
        Now, for the $D$ we have fixed,  let $\mathbf{i}$ be the reduced word for $w_0$ in Lemma~\ref{lem: H is heap} and $D'$ be the Condorcet domain of tiling type represented by $\mathbf{i}$. By Lemma~\ref{lem: H is heap}, $H=H(\mathbf{i})$. Let $L$ be the bijection from $J(H(\mathbf{i}))=J(H)$ to $D'$ defined in Proposition~\ref{prop: order ideal of heap}. We have $\rho: v\mapsto I(v)\cap P^+$ is a bijection from $D$ to $J(P^+)$, and then $R\circ \rho$ is a bijection from $D$ to $J(H)$. So the composition $T:=L\circ R\circ \rho$ is a bijection from $D$ to $D'$. By Lemma~\ref{lem: linear extension is reduced word}, any linear extension of $R\circ \rho(v)$ is a reduced word for $v$, which is also a reduced word for $T(v)$ by the definition of $L$. Therefore, $v = T(v)$, which implies $D=D'$, completing the proof.
\end{proof}

\subsection{Restriction to root subsystems} \label{subsec: Restriction to root subsystems}
Let $\Phi'\subseteq \Phi$ be a root subsystem, as any root-triple $\{\alpha, \beta, \gamma\}$ in $\Phi'$ is also a root-triple in $\Phi$, for any biconvex set $I\subseteq \Phi$, the intersection $I\cap \Phi'$ is biconvex in $\Phi'$. We can therefore define the \emph{restriction} to $\Phi'$ of any subset $D \subseteq W$ as
$$D':=\{ w\in W'=W(\Phi')\mid \text{there exists } v\in D \text{ such that } I(w)=I(v)\cap \Phi'\}.$$ 
Since any root-triple $\{\alpha, \beta, \gamma\}$ in $\Phi'$ is a root-triple in $\Phi$, restriction preserves the property of being a Condorcet domain by Definition~\ref{def: condorcet}. Furthermore, restriction also preserves being closed.

\begin{prop}
    \label{prop: restriction preserve closed}
     If $D'$ is the restriction of a closed Condorcet domain $D$ to $\Phi'$, then $D'$ is also closed.
\end{prop}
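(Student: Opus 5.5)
The plan is to show directly that $\nu(D')\subseteq D'$ by lifting a voting profile on $D'$ to one on $D$. This lets the closedness of $D$ do the work.

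\textbf{Step 1: lift the profile.} Take $w\in\nu(D')$. By Theorem~\ref{thm: fundamental}, there is a no-tie voting profile $f'$ on $D'$ with $I(f')=I(w)$ and $R(f')=\{w\}$.

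For each $u\in D'$, choose one $v_u\in D$ with $I(v_u)\cap\Phi'=I(u)$. Such a $v_u$ exists by definition of the restriction, and the choices $v_u$ are distinct for distinct $u$. Define $f$ on $D$ by $f(v_u)=f'(u)$ and $f=0$ elsewhere. Then $|f|=|f'|$, and for every $\alpha\in\Phi'$ we have $f(\alpha)=f'(\alpha)$.

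\textbf{Step 2: remove ties outside $\Phi'$.} The profile $f$ may have ties at roots $\alpha\in\Phi\setminus\Phi'$. To avoid this, replace $f'$ by $3f'$ if needed so that $|f'|$ is odd... but scaling preserves parity of ties only if $|f'|$ is odd. The cleaner fix is to first multiply $f'$ by an odd number (no change) and then note the following. Since $f'$ has no tie on $\Phi'$, we have $|f'(\alpha)-|f'|/2|\ge 1/2$ for all $\alpha\in\Phi'$. If we multiply $f$ by $2$ and add $1$ at a single $v_u$, the roots of $\Phi'$ keep their strict majority sides, because $2f(\alpha)$ differs from $|f|$ by at least $1$ and the total becomes odd. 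Call the result $g$. Then $g$ has no tie anywhere, and $I(g)\cap\Phi'=I(f')=I(w)$.

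\textbf{Step 3: apply closedness of $D$.} By Theorem~\ref{thm: fundamental} and closedness of $D$, there is $v\in D$ with $I(v)=I(g)$. Hence $I(v)\cap\Phi'=I(g)\cap\Phi'=I(w)$. By the definition of restriction, $w\in D'$, and therefore $\nu(D')\subseteq D'$.

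\textbf{Main obstacle.} The only delicate point is the tie bookkeeping in Step 2. The perturbation must not flip any root of $\Phi'$ and must break all ties at roots outside $\Phi'$. The doubling-plus-one trick handles both at once: for $\alpha\in\Phi'$ the inequality $f(\alpha)>|f|/2$ means $f(\alpha)\ge (|f|+1)/2$, so $2f(\alpha)\ge|f|+1>(2|f|+1)/2$.
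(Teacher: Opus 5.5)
Your argument is correct, but it takes a different route from the paper's proof.

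\textbf{The paper's route.} The paper stays on the poset side. It restricts $P=\varphi(D)$ to $\Phi'$ and checks that $P|_{\Phi'}$ is a Condorcet root poset. It then proves $\psi(P|_{\Phi'})=D'$: every $I'\in J_s(P|_{\Phi'})$ generates a partial symmetric order ideal of $P$. By Proposition~\ref{prop: J_p(P) in J_s(P)} this extends to some $I\in J_s(P)$ with $I\cap\Phi'=I'$. Closedness then comes from Lemma~\ref{lem: psi(P) closed} together with Corollary~\ref{cor:bijection}.

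\textbf{Your route.} You never touch $\varphi$ or $\psi$. You lift a no-tie profile from $D'$ to $D$ and let closedness of $D$ finish the job. The lift is sound:
\begin{itemize}
\item The $v_u$ are distinct, and $f(\alpha)=f'(\alpha)$ for every $\alpha\in\Phi'$.
\item The profile $g$ (namely $2f$ plus one extra vote at a single $v_{u_0}$) has odd total $2|f|+1$, so it has no ties anywhere.
\item For $\alpha\in I(f')$ you get $g(\alpha)\ge 2f(\alpha)\ge|f|+1>|g|/2$. Since $g$ has no tie, exactly one of $\pm\alpha$ lies in $I(g)$, which gives $I(g)\cap\Phi'=I(f')=I(w)$. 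State this last step explicitly rather than leaving it inside ``keep their strict majority sides.''
\end{itemize}

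\textbf{What each approach buys.} Yours is shorter and more elementary: it uses only Theorem~\ref{thm: fundamental} and the definitions. The paper's yields something structural: every symmetric order ideal of $P|_{\Phi'}$ is the trace of one of $P$, valid for any antipodal root poset. It also gives the explicit description $D'=\psi(\varphi(D)|_{\Phi'})$, which fits the section's theme of reading properties off the poset.

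\textbf{One editorial point.} Delete the abandoned sentence in Step~2 about replacing $f'$ by $3f'$ or ``multiplying by an odd number.'' It is incoherent and plays no role; the doubling-plus-one construction is all you need.
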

\begin{proof}
    Let $P=\varphi(D)$ and $P'$ be the restriction of the poset $P$ to the underlying set $\Phi'$. Then $P'$ is clearly a Condorcet root poset by Definition~\ref{def: condorcet-poset}. By Corollary~\ref{cor:bijection}, it suffices to prove that $\psi(P')=D'$. 
    
    For any $w\in D'$, there exists $v\in D$ such that $I(w)=I(v)\cap \Phi'$. Since $I(v)$ is a symmetric order ideal in $P$, its intersection with $\Phi'$ is a symmetric order ideal in $P'$. Thus, $w \in \psi(P')$, which establishes $D' \subseteq \psi(P')$.
    
    To prove the reverse inclusion, $\psi(P')\subseteq D'$, take any $I'\in J_s(P')$. We wish to find $I\in J_s(P)$ such that $I\cap \Phi'=I'$. Let $I_0$ be the order ideal generated by $I'$ in $P$. We claim that $-I_0\cap I_0=\emptyset$. To see this, note that $-I_0$ is exactly the order filter generated by $-I'$ in $P$. If their intersection were non-empty, there would exist $\alpha \in I'$ and $\beta\in -I'$ such that $\beta \le_P \alpha$. Because both $\alpha, \beta \in \Phi'$, this would mean $\beta \le_{P'} \alpha$, contradicting the fact that $I'$ is an symmetric order ideal in $P'$. 
    
    Therefore, $I_0\in J_p(P)$. By Proposition~\ref{prop: J_p(P) in J_s(P)}, there exists $I\in J_s(P)$ such that $I\supseteq I_0\supseteq I'$. Because $-I'\cap I\subseteq -I_0\cap I \subseteq -I\cap I=\emptyset$, and $I' \sqcup -I' = \Phi'$, it necessarily follows that $I\cap \Phi'=I'$. This completes the proof.
\end{proof}

Note that a domain being ample can equivalently be defined as its restriction to any one-dimensional root subsystem of $\Phi$ being maximal. Consequently, restriction does not preserve maximality in general, as illustrated by Example~\ref{ex: maximal doesn't imply ample}. However, by Propositions~\ref{prop: ample in poset}, \ref{prop: connected in poset}, \ref{prop: symmetry in poset}, and \ref{prop: maximal width in poset}, along with the definitions of saturated single-crossing and peak-pit domains, the properties of being ample, connected, saturated single-crossing, peak-pit, symmetric, and of maximal width are all preserved under restriction.

\section{Voting in Condorcet domains}\label{sec: voting}
We devote a whole section to a short discussion on voting in Condorcet domains, which is the origin of the classical theory on Condorcet domains, and is also our initial motivation. Define the \emph{support} of a voting profile $f$ on $D$ to be $\operatorname{supp}(f)=\{v\in D\mid f(v)> 0\}$. In the definition of $\nu(D)$, we required the voting profile $f$ to have no ties. This condition is indeed necessary, as we can see in Example~\ref{ex: Condorcet and voting} (2). However, there are certain cases where $f$ has ties, yet we still obtain $R(f)\subseteq \nu(D)$. We say that a voting profile $f$ has only \emph{simple ties} if $R(f)\subseteq \nu(\operatorname{supp}(f))$. 

\begin{proof}[Proof of Theorem~\ref{thm: simple ties}]
    Let $v\in R(f)$, by Definition~\ref{def: closed}, we need only show $v\in D$. Let $A=\{\alpha\in \Phi \mid f(\alpha)=|f|/2\} \subseteq I(f)$ to be the set of tied roots. Let $P=\varphi(D)$. Recall that for any $\alpha \in P_G$, either $\alpha \in I(u)$ for all $u \in D$ or $\alpha \notin I(u)$ for all $u \in D$. Thus, $f(\alpha)=|f|$ or $f(\alpha)=0$. Since $|f|\ge |D|>0$, it follows that $A\cap P_G=\emptyset$. Furthermore, because $D$ is connected, Proposition~\ref{prop: connected in poset} ensures there are no equal roots in $\tilde{P}$, and therefore none in $A$. 
    
    Now, consider any strict relation $\alpha > \beta $ in $P=\varphi(D)$. By Definition~\ref{def: map phi}, this means $D(\alpha) \subsetneq D(\beta)$, so there exists $u\in D$ such that $\beta \in I(u)$ but $\alpha \notin I(u)$. Since $u \in \operatorname{supp}(f)$, meaning $f(u)>0$, this strict inclusion implies $f(\alpha) < f(\beta)$. Consequently, no two elements in $A$ can be comparable in $P$, meaning $A$ is an antichain. 
    Moreover, the sets $I(f)=\{\alpha\in \Phi\mid f(\alpha)\ge|f|/2\}$ and $I(f)\setminus A=\{\alpha\in \Phi\mid f(\alpha)>|f|/2\}$ are both order ideals in $P$. This implies that $A$ consists of maximal elements in the order ideal $I(f)$. 
    
    Because $v \in R(f)$ is an outcome of the voting profile, its inversion set $I(v)$ must agree with the strict majority and break ties in $A$. Thus, $I(v)=I(f)\setminus (A\setminus I(v))$. Since $A$ is an antichain of maximal elements in $I(f)$, removing any subset of $A$ leaves a valid order ideal. Therefore, $I(v)\in J_s(P)$. By Corollary~\ref{cor:bijection}, this implies $v \in \psi(P) = D$, which completes the proof.
\end{proof}

\section*{Acknowledgments}
We thank Prof. Vic Reiner for helpful discussions. Y.G. is partially supported by NSFC Grant no. 12471309.

\bibliographystyle{plain}
\bibliography{ref}

@article {Puppe-Slinko-median-graph,
    AUTHOR = {Puppe, Clemens and Slinko, Arkadii},
     TITLE = {Condorcet domains, median graphs and the single-crossing
              property},
   JOURNAL = {Econom. Theory},
  FJOURNAL = {Economic Theory},
    VOLUME = {67},
      YEAR = {2019},
    NUMBER = {1},
     PAGES = {285--318},
      ISSN = {0938-2259,1432-0479},
   MRCLASS = {91B12 (05C90 91B08)},
  MRNUMBER = {3905426},
MRREVIEWER = {Daniel\ Eckert},
       DOI = {10.1007/s00199-017-1084-6},
       URL = {https://doi.org/10.1007/s00199-017-1084-6},
}

@incollection{monjardet2009acyclic,
    AUTHOR = {Monjardet, Bernard},
     TITLE = {Acyclic domains of linear orders: {A} survey},
 BOOKTITLE = {The mathematics of preference, choice and order},
    SERIES = {Stud. Choice Welf.},
     PAGES = {139--160},
 PUBLISHER = {Springer, Berlin},
      YEAR = {2009},
      ISBN = {978-3-540-79127-0},
   MRCLASS = {91B14},
  MRNUMBER = {2648300},
       DOI = {10.1007/978-3-540-79128-7\_8},
       URL = {https://doi.org/10.1007/978-3-540-79128-7_8},
}

@article{danilov2012condorcet,
    AUTHOR = {Danilov, Vladimir I. and Karzanov, Alexander V. and Koshevoy,
              Gleb},
     TITLE = {Condorcet domains of tiling type},
   JOURNAL = {Discrete Appl. Math.},
  FJOURNAL = {Discrete Applied Mathematics. The Journal of Combinatorial
              Algorithms, Informatics and Computational Sciences},
    VOLUME = {160},
      YEAR = {2012},
    NUMBER = {7-8},
     PAGES = {933--940},
      ISSN = {0166-218X,1872-6771},
   MRCLASS = {91B14 (05B45 91B08)},
  MRNUMBER = {2901112},
       DOI = {10.1016/j.dam.2011.08.001},
       URL = {https://doi.org/10.1016/j.dam.2011.08.001},
}

@article{puppe2024maximal,
    AUTHOR = {Puppe, Clemens and Slinko, Arkadii},
     TITLE = {Maximal {C}ondorcet domains a further progress report},
   JOURNAL = {Games Econom. Behav.},
  FJOURNAL = {Games and Economic Behavior},
    VOLUME = {145},
      YEAR = {2024},
     PAGES = {426--450},
      ISSN = {0899-8256,1090-2473},
   MRCLASS = {91B14 (91B08)},
  MRNUMBER = {4732147},
MRREVIEWER = {Stefano\ Vannucci},
       DOI = {10.1016/j.geb.2024.04.001},
       URL = {https://doi.org/10.1016/j.geb.2024.04.001},
}

@article{reiner2025majority,
  title={Majority relations for Condorcet domains of tiling type},
  author={Reiner, Victor and Tenner, Bridget Eileen},
  journal={arXiv preprint arXiv:2509.19614},
  year={2025}
}

@article{leedham2024largest,
    AUTHOR = {Leedham-Green, Charles and Markstr\"om, Klas and Riis, S\o
              ren},
     TITLE = {The largest {C}ondorcet domain on 8 alternatives},
   JOURNAL = {Soc. Choice Welf.},
  FJOURNAL = {Social Choice and Welfare},
    VOLUME = {62},
      YEAR = {2024},
    NUMBER = {1},
     PAGES = {109--116},
      ISSN = {0176-1714,1432-217X},
   MRCLASS = {91B08 (91B14)},
  MRNUMBER = {4695881},
       DOI = {10.1007/s00355-023-01481-3},
       URL = {https://doi.org/10.1007/s00355-023-01481-3},
}

@article {Dyer-weak,
    AUTHOR = {Dyer, Matthew},
     TITLE = {On the weak order of {C}oxeter groups},
   JOURNAL = {Canad. J. Math.},
  FJOURNAL = {Canadian Journal of Mathematics. Journal Canadien de
              Math\'ematiques},
    VOLUME = {71},
      YEAR = {2019},
    NUMBER = {2},
     PAGES = {299--336},
      ISSN = {0008-414X,1496-4279},
   MRCLASS = {20F55 (06B23 17B22)},
  MRNUMBER = {3943754},
MRREVIEWER = {Jean\ Fromentin},
       DOI = {10.4153/cjm-2017-059-0},
       URL = {https://doi.org/10.4153/cjm-2017-059-0},
}

@article{akello-egwelCondorcetDomainsMost2025,
  title = {Condorcet Domains on at Most Seven Alternatives},
  author = {{Akello-Egwel}, Dolica and {Leedham-Green}, Charles and Litterick, Alastair and Markstr{\"o}m, Klas and Riis, S{\o}ren},
  year = 2025,
  month = jan,
  journal = {Mathematical Social Sciences},
  volume = {133},
  pages = {23--33},
  issn = {01654896},
  doi = {10.1016/j.mathsocsci.2024.12.002},
  urldate = {2026-02-20},
  langid = {english}
}

@article{stembridgeFullyCommutativeElements1996,
  title = {On the {{Fully Commutative Elements}} of {{Coxeter Groups}}},
  author = {Stembridge, John R.},
  year = 1996,
  month = oct,
  journal = {Journal of Algebraic Combinatorics},
  volume = {5},
  number = {4},
  pages = {353--385},
  issn = {1572-9192},
  doi = {10.1023/A:1022452717148},
  urldate = {2026-03-25},
  langid = {english}
}

@book{stanleyEnumerativeCombinatoricsVolume2012,
  title = {Enumerative Combinatorics. {{Volume}} 1},
  author = {Stanley, Richard P.},
  year = 2012,
  series = {Cambridge {{Studies}} in {{Advanced Mathematics}}},
  edition = {Second},
  volume = {49},
  publisher = {Cambridge University Press, Cambridge},
  isbn = {978-1-107-60262-5},
  mrnumber = {2868112}
}

@inproceedings{liEquivalenceConnectedPeakPit2026,
  title = {Equivalence of~{{Connected}} and~{{Peak-Pit Maximal Condorcet Domains}}},
  booktitle = {Computing and {{Combinatorics}}},
  author = {Li, Guanhao},
  editor = {Fomin, Fedor V. and Xiao, Mingyu},
  year = 2026,
  pages = {307--319},
  publisher = {Springer Nature},
  address = {Singapore},
  doi = {10.1007/978-981-95-0215-8_23},
  isbn = {9789819502158},
  langid = {english}
}

@incollection {bandelt2008metric,
    AUTHOR = {Bandelt, Hans-J\"urgen and Chepoi, Victor},
     TITLE = {Metric graph theory and geometry: a survey},
 BOOKTITLE = {Surveys on discrete and computational geometry},
    SERIES = {Contemp. Math.},
    VOLUME = {453},
     PAGES = {49--86},
 PUBLISHER = {Amer. Math. Soc., Providence, RI},
      YEAR = {2008},
      ISBN = {978-0-8218-4239-3},
   MRCLASS = {05C12 (05C62 51K05 52-02 54E35)},
  MRNUMBER = {2405677},
MRREVIEWER = {Mikhail\ Ostrovskii},
       DOI = {10.1090/conm/453/08795},
       URL = {https://doi.org/10.1090/conm/453/08795},
}

@article {klavzar1999median,
    AUTHOR = {Klav\v zar, Sandi and Mulder, Henry Martyn},
     TITLE = {Median graphs: characterizations, location theory and related
              structures},
   JOURNAL = {J. Combin. Math. Combin. Comput.},
  FJOURNAL = {Journal of Combinatorial Mathematics and Combinatorial
              Computing},
    VOLUME = {30},
      YEAR = {1999},
     PAGES = {103--127},
      ISSN = {0835-3026,2817-576X},
   MRCLASS = {05C12},
  MRNUMBER = {1705337},
MRREVIEWER = {Pranava\ K.\ Jha},
}

@book {mulder1980interval,
    AUTHOR = {Mulder, H. M.},
     TITLE = {The interval function of a graph},
    SERIES = {Mathematical Centre Tracts},
    VOLUME = {132},
 PUBLISHER = {Mathematisch Centrum, Amsterdam},
      YEAR = {1980},
     PAGES = {iii+191},
      ISBN = {90-6196-208-0},
   MRCLASS = {05C99 (05C65 06B99)},
  MRNUMBER = {605838},
MRREVIEWER = {M.\ E.\ Watkins},
}

@book {Arrow1951,
    AUTHOR = {Arrow, Kenneth J.},
     TITLE = {Social {C}hoice and {I}ndividual {V}alues},
    SERIES = {Cowles Commission Monograph},
    VOLUME = {No. 12},
 PUBLISHER = {John Wiley \& Sons, Inc., New York; Chapman \& Hall, Ltd.,
              London},
      YEAR = {1951},
     PAGES = {xi+99},
   MRCLASS = {90.0X},
  MRNUMBER = {39976},
MRREVIEWER = {D.\ Gale},
}

@article{Sen1966,
 ISSN = {00129682, 14680262},
 URL = {http://www.jstor.org/stable/1909947},
 author = {Amartya K. Sen},
 journal = {Econometrica},
 number = {2},
 pages = {491--499},
 publisher = {[Wiley, Econometric Society]},
 title = {A Possibility Theorem on Majority Decisions},
 urldate = {2026-05-22},
 volume = {34},
 year = {1966}
}

@book{Condorcet1785,
  title={Essai sur l'application de l'analyse {\`a} la probabilit{\'e} des d{\'e}cisions rendues {\`a} la pluralit{\'e} des voix},
  author={de Caritat, Marie Jean Antoine Nicolas and De Condorcet, Marquis},
  year={1785}
}

@article{Black1948,
  title={On the rationale of group decision-making},
  author={Black, Duncan},
  journal={Journal of political economy},
  volume={56},
  number={1},
  pages={23--34},
  year={1948},
  publisher={The University of Chicago Press}
}

@article{Ward1961,
  title={Majority rule and allocation},
  author={Ward, Benjamin},
  journal={Journal of Conflict Resolution},
  volume={5},
  number={4},
  pages={379--389},
  year={1961},
  publisher={Sage Publications Sage CA: Thousand Oaks, CA}
}

@article {GalambosReiner2008,
    AUTHOR = {Galambos, \'Ad\'am and Reiner, Victor},
     TITLE = {Acyclic sets of linear orders via the {B}ruhat orders},
   JOURNAL = {Soc. Choice Welf.},
  FJOURNAL = {Social Choice and Welfare},
    VOLUME = {30},
      YEAR = {2008},
    NUMBER = {2},
     PAGES = {245--264},
      ISSN = {0176-1714,1432-217X},
   MRCLASS = {91B12 (91B08)},
  MRNUMBER = {2372925},
MRREVIEWER = {Vicki\ Knoblauch},
       DOI = {10.1007/s00355-007-0228-1},
       URL = {https://doi.org/10.1007/s00355-007-0228-1},
}

@article{karpovConstructingLargePeakpit2023,
  title = {Constructing Large Peak-Pit {{Condorcet}} Domains},
  author = {Karpov, Alexander and Slinko, Arkadii},
  year = 2023,
  month = jan,
  journal = {Theory and Decision},
  volume = {94},
  number = {1},
  pages = {97--120},
  issn = {0040-5833, 1573-7187},
  doi = {10.1007/s11238-022-09878-9},
  urldate = {2026-02-20},
  langid = {english}
}
\end{document}